\definecolor{dullmagenta}{rgb}{0.4,0,0.4}   
\definecolor{darkblue}{rgb}{0,0,0.4}
\def\note#1{\textup{\textsf{\color{blue}(#1)}}}
\newcommand\edit[1]{\textcolor{green}{#1}}
\newtheorem{theorem}{Theorem}[section]
\newtheorem{lemma}[theorem]{Lemma}
\newtheorem{corollary}[theorem]{Corollary}
\newtheorem{proposition}[theorem]{Proposition}
\theoremstyle{definition}
\newtheorem{definition}[theorem]{Definition}
\theoremstyle{remark}
\newtheorem{remark}[theorem]{Remark}
\newtheorem{assumption}[theorem]{Assumption}
\newtheorem{conjecture}[theorem]{Conjecture}
\def\xuNunscaled{\mathbf{u}^{(N),u}}
\def\xvNunscaled{\mathbf{v}^{(N),v}}
\def\xuN{\hat{\mathbf{u}}^{(N),u}}
\def\xvN{\hat{\mathbf{v}}^{(N),v}}
\def\xu{\mathbf{u}^{u}}
\def\xv{\mathbf{v}^{v}}
\def\hscale{H^{(N)}}
\def\zscale{Z^{(N)}}
\def\e{{\varepsilon}}
\def\tzscale{\tilde{Z}^{(N)}}
\def\G{\mathcal{G}}
\def\p{\mathfrak{p}}
\def\SuppCDH{\mathcal{F}}
\def\Suppp{\mathcal{S}}
\def\SupppU{\mathcal{SU}^{d,u}}
\def\SupppV{\mathcal{SV}^{d,v}}
\def\Supp{\mathrm{S}}
\def\Supph{\mathrm{\hat{S}}}
\def\C{\mathbb{C}}
\def\R{\mathbb{R}}
\def\Z{\mathbb{Z}}
\def\E{\mathbb{E}}
\def\P{\mathbb{P}}
\def\Y{\mathbb{Y}}
\def\T{\mathbb{T}}
\def\Yh{\mathbb{\hat{Y}}}
\def\CDWtransition{\mathsf{W}}
\def\CDHtransition{\mathsf{CDH}}
\def\Cuv{\mathsf{C}_{u,v}}
\def\Cduv{\mathsf{C}_{d,u,v}}
\def\ErrLem{\widehat{\mathsf{Err}}}
\def\ErrLemtilde{\widetilde{\mathsf{Err}}}
\newcommand{\iu}{{i}}
\newcommand{\AqP}{{\mathcal A}}
\newcommand{\Error}{{\mathsf{Err}}}
\newcommand{\Err}{{\mathsf{E}}}
\DeclarePairedDelimiter\ceil{\lceil}{\rceil}
\def\r{r}
\def\AA{\mathsf{A}}
\def\BB{\mathsf{B}}
\begin{document}                        


\title{Stationary measure for the open KPZ equation}

\author{Ivan Corwin }{ Department of Mathematics, Columbia University}
\author{Alisa Knizel }{Department of Statistics,
    the University of Chicago}





\begin{abstract}
We provide the first construction of stationary measures for the open KPZ equation on the spatial interval $[0,1]$ with general inhomogeneous Neumann boundary conditions at $0$ and $1$ depending on real parameters $u$ and $v$, respectively. When $u+v\geq 0$ we uniquely characterize the constructed stationary measures through their multipoint Laplace transform which we prove is given in terms of a stochastic process that we call the continuous dual Hahn process.

Our work relies on asymptotic analysis of Bryc and Weso{\l}owski's \cite{BW1} Askey-Wilson process formulas for the open ASEP stationary measure (which in turn arise from  Uchiyama, Sasamoto and Wadati's \cite{Uchiyama_2004} Askey-Wilson Jacobi matrix representation of Derrida, Evans, Hakim and Pasquier's \cite{Derrida_1993} matrix product ansatz) in conjunction with Corwin and Shen's \cite{CS} proof that open ASEP converges to open KPZ under weakly asymmetric scaling.

\end{abstract}

\maketitle






\section{Introduction}\label{sec:intro}

The  {\em open Kardar-Parisi-Zhang} (KPZ) equation models stochastic interface\\ growth on $[0,1]$ subject to inhomogeneous Neumann boundary conditions at $0$ and $1$. The equation is written as
\begin{align}\label{eq_KPZ_informal}
\partial_T H(T,X) = \tfrac{1}{2}\partial_X^2 H(T,X) + \tfrac{1}{2}\big(\partial_X H(T,X)\big)^2 +  \xi(T,X),
\end{align}
where $\xi$ is space-time white noise and for all $T>0$ we impose the boundary conditions
\begin{equation}\label{eq_KPZ_bcs}
\partial_X H(T,X)\big\vert_{X=0} = u,\qquad \partial_X
H(T,X)\big\vert_{X=1} = -v, \quad u, v\in \mathbb R.
\end{equation}
This requires a careful definition that we provide here, following \cite[Definition 2.5]{CS}.

\medskip
Let $C\big([0,\infty),C([0,1])\big)$ denote the space of continuous functions from $[0,\infty)\to C([0,1])$ where $C([0,1])$ is the space of continuous function from $[0,1]\to\R$. Let $(\Omega, \mathcal{F},\P)$ denote a probability space which supports a space-time white noise $\xi$ and a random almost-surely strictly positive function $Z_0$ taking values in $C([0,1])$ and satisfying $\sup_{X\in [0,1]} \E[Z_0(X)^p]<\infty$ for all $p>0$. For $t\geq 0$, let $\mathcal{F}_{t}$ denote the filtration generated by $Z_0$ and $\big(\xi(s,x)\big)_{s\leq t, x\in [0,1]}$. Then the {\em mild solution} to the stochastic heat equation (SHE)  inhomogeneous Robin boundary conditions is a random function $Z\in C\big([0,\infty),C([0,1])\big)$ satisfying:
\begin{itemize}[leftmargin=*]
\item Initial data: $Z(0,\cdot) = Z_0(\cdot)$ almost surely (here and below $f(\cdot)$ denotes the function $\cdot\mapsto f(\cdot)$) as random functions in $C([0,1]).$
\item Measurability: $Z(t,\cdot)$ is measurable with respect to $\mathcal{F}_t$ for all $t\geq 0$.
\item Duhamel form of SHE: For all $T>0$ and $X\in [0,1]$
\begin{align}\label{eq:mildSHE}
Z(T,X)= \int_0^1 P_{u,v}(T,X,Y) Z_0(Y)dY + \int_0^1 \int_0^\infty P_{u,v}(T-S,X,Y)Z(S,Y) \xi(dS,dY)
\end{align}
where the integral against $\xi$ is in the sense of It\^o, and $P_{u,v}(T,X,Y)$ is the Gaussian heat kernel on $[0,1]$ with inhomogeneous Robin boundary conditions, i.e., for all $T>0$ and $ X,Y\in (0,1)$
$$
\partial_T P_{u,v}(T,X,Y) = \partial_X^2 P_{u,v}(T,X,Y),
$$
with $P_{u,v}(0,X,Y) = \delta_{X=Y}$ and, for all $T>0$ and $Y\in [0,1]$,
\begin{align}
\partial_X P_{u,v}(T,X,Y) \big\vert_{X=0} &= (u-\tfrac{1}{2}) P_{u,v}(T,0,Y),\\
\partial_X P_{u,v}(T,X,Y) \big\vert_{X=1} &= -(v-\tfrac{1}{2}) P_{u,v}(T,1,Y).
\end{align}
\end{itemize}
The existence, uniqueness and strict positivity (i.e., provided that $Z_0$ is almost surely strictly positive then almost surely $\big(Z(t,x)\big)_{t\geq 0,x\in [0,1]}$ is likewise strictly positive) for the solution of the SHE are proved in \cite[Proposition 2.7]{CS} and \cite[Proposition 4.2]{Parekh}.

The  Hopf-Cole solution to the open Kardar-Parisi-Zhang with inhomogeneous Neumann boundary conditions parameterized by  $u,v\in \R$ and initial data $H_0=\log Z_0 \in C([0,1])$ is then the random function $H\in C\big([0,\infty),C([0,1])\big)$ defined on the same probability space as above by the equality
\begin{equation}\label{eq:KPZdefn}
H(t,x):= \log Z(t,x),\quad \forall t\geq 0, x\in [0,1].
\end{equation}
Owing to the strict positivity, this logarithm is well-defined.

\medskip
Informally one writes the SHE as the solution to the following stochastic PDE
\begin{equation}\label{eq_SHE_informal}
\partial_T Z(T,X) = \tfrac{1}{2}\partial_X^2 Z(T,X)+  \xi(T,X) Z(T,X)
\end{equation}
for $T\geq 0$ and $X\in [0,1]$ with boundary conditions that for all $T>0$,
\begin{equation}\label{eq_SHE_bcs}
\partial_X Z(T,X)\big\vert_{X=0} =(u-\tfrac{1}{2})  Z(T,0),\quad \partial_X Z(T,X)\big\vert_{X=1} = -(v-\tfrac{1}{2}) Z(T,0).
\end{equation}
Justifying the above Hopf-Cole notion of solution to the KPZ equation has a long history going back in the full-line case to \cite{BC95}. For the above open KPZ equation, \cite{GH19} uses regularity structures to show that this Hopf-Cole solution arises when one smoothes the noise $\xi$ (in which case all equations make classical sense) and then renormalizes the solution as the smoothing is removed. Note that
in going from the SHE \ref{eq_SHE_bcs} to KPZ equation boundary condition \ref{eq_KPZ_bcs} we have removed a factor of $1/2$. This is simply a convention used to match the parameterization of the KPZ boundary conditions present in \cite{GH19} and \cite{AHL_2020__3__87_0}.

\medskip

Our aim in this work is to provide a characterization of stationary
solutions to the above open KPZ equation. The solution to the open KPZ equation is a Markov process in time with state space given by $C([0,1])$. This process does not have stationary probability measures in the usual sense since there is an overall drift and diffusion of the height function (in a similar spirit to how the SSRW does not have a stationary probability measure). However, as we will show, the open KPZ increment Markov process $\big(H(T,X)-H(T,0))_{T\geq 0, X\in [0,1]}$ does have stationary probability measures. Precisely, we say that a probability measure $\mu_{u,v}$ on $C([0,1])$ is {\em stationary} for the open KPZ increment process if the following holds: For all time $T\geq 0$, the law of $\big(H(T,X)-H(T,0)\big)_{X\in [0,1]}$ equals $\mu_{u,v}$ where $H(T,X)$  is the Hopf-Cole solution to the open Kardar-Parisi-Zhang with inhomogeneous Neumann boundary conditions parameterized by  $u,v\in \R$ and initial data $H_0\in C([0,1])$ whose law is $\mu_{u,v}$. Rather than working directly with the stationary measure $\mu_{u,v}$, we will often find it easier to think of a random function $H_{u,v}\in C([0,1])$ whose law is $\mu_{u,v}$, e.g. the canonical process on the probability space $(C([0,1]),\mathcal{F},\mu_{u,v})$ with $\mathcal{F}$ the Borel sigma-algebra for $C([0,1])$.

Theorem \ref{thm_main} provides the first construction of stationary probability
measures $\mu_{u, v}$ for the open KPZ increment process for all choices of $u$ and $v$. For $u+v\geq 0$, we completely characterize $\mu_{u,v}$ via a duality --- its multi-point Laplace transform is explicitly given in terms of a Markov process that we call the {\em continuous dual Hahn process}.
A simple case of these formulas shows that for $u,v>0$ and $c\in (0,2u)$,
\begin{equation}\label{eq:Hincform}
\E\left[e^{-cH_{u,v}(1)}\right] = e^{c^2/4}\,\cdot\,\dfrac{\int\limits_0^{\infty} e^{-r^2}\,\cdot \, \frac{\big|\Gamma(\frac{c}{2}+u+ir)\Gamma(-\frac{c}{2}+v+ir)\big|^{2}}{\big|\Gamma(2ir)\big|^2} dr}{\int\limits_0^{\infty} e^{-r^2}\,\cdot \,\frac{\big|\Gamma(u+ir)\Gamma(v+ir)\big|^{2}}{\big|\Gamma(2ir)\big|^2} dr}.
\end{equation}
%
The notation on the left-hand side needs a bit of explanation. As noted above, we are using $H_{u,v}(X;\omega)$ to denote the canonical process associated with the probability space $(C([0,1]),\mathcal{F},\mu_{u,v})$. The expectation $\E$ simply denotes integrating against the measure $\mu_{u,v}$. In other words it could be written as $\int_{C([0,1])} e^{-c \omega(1)}d\mu_{u,v}(\omega)$. However, as is often the case in working with random variables versus their measures, we find it more clear to simply think of $H_{u,v}$ as a random function with law $\mu_{u,v}$. In \eqref{eq:Hincform}, $H_{u,v}(1)$ records the net height change across the interval $[0,1]$. For $u,v>0$ and  $c\in (0,2u)$ the integral on the right-hand side involves a continuous integrand. In the case where either $u$ or $v$ are negative, the formula has an extension (that follows from our main result, Theorem \ref{thm_main}, below) involving a continuous integrand plus a sum of discrete atoms.

The Laplace transform formulas for $\mu_{u,v}$ were inverted after the first version of this paper was posted. In the mathematics literature this came in  work of \cite{BKWW} while in the physics literature it came in work of \cite{BLD}. The inversions provide a satisfying probabilistic description for the stationary
measures: $\mu_{u,v}$ is equal to the distribution of $2^{-1/2} W + Y$ where $W$ and $Y$ are independent stochastic processes that we now briefly describe. $W\in C([0,1])$ is a standard Brownian motion. $Y\in C([0,1])$ is given by a reweighing of a Brownian motion of variance $1/2$ as follows.  Write the law of $Y$ as $\mathbb{P}_{Y}$ and the law of Brownian motion with variance $1/2$ as $\mathbb{P}_{BM}$. Then, the Radon-Nikodym derivative $\frac{d \mathbb{P}_{Y}}{d \mathbb{P}_{BM}}(\beta)$ for $\beta\in C([0,1])$ is proportional to
\begin{equation}
e^{-2 v \beta(1)}\left(\int_0^1 e^{-2\beta(t)}dt\right)^{-u-v}.
\end{equation}
This description has been proven rigorously in \cite[Proposition 1.7]{BKWW} provided $u+v\geq 0$ and $\min(u,v)>-1$, and is conjectured in \cite{BLD} to hold for all values of $u$ and $v$. For $u+v=0$, $Y$ reduces to a Brownian motion with drift and one sees that $2^{-1/2} W + Y$ has the law of standard Brownian motion with drift $u=-v$ (i.e., the law of the random function on $[0,1]$ given by $X\mapsto B(X)+uX$ where $B$ is a standard variance 1 Brownian motion).

Let us also note one further development since this paper was originally posted. For the half-space KPZ equation, \cite{BC22} constructed what is conjectured to be the full set of stationary measures. The approach taken therein is quite different than here and proceeds through studying the half-space log-gamma polymer model. Interestingly, the above sort of structure for the stationary measure (as a reweighing of simple random-walk type objects) can be seen directly already at the level of the log-gamma polymer. As such, it would be interesting to find a more direct proof of the above open KPZ stationary measure description in which this structure is already apparent at the level of a discretization of the process.

\medskip

The aim of the rest of this introduction is to state our  main result, Theorem \ref{thm_main}. This requires introducing two other Markov processes --- the open ASEP on an interval and the continuous dual Hahn process. We proceed with those first.

\subsection{The open ASEP}\label{sec:intrasep}
Fix six parameters $q\in [0,1)$, $\alpha, \beta>0$, $\gamma, \delta\geq 0$, and $N\in \Z_{\geq 1}$.  Open ASEP is a continuous-time Markov process taking values in the state space $\{0,1\}^{\llbracket 1,N\rrbracket}$. The state at time $t$ is denoted by $\tau(t)=\big(\tau_1(t),\ldots, \tau_N(t)\big)$;  sites $x\in \{1,\ldots N\}$ where $\tau_x(t)=1$ are said to be {\em occupied} by a particle, and those where $\tau_x(t)=0$ are {\em unoccupied}. The process is defined via the rates of its transitions as follows:
Particles jump left or right from occupied sites to unoccupied sites within $\llbracket 1,N\rrbracket$ at rate $q$ or $p=1$, respectively; at the left boundary, sites become occupied (if presently unoccupied) at site $1$ at rate $\alpha$ and become unoccupied (if presently occupied) at rate $\gamma$; at the right boundary, particles become occupied (if unoccupied) at site $N$  at rate $\delta$ and become unoccupied (if occupied) at rate $\beta$. All moves are from independent exponential clocks. As it is easy to write down the generator of this process from the above description we do not labor this point (we also do not make use of this).

The open ASEP has a unique stationary probability measure $\pi^{\mathrm{ASEP}}_N (\tau)$, with the dependence on the other parameters $q, \alpha, \beta, \gamma, \delta$ implicit. In other words, $\pi^{\mathrm{ASEP}}_N$ uniquely satisfies $\mathcal{L} \pi^{\mathrm{ASEP}}_N=0 $ where $\mathcal{L}$ is the generator of open ASEP. Note that in this paper we only use the term {\em stationary} to refer to this sort of temporal statistical stationarity, not any sort of spatial shift-invariance (which anyway does not make much sense in this context).
We will denote the expectation of a function $f:\{0,1\}^{\llbracket 1,N\rrbracket}\to \R$ under $\pi_{N}(\tau)$ by
\begin{equation}\label{eq:langrang}
\big\langle f\big\rangle_N:= \!\!\!\!\sum_{\tau\in \{0,1\}^{\llbracket 1,N\rrbracket}} f(\tau)\cdot \pi^{\mathrm{ASEP}}_N(\tau).
\end{equation}
This is in accordance with notation used in much of the physics and mathematics literature around this model.
For $\tau$ (defined on some probability space) distributed according to $\pi^{\mathrm{ASEP}}_N$ we write
\begin{equation}\label{eq:hNdef}
h_N(x)=\sum_{i=1}^{x} (2\tau_i-1)
\end{equation}
for the associated random height function. Extend this to a continuous function on $[0,N]$ by linear interpolation.

From the occupation variable process $\tau(t)$ defined above, we define the ASEP height function Markov process $h_N(t,x)$. The subscript indicates the lattice size $N$, and the time $t$ and spatial location $x$ are both arguments. The dependence of $h_N(t,x)$ on the other parameters $q,\alpha,\beta,\gamma,\delta$ will be generally suppressed.
The height function is defined for $t\geq 0$ and $x\in \llbracket 0, N\rrbracket$ as
\begin{equation}\label{eq_hN}
h_N(t,x) := h_N(t,0) + \sum_{i=1}^{x} (2\tau_i(t)-1),\qquad  h_N(t,0):=-2\mathcal{N}_N(t)
\end{equation}
where the net current $\mathcal{N}_N(t)$ equals the number of
particles to enter into site $1$ from the left reservoir minus the
number of particles to exit from site $1$ into the left reservoir, up
to time $t$. The height function definition is extended to $x\in
[0,N]$ by linear interpolation---see  Figure \ref{Fig:openASEP}. Just
as for the open KPZ equation, the open ASEP height function process
will not have a stationary measure. However, its the increment process
(which is essentially just the $\tau$ process) will: If $h_N(x)$ is
randomly chosen as in \eqref{eq:hNdef}, then starting the ASEP height
function process from that initial data we immediately get that the law of $h_N(t,\cdot)-h_N(t,0)$ as a function of $\cdot$ will be $t$-independent.

\begin{figure}[h]
\centering
\scalebox{0.6}{\includegraphics{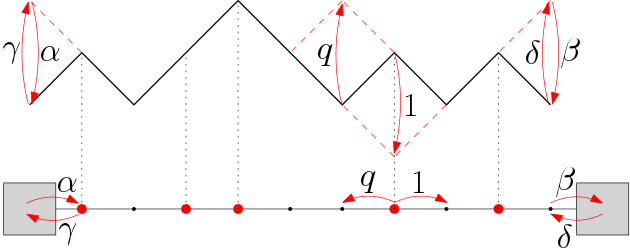}}
 \captionsetup{width=\linewidth}
\caption{Open ASEP with system size $N=10$ and its height function
  $h_N(t, x)$. Red arrows indicate some possible moves with rates labeled.}
\label{Fig:openASEP}
\end{figure}

%

It is convenient to work with a particular parameterization for open
ASEP. Consider  the functions
\begin{equation}\label{eqnkappa}
\kappa^{\pm}(q,x,y):=\dfrac{1}{2
  x}\left(1-q-x+y\pm\sqrt{(1-q-x+y)^2+4 xy} \right).
\end{equation}
Let us define $(q,A,B,C,D)$ in terms of $(q,\alpha,\beta,\gamma,\delta)$ as
\begin{equation}\label{eq_ABCD}
A=\kappa^+(q,\beta,\delta), \quad B=\kappa^-(q,\beta,\delta),\quad C=\kappa^+(q,\alpha,\gamma) ,\quad D=\kappa^-(q,\alpha,\gamma)
\end{equation}
Given $q$, \eqref{eq_ABCD} provides a bijection between $\big\{(\alpha,\beta,\gamma,\delta):  \alpha, \beta>0,\, \gamma, \delta\geq 0\big\}$ and $\big\{(A,B,C,D): A, C>0,\, B,D\in (-1,0]\big\}$.

In order to make contact with the open KPZ equation we have to assume that the rates vary with $N$ and two parameters $u,v\in \R$ as follows.
\begin{assumption}\label{assumption1}
Let
\begin{equation}\label{eq:assumerates}
q=\exp\Big(-\frac{2}{\sqrt{N}}\Big), \quad A=q^{v},\quad B=-q,\quad C=q^{u},\quad D=-q.
\end{equation}
Solving for $\alpha,\beta,\gamma$ and $\delta$ in terms of $q,u$ and $v$,
\begin{equation}\label{eq:explicitparam}
\alpha = \frac{1}{1+q^u},\qquad \beta = \frac{1}{1+q^v},\qquad \gamma = \frac{q^{u+1}}{1+q^u},\qquad \delta = \frac{q^{v+1}}{1+q^v}.
\end{equation}
Let $h_N(X)$ be a random height function defined as in \eqref{eq:hNdef} whose law is the push-forward of the ASEP stationary measure  $\pi^{\mathrm{ASEP}}_N$. Define a diffusive scaling of $h_N(X)$, keeping track of $N$ in the super-script and keeping track of the parameters $u$ and $v$ in the subscript: For $X\in [0,1]\cap \Z/N$ let
\begin{equation}\label{eqn_hscale}
H^{(N)}_{u,v}(X) := N^{-1/2} h_N(NX)
\end{equation}
and then linear interpolate to all $X\in [0,1]$. Finally, let $\mu^{(N)}_{u,v}$ denote the law of $H^{(N)}_{u,v}$, i.e., the stationary measure itself.
\end{assumption}

The scaling of $q$ in \eqref{eq:assumerates} in conjunction with the
height function scaling in \eqref{eqn_hscale} is called {\em weak
  asymmetry scaling}. The conditions on $A,B,C$ and $D$ in
\eqref{eq:assumerates} correspond to $\alpha,\beta,\gamma$ and
$\delta$ which satisfy {\em Liggett's condition} \cite{Liggett75, Liggett77} that $\alpha +\gamma/q =1$ and $\beta+\delta/q=1$. Moreover, from \eqref{eq:explicitparam} we see that $\alpha,\beta,\gamma$ and $\delta$ satisfy {\em triple point scaling}, which means that as $N\to \infty$,
\begin{equation}
\label{eq_trip_exp}
\begin{aligned}
&\alpha = \frac{1}{2} + \frac{u}{2} N^{-1/2} + o(N^{-1/2}), &\beta = \frac{1}{2} + \frac{v}{2} N^{-1/2} +o(N^{-1/2}),\\
&\gamma = \frac{1}{2} - \frac{u}{2} N^{-1/2} + o(N^{-1/2}), &\delta = \frac{1}{2} - \frac{v}{2} N^{-1/2} + o(N^{-1/2}).
\end{aligned}
\end{equation}

\subsection{Continuous dual Hahn process}\label{sec:introCDH}
The open KPZ stationary measures that we construct can be characterized via a duality with another stochastic process which we call the {\em continuous dual Hahn process} (denoted below by $\T_s$). This is a special limit of the Askey-Wilson processes constructed by Bryc and Weso{\l}owski \cite{BW1}; see Section \ref{sec:AW}. The continuous dual Hahn process depends on two parameters $u,v\in\R$ which are assumed throughout to satisfy the relation $u+v>0$. The definition of this process is simplest (and also appears in \cite{Bryc}) when $u,v>0$ and thus for the sake of this introduction we will only define it in that case here. Section \ref{sec:tangentAWP} addresses the considerably more complicated general case of $u+v>0$.


We will only define the continuous dual Hahn process $\T_s$ for $s\in [0,\Cuv)$  (see subsequent work of \cite{BrycCDH} for an extension to all of $\mathbb{R}$) where
\begin{equation}\label{eq:Cuv}
\Cuv :=
\begin{cases}
2&\textrm{if }u\leq 0\textrm{ or }u\geq 1,\\
2u&\textrm{if }u\in(0,1).
\end{cases}
\end{equation}
Formulas become more involved outside $[0,\Cuv)$ and will not be needed.

For $u,v>0$ and $s\in [0,\Cuv)$ define a measure $\p_s$ with density given by
\begin{equation}
\p_s(r):=\dfrac{(v+u)(v+u+1)}{8\pi}\cdot
      \frac{\Big|\Gamma\Big(\dfrac{s}{2}+v+\iu\dfrac{\sqrt{r}}{2}\Big)\cdot\Gamma\Big(-\dfrac{s}{2}+u+\iu\dfrac{\sqrt{r}}{2}\Big)\Big|^2}{\sqrt{r}
        \cdot \big|\Gamma(\iu \sqrt{r})\big|^2}{\bf 1}_{r>0}.
\end{equation}
This family of infinite measures will turn out to be preserve by our Markov process and necessary in the statement of our main results.

Following \cite{doi:10.1137/0511064}, we define the orthogonality probability measure for the continuous dual Hahn orthogonal polynomials as follows: For $a\in \R$ and $b=\bar c\in \C\setminus \R$ with $\textup{Re}(b)=\textup{Re}(c)>0$ let
  \begin{equation}
\CDHtransition(x; a, b, c):=\dfrac{1}{8\pi}\,\cdot\,\dfrac{\bigg\rvert \Gamma\left(a+\iu  \dfrac{\sqrt x}{2}\right)\cdot\Gamma\left(b+\iu
  \dfrac{\sqrt x}{2}\right)\cdot\Gamma\left(c+\iu  \dfrac{\sqrt x}{2}\right)\bigg \rvert^2}{\Gamma(a+b)\cdot \Gamma(a+c)\cdot\Gamma (b+c)\cdot \sqrt x\cdot\left\rvert \Gamma\left(\iu \sqrt x\right)\right\rvert^2}{\bf 1}_{x> 0}.
  \end{equation}
For $s,t\in [0,\Cuv)$ with $s<t$ and $m,r\in (0,\infty)$, define a measure $\p_{s,t}(m,\cdot)$ with density in $r$ by
\begin{equation}
\p_{s, t}(m, r):=\CDHtransition\left(r; u-\dfrac{t}{2}, \dfrac{t-s}{2}+\iu\dfrac{\sqrt m}{2}, \dfrac{t-s}{2}-\iu\dfrac{\sqrt m}{2}\right).
\end{equation}

The continuous dual Hahn process (with $u,v>0$) $\{\T_s\}_{s\in [0,\Cuv)}$ is the Markov process
with state-space $\R_{>0}$ and transition probabilities given by $\p_{s,t}$. Lemma \ref{lem:consistancy} verifies that the $\p_{s,t}$ satisfy the Chapman-Kolmogorov equation. That lemma also verifies that if $\T_0$ is started according to the infinite distribution $\p_0$ then the infinite distribution of $\T_s$ is $\p_s$ for all $s\in [0,\Cuv)$.

\subsection{Statement of the main result}
For $d\in \Z_{\geq 1}$ we will assume  that
\begin{align}
\vec{X} &= (X_0,\ldots, X_{d+1})\textrm{ where }0=X_0<X_1<\cdots<X_d\leq X_{d+1}=1,\\
\vec{c} &= (c_1,\ldots, c_d)\textrm{ where }c_1,\ldots, c_d>0,\label{eq:Xcs}\\
\vec{s} &= (s_1> \cdots>s_{d+1})\textrm{ where }s_k = c_k+\cdots +c_d \textrm{ and }s_{d+1} = 0.
\end{align}

We are now prepared to state our main theorem. Part
\eqref{it:tightness} shows the existence of stationary measures for the open KPZ increment process as limits of stationary measures for the WASEP increment process (i.e., with parameters scaled as in \eqref{eq:assumerates}). Part  \eqref{it:coupling} records a coupling between these stationary measures in which height differences are stochastically ordered relative to a certain ordering of the boundary parameters. Part (\ref{it:brownian}) records the simple Brownian case for the stationary measure which occurs when $u+v=0$. Part \eqref{it:duality}
records a duality which comes from switching the roles of $u$ and $v$. Part \eqref{it:explicit} provides a unique characterization of
the WASEP derived stationary measures for the open KPZ increment process provided that $u+v>0$ (the $u+v=0$ case was already addressed in part \eqref{it:brownian}). This characterization is given by a remarkable duality formula which relates the Laplace transform of the KPZ stationary measure to  the continuous dual Hahn process introduced above and in greater generality in Section \ref{sec:tangentAWP}. This shows that provided $u+v>0$, there is a unique limit point in part \eqref{it:tightness}.

Section \ref{sec:discussion} gives an outline of the key ideas and logic that go into the proof of these results.
%
%

\begin{theorem}\label{thm_main}
Assume that open ASEP satisfies Assumption \ref{assumption1} for all $N$.
\begin{enumerate}[leftmargin=*]
  \item\label{it:tightness} {\bf Tightness and construction of
      WASEP-stationary measures:}  For any $u,v\in \R$, the
    $N$-indexed sequence of laws of $\mu^{(N)}_{u,v}$ (recall from Assumption \ref{assumption1}) are tight
    in the space of measures on $C([0,1])$ and all
    subsequential limits $\mu_{u,v}$ are stationary measures for the open KPZ increment process and are almost surely H\"older $\alpha$ for all $\alpha<1/2$. Call any such subsequential limit a {\em WASEP-stationary measure} for the open KPZ increment process.
  \item\label{it:coupling} {\bf Coupling:} For any $M\in \Z_{\geq 2}$, $u_1\leq \cdots
    \leq u_M$ and $v_1\geq \cdots \geq v_M$, assume that $\left\{\mu_{u_i,v_i}\right\}_{i=1}^{M}$ are WASEP-stationary measures that arise in part \eqref{it:tightness} along the same subsequence as $N\to \infty$. Then there exists a probability space which supports $M$ random functions   $\left\{H_{u_i,v_i}\right\}_{i=1}^{M}$ in $C([0,1])$ such that marginally each $H_{u_i,v_i}$ has distribution $\mu_{u_i,v_i}$ and such that for all $0\leq X\leq X'\leq 1$ and $1\leq i\leq j\leq M$,
    \begin{equation}\label{eqnHcoupling}
    H_{u_i,v_i}(X')-H_{u_i,v_i}(X)\leq H_{u_j,v_j}(X')-H_{u_j,v_j}(X).
    \end{equation}
    \item\label{it:brownian} {\bf Brownian case:} For $u+v=0$, there is a unique WASEP-stationary measure $\mu_{u,-u}$
    for the open KPZ increment process that coincides with the law of standard Brownian motion of drift $u=-v$.
    \item\label{it:duality} {\bf Duality:}  For any $u,v\in \R$ let $\mu_{u,v}$ and $\mu_{v,u}$ be a pair of
      WASEP-stationary measures for the open KPZ equation which arise
      in part \eqref{it:tightness} along the same subsequence as $N\to \infty$. Then the corresponding stochastic processes $X\mapsto H_{u,v}(X)$ and $X\mapsto H_{v,u}(1-X)-H_{v,u}(1)$ have the same law in $C([0,1])$.
    \item\label{it:explicit}{\bf Explicit Laplace
        transform characterization:} For $u,v\in \R$ with $u+v>0$, the measures $\mu^{(N)}_{u,v}$ converge to a unique limit  $\mu_{u,v}$  as $N\to \infty$ (hence there is a unique WASEP-stationary measure). This limit $\mu_{u,v}$ is supported on $C([0,1])$ and is determined by its multi-point  Laplace transform formula:
        For any $d\in \Z_{\geq 1}$, $\vec{X},\vec{c}$ and $\vec{s}$ as in \eqref{eq:Xcs}, provided $s_1<\Cuv$, see \eqref{eq:Cuv},
        \begin{align}\label{eq_mainthmformphi}
    \displaystyle\E\Big[e^{-\sum\limits_{k=1}^{d} c_k H_{u,v}(X_k)}\Big] = \frac{\displaystyle\E\Big[e^{\frac{1}{4}\sum\limits_{k=1}^{d+1} (s_k^2-\T_{s_k})(X_k-X_{k-1})}\Big]}{\displaystyle\E\Big[e^{-\frac{1}{4}\T_0}\Big]} =:\phi_{u,v}(\vec{c},\vec{X})\qquad
\end{align}
where on the left-hand side $H_{u,v}$ has law $\mu_{u,v}$ and on the right-hand side where $\T_s$ is the continuous dual Hahn process started with $\T_0$ according to the infinite distribution $\p_0$ (see Section \ref{sec:introCDH} and \ref{sec:tangentAWP}). In particular, this implies that $H^{(N)}_{u,v} \Rightarrow H_{u,v}$ as stochastic processes in $C([0,1])$.

\end{enumerate}
\end{theorem}

As remarked earlier, the Laplace transform formula can be
inverted. This was achieved after the posting of this paper by
\cite{BKWW} and \cite{BLD}, see also \cite{BKuz} where in particular the
relationship between the results in \cite{BKWW} and \cite{BLD} is discussed. The inversion relies on the spectral decomposition of the heat kernel with an exponential potential (known as Liouville quantum mechanics in physics). The condition $u+v>0$ that we assume corresponds to the {\em fan region} for open ASEP/KPZ. While we do not currently have formulas for $u+v<0$ (the {\em shock region}), the description given in \cite{BLD} offers a plausible conjecture for the stationary measure with those parameters. With our methods, it should also be possible to access the stationary measure for the open KPZ increment process on an interval $[0,M]$ for any $M>0$. Taking the $M\to \infty$ limit should make contact with the KPZ equation in a half-space. See \cite{BLD, BKLD20} for some discussion on this limit procedure and \cite{BC22} for an alternative approach to construct the half-space KPZ increment process stationary measures.

Another important remark is that while our tightness result implies existence of  stationary measures for the open KPZ  increment process, it does not imply uniqueness. Even for parameters where we prove that WASEP-stationary measures are unique (i.e. uniqueness of the limit points of the scaled open WASEP stationary measures), we do not rule out the existence of other stationary measures for the open KPZ increment process with the same boundary parameters. However, based on related results in the literature, we conjecture that the stationary measures we have constructed are unique for all choices of $u$ and $v$.

\begin{conjecture}\label{conj_unique}
Fix any $u,v\in \R$. Consider any two random functions $H_0,\tilde H_0\in C([0,1])$, supported on the same probability space. On this probability space, define a space-time white noise $\xi$ with the time variable ranging over $\R$ and let $H(T,X;-T_0)$ and $\tilde H(T,X;-T_0)$ denote the solutions to the open KPZ equation started at time $-T_0$ with initial data $H_0$ and $\tilde H_0$, respectively. Then the following {\em one force one solution principle} holds: For any $S<S'$ the random functions $(T,X)\mapsto H(T,X)-H(T,0)$ and $(T,X)\mapsto \tilde H(T,X)-\tilde H(T,0)$ in $C([S,S'],C([0,1]))$ converge almost surely to the same limit as $T_0\to \infty$. In particular, for any fixed $u,v\in R$ there exists a unique stationary measure for the open KPZ increment process.
\end{conjecture}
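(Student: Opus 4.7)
The plan is to establish the one-force-one-solution principle by passing to the Hopf--Cole / SHE level, reformulating the problem as convergence of a ratio of stochastic propagators, and then attempting to prove this convergence via a polymer-endpoint coalescence argument; uniqueness of the stationary measure will then follow as an immediate corollary.

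First, I would reformulate everything in terms of $Z=e^{H}$ and $\tilde Z=e^{\tilde H}$, so that $H(T,X;-T_0)-H(T,0;-T_0)=\log\bigl(Z(T,X;-T_0)/Z(T,0;-T_0)\bigr)$ (and likewise for $\tilde H$). Using the mild / chaos-expansion formulation from \cite{CS}, write
\begin{equation}
Z(T,X;-T_0)=\int_0^1 \mathcal{P}_{-T_0,T}(Y,X)\, Z_0(Y)\, dY,
\end{equation}
where $\mathcal{P}_{-T_0,T}(Y,X)$ is the a.s.\ positive random propagator for the SHE with inhomogeneous Robin boundary conditions built from the white noise $\xi$ on $[-T_0,T]\times[0,1]$. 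Then
\begin{equation}
\frac{Z(T,X;-T_0)}{Z(T,0;-T_0)}=\frac{\int_0^1 \mathcal{P}_{-T_0,T}(Y,X)\, Z_0(Y)\, dY}{\int_0^1 \mathcal{P}_{-T_0,T}(Y,0)\, Z_0(Y)\, dY},
\end{equation}
so the conjecture reduces to showing that, almost surely as $T_0\to\infty$, the ratio $\mathcal{P}_{-T_0,T}(Y,X)/\mathcal{P}_{-T_0,T}(Y,0)$ becomes asymptotically independent of $Y\in[0,1]$, locally uniformly in $(T,X)$. Given such $Y$-independence, the common limit automatically defines a time-stationary adapted random field solving open KPZ with $H(T,0)\equiv 0$, whose $T=0$ marginal is a stationary measure independent of the initial datum, yielding simultaneously the pullback convergence and uniqueness of the stationary measure.

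Second, I would attack the $Y$-independence via a coalescence / Busemann argument at the polymer level. Interpret the ratio above as a polymer measure on the starting point $Y$ weighted by $Z_0(Y)$, with quenched weights determined by the noise: for two starting points $Y,Y'$, I want the corresponding polymer trajectories (conditioned to end at $(X,T)$) to coalesce by some intermediate time $-T_0+s^\star$, with $s^\star$ an a.s.\ finite random variable. On the bounded interval, the Robin boundary conditions provide a reflecting-type mechanism that localizes paths, and the exponential decay of the Dirichlet heat kernel on $[0,1]$ gives effective mixing in reverse time. I would couple two such polymer measures via a regeneration / ``good noise pocket'' scheme in the spirit of Bakhtin--Khanin on the line and the periodic-boundary results of Rosati, iterated along a sequence of deeper and deeper time windows so that a Borel--Cantelli argument forces coalescence almost surely as $T_0\to\infty$. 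A natural alternative is to carry the coalescence out at the WASEP level: open ASEP is an irreducible finite-state Markov chain with a trivially unique invariant measure, and basic coupling yields coalescence of any two copies; one would then have to promote a uniform-in-$N$ coalescence time estimate through the weak asymmetry scaling of Assumption \ref{assumption1}, using the coupling results collected in Section \ref{sec_couplingoverall} and the convergence of WASEP to open KPZ established in Theorem \ref{thm_main}\eqref{it:tightness}.

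The hard part, and the reason this remains a conjecture, is precisely the coalescence / mixing step. The KPZ semigroup on $[0,1]$ with general Neumann parameters is not order-preserving in initial data (unlike the comparison in boundary parameters recorded in Theorem \ref{thm_main}\eqref{it:coupling}), so naive monotone sandwiching fails; one cannot simply bracket $H_0$ and $\tilde H_0$ between ordered reference profiles. Quantitative mixing-time estimates for open ASEP uniform in $u,v$, especially in the shock region $u+v<0$, are also not available in the literature, which obstructs the ASEP-approximation route. Thus the proposal splits into two genuinely difficult sub-problems: (i) a continuum coalescence statement for the stochastic heat kernel on $[0,1]$ with inhomogeneous Robin boundary, controlled uniformly in $T_0$; and (ii) if one prefers the discrete route, a quantitative mixing-time result for open ASEP stable under weak-asymmetry scaling. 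I view (i) as the more promising direction because it avoids the interchange of limits inherent in (ii), but both would constitute substantial progress and, in conjunction with the explicit stationary-measure characterization in Theorem \ref{thm_main}\eqref{it:explicit}, would settle the uniqueness question for open KPZ in the fan region.
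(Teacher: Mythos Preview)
The paper does not prove this statement: it is explicitly labeled a \emph{conjecture}, and the surrounding discussion (Section~\ref{sec:discussion}) only collects evidence for it from the literature (periodic KPZ \cite{hairer2018,gubinelli2018infinitesimal,rosati2019synchronization}, compact-forcing Burgers \cite{EKMS}, mixing-time heuristics for open ASEP \cite{PhysRevLett.95.240601,gantert2020mixing}) without attempting a proof. So there is no ``paper's own proof'' to compare against.

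Your proposal is therefore not wrong so much as it is a well-organized research outline rather than a proof, and you say as much yourself. The reduction to $Y$-independence of the propagator ratio is sound and is exactly the continuum analogue of the polymer/Busemann picture; your identification of the two sub-problems matches the obstacles the paper itself flags. A few points worth sharpening: (a) the paper notes that \cite{gantert2020mixing} only achieves an $N^3\log N$ upper bound on the mixing time in the maximal current phase, far from the conjectured $N^{3/2}$, so the discrete route~(ii) is likely harder than you suggest---it is not merely a matter of making existing estimates uniform in $N$; (b) the lack of order-preservation in initial data that you identify as the obstruction to sandwiching is indeed the crux, and neither the paper nor the cited literature offers a workaround for it in the open geometry; (c) your final sentence overstates what would follow: Theorem~\ref{thm_main}\eqref{it:explicit} already gives uniqueness of the \emph{WASEP-stationary} measure in the fan region, so settling~(i) would add uniqueness among \emph{all} stationary measures, not just in the fan region but for all $u,v$---that is the content of the conjecture.
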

For the KPZ increment process with periodic boundary conditions \cite{hairer2018} showed uniqueness of the Brownian bridge stationary measure while \cite{gubinelli2018infinitesimal} constructed the infinitesimal generator and estimated its spectral gap, establishing $L^2$ exponential ergodicity. Further, \cite{rosati2019synchronization} demonstrated the one force one solution principle. Let us also mention related work of \cite{EKMS,BCKFullLine,alex2019stationary} for the stochastic Burgers equation, and work on the mixing time of open ASEP \cite{PhysRevLett.95.240601,gantert2020mixing,Schmid,LabbeLacoin,bufetov2020cutoff}.

Finally, we remark that Theorem \ref{thm_main} \eqref{it:coupling} and
\eqref{it:brownian} combine to show that increments of $H_{u,v}\in C([0,1])$ with law $\mu_{u,v}$ (for any WASEP-stationary measure coming for Theorem \ref{thm_main} \eqref{it:tightness}) are stochastically sandwiched between Brownian motions of different drifts. Take $M=3$ and let $u_1=-v, u_2=u, u_3=u$ and $v_1=-u, v_2=v, v_3=v$. Then Theorem \ref{thm_main} \eqref{it:brownian} implies that along every subsequence of $N\to \infty$, $\mu^{(N)}_{u_1,v_1}$ converges to  $\mu_{u_1,v_1}$ which is the law of a standard Brownian motion $B_{-v}$ of drift $-v$; similarly $\mu^{(N)}_{u_3,v_3}$ converges $\mu_{u_3,v_3}$ which is the law of a standard Brownian motion $B_u$ of drift $u$. There is a subsequence along which $\mu^{(N)}_{u_2,v_2}$ converges to the limit $\mu_{u_2,v_2}$. Thus,  Theorem \ref{thm_main} \eqref{it:coupling}  implies that for all $0\leq X\leq X'\leq 1$,
\begin{equation}\label{eq:browniancomparison}
B_{-v}(X')-B_{-v}(X) \leq H_{u,v}(X')-H_{u,v}(X) \leq B_{u}(X')-B_{u}(X)
\end{equation}
with $B_{-v},H_{u,v},B_{u}$ are all random $C([0,1])$ functions defined on a common probability space with marginals given by $\mu_{u_1,v_1}$, $\mu_{u_2,v_2}$, and $\mu_{u_3,v_3}$, respectively.
The Brownian case in Theorem \ref{thm_main} follows easily from the known fact that Bernoulli product measure is the stationary measure for open ASEP when $u+v=0$. This is the only case when the stationary measure for open ASEP is simple and of product form. For general parameters it is quite complicated. For ASEP on the full line or torus, the stationary measure is product Bernoulli so for full line \cite{FQ14} and periodic \cite{hairer2018} KPZ increment processes the stationary measure is Brownian (two-sided Brownian motion with general drift, or Brownian bridge with any fixed height shift).

\subsubsection*{Outline}
Section \ref{sec:discussion} reviews the key ideas in the proof of Theorem \ref{thm_main}.
The proofs of Theorem \ref{thm_main} \eqref{it:tightness}-\eqref{it:duality} are in Section \ref{sec_thm123} and rely on Section \ref{sec_intro_WASEP} (the weak asymmetry scaling under which open ASEP height function process converges to the open KPZ equation height function process) and Section \ref{sec_couplingoverall} (coupling results for open ASEP).
The proof of Theorem \ref{thm_main} \eqref{it:explicit} is given in Section \ref{sec:proof5}. The starting point in this proof (given in Section \ref{sec:AW}) is Corollary \ref{cor:ASEPgen} (see also Proposition \ref{BrycWangFluctuations}  and \cite[Theorem 1]{BW1}) which relates the generating function for the open ASEP stationary measure to the Askey-Wilson process. Section \ref{sec:tangentAWP} defines the continuous dual Hahn process which arises as a special limit of the Askey-Wilson process. The main calculation in the proof of Theorem \ref{thm_main} \eqref{it:explicit} is Proposition \ref{prop_ASEP_gen_function} which computes the limit of the open ASEP stationary measure generating function. Combining this with some of the results in Theorem \ref{thm_main} \eqref{it:tightness}-\eqref{it:duality} we show weak convergence and that the limit of the open ASEP formula gives the open KPZ Laplace transform formula claimed in the theorem. The proof of this limit is given in Section \ref{sec:propASEPproof} and relies heavily on precise asymptotics for $q$-Pochhammer symbols. These asymptotics are stated as Proposition \ref{factorials} and proven in Section \ref{sec:factorials}.

\subsubsection*{Notation}
Lower versus upper case variables will refer to discrete versus continuous objects, respectively. For integers $a\leq b$, let $\llbracket a,b\rrbracket:= \{a,\ldots, b\}$ and  $\Z_{\geq a}=\Z\cap [a,\infty)$ (and likewise for $\Z$ replaced by $\R$ and $\geq$ replaced by $>$, $\leq $ or $<$).
We will use the standard notation for the Pochhammer and $q$-Pochhamer symbols: For $j\in \Z_{\geq 0}$ and $x\in \R$, define $[x]_j:=(x)(x+1)\cdots(x+j-1)$ with the convention that $[x]_0:=1$. For multiple arguments $x_1,\ldots x_n\in \R$, define $[x_1, \ldots, x_n]_j:=[x_1]_j\cdots [x_n]_j$. For $a, q\in \C$, with $|q|<1$, and $j\in \Z_{\geq 0}\cup\{\infty\}$, define $(a; q)_j:=(1-a)(1-a q)\cdots(1-a q^{j-1})$ and $(a_1, \dots, a_n; q)_j=(a_1;q)_j\cdots (a_j;q)_j$. We will often omit the dependence on $q$ and write $(a)_j$ or $(a_1,\dots,a_n)_j$. We also use the notation $\Gamma(x_1,\ldots,x_n):= \Gamma(x_1)\cdots \Gamma(x_n)$. We will denote $\iu := \sqrt{-1}$.


\section{Key ideas in proving Theorem \ref{thm_main}} \label{sec:discussion}
\subsection{Key ideas in proving Theorem \ref{thm_main} \eqref{it:tightness}-\eqref{it:duality}}
The open ASEP height function process converges to the open KPZ equation height function process under suitable weak asymmetry and triple point scaling. This result is basically contained in \cite{CS,Parekh} (see Section \ref{sec_intro_WASEP}). To apply it to the open ASEP height function stationary measure, we need to verify H\"older bounds on the exponential of $\hscale_{u,v}(X)$. These can be deduced from the following considerations.
When $u+v=0$, the open ASEP stationary measure is product Bernoulli. This implies tightness of the height function $\hscale_{u,-u}$ and H\"older bounds for it, and yields Theorem \ref{thm_main} \eqref{it:brownian}. To move to general $u,v$ we use the fact that here exists an attractive coupling between versions of open ASEP with different boundary rates (a finite $N$ version of Theorem \ref{thm_main} \eqref{it:coupling}). This coupling implies that the increments of the stationary height function are bounded above and below by random walk increments (by appealing to the $u+v=0$ result). This yields H\"older bounds for all $u$ and $v$. 
\subsection{Key ideas in proving Theorem \ref{thm_main} \eqref{it:explicit}}
The starting point for our work
is the {\em matrix product ansatz}, introduced by Derrida, Evans, Hakim and Pasquier \cite{Derrida_1993}, which describes the stationary measure in terms of certain non-commuting operator products.
Useful (infinite) matrix representations for these operators related
to Askey-Wilson polynomials \cite{AskeyWilson, KS2}.  Jacobi matrices were discovered by Uchiyama, Sasamoto and Wadati \cite{Uchiyama_2004}; based on a slightly more general matrix representation, Corteel and Williams \cite{corteel2011} developed a combinatorial description for the open ASEP stationary measure in terms of tableaux combinatorics. So far these formulas for the stationary measure have not been used for the type of asymptotics we need to perform in order to access the KPZ equation, though \cite{Uchiyama_2004} did perform other interesting asymptotics.

More recently, relying on the work of Uchiyama, Sasamoto and Wadati \cite{Uchiyama_2004}, Bryc and Weso{\l}owski \cite{BW1} discovered a
way to rewrite the Askey-Wilson Jacobi matrix solution to the matrix product ansatz in terms of the {\em Askey-Wilson processes}. These
Markov processes were introduced earlier in \cite{BW2} in relation to quadratic harnesses. Askey-Wilson polynomials are orthogonal martingale polynomials for these processes. The following remarkable identity is our starting point.

\begin{proposition}[Theorem 1 of \cite{BW1}]\label{BrycWangFluctuations} Let $ \big \langle \cdot  \big\rangle_N$
denote the expectation with respect to the stationary measure $\pi^{\mathrm{ASEP}}_N$ of open ASEP parameterized by $q$ and $(A,B,C,D)$ as in \eqref{eq_ABCD}.
Assume that $AC<1$. Then for $0 < t_1 \leq t_2 \leq\cdots\leq t_n$, the joint generating function of the stationary measure for open ASEP can be expressed as
\begin{equation}\label{eq:key_identity}
\bigg \langle\prod_{j=1}^N t_j^{\tau_j}  \bigg
\rangle _N=\dfrac{\E\left[\prod_{j=1}^N\left(1+t_j+2\sqrt t_j
      \Y_{t_j}\right) \right]}{2^N \E
  \left [\left (1+\Y_1 \right )^N\right ]},
\end{equation}
where $\{\Y_t\}_{t\geq 0}$ is the Askey-Wilson process with parameters $(A, B, C, D, q)$ defined in Section \ref{sec:AW}.
\end{proposition}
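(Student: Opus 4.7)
My plan is to combine the Derrida-Evans-Hakim-Pasquier (DEHP) matrix product ansatz recalled in Section \ref{sec_mpa} with a specific infinite-dimensional representation of its algebra, in which the one-parameter family of operators $tD+E$ has a tridiagonal form whose spectral decomposition against the distinguished vectors $|V\rangle,\langle W|$ is governed by the Askey-Wilson orthogonality measure with time-scaled parameters $(A\sqrt{t},B\sqrt{t},C/\sqrt{t},D/\sqrt{t})$.

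First, I would collapse the generating function via the matrix product ansatz. Since $\tau_j\in\{0,1\}$, summing $t_j^{\tau_j}(\tau_j D+(1-\tau_j)E)$ over $\tau_j$ replaces each site factor by $t_j D+E$, producing the ratio
\[
\bigg\langle\prod_{j=1}^N t_j^{\tau_j}\bigg\rangle_N = \frac{\langle W|\prod_{j=1}^N(t_j D+E)|V\rangle}{\langle W|(D+E)^N|V\rangle}.
\]
This reduces the claim to an identity of matrix elements for a non-commuting ordered product.

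Next, I would invoke the Uchiyama-Sasamoto-Wadati representation of the DEHP algebra on $\ell^2(\Z_{\geq 0})$, in which $D,E$ act as explicit Jacobi matrices built from the Askey-Wilson three-term recurrence with parameters $(A,B,C,D,q)$, and $\langle W|,|V\rangle$ are the explicit vectors determined by the boundary conditions $(\beta D-\delta E)|V\rangle=|V\rangle$ and $\langle W|(\alpha E-\gamma D)=\langle W|$. The key algebraic fact, to be verified by direct inspection of the tridiagonal entries, is that $tD+E$ factors as $\gamma(t)\bigl(1+t+2\sqrt{t}\,\mathbf{X}_t\bigr)$ for an explicit scalar $\gamma(t)$ and a tridiagonal operator $\mathbf{X}_t$ whose spectral measure against $|V\rangle,\langle W|$ is precisely the Askey-Wilson orthogonality measure with the rescaled parameters above. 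The hypothesis $AC<1$ ensures positivity of this measure and hence the existence of the Askey-Wilson process $\Y_t$. The $\gamma(t)$ and normalization factors cancel cleanly between numerator and denominator after accounting for the $2^N$ from the $t\equiv 1$ specialization.

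The final and most delicate task is to identify the non-commutative ordered operator product with the time-ordered Askey-Wilson expectation, i.e.,
\[
\frac{\langle W|\prod_{j=1}^N(1+t_j+2\sqrt{t_j}\,\mathbf{X}_{t_j})|V\rangle}{\langle W|(2+2\mathbf{X}_1)^N|V\rangle} = \frac{\E\bigl[\prod_{j=1}^N(1+t_j+2\sqrt{t_j}\,\Y_{t_j})\bigr]}{2^N\,\E\bigl[(1+\Y_1)^N\bigr]}.
\]
I would prove this by induction on $N$: at each step one absorbs the innermost factor by averaging against the Askey-Wilson transition kernel from time $t_{j-1}$ to $t_j$, using that the Askey-Wilson polynomials are martingale polynomials for $\Y$, so the connection coefficients between Askey-Wilson bases at successive times coincide with the intertwiners relating $\mathbf{X}_{t_{j-1}}$ and $\mathbf{X}_{t_j}$ within the matrix product. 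The main obstacle is precisely this compatibility between the spectral-theoretic operator data and the Markov transition kernel of $\Y$, a matching which ultimately rests on the three-term recurrence and orthogonality of the Askey-Wilson polynomials; the bookkeeping becomes subtle when the scaled parameters produce atoms in the spectral measure.
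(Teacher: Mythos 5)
The paper does not contain a proof of Proposition \ref{BrycWangFluctuations}; it is quoted verbatim as Theorem 1 of \cite{BW1}, and the authors treat it as an external input (the starting point for the asymptotic analysis in Sections \ref{sec:AW}--\ref{sec:propASEPproof}). There is therefore nothing in the paper itself to compare your sketch against. With that caveat, your outline does follow the road map of \cite{BW1} (and its companion \cite{BW2}): matrix product ansatz, the Uchiyama--Sasamoto--Wadati Jacobi-matrix representation, and identification of ordered products with Askey--Wilson process expectations through the martingale-polynomial property.

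Two points where your phrasing glosses over real issues. First, the claimed factorization $tD+E=\gamma(t)\bigl(1+t+2\sqrt{t}\,\mathbf{X}_t\bigr)$ with a scalar $\gamma(t)$ is not literally available: in the USW representation one has $(1-q)(tD+E)=(1+t)I + t\mathbf{d}+\mathbf{e}$ with $\mathbf{d},\mathbf{e}$ non-symmetric bidiagonal operators, and the passage to a symmetric Jacobi matrix $J_t$ with the time-scaled Askey--Wilson parameters requires a $t$-dependent diagonal conjugation, not a scalar rescaling. Consequently the cancellation between numerator and denominator is not a simple ``$\gamma(t)$ cancels''; what cancels is the $t$-independent $(1-q)^{-N}$, while the $t$-dependent conjugations are exactly what the Askey--Wilson transition kernel absorbs. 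You do acknowledge this is ``the main obstacle,'' but the factorization as stated would mislead a reader into thinking the rescaling commutes across the ordered product. Second, the hypothesis $AC<1$ is doing more than ensuring positivity of the orthogonality measure at a single time: it is what guarantees the one-parameter family of Askey--Wilson distributions with parameters $(A\sqrt{t},B\sqrt{t},C/\sqrt{t},D/\sqrt{t})$ is well defined and consistent for all $t>0$, i.e.\ that the process $\Y_t$ in Definition \ref{def:AWprocess} exists on the whole time axis; without it the family develops obstructions (in the ASEP phase diagram, $AC<1$ is precisely the fan region). These are the places a full write-up of your plan would need to be careful.
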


An immediate corollary \cite[Section 4.3]{BW} of this is the  multi-point Laplace transform formula for $H^{(N)}_{u,v}(X)$ (defined from $\tau$ by combining \eqref{eqn_hscale} and \eqref{eq:hNdef}) under the stationary measure  $\pi^{\mathrm{ASEP}}_N$.

\begin{corollary}\label{cor:ASEPgen}
As in \eqref{eq_ABCD}, let $ \big \langle \cdot  \big\rangle_N$ denote the expectation with respect to the stationary measure $\pi^{\mathrm{ASEP}}_N$ of open ASEP parameterized by $q$ and $(A,B,C,D)$. Assume that $AC<1$.
For any $d\in \Z_{\geq 1}$, let $\vec{X},\vec{c}$ and $\vec{s}$ be as in \eqref{eq:Xcs}, $\tilde{c}\in\R$. Then
\begin{align}\label{eq:pihn}
\phi^{(N)}(\vec{c},\tilde{c},\vec{X}):= \left\langle e^{-\sum_{k=1}^d c_k H^{(N)}_{u,v}(X^{(N)}_k) -\tilde{c}H^{(N)}_{u,v}(1)} \right\rangle_N \!\!\!\!= \dfrac{\E\left[\,\,\prod\limits_{k=1}^{d+1}\left(\!\!\cosh\left(\tilde{s}^{(N)}_k\right)+
      \Y_{e^{-2 \tilde{s}^{(N)}_k}}\!\!\right)^{n_k-n_{k-1}} \right]}{\E  \left [\left (1+\Y_1 \right )^N\right ]}\,\,\,\,\,\,
\end{align}
where $\tilde{s}^{(N)}_k = N^{-1/2} (s_k + \tilde{c})$ for $k\in
\llbracket 1,d+1\rrbracket$, $X_k^{(N)}:= N^{-1} \lfloor NX_k\rfloor$,
$n_k=\lfloor NX_k\rfloor$  for $k\in \llbracket 1,d\rrbracket$, and
$\Y_s$ is the Askey-Wilson process (Definition \ref{def:AWprocess})
with parameters $A,B,C,D,q$ matching those of the ASEP we are
considering, and with marginal distribution $\pi_s$ (see
\eqref{eq:def_pi}) at all times $s$. When $\tilde{c}=0$ we will write $\phi^{(N)}(\vec{c},\vec{X})$ instead of $\phi^{(N)}(\vec{c},0,\vec{X})$ and use $s_k$ instead of $\tilde{s}_k$ on the right-hand side of \eqref{eq:pihn}.
\end{corollary}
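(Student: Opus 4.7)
The plan is to directly derive the claimed formula by specializing the generating function identity in Proposition \ref{BrycWangFluctuations} to a particular choice of parameters $t_1,\dots,t_N$ that encodes the exponential of the linear combination of height function values appearing in $\phi^{(N)}$. Since the statement is an exact algebraic identity (not a limit theorem), the proof should be a one-shot calculation rather than any form of analysis; the only thing to verify is that the hypothesis $0<t_1\leq\cdots\leq t_N$ of Proposition \ref{BrycWangFluctuations} holds under the specialization.

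First I would rewrite the exponent. Using $\hscale(X_k^{(N)})=N^{-1/2}\sum_{i=1}^{n_k}(2\tau_i-1)$ and interchanging the order of summation, the coefficient of $(2\tau_i-1)$ in $-\sum_{k=1}^d c_k\hscale(X_k^{(N)})-\tilde c\hscale(1)$ is $-N^{-1/2}(s_k+\tilde c)=-\tilde s_k^{(N)}$ for every $i$ with $n_{k-1}<i\leq n_k$ (using the convention $n_0=0$, $n_{d+1}=N$, $s_{d+1}=0$). Exponentiating and grouping gives
\begin{equation*}
e^{-\sum_{k=1}^d c_k\hscale(X_k^{(N)})-\tilde c\hscale(1)} = \prod_{k=1}^{d+1} e^{\tilde s_k^{(N)}(n_k-n_{k-1})} \prod_{i=1}^N t_i^{\tau_i}
\end{equation*}
where $t_i:=e^{-2\tilde s_k^{(N)}}$ for $n_{k-1}<i\leq n_k$; the factor $e^{\tilde s_k^{(N)}(n_k-n_{k-1})}$ comes from pulling out the constant term in $-\tilde s_k^{(N)}(2\tau_i-1)=\tilde s_k^{(N)}-2\tilde s_k^{(N)}\tau_i$.

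Next, since $s_1>s_2>\cdots>s_{d+1}=0$ and all $c_k>0$, the sequence $\tilde s_k^{(N)}$ is strictly decreasing in $k$, so $t_i$ is non-decreasing in $i$, and the hypothesis of Proposition \ref{BrycWangFluctuations} applies. Plugging into \eqref{eq:key_identity} and using the elementary identity $1+e^{-2s}=2e^{-s}\cosh s$ together with $\sqrt{t_i}=e^{-\tilde s_k^{(N)}}$, one finds
\begin{equation*}
1+t_i+2\sqrt{t_i}\,\Y_{t_i} = 2e^{-\tilde s_k^{(N)}}\left(\cosh(\tilde s_k^{(N)})+\Y_{e^{-2\tilde s_k^{(N)}}}\right)
\end{equation*}
for every $i$ in the $k$-th block. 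Taking the product over $i=1,\dots,N$ turns the $2e^{-\tilde s_k^{(N)}}$ factors into $\prod_{k=1}^{d+1}(2e^{-\tilde s_k^{(N)}})^{n_k-n_{k-1}}=2^N\prod_k e^{-\tilde s_k^{(N)}(n_k-n_{k-1})}$, which exactly cancels the $2^N$ in the denominator of \eqref{eq:key_identity} and the prefactor $\prod_k e^{\tilde s_k^{(N)}(n_k-n_{k-1})}$ collected from the exponent rearrangement. What remains is precisely the right-hand side of \eqref{eq:pihn}.

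There is no real obstacle: the entire argument is bookkeeping, and the key algebraic step is recognizing the $\cosh$ identity so that the $2^N$ and the exponential prefactors cancel cleanly. The one thing worth highlighting is the verification that $t_1\leq \cdots\leq t_N$ under the monotonicity $s_1>\cdots>s_{d+1}$, which ensures that Proposition \ref{BrycWangFluctuations} is applicable; this uses the hypothesis $c_1,\dots,c_d>0$. The case $\tilde c=0$ is immediate since then $\tilde s_k^{(N)}=N^{-1/2}s_k$, and the last block $k=d+1$ contributes a trivial factor $(\cosh 0+\Y_1)^{N-n_d}=(1+\Y_1)^{N-n_d}$ in the numerator (this factor matches what one would obtain by simply not including the $\tilde c \hscale(1)$ term, as advertised by the notational convention stated at the end of the corollary).
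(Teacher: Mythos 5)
Your proof is correct and follows essentially the same route as the paper: re-express the exponent blockwise in terms of $\tilde s_k^{(N)}$, pull out the constant $e^{\tilde s_k^{(N)}(n_k-n_{k-1})}$, apply Proposition \ref{BrycWangFluctuations}, then collapse via $1+e^{-2s}+2e^{-s}y=2e^{-s}(\cosh s+y)$. Your explicit check that $c_1,\dots,c_d>0$ makes the $t_i$'s non-decreasing (so the proposition's hypothesis $0<t_1\leq\cdots\leq t_N$ holds) is a worthwhile detail that the paper's proof leaves implicit.
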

Note that the restriction that the $c_k$ are strictly positive comes from the increasing nature of the $t$'s from Proposition \ref{BrycWangFluctuations}. We do not know how to analytically continue to general $c_k$. However, for our purposes it is sufficient to work with the positive $c_k$ and also to assume $\tilde{c}=0$. In that case we when $\tilde{c}=0$ we write $\phi^{(N)}(\vec{c},\vec{X})$ instead of $\phi^{(N)}(\vec{c},0,\vec{X})$.

Using this corollary we see that the finite $N$ Laplace transform can be written as (see also \eqref{eq:phineq})
\begin{equation}\label{eq:introphin}
\phi^{(N)}_{u,v}(\vec{c},\vec{X}) = \frac{\tilde\phi^{(N)}_{u,v}(\vec{c},\vec{X})}{\tilde\phi^{(N)}_{u,v}(\vec{0},\vec{X})},\quad
\tilde\phi^{(N)}_{u,v}(\vec{c},\vec{X}):=\E \left[N^{u+v} \G^{(N)}\Big((\Yh^{(N)}_{s_1},\ldots ,\Yh^{(N)}_{s_{d+1}}); \vec{c};\vec{X}\Big)\right].
\end{equation}
Here we have set (see \eqref{eq:rescaling}) $\Yh_s^{(N)}:=2 N \left(1-\Y_{q^{s}}\right)$  and (see \eqref{eq:gN})
\begin{equation}
\G^{(N)}(\vec{r}; \vec{c};\vec{X}):= {\bf 1}_{\vec{r}\in \R^{d+1}_{\leq 4N}} \cdot 2^{-N}\prod_{k=1}^{d+1}\left(\cosh\left (\tfrac{s_k}{\sqrt N}\right)+1-\tfrac{r_k}{2N}\right)^{N(X^{(N)}_k-X^{(N)}_{k-1})}.
\end{equation}
It is  easy to check that as a  function in $\vec{r}$, $\G^{(N)}(\vec{r}; \vec{c};\vec{X})$ converges point-wise to (see \eqref{eq_G})
\begin{equation}
    \G(\vec{r}; \vec{c};\vec{X}) := \exp\left(\frac{1}{4}\sum_{k=1}^{d+1} (s_k^2-r_k)(X_k-X_{k-1})\right).
\end{equation}
The overwhelming majority of the work is thus left to show that in an appropriate strong sense
\begin{equation}\label{eq:introYHN}
N^{u+v} \mathrm{Law}\big(\Yh^{(N)}_{s_1},\ldots ,\Yh^{(N)}_{s_{d+1}})\Rightarrow \mathrm{Law}(\T_{s_1},\ldots ,\T_{s_{d+1}})
\end{equation}
where $\T_s$ is the continuous dual Hahn process started at $\T_{s_1}$ according to the infinite measure $\p_{s_1}$. In fact, what is really needed is the convergence of
\begin{equation}
\label{eq:introphinconverg}
\tilde\phi^{(N)}_{u,v}(\vec{c},\vec{X}) \rightarrow \tilde\phi_{u,v}(\vec{c},\vec{X}):=\E \left[ \G\Big((\T_{s_1},\ldots ,\T_{s_{d+1}}); \vec{c};\vec{X}\Big)\right]
\end{equation}
from which the convergence $\phi^{(N)}_{u,v}(\vec{c},\vec{X})\to \phi_{u,v}(\vec{c},\vec{X})$ readily follows.

The $\Yh^{(N)}$ process can be thought of as a secant process to the Askey-Wilson process. If one conditions on its value at $s_1$ then the distribution of its values at times $s_2,\ldots$ are given by the product of bona-fide transition probabilities. On the other hand, the marginal distribution that we have assumed of the Askey-Wilson process requires rescaling by $N^{u+v}$ to have a non-trivial limit as $N\to \infty$. As a result, in the limit $N\to \infty$, the marginal distribution of $\Yh^{(N)}$ ends up becoming an infinite measure. This means that in order to establish the limit \eqref{eq:introphinconverg} we must establish both point-wise convergence and rather strong bounds on the marginal and transitional measures used to define the $\Yh^{(N)}$ process, so as to be able to apply the dominated convergence theorem. Owing to the fact that these measures are written in terms of $q$-Pochhammer functions, this analysis ends up involving rather refined asymptotics of $\log(\pm q^z;q)_{\infty}$ for $q=e^{-\kappa}$ with both $\kappa$ and $z$ varying in certain potentially unbounded ranges.

Before delving into those asymptotics, let us explain one final wrinkle in the proof of Theorem \ref{thm_main} \eqref{it:explicit}. The argument described above ultimately shows (see Proposition \ref{prop_ASEP_gen_function}) that the Laplace transform $\phi^{(N)}_{u,v}(\vec{c},\vec{X})$ of the finite dimensional marginals of $\mu^{(N)}_{u,v}$ converge to a limit $\phi_{u,v}(\vec{c},\vec{X})$ as $N\to \infty$, provided the spectral variables $\vec{c}\in (0,\Cduv)^d$. However, this does not immediately means that $\mu^{(N)}_{u,v}$ converges to a limit itself. Indeed, if we knew independently that $\phi_{u,v}(\vec{c},\vec{X})$ was the Laplace transform (in the $\vec{c}$ variables) of some probability distribution $\mu_{u,v}$ then this would imply that $\mu^{(N)}_{u,v}\Rightarrow \mu_{u,v}$. This is due to a generalization (see \cite[Appendix A]{BW}) of an old result of \cite{Curtiss}. The work of \cite{BKWW} (subsequent to our current paper) established this property of  $\phi_{u,v}(\vec{c},\vec{X})$ provided $\min(u,v)>-1$ (in addition to the ongoing assumption here that $u+v>0$). That, however, does not cover the full range of $u$ and $v$.

In any case, prior to this inversion work we developed a rather different route to show that  $\phi_{u,v}(\vec{c},\vec{X})$ is the Laplace transform of some  probability distribution $\mu_{u,v}$ and hence that $\mu^{(N)}_{u,v}\Rightarrow \mu_{u,v}$.
Our approach uses some of the additional probabilistic information about the $\mu^{(N)}_{u,v}$ measures provided to us by the earlier parts of Theorem \ref{thm_main}. Namely, we use the tightness of $\mu^{(N)}_{u,v}$ (from Theorem \ref{thm_main} \eqref{it:tightness} and uniform
control over exponential moments $\mu^{(N)}_{u,v}$  (from Theorem \ref{thm_main} \eqref{it:coupling} and \eqref{it:brownian}) to show that
$\phi_{u,v}(\vec{c},\vec{X})$ coincides on an open set with the Laplace transform of some sub(sub)sequential weak limits of $\mu{(N)}_{u,v}$. This identifies uniquely the weak limits along all subsubsequences as being the same, and hence shows convergence of the original sequence of measures. This combination of integrable (exact asymptotic calculation) and probabilistic (the tightness and coupling arguments) methods is quite powerful and allowed us to proceed where each method on its own failed to produce results.

As noted above, the proof of the Laplace transform convergence $\phi^{(N)}_{u,v}(\vec{c},\vec{X})\to \phi_{u,v}(\vec{c},\vec{X})$ constitutes the most technically demanding part of this work. The starting point for this convergence is the fact that the Askey-Wilson process marginal distributions and transition probabilities are written explicitly in terms of the Askey-Wilson orthogonality measure, which in turn is written in terms of $q$-gamma functions (i.e., certain $q$-Pochhammer symbols).

Thus, one of the key technical challenges here is to develop an explicit asymptotic expansion of $q$-Pochhammer symbols (really the $q$-gamma function) as $q\to 1$ with precise error bounds which can be controlled uniformly over all arguments. Recall that for $a, q\in \C,$ $|q|<1$, we let $(a; q)_j:=(1-a)(1-a q)\dots\left(1-a q^{j-1}\right)$ and write $(a_1, \dots, a_n; q)_j=(a_1;q)_j\dots (a_j;q)_j$. We often drop the $q$ dependence.
Let us define for $z\in \C$ and $\kappa >0$ the following functions:
\begin{align}
\AqP^+[\kappa, z]&= -\dfrac{\pi^2}{6 \kappa}-\left(z-\dfrac{1}{2}\right)\log \kappa- \log \left[\dfrac{\Gamma (z)}{\sqrt{2\pi}}\right ],\label{eq:AqPp}\\
\AqP^-[\kappa, z]&= \dfrac{\pi^2}{12\kappa}-\left(z-\dfrac{1}{2}\right)\log 2.\label{eq:AqPm}
  \end{align}
\begin{proposition}\label{factorials}
For $\kappa\in (0,1)$ let $q=e^{-\kappa}$. For $z\in\C$ and $m\in\Z_{\geq 1}$
\begin{align}
\text{ }\log (q^z; q)_\infty&=\AqP^+[\kappa, z]-\sum\limits_{n=1}^{m-1}\dfrac{B_{n+1}(z)B_{n}}{n(n+1)!}\kappa^{n}+\Error_m^+[\kappa, z],\label{eq:factorialspos}\\
\log (-q^z; q)_\infty&=\AqP^-[\kappa, z]-\sum\limits_{n=1}^{m-1}\left(2^{n}-1\right)\dfrac{B_{n+1}(z)B_{n}}{n(n+1)!}\kappa^{n}+\Error_m^-[\kappa, z].\quad\label{eq:factorialsneg}
\end{align}
where $B_{k}(z)$ and $B_{k}$ denote Bernoulli polynomials and Bernoulli numbers (Section \ref{sec:bern}).
For any $\alpha\in (0,\pi)$, $\e\in (0,1/2)$ and $b\in (m-1,m)$ there exist $C,\kappa_0>0$ such that for all $\kappa\in (0,\kappa_0)$ and all $z\in\C$ with $|\textup{Im}(z)|<\tfrac{\alpha}{\kappa}$
\begin{equation}\label{eq:errorbounds}
\big \lvert \Error^{\pm}_m[\kappa, z]\big \rvert \leq  C\left(\kappa(1+|z|)^2 + \kappa^b(1+|z|)^{1+2b+\e}\right).
\end{equation}
The bound on $\Error^{+}_m[\kappa, z]$ further holds with the condition $|\textup{Im}(z)|<\tfrac{\alpha}{\kappa}$ replaced by $|\textup{Im}(z)|<\tfrac{2\alpha}{\kappa}$. Furthermore, for any $r>0$, $\e\in (0,1/2)$ and $b\in (m-1,m)$ there exist $C,\kappa_0>0$ such that for all $\kappa\in (0,\kappa_0)$ and all $z\in\C$ with $\textup{dist}(\textup{Re}(z),\mathbb{Z}_{\leq 0})>r$, \eqref{eq:errorbounds} continues to hold.
\end{proposition}

Observe that since $e^{\pi \iu}=-1$,
\begin{equation}\label{eq:transfo}
\log(q^z;q)_{\infty}\mapsto \log(-q^z;q)_{\infty}\quad \textrm{when}\quad z\mapsto z+\frac{\pi}{\kappa}\iu.
\end{equation}
Thus, the restriction that $|\textup{Im}(z)|<\tfrac{\alpha}{\kappa}$ is quite natural and not really a restriction since we can extract asymptotics for $\log(q^z;q)_{\infty}$ for general imaginary part using the above fact in conjunction that $\log(q^z;q)_{\infty}$ remains invariant under $z\mapsto z+\frac{2\pi}{\kappa}\iu$. This invariance easily implies the claims after equation \eqref{eq:errorbounds} as corollaries of that bound with the restriction $|\textup{Im}(z)|<\tfrac{\alpha}{\kappa}$ in place.

We will, in fact, only make use of the $m=1$ case of the proposition, though we leave the general result since it is not much harder to prove and may be of subsequent use to others.

The $q$-gamma function is closely related to $(q^z;q)_{\infty}$ and given by
$
\Gamma_q(z) = (1-q)^{1-z} \frac{(q;q)_{\infty}}{(q^z;q)_{\infty}},
$
thus our result can be seen as an asymptotic result for the $q$-gamma function as well. Asymptotics of $\Gamma_q$ have been studied in a number of contexts previously, e.g. \cite{Moak, Mcintosh, Daalhuis,Z}. In all of those works (and others) the error bounds are either for $z$ fixed as $\kappa$ goes to zero, or $\kappa$ fixed as $z$ goes to infinity in some direction. To our knowledge, there has been no analysis of how these two limits balance. This balance, however, is extremely important for us since we will deal with measures that are defined with respect to these $q$-Pochhammer symbols and certain key asymptotics that we perform in Section \ref{sec:propASEPproof} will involve probing $|z|$ of order $\kappa^{-1}$, with $\kappa$ going to zero.

The proof of Proposition \ref{factorials} (Section \ref{sec:factorials}) relies on complex analytic methods often used in analytic number theory \cite{MV, R} such as the Mellin transform and the use of gamma, zeta, Hurwitz zeta and Jacobi theta functions. The formula \eqref{eq:factorialspos} can already be found in \cite[Theorem 2]{Z}, though the error bound stated there involves fixed $z$ with $\kappa$ tending to zero. In fact, the proof of \cite[Theorem 2]{Z} relies on an incorrect result, \cite[Lemma 5]{Z}, which claims that $\zeta(s,z) = \mathcal{O}(|t|^{2N+1})$ as $|t|\to \infty$ uniformly for $z$ in any compact subset of $\textup{Re}(z)>0$. Here $\zeta(s,z)$ is the Hurwitz zeta function and $s=\sigma+\iu t$ with real $\sigma >-2N$ for $N\in \Z_{\geq 1}$ and $t\in\R$. Proposition \ref{hurwitz_bound} provides a correct bound on the Hurwitz zeta function with exponential growth $e^{|\textup{Arg(z)}\cdot t|}$ and important polynomial factors which ultimately translate into our error bound above. The analysis of $(-q^z;q)_{\infty}$ is similar, though it involves the Dirichlet eta function as well.

We close this discussion by comparing our proof to that of  \cite{BW} who studied the scaling limit of the ASEP stationary measure for $q$ (and the boundary parameters) fixed, as opposed to scaling with $N$. There they also utilized the Askey-Wilson processes, but there were two major simplifications. The first was that the $q$-Pochhammer symbols that come up there are easily controlled since $q$ is not varying. This renders the asymptotics of the Laplace transform considerably simpler. The second is that the limiting Laplace transform was recognizable as the Laplace transform of a probability distribution due to their work in \cite{BRYC201877}. This avoided the need for the additional twist described above. Another difference with \cite{BW} is that they were only concerned with taking a limit of the stationary measure --- they do not show that this limit measure is stationary for some limiting Markov process. In our case, additional probabilistic/stochastic analytic work is needed to show that the limiting measures are stationary measures for the open KPZ equation height function increment process. These remarks are not meant to diminish the work of \cite{BW} but rather indicate how it serves a key starting point for this current paper which has to confront a number of additional conceptual and technical challenges.

\section{Weak asymmetry limit to the open KPZ equation}\label{sec_intro_WASEP}

The open ASEP height function process (recall from Section \ref{sec:intrasep}) converges to the Hopf-Cole solution to the open KPZ equation under the following assumptions on parameters. Here we will not necessarily assume that ASEP is started from its stationary measure, but rather allow for a very general class of initial data that satisfy some H\"older bounds.

\begin{assumption}\label{assump_wasep}\mbox{}
\begin{enumerate}[leftmargin=*]
\item {\em Weak asymmetry scaling}:
$
q= \exp\left(-\frac{2}{\sqrt{N}}\right).
$
\item {\em Liggett's condition}:
$\alpha +\gamma/q=1$ and $\beta+\delta/q=1.
$

\item {\em Triple point scaling}: For some $u, v\in \R$, as $N\to \infty$
\begin{align}
&\alpha=\frac{1}{2}+\frac{u}{2} N^{-1/2} + o(N^{-1/2}), &\beta=\frac{1}{2}+\frac{v}{2} N^{-1/2} + o(N^{-1/2}),\\
&\gamma=\frac{1}{2}-\frac{u}{2} N^{-1/2} + o(N^{-1/2}), &\delta=\frac{1}{2}-\frac{v}{2} N^{-1/2} + o(N^{-1/2}).
\end{align}


\item {\em $4:2:1$ height function scaling}: For $T\geq 0$ and $X\in [0,1]$ define
\begin{align}\label{eq_hNtx_scaled}
\hscale_{u,v}(T,X) &:= N^{-1/2} h_N(\tfrac{1}{2} e^{N^{-1/2}} N^2 T,N X) + (\tfrac{1}{2} N +\tfrac{1}{24}) T,\\
\zscale_{u,v}(T,X) &:= e^{\hscale_{u,v}(T,X)},
\end{align}
where $h_N$ is the ASEP height function process.

\item {\em H\"older bounds on initial data}: The $N$-indexed sequence
  of open ASEP initial data $h_N(0,\cdot)$ satisfies that for all
  $\theta\in (0,\tfrac{1}{2}\big)$  and $n\in \Z_{\geq 1}$, there
  exist positive $C(n), C(\theta,n)$  such that for every $X,X'\in[0,1]$ and $N\in \Z_{\geq 1}$
\begin{equation}\label{eq_Holder}
\|\zscale_{u,v}(0,X)\|_n\leq C(n),\quad \textrm{and}\quad \|\zscale_{u,v}(0,X)-\zscale_{u,v}(0,X')\|_n\leq C(\theta,n)|X-X'|^{\theta}.
\end{equation}
Here $\|\cdot\|_n:= \big(\E\big[|\cdot|^n\big]\big)^{1/n}$ where $\E$ is expectation over $h_N(0,\cdot)$ (recall that we are not currently assuming that the law of this initial data is stationary).
\end{enumerate}
\end{assumption}

%



\begin{proposition}\label{thm_kpz_limit}
Consider any $N$-indexed sequence of open ASEPs with parameters and initial datum satisfying all Assumption \ref{assump_wasep}. Then the law of $\zscale_{u,v}(\cdot,\cdot)\in D\big([0,T_0],C([0,1])\big)$ (the Skorohod space) is tight as $T\to \infty$ for any fixed $T_0>0$ and all limit points are in $C\big([0,T_0],C([0,1])\big)$.
If there exists a (possibly random) non-negative-valued function $Z_0\in C([0,1])$ such that, as $N\to \infty$, $\zscale_{u,v}(0,X)\Longrightarrow Z_0(X)$ in the space of continuous processes of $X\in [0,1]$), then
$
\zscale_{u,v}(T,X) \Longrightarrow Z_{u,v}(T,X)
$
in $D\big([0,T_0],C([0,1])\big)$ for any $T_0>0$ as $N\to \infty$,
where $Z_{u,v}(T,X)$ in $C([0,T_0],C([0,1]))$ is the unique mild solution to the stochastic heat equation with boundary parameters $u$ and $v$, and initial data $Z_0(X)$ (recall the definition from the beginning of Section \ref{sec:intro}).
\end{proposition}

The Skorohod space is used above since ASEP takes discrete jumps in time.

\begin{proof}


This result is essentially contained in \cite{CS} for $u,v\geq 1/2$ and \cite{Parekh} for general $u,v\in \R$. The tightness is from \cite[Proposition 4.17]{CS} and \cite[Proposition 5.4]{Parekh} while the convergence result is from
\cite[Theorem 2.18]{CS} and \cite[Theorem 1.1]{Parekh}. The only difference from those works is that we have used a different parametrization. For the tightness, the boundary parameters play no role and hence our result follows immediately from that of \cite{CS,Parekh}. For the convergence result, our parametrization can relatively easily be matched to that used in \cite{CS,Parekh} and their parameters $A$ and $B$ correspond to $u-1/2 +o(1)$ and $v-1/2 + o(1)$ respectively. The $o(1)$ terms go to zero as $N$ go to infinity and thus do not affect the limiting equation (as can be justified either by a coupling argument or by tracing through the proof in \cite{CS,Parekh}).
\end{proof}

\section{Attractive coupling of different boundary parameters}\label{sec_couplingoverall}
This prepares us for the proof of Theorem \ref{thm_main} \eqref{it:tightness}-\eqref{it:duality} in Section \ref{sec_thm123}.

\subsection{Coupling via multi-species open ASEP}\label{sec_coupling}




We prove an {\em attractive} coupling of open ASEPs with different boundary conditions. This means that if the occupation variables start ordered between different ASEPs, then they will remain ordered. As is standard in proving attractive couplings (e.g. \cite{AV97}), we appeal to a multi-species version of the model. For $M=2$, part (1) below coincides with \cite[Lemma 2.1]{gantert2020mixing}.

\begin{lemma}\label{prop:threespeciescoupling}
Fix $q\geq 0$, any $M\geq 2$ and any non-negative real numbers $\{\alpha^i\big\}_{i=1}^{M}$,  $\{\beta^i\big\}_{i=1}^{M}$,  $\{\gamma^i\big\}_{i=1}^{M}$ and  $\{\delta^i\big\}_{i=1}^{M}$ such that for all $i<j$,
$$
\alpha^i\leq \alpha^j,\qquad \beta^i\geq \beta^j,\qquad \gamma^i\geq \gamma^j,\qquad \delta^i\leq \delta^j.
$$
For each $i\in \{1,\ldots, M\}$ fix any initial data $\tau^i = (\tau^i_x)_{x\in \llbracket 1,N\rrbracket}\in \{0,1\}^{N}$ such that for all $1\leq i<j\leq M$,  $\tau^i\leq \tau^j$ (i.e., $\tau^i_x\leq \tau^j_x$ for all $x\in \llbracket 1, N\rrbracket$). Let $\tau^i(\cdot)$ denote the $N$ site open ASEP with parameter $(q,\alpha^i,\beta^i,\gamma^i,\delta^i)$ started with $\tau^i(0)=\tau^i$. Then:
\begin{enumerate}[leftmargin=*]
\item There exists a single probability space supporting $M$ processes  $\tau^1(\cdot),\dots, \tau^M(\cdot)$ and has the property that for all $t\geq 0$ and  $1\leq i<j\leq M$,  $\tau^i(t)\leq \tau^j(t)$.
\item Let $\tilde{\tau}^i$ denote an occupation vector distributed according to the stationary measure for the $N$ site open ASEP with parameters $(q,\alpha^i,\beta^i,\gamma^i,\delta^i)$. Then there exists a coupling of all $M$ stationary measures such that for all  $1\leq i<j\leq M$,  $\tilde\tau^i\leq \tilde\tau^j$.
\end{enumerate}
\end{lemma}
\begin{proof}
The second claim follows immediately from the first by taking time to
infinity and using the uniqueness of the stationary measure. The first
claim can be shown by appealing to a multi-species open ASEP. Consider
an $M$ species version of open ASEP where sites can be occupied by a
single particle of species $1$ through $M$. This process has the
following transition rates (as a convention let
$\alpha^0=\beta^{M+1}=\gamma^{M+1}=\delta^0=0$)
\begin{enumerate}[leftmargin=*]
\item For $x\in\llbracket 1,N-1\rrbracket$, if sites $x$ and $x+1$ are
  occupied by $AB$ (i.e., there is a species $A$ particle  at site $x$ and a species $B$ particle at site $x+1$), then
  this becomes $BA$ with rate $1$ if $A<B$ and rate $q$ if $A>B$.
\item If site $1$ is occupied by $A$, then this becomes $B$ at rate
  $\alpha^B-\alpha^{B-1}$ if $A>B$ and at rate $\gamma^B-\gamma^{B+1}$
  of $A<B$.
\item If site $N$ is occupied by $A$, then this becomes $B$ at rate $\beta^B-\beta^{B+1}$ if $A<B$ and at rate $\delta^B-\delta^{B-1}$ if $A>B$.
\end{enumerate}
Denote the occupation variables for this process by $\eta^i_x(t)\in\{0,1\}$ where $i\in \llbracket 1, M\rrbracket$, $x\in \llbracket 1,N\rrbracket$ and $t\geq 0$.  In other words, $\eta^i_x(t)=1$ if there is a species $i$ particle at position $x$ at time $t$, and $0$ otherwise.
From these multi-species occupation variables we define
$\tau^i_x(t) = \sum_{j=1}^{i} \eta^j_x(t)$
From the $\tau^i_x$ in the statement of the lemma we define initial
data for the multi-species ASEP by $\eta^j_x(0) =
\tau^i_x-\tau^{i-1}_x$ (with the convention that $\tau^{0}_x=0$).
It is evident that $\tau^i_x(0)=\tau^i_x$ and that marginally, for each $i\in \llbracket 1,M\rrbracket$, $\tau^i(t)$ evolves as a process in $t$ precisely as open ASEP with parameters $(q,\alpha^i,\beta^i,\gamma^i,\delta^i)$. This implies the desired attractive coupling since for any $1\leq i<j\leq N$, the difference $\tau^j_x(t)-\tau^i_x(t) = \sum_{k=i+1}^{j} \eta^k_x(t)$ is positive, hence $\tau^i_x(t)\leq \tau^j_x(t)$ as desired.
\end{proof}

\subsection{Height function coupling and its implications}\label{sec_height_coupling}
Armed with the attractive coupling of Lemma  \ref{prop:threespeciescoupling} we may now prove the following results.

\begin{proposition}\label{lem:momentbound}
Fix $u,v\in \R$ and consider the stationary measure for $N$ site open ASEP parameterized as in \eqref{eq:assumerates} of Assumption \ref{assumption1} by $u$ and $v$. As in \eqref{eqn_hscale}, define the diffusive scaled stationary height function $\hscale_{u,v}(X)\in C([0,1])$, and then define its exponential transform $\zscale_{u,v}(X) := \exp(\hscale_{u,v}(X))$.
Then the following holds (in points \eqref{it:mb3} and \eqref{it:mb4} below we use the notation $\|\cdot\|_n:= \big(\langle|\cdot|^n\rangle_N\big)^{1/n}$ where $\langle \cdot\rangle_N$ is the expectation from \eqref{eq:langrang} with respect to the open ASEP stationary measure $ \pi^{\mathrm{ASEP}}_N$):
\begin{enumerate}[leftmargin=*]
\item\label{it:mb1} As processes in $C([0,1])$ as $N\to \infty$ $\hscale_{u,-u}(X)\Longrightarrow B_u(X)$ where $B_u$ is a standard Brownian motion with drift $u$ (i.e. $B_u(X) = B(X)+uX$ for $B$ a standard variance 1 Brownian motion and $X\in [0,1]$).

\item\label{it:mb12} For any $u,v\in \R$, $\hscale_{u,v}(\cdot)$ and $\hscale_{v,u}(1-X)-\hscale_{v,u}(1)$ have the same law as random functions in $C([0,1])$.

\item\label{it:mb2} For all $u$ and $v$ such that $u+v\geq 0$ there exists a coupling of $\hscale_{u,v}$  with $\hscale_{u,-u}$ and $\hscale_{-v,v}$ such that for all $X,X'\in [0,1]$ with $X\leq X'$,
$$
\hscale_{-v,v}(X')-\hscale_{-v,v}(X)\leq \hscale_{u,v}(X')-\hscale_{u,v}(X)\leq \hscale_{u,-u}(X')-\hscale_{u,-u}(X).
$$
For all $u$ and $v$ such that $u+v\leq 0$ there exists a coupling of $\hscale_{u,v}$  with $\hscale_{u,-u}$ and $\hscale_{-v,v}$ such that for all $X,X'\in [0,1]$ with $X\leq X'$,
$$
\hscale_{-v,v}(X')-\hscale_{-v,v}(X)\geq \hscale_{u,v}(X')-\hscale_{u,v}(X)\geq \hscale_{u,-u}(X')-\hscale_{u,-u}(X).
$$
\item\label{it:mb3} For all $u,v\in \R$ we have the following H\"older bound. For all $\theta\in (0,\tfrac{1}{2})$ and every $n\in \Z_{\geq 1}$, there exists a constant $C(\theta,n,u,v)>0$ such that for every $X,X'\in [0,1]$ and every $N\in \Z_{\geq 1}$,
    \begin{equation}\label{eq:Hholder}
    \big\| \hscale_{u,v}(X)-\hscale_{u,v}(X')\big\|_n \leq C(\theta,n,u,v)\left| X-X'\right|^{\theta}.
    \end{equation}
\item\label{it:mb4}  For all $u,v\in \R$ we have the following H\"older bounds. For all $n\in \Z$ there exists $C(n,u,v)>0$ such that for all $N\in\Z_{\geq 1}$ and all  $X\in [0,1]$
\begin{equation}\label{eq:LNbound}
\| \zscale_{u,v}(X)\|_n \leq C(n,u,v),
\end{equation}
and for all $\theta\in (0,\tfrac{1}{2})$ and every $n\in \Z_{\geq 1}$, there exists a constant $C(\theta,n,u,v)>0$ such that for every $X,X'\in [0,1]$ and every $N\in \Z_{\geq 1}$,
\begin{equation}\label{eq:LNbound2}
    \big\| \zscale_{u,v}(X)-\zscale_{u,v}(X')\big\|_n \leq C(\theta,n,u,v)\left|X-X'\right|^{\theta}.
\end{equation}
\end{enumerate}
\end{proposition}
\begin{proof}

Part \eqref{it:mb1} follows from that fact that when $v=-u$, $AC=1$ (recall $A$ and $C$ from \eqref{eq:assumerates})  which implies (see \cite[Remark 2.4]{BW}, \cite{ED}, or \cite{AHL_2020__3__87_0}) that the open ASEP stationary measure is product Bernoulli with particle density $\rho=(1+A)^{-1}$. As in \eqref{eq:hNdef}, for $x\in \llbracket 0,N\rrbracket$ define $h_{N;u,-u}(x)=\sum_{i=1}^{x} (2\tau_i-1)$ where $\tau$ has this stationary Bernoulli product measure. This means that $h_{N;u,-u}(\cdot)$ is a random walk with i.i.d. $\pm1$ increments, increasing by 1 with probability $\rho(u) = (1+q^u)^{-1}$ and decreasing by 1 with probability $1-\rho(u)$. Since $\rho=\rho(u) = \tfrac{1}{2}\big(1-u N^{-1/2} + O(N^{-1})\big)$ the mean of the jump distribution for $h_{N;u,-u}(x)$ is $uN^{-1/2} + O(N^{-1})$ and the variance is $1+O(N^{-1/2})$.  It then follows from Donsker's invariant theorem that under diffusive scaling $\hscale_{u,v}(X) = N^{-1/2} \hscale_{u,-u}(NX)$ converges as a process in $C([0,1])$ to  $B_u$, a standard Brownian motion with drift $u$, as claimed. Part \eqref{it:mb12} follows immediately from the particle/hole duality for open ASEP.
Part \eqref{it:mb2} follows immediately by applying Lemma \ref{prop:threespeciescoupling} with $M=3$ to sandwich the $u,v$ height function by the $u,-u$ and $-v,v$ one.

Part \eqref{it:mb3} makes use of results we have already demonstrated
in Parts \eqref{it:mb1} and \eqref{it:mb2} above. By Part
\eqref{it:mb1}, $\hscale_{-v,v}$ and $\hscale_{u,-u}$ are both
diffusively rescaled random walks and by Part \eqref{it:mb2}, the
increments of $\hscale_{u,v}$ are bounded below and above in our
coupled probability space by the increments of $\hscale_{-v,v}$ and
$\hscale_{u,-u}$ (depending on the sign of $u+v$, the ordering is
switched). Taking the $n$th power of these inequalities and then expectations yields
\begin{align}
&\big\| \hscale_{u,v}(X')-\hscale_{u,v}(X)\big\|^n_n\\
&\quad \leq 2 \max\left(\left\| \hscale_{-v,v}(X')-\hscale_{-v,v}(X)\right\|^n_n, \left\| \hscale_{u,-u}(X')-\hscale_{u,-u}(X)\right\|^n_n\right).
\end{align}
(Here we have used the following argument. Given two non-negative numbers $a$ and $b$, note that $\max(a,b) \leq a+b\leq 2\max(a,b)$. Now, consider two non-negative random variables $A$ and $B$. The first inequality gives  $\E\big[\max(A,B)\big] \leq \E[A]+\E[B]$ and using the second inequality we find that $\E[A]+\E[B]\leq 2\max\big(\E[A],\E[B]\big)$).
Thus, in order to prove the H\"older bound \eqref{eq:Hholder} for $\hscale_{u,v}$, it suffices to show it for both $\hscale_{-v,v}$ and $\hscale_{u,-u}$. These bounds, however follow readily since  $\hscale_{-v,v}$ and $\hscale_{u,-u}$ are diffusively rescaled simple random walks which converge to drifted Brownian motions. 

Turning to Part \eqref{it:mb4}, we first show \eqref{eq:LNbound}. As in the Part \eqref{it:mb3} proof we may use the coupling in Part \eqref{it:mb2} to show that
\begin{equation}\label{eq_zscalemax}
\big\|\zscale_{u,v}(X)\big\|_n^n \leq 2  \max\left(\big\|\zscale_{-v,v}(X)\big\|^n_n,\big\|\zscale_{u,-u}(X)\big\|^n_n\right).
\end{equation}
So, it is sufficient to show \eqref{eq:LNbound}  with $\zscale_{u,v}(X)$ replaced by both $\zscale_{-v,v}(X)$ and $\zscale_{u,-u}(X)$. Each of these expressions involves the exponential of a diffusively scaled i.i.d. simple random walks. As we did not include the analogous calculation in Part \eqref{it:mb3}, we include this.

Let $y_1,\ldots, y_N$ be i.i.d. random variables with $\P(y_1=1)=\rho:=(1+q^u)^{-1}$ and $\P(y_1=-1)=1-\rho$. Then we claim that for all $n\in \Z_{\geq 1}$ there exists $C(n,u)>0$ such that
\begin{equation}\label{eq:iidbound}
\E\Big[\Big(e^{N^{-1/2} (y_1+\cdots+y_M)}\Big)^n\Big] \leq C(n,u)
\end{equation}
for all $M\leq N\in\Z_{\geq 1}$.
It is clear that \eqref{eq:LNbound} immediately follows by combining this with \eqref{eq_zscalemax} and the random walk description of both $\zscale_{-v,v}(X)$ and $\zscale_{u,-u}(X)$. So, we now focus on proving \eqref{eq:iidbound}.

By independence of the $y_i$, we can rewrite the left-hand side of \eqref{eq:iidbound} as
\begin{equation}\label{eq:iidbound2}
\E\Big[\Big(e^{N^{-1/2} (y_1+\cdots+y_M)}\Big)^n\Big] = \left(\rho e^{nN^{-1/2}}+(1-\rho)e^{-nN^{-1/2}}\right)^M.
\end{equation}
Taylor expanding yields the asymptotics that $\rho e^{nN^{-1/2}}+(1-\rho)e^{-nN^{-1/2}}=1+(n^2/2+un)N^{-1} +o(N^{-1})$. Raising this to the $M^{th}$ power yields a bound on the right-hand side of \eqref{eq:iidbound2} like $e^{(n^2/2+un)X}$ where $X=M/N\in [0,1]$. The maximum occurs at either $X=0$ or $X=1$ depending on the sign of $n^2/2+un$, and thus letting $C(n,u) = \max(1, e^{n^2/2+un})$ seems to work.
To make the above argument rigorous we just need to control the Taylor expansion error. If $ \rho e^{nN^{-1/2}}+(1-\rho)e^{-nN^{-1/2}}\leq 1$ then the right-hand side in \eqref{eq:iidbound2} is maximized when $M=0$, otherwise it is maximized when $M=N$. In the first case when $M=0$, the right-hand side of \eqref{eq:iidbound2} is obviously bounded by $1$. So, it suffices to bound the right-hand side of \eqref{eq:iidbound2} when $M=N$.

We recall a few elementary inequalities which control the Taylor expansion and provide bounds which hold for all $n,N\in \Z_{\geq 1}$ and $u\in \R$. Below, $C$ will denote a positive constant. Recall that $q=e^{-2N^{-1/2}}$. We can bound
$
|\rho - (\frac{1}{2} - \frac{u}{2} N^{-1/2})| \leq C|u|^3N^{-3/2}.
$
With this we can show that
\begin{align}
&\left|\rho e^{nN^{-1/2}}+(1-\rho)e^{-nN^{-1/2}} - \left(\cosh(nN^{-1/2}) + \sinh(nN^{-1/2}) uN^{-1/2}\right) \right|\\
&\qquad\qquad\leq C\cosh(nN^{-1/2})|u|^3N^{-3/2}.
\end{align}
Bounding
$$
|\cosh(nN^{-1/2}) - (1 + \frac{n^2}{2} N^{-1})| \leq C n^4 N^{-2},
|\sinh(nN^{-1/2}) - nN^{-1/2} | \leq C n^3 N^{-3/2},$$
and $\cosh(nN^{-1/2})\leq \cosh(n)$ for $N\in \Z_{\geq 1}$, we deduce that
$$
\Big|\rho e^{nN^{-1/2}}+(1-\rho)e^{-nN^{-1/2}} - \Big(1 + \Big(\frac{n^2}{2}+un\Big) N^{-1}\Big)\Big|\leq C(u,n) N^{-3/2}.
$$
Here and below $C(u,n)>0$ depends on $u$ and $n$ (though may vary between lines).  From this it follows that
$
(\rho e^{nN^{-1/2}}+(1-\rho)e^{-nN^{-1/2}})^N \leq C(u,n)
$
as needed to show \eqref{eq:LNbound}.
%
%

We now turn to showing \eqref{eq:LNbound2}. This bound follows by combining the result of Part \eqref{it:mb3} with the already showed inequality \eqref{eq:LNbound} in Part \eqref{it:mb4}. First note that for all $a,b\in \R$,
$
| e^a-e^b| \leq \max(e^a,e^b) | a-b|.
$
Substituting $a= \hscale_{u,v}(X)$ and $b=\hscale_{u,v}(X')$, taking expectations and using Cauchy-Schwarz yields
$
\| \zscale_{u,v}(X)-\zscale_{u,v}(X')\|_n \leq \| \max(\zscale_{u,v}(X),\zscale_{u,v}(X'))\|_{2n} \cdot \|\hscale_{u,v}(X)-\hscale_{u,v}(X')\|_{2n}.
$
The result of Part \eqref{it:mb3} provides us with control over the second term on the right-hand side
thus it suffices to prove that there exists $C'(n,u,v)>0$ such that for all $X,X'\in [0,1]$
\begin{equation}\label{eqmaxsza}
 \left\| \max\left(\zscale_{u,v}(X),\zscale_{u,v}(x')\right)\right\|_{2n} \leq C'(n,u,v).
\end{equation}

We already know how to control the $n$-norms of the individual terms inside of the $\max$ in the left-hand side of \eqref{eqmaxsza} by the already proved bound \eqref{eq:LNbound}. We need to control to the $n$-norm of the $\max$. For positive random variables $A$ and $B$, using the binomial expansion and Cauchy-Schwarz we show that for any $m\in \Z_{\geq 1},$
$
\E[\max(A,B)^m] \leq 2^m \max_{k\in \llbracket 0,m\rrbracket } \E[A^{2k}]^{1/2} \E[B^{2(m-k)}]^{1/2}.
$
%
%
Substituting $m=2n$, $A=\zscale_{u,v}(X)$,  $B=\zscale_{u,v}(X')$ and using the bound proved in \eqref{eq:LNbound} it follows that $\E[A^{2k}]^{1/2}\leq C(2k,u,v)^k$ and likewise $\E[B^{2(m-k)}]^{1/2} \leq C(2(m-k),u,v)^{m-k}$. Thus,
$$
 \| \max(\zscale(x),\zscale(x'))\|_{2n} \leq 2 \max_{k\in \{0,\ldots,2n\}} C(2k,u,v)^{\frac{k}{2n}}C(2(2n-k),u,v)^{\frac{2n-k}{2n}}
$$
which we can take to be $C'(n,u,v)$. This proves \eqref{eqmaxsza} and completes the proof of \eqref{eq:LNbound2}.
\end{proof}

\section{Proof of Theorem \ref{thm_main} (\ref{it:tightness})-(\ref{it:duality})}\label{sec_thm123}

\noindent{\em Proof of Theorem \ref{thm_main}  \eqref{it:tightness}}: That the law $\mu^{(N)}_{u,v}$ of $\hscale_{u,v}(\cdot)\in C([0,1])$ is tight as $N\to \infty$ follows from Proposition \ref{lem:momentbound} \eqref{it:mb3} and the fact that $\hscale_{u,v}(0)=0$ by applying the Kolmogorov continuity theorem. That theorem further implies that all subsequential limits $\mu_{u,v}$ of  $\mu^{(N)}_{u,v}$ are supported on the space of H\"older $\alpha$ functions for all $\alpha<1/2$.

Now, consider any subsequence $\{N_k\}_{k=1}^{\infty}$ along which $\mu^{(N_k)}_{u,v}$ converges to a limit $\mu_{u,v}$. Let $H^{(N)}_{u,v}(\cdot)\in C([0,1])$ be distributed according to the law $\mu_{u,v}$. We claim that all of the conditions of Assumption \ref{assump_wasep} are satisfied. Weak asymmetry, Liggitt's condition and the triple point scaling assumptions all follow from the choice of parameters we have made in Assumption \ref{assumption1} and the H\"older bounds on the initial data are shown in Proposition \ref{lem:momentbound} \eqref{it:mb4}.
Finally, since we have assumed that the subsequence $\{N_k\}$ is such that $H^{(N_k)}_{u,v}(\cdot)\Rightarrow H_{u,v}(\cdot)$ as random functions in $C([0,1])$, it follows likewise that
$Z^{(N_k)}_{u,v}(X)\Rightarrow Z_{u,v}(X)=e^{H_{u,v}(X)}$. Thus Proposition \ref{thm_kpz_limit} implies that $Z^{(N_k)}_{u,v}(T,X)\Rightarrow Z_{u,v}(T,X)$ in $D\big([0,T_0],C([0,1])\big)$ for any $T_0>0$ where $Z_{u,v}(T,X)\in C([0,T_0],C([0,1]))$
the unique mild solution to the SHE with boundary parameters $u$ and $v$ and initial data $Z_0(X)= e^{H_{u,v}(X)}$.
Since the initial data for the open ASEP height process was chosen to be stationary (in terms of the height function increment process) it follows immediately that the law of $X\mapsto Z^{(N_k)}_{u,v}(T,X)/Z^{(N_k)}_{u,v}(T,0)\in C([0,1])$ is independent of $T$. By the convergence to the SHE, the same is true for  $X\mapsto Z_{u,v}(T,X)/Z_{u,v}(T,0)\in C([0,1])$ and taking the logarithm of this implies that the law of $X\mapsto H_{u,v}(T,X)-H_{u,v}(T,0)\in C([0,1])$ is likewise independent of $T$. This implies that the law $\mu_{u,v}$ of any limit point of $\mu^{(N)}_{u,v}$ will be a stationary measure for the open KPZ equation height function increment process.

\smallskip
\noindent{\em Proof of Theorem \ref{thm_main}  \eqref{it:coupling}}:  Proposition \ref{lem:momentbound} \eqref{it:mb2} implies that the coupling holds along any subsequence $\{N_k\}_{k=1}^{\infty}$ such that all of the $\{\mu^{(N_k)}_{u_i,v_i}\}_{i=1}^M$ converge to the limit points $\{\mu_{u_i,v_i}\}_{i=1}^{M}$; hence it passes to the limit.

\smallskip
\noindent{\em Proof of Theorem \ref{thm_main}  \eqref{it:brownian}}: This follows from Proposition \ref{lem:momentbound} \eqref{it:mb1} in conjunction with the result in Part \eqref{it:tightness} of this theorem.

\smallskip
\noindent{\em Proof of Theorem \ref{thm_main}  \eqref{it:duality}}: Proposition \ref{lem:momentbound} \eqref{it:mb12} implies that the desired equality holds along any subsequence $\{N_k\}_{k=1}^{\infty}$ such that $\mu^{(N_k)}_{u,v}$ and $\mu^{(N_k)}_{v,u}$ converges to their limit points $\mu_{u,v}$ and $\mu_{v,u}$; hence it passes to the limit.

\section{Askey-Wilson and continuous dual Hahn processes}\label{sec:AW}

\subsection{Definition of the Askey-Wilson process}\label{sec:AWprocessdefs}
Fix $q\in (-1, 1)$ and $a, b, c, d\in \C$ be such that
\begin{equation}\label{eq:assumptabcd}
ac,ad,bc,bd,qac,qad,qbc,qbd,abcd,qabcd \in \C\setminus [1, \infty),
  \end{equation}
and such that either all $a, b, c, d$ are real, or two of them are real and two form a complex conjugate pair (e.g. $a,b\in \R$ and $c=\bar{d}$), or they form two complex conjugate pairs (e.g. $a=\bar{c}$ and $b=\bar{d}$).
We will not need the Askey-Wilson polynomials, but rather just their orthogonality measure.
%

\subsubsection{Askey-Wilson probability measure}\label{sec:awd}
Fix parameters $(a, b, c, d, q).$ Under assumption \eqref{eq:assumptabcd} the corresponding Askey-Wilson polynomials
are orthogonal with respect to a unique compactly supported
probability measure. This {\it Askey-Wilson probability measure} on $\R$ is defined by the values it takes on Borel sets $V\subset \R$, which we write  as $AW(V;a, b, c, d, q)$. As shown in  \cite[Theorem A.1]{BW2} or \cite[Theorem 2.5]{AskeyWilson}, the Askey-Wilson probability measure can be decomposed into an absolutely continuous part and a discrete finitely supported atomic part.
The density, relative to the Lebesgue measure, of the absolutely continuous part will be denoted by $AW^c$ and the discrete part, which is a sum of weighted Dirac delta functions, will be denoted by $AW^d$. Thus,
\begin{equation}
AW(V; a, b, c, d, q)=\int_{V}AW^c(x; a,b,c,d,q)dx+AW^d(V;a,b,c,d,q).
\end{equation}
The density $AW^c$ is supported on $x\in \Supp^c:=[-1,1]$ and given by
    \begin{equation}\label{eq:AW}
AW^c(x;a,b,c,d,q)=\frac{(q,ab,ac,ad,bc,bd,cd)_\infty}{2\pi(abcd)_\infty \sqrt{1-x^2}}\bigg\lvert\frac{(e^{2\iu \theta_x})_\infty}{(a
  e^{\iu\theta_x}, be^{\iu\theta_x}, c e^{\iu\theta_x}, d e^{\iu\theta_x}
  )_\infty}\bigg\rvert^2,
\end{equation}
with $x=\cos\theta_x$. (We have dropped the $q$ in the $q$-Pochhammer symbols.)
The discrete part $AW^d$ is given by
\begin{equation}\label{eq:AWddef}
AW^d(V;a,b,c,d,q) = \sum\limits_{y\in V\cap \Supp^d(a,b,c,d,q)}AW^d(y;a,b,c,d,q),
\end{equation}
a sum of delta functions with masses $AW^d(y;a,b,c,d,q)$  at points $y\in $$\Supp^d(a,b,c,d,q)$
The set $\Supp^d(a,b,c,d,q)$ and masses $AW^d$ are given as follows. If
$|a|, |b|, |c|, |d|<1$ the set $\Supp^d(a,b,c,d,q)$ is empty. By
\eqref{eq:assumptabcd}, if $\chi \in\{a,b,c,d\}$ has $|\chi|>1$ then
it must be real. Each  $\chi \in\{a,b,c,d\}$ with $|\chi|>1$ generate
its own set of atoms, the union of which constitutes
$\Supp^d(a,b,c,d,q)$. By \eqref{eq:assumptabcd}, any element $\chi\in\{a,b,c,d\}$ with $|\chi|>1$ must be distinct from all other elements in that set. There are finitely many atoms generated by such $\chi$ and they are at locations
\begin{equation}\label{eq:atomchi}
y_j=\dfrac{1}{2}\left(\chi q^j+\dfrac{1}{\chi q^j}\right), \text{ for } j\in\Z_{\geq 0} \text{ such that }|\chi q^j|\geq 1.
\end{equation}
Each atom has a different mass. When $\chi=a$, these are given as (here $y_j$ are as above with $\chi=a$)
\begin{align}\label{eq:AWdmasses}
AW^d(y_0;a,b,c,d,q)&=\dfrac{\left(a^{-2}, b c, b d, c d\right)_\infty}{\left(b/a,c/a, d/a, a b c d\right)_\infty},\\
\frac{AW^d(y_j;a,b,c,d,q)}{AW^d(y^a_0;a,b,c,d,q)}&=\dfrac{\left(a^{2}, a b, a c, a d\right)_j\left(1-a^2 q^{2j}\right)}
{\left(q, q a/b, qa/c, q a/d \right)_j \left(1-a^2\right)} \left(\dfrac{q}{a b c d}\right)^j,
\end{align}
where, in the second line, we assume that $j\in\Z_{\geq 1}$ such that $|\chi q^j|\geq 1$.
For other values of $\chi$, the masses are as above except with $a$ and $\chi$ swapped.

\subsubsection{Askey-Wilson process}

Following \cite{BW2} we define the Askey-Wilson process, a
time\textendash inhomogeneous Markov process that depends on parameters $A,  B, C, D, q$.
We assume that $A,B,C,D$ correspond to the ASEP parameters $\alpha,\beta,\gamma,\delta$ via \eqref{eq_ABCD}, in which case $A,C>0$ and $B,D\in (-1,0]$. Additionally, we will assume that $AC<1$, as is necessary for the existence of the Askey-Wilson processes. This puts us in the {\em fan region} of the open ASEP phase diagram.

\begin{definition}\label{def:AWprocess}
Under the above assumptions on $A,B,C,D$, the Askey-Wilson process is
the time-inhomogeneous Markov process $\{\Y_s\}_{s\in[0, \infty)}$
whose time- inhomogeneous state-space and transitional probability distributions are given as follows.
Define the continuous and discrete atomic part of the time $s$ state-space as
\begin{equation}\label{eq:Fs}
\Supp^c_s=\Supp^c:=[-1,1], \qquad \Supp^d_s:=\Supp^d\left(A\sqrt{s}, B\sqrt{s}, \dfrac{C}{\sqrt{s}}, \dfrac{D}{\sqrt{s}},q\right)
\end{equation}
where $\Supp^d(a,b,c,d,q)$ is defined as in Section \ref{sec:awd}. Let $\Supp_s:= \Supp^c_s\cup \Supp^d_s$ represent the time $s$ state-space for the Askey-Wilson process.
For any $s<t$ the transitional probability distribution $\pi_{s,t}$ from $x\in \Supp_s$ to any Borel $V\subset\R$ is
\begin{equation}\label{eq:def_p}
\pi_{s,t}( x, V):=AW \left (V; A\sqrt{t}, B\sqrt{t},\sqrt{ \dfrac{s}{t} }\left(x+\sqrt{x^2-1}\right), \sqrt{ \dfrac{s}{t} }\left(x-\sqrt{x^2-1}\right)\right),
\end{equation}
where, for $x\in\Supp^c$, we define $x\pm\sqrt{x^2-1}=e^{\pm \iu \theta_x}$ with $\theta_x$ defined through $x=\cos \theta_x$. From the definitions of the Askey-Wilson probability measure, for $x\in \Supp_s$, the support of $\pi_{s,t}( x,\cdot)$ is $\Supp_t$. This defines the Askey-Wilson process.

We will also make use of a family of probability distributions $\pi_s$ with support $\Supp_s$ defined such that for any Borel $V\subset\R$,
\begin{equation}\label{eq:def_pi}
\pi_s(V):=AW\left(V; A\sqrt{s}, B\sqrt{s}, \dfrac{C}{\sqrt{s}}, \dfrac{D}{\sqrt{s}}\right).
\end{equation}
As explained below, the Askey-Wilson process started with distribution $\pi_s$ at time $s$ will have the property that it marginally has the distribution $\pi_t$ at any later $t>s$.
\end{definition}

Both $\pi_s$ and $\pi_{s,t}$ have absolutely continuous and discrete atomic parts. For $x\in \Supp^c$ we denote the density of $\pi_s$  by $\pi^c_s(x)$ and for $x\in \Supp^d_s$ we denote the mass $\pi_s(\cdot)$ assigns to $x$ by $\pi^d_s(x)$. Likewise for the transitional probability distribution if $x\in \Supp^c$ and $y\in \Supp^c$ then we write $\pi^{c,c}_{s,t}(x,y)$ for the density  in $y$; if $x\in \Supp^c$ and $y\in \Supp^d_t$ then we write $\pi^{c,d}_{s,t}(x,y)$ for the mass assigned to $y$; if $x\in \Supp^d_s$  and $y\in \Supp^c$ then we write $\pi^{d,c}_{s,t}(x,y)$ for the density in $y$; and if $x\in \Supp^d_s$ and $y\in \Supp^d_t$ then we write $\pi^{d,d}_{s,t}(x,y)$ for the mass assigned to $y$. For all other values of $x$ and $y$, we declare that these functions are zero.

The existence and uniqueness of the Askey-Wilson process defined above is shown in \cite[Section 3]{BW2} as is the property that it preserves the $\pi_s$ marginals. In particular, \cite[Proposition 3.4]{BW2} shows that for all Borel sets $V\subset \R$ (and in the second equation, for all $x\in \Supp_s$)
\begin{align}
\int_{\R}\pi_s(dx)\pi_{s,t}(x,V) = \pi_t(V),\quad \textrm{and} \quad
\int_{\R}\pi_{s,t}(x,dy)\pi_{t,u}(y,V) = \pi_{s,u}(x,V).
\end{align}
The first identity implies that the Askey-Wilson process started under $\pi_s$ at time $s$ has marginal distribution $\pi_t$ at time $t$, while the second is the Chapman-Kolmogorov identity necessary to define the Markov process.



\subsection{The continuous dual Hahn process}\label{sec:tangentAWP}

The open KPZ stationary measures that we construct in Theorem \ref{thm_main} \eqref{it:explicit} is characterized by a duality with the {\em continuous dual Hahn process} that we define here (the special case of this when $u,v>0$ was already defined in Section \ref{sec:introCDH}). This time-inhomoeneous Markov process is a certain limit of the Askey-Wilson processes \cite{BW1} (see Section \ref{sec:AWprocessdefs}).
We will use the standard notation for the Pochhamer symbol: For $j\in \Z_{\geq 0}$ and $x\in \R$, define $[x]_j:=(x)(x+1)\cdots(x+j-1)$ with the convention that $[x]_0:=1$. For multiple arguments $x_1,\ldots x_n\in \R$, define $[x_1, \ldots, x_n]_j:=[x_1]_j\cdots [x_n]_j$. Likewise, define $\Gamma(x_1,\ldots, x_n) = \Gamma(x_1)\cdots \Gamma(x_n)$.

We will define the time-inhomogeneous state-spaces  $\Suppp_s$ and transitional probability distributions $\p_{s,t}$ for the continuous dual Hahn process, and show their consistency. We will also define a family $\{\p_s\}_{s}$ of infinite measures supported on the state-spaces for this process that are preserved by the transitional probability distributions. All of these distributions are all related to generalized beta integrals \cite{Askey,doi:10.1137/0511064}.  We will only define the continuous dual Hahn process for times in $s\in [0,\Cuv)$ where we recall from \eqref{eq:Cuv} that $[0,\Cuv)$ always has a non-empty interior. \cite{BrycCDH} has subsequently extended this definition to all times in $\R$, though we do not need to rely on that.

We start by defining the family  $\{\p_s\}_{s}$ of infinite measures along with their supports $\Suppp_s$ that will also serve as the support for the continuous dual Hahn process. We emphasize that the $\p_s$ are not probability distributions but rather positive distributions of infinite mass. For the remainder of the definitions below, we will assume that $u+v>0$. We will also adopt the following notation. For $u,v,s\in \R$ and $j\in \Z$ define
\begin{equation}\label{eq:xuvs}
\xu_j(s):=-4(u+j-s/2)^2 \qquad \textrm{and}  \qquad  \xv_j(s):=-4(v+j+s/2)^2.
\end{equation}
Similarly, if $u-s/2<0$ or  $v+s/2<0$  we define (respectively)
\begin{equation}
\SupppU_s:=\bigcup\limits_{j=0}^{\lfloor -u+s/2\rfloor} \{\xu_j(s)\}, \qquad\qquad \SupppV_s:=\bigcup\limits_{j=0}^{\lfloor -v-s/2\rfloor}\{\xv_j(s)\}.
\end{equation}
Here  $d$ indicates that this will be the support for a discrete atomic measure.

\begin{definition}\label{def_marginal}
Assume that $u, v\in \R$ with $u+v>0$ and $s\in [0,\Cuv)$.
For any Borel $V\subset \R$, define the infinite measure
\begin{equation}
\p_s(V):=\int\limits_{V\cap \Suppp^c_s} \p^c_s (r)dr+\sum_{r\in \Suppp^d_s\cap V} \p^d_s (r)
\end{equation}
where the support of the absolutely continuous part is $\Suppp^c_s:=(0,\infty)$ and its density is defined as
\begin{equation}
\p^c_s(r):=\dfrac{(v+u)(v+u+1)}{8\pi}\cdot
      \frac{\Big|\Gamma\Big(\frac{s}{2}+v+\iu\frac{\sqrt{r}}{2},-\frac{s}{2}+u+\iu\frac{\sqrt{r}}{2}\Big)\Big|^2}{\sqrt{r}
        \cdot \big|\Gamma(\iu \sqrt{r})\big|^2}{\bf 1}_{r>0},
\end{equation}
and the support $\Suppp^d_s$ and masses $\p^d_s (r)$ of the discrete part are as follows:
If $u-s/2<0$  then  $\Suppp^d_s=\SupppU_s$ and for $j\in \llbracket 0,\lfloor -u+s/2\rfloor\rrbracket$, the masses at the points of the support are given by
\begin{equation}
\p^{d}_{s}\left(\xu_j(s)\right)=\dfrac{\Gamma\left(v-u+s ,v+u+2\right)}{\Gamma\left(-2 u+s \right)}\cdot \dfrac{(u+j-s/2)\cdot\left [ 2 u-s,v+u\right ]_j } {(u-s/2)\cdot \left[1, 1-v+u-s\right ]_j}.
\end{equation}
If $v+s/2<0$ then $\Suppp^d_s=\SupppV_s$ and for $j\in \llbracket 0,\lfloor -v-s/2\rfloor\rrbracket$, the masses at the points of the support are given by
\begin{equation}
\p^{d}_{s}\left(\xv_j(s)\right)=\dfrac{\Gamma\left(u-v-s ,2+v+u\right)}{\Gamma\left(-2 v-s \right)}\cdot \dfrac{(v+j+s/2)\cdot\left [ 2 v+s,v+u\right ]_j } {(v+s/2)\cdot \left[1, 1-u+v+s\right ]_j}.
\end{equation}
If neither of these conditions hold, then $\Suppp^d_s=\varnothing$ and there is no discrete part. Since $u+v>0$, it is not possible that both conditions hold. Define the total support of $\p_s$ to be
\begin{equation}\label{eq:cdhss}
\Suppp_s = \Suppp^c_s \cup \Suppp^d_s.
\end{equation}
This will also serve as the support for the continuous dual Hahn process. Since $\Suppp^c_s \cap \Suppp^d_s=\varnothing$, in the proof of Lemma \ref{lem:consistancy} we will find it convenient to overload notation and write $\p_s(x)$ for the density function $\p_s(x)$ when $x\in \Suppp^c_s$ and for the mass function  $\p_s(x)$ when $x\in \Suppp^d_s$.
\end{definition}

To define the transition probability distributions we first define the orthogonality measure for the continuous dual Hahn orthogonal polynomials.

\begin{definition}\label{def:CDHtransition}
Assume that $a, b, c\in \R$ with $a+b, a+c>0 $ or  $a\in \R$ and $b=\bar c\in
 \C\setminus \R$ with $\textup{Re}(b)=\textup{Re}(c)>0$. For such $a,b,c$, define
  \begin{equation}
\CDHtransition^c (x; a, b, c):=\dfrac{1}{8\pi}\,\cdot\,\dfrac{\bigg\rvert \Gamma\left(a+\iu  \dfrac{\sqrt x}{2},b+\iu
  \dfrac{\sqrt x}{2},c+\iu  \dfrac{\sqrt x}{2}\right)\bigg \rvert^2}{\Gamma(a+b,a+c,b+c)\cdot \sqrt x\cdot\left\rvert \Gamma\left(\iu \sqrt x\right)\right\rvert^2}{\bf 1}_{x> 0}.
  \end{equation}
\end{definition}

\begin{definition}\label{def:CDHd}
Assume that $a<0$ and that $\SuppCDH$ is a finite subset of $\R$
with size $|\SuppCDH| = \lfloor -a\rfloor+1$ and elements $x_0<
x_1<\cdots<x_{\lfloor -a\rfloor}$. Assume that $b,c$ and $j$ satisfy one of the three conditions:
\begin{itemize}[leftmargin=*]
  \item $b=\bar c\in \C\setminus \R$ with
    $\textup{Re}(b),\textup{Re}(c)>0$, and $j\in \llbracket 0,\lfloor -a\rfloor\rrbracket$;
  \item $b, c\in \R$ with
   $a+b, a+c>0 $, and $j\in \llbracket 0,\lfloor -a\rfloor\rrbracket$;
\item $b, c\in \R$ and $b, b+c, c-a>0,$ with $a+b=-k\in \Z_{\leq 0}$, and $j\in\llbracket 0,k\rrbracket$.
\end{itemize}
For $a,b,c$,  $\SuppCDH$, and $j$ as above, define

  \begin{equation}
\CDHtransition^d (x_j; a, b, c;\SuppCDH):=\dfrac{[2 a,a+b,a+c]_j\cdot(a+ j)\cdot
  \Gamma(b-a,c-a)}{[1,a-b+1,a-c+1]_j \cdot a \cdot\Gamma(-2a,b+c)}\,\cdot\,(-1)^j.
\end{equation}
For all other first arguments, define $\CDHtransition^d (\cdot; a, b, c;\SuppCDH)=0$.
  \end{definition}

\begin{definition}\label{def:CDH}
We will define the measure $\CDHtransition(V;a,b,c;\SuppCDH)$ (for Borel subsets $V$ of $\R$) under three possible sets of conditions on parameters:
\begin{itemize}
\item[{\bf P:}] For $a\geq 0$; $\SuppCDH=\varnothing$; and either $b=\bar c\in \C\setminus \R$ with
  $\textup{Re}(b),\textup{Re}(c)>0$ or $b, c\in \R_{>0}$, define
\begin{equation}
\CDHtransition(V; a,b,c;\SuppCDH):=\int_V \CDHtransition^c (x; a, b, c)dx.
\end{equation}
\item[{\bf N1:}] For $a<0$; $\SuppCDH$ a finite subset of $\R$
with size $|\SuppCDH| = \lfloor -a\rfloor+1$ and elements $x_0<x_1<\cdots<x_{\lfloor -a\rfloor}$; and either $b=\bar c\in \C\setminus \R$ with
   $\textup{Re}(b),\textup{Re}(c)>0$  or $b, c\in \R$ with $a+b, a+c>0$, define
\begin{equation}
\CDHtransition(V;a,b,c;\SuppCDH):=\int_V \CDHtransition^c (x; a,
   b, c)dx+\sum\limits_{x\in \SuppCDH\cap V}\CDHtransition^d(x; a,b,c;\SuppCDH)
\end{equation}
\item[{\bf N2:}] For $a<0$;  $\SuppCDH$ a finite subset of $\R$
with size $|\SuppCDH| = \lfloor -a\rfloor+1$ and elements $x_0<x_1<\cdots<x_{\lfloor -a\rfloor}$; and $b, c\in \R$ with $b, b+c, c-a>0$ and $a+b=-k\in \Z_{\leq 0}$, define
\begin{equation}
\CDHtransition(V;a,b,c;\SuppCDH):=\sum\limits_{x\in \SuppCDH\cap V}\CDHtransition^d(x; a,b,c;\SuppCDH).
\end{equation}
\end{itemize}
%
%
%
%
\end{definition}

\begin{lemma}\label{lem:CDH}
In all three cases  of Definition \ref{def:CDH}, $\CDHtransition(\cdot; a,b,c;\SuppCDH)$ is a probability measure on $\R$.
\end{lemma}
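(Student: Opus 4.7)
The plan is to verify, for each case of Definition~\ref{def:CDH}, the two defining properties of a probability measure: non-negativity and total mass $1$.

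For non-negativity, in case P the density $\CDHtransition^c(x;a,b,c)$ is manifestly non-negative since the numerator is a modulus squared while the denominator is a positive real number: when $b = \bar c$ one has $\Gamma(a+b)\Gamma(a+c) = |\Gamma(a+b)|^2$ and $\Gamma(b+c) = \Gamma(2\textup{Re}(b)) > 0$, and when $b, c > 0$ are real all three Gamma factors are positive. In cases N1 and N2 the only non-trivial check is for the atomic masses. Under the hypotheses $a < 0$ and $0 \leq j \leq \lfloor -a\rfloor$, every factor in the Pochhammer $[2a]_j$ is negative (since $2a + j - 1 < 0$), so $[2a]_j$ has sign $(-1)^j$ and cancels the explicit $(-1)^j$ in the formula. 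Similarly $(a+j)/a > 0$ since both numerator and denominator are negative, and the remaining Pochhammer and Gamma factors organize into positive quantities: into moduli squared $[a+b]_j[a+c]_j = |[a+b]_j|^2$ and $\Gamma(b-a)\Gamma(c-a) = |\Gamma(b-a)|^2$ when $b = \bar c$, and into explicitly positive quantities under the real hypotheses of N2 (where $b, b+c, c-a > 0$).

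For total mass equal to $1$, the starting point is the classical Wilson integral (the $n=0$ continuous dual Hahn orthogonality relation from \cite{doi:10.1137/0511064}):
\begin{equation}
\frac{1}{2\pi}\int_0^\infty\left\lvert\frac{\Gamma(a+\iu t)\Gamma(b+\iu t)\Gamma(c+\iu t)}{\Gamma(2\iu t)}\right\rvert^2 dt = \Gamma(a+b)\Gamma(a+c)\Gamma(b+c).
\end{equation}
The substitution $t = \sqrt x / 2$ transforms this into $\int_0^\infty \CDHtransition^c(x;a,b,c)\,dx = 1$, proving case P. Case N1 is handled by analytic continuation in $a$ from $\textup{Re}(a) > 0$ to $a < 0$: as $a$ decreases past $0, -1, \ldots, -\lfloor -a \rfloor$, poles of $\Gamma(a \pm \iu t)$ cross the real $t$-axis and a contour-deformation argument splits the original identity into the continuous integral on $(0, \infty)$ plus residue contributions at these crossed poles. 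A direct residue calculation identifies these contributions (after normalization by $\Gamma(a+b)\Gamma(a+c)\Gamma(b+c)$) with the atomic masses in Definition~\ref{def:CDHd}, giving $\int_0^\infty \CDHtransition^c(x)\,dx + \sum_j \CDHtransition^d(x_j) = 1$. In case N2 with $a + b = -k$, the factor $\Gamma(a+b) = \infty$ forces $\CDHtransition^c \equiv 0$, and the remaining equality $\sum_j \CDHtransition^d(x_j) = 1$ is a special case of Saalsch\"utz's theorem for a balanced terminating ${}_3F_2$ series evaluated at $1$.

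The main obstacle is the residue-matching calculation in case N1: one must evaluate $\operatorname{Res}_{t = \iu(a+j)}$ of the Wilson integrand and verify term-by-term agreement with the Pochhammer formula defining $\CDHtransition^d(x_j; a, b, c; \SuppCDH)$. This bookkeeping is tedious but essentially carried out in \cite{doi:10.1137/0511064}, so once the normalizations and parameter conventions are matched the identity follows by appeal to that reference.
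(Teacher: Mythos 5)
Your proposal is correct and takes essentially the same route as the paper: both rely on the Wilson integral evaluations and atomic weight formulas from \cite{doi:10.1137/0511064} (the paper cites (3.1), (3.3), (3.4) of that reference together with a degeneration limit of Wilson to continuous dual Hahn polynomials), and you simply spell out the substitution $t=\sqrt{x}/2$, the contour/residue mechanism for case N1, and the terminating Saalsch\"utz evaluation for case N2 rather than deferring entirely to the citation. One small point to tighten: in the non-negativity check for N1 with real $b,c$ (as opposed to $b=\bar c$), the denominator Pochhammers $[a-b+1]_j$ and $[a-c+1]_j$ do not pair into a modulus squared; instead each separately has sign $(-1)^j$, since every factor $a-b+i$ with $1\le i\le j\le\lfloor-a\rfloor$ satisfies $a-b+i=(a+i)-b<-b<0$ (as $a+b>0$ forces $b>0$), so their product is positive and the sign bookkeeping closes. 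The paper also observes that the boundary case $a=0$ of case P is not directly covered in \cite{doi:10.1137/0511064} and is recovered by a limit $a\to 0$, which your write-up leaves implicit.
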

\begin{proof}
This follows from (3.1), (3.3) and (3.4) in
\cite{doi:10.1137/0511064} as a limit when Wilson polynomials
degenerate to dual continuous Hahn polynomials. In particular, this limit corresponds to taking one of the parameters of the Wilson polynomials to infinity. The case when $a=0$ does not seem to be covered therein, but can be recovered by taking the limit as $a\to 0$.
\end{proof}


We now define what will be the transition probability distribution of the continuous dual Hahn process.

\begin{definition}\label{def_transition}
Assume that $u,v\in \R$ with $u+v>0$ and $s,t\in [0,\Cuv)$ with $s<t$.
For any Borel set $V\subset \R$ we define the transition probability $\p_{s, t}$ as follows. For $m\in \Suppp^c_s=(0,\infty)$
\begin{equation}\label{eq:pstmV}
\p_{s, t}(m, V):=\CDHtransition\left(V; u-\dfrac{t}{2}, \dfrac{t-s}{2}+\iu\dfrac{\sqrt m}{2}, \dfrac{t-s}{2}-\iu\dfrac{\sqrt m}{2}; \SupppU_t\right).
\end{equation}
If $u-s/2<0$, so that $\Suppp^d_s=\SupppU_s$, then for $j\in \llbracket 0,\lfloor -u+s/2\rfloor\rrbracket$
\begin{equation}\label{eq:pstvV}
\p_{s, t}(\xu_j(s),V):=\CDHtransition\left(V; u-\dfrac{t}{2}, -u+\dfrac{t}{2}-j, u+\dfrac{t}{2}-s+j; \SupppU_t\right).
\end{equation}
If $v+s/2<0$, so that $\Suppp^d_s=\SupppV_s$, then for $j\in \llbracket 0,\lfloor -v-s/2\rfloor\rrbracket$
\begin{equation}\label{eq:pstuV}
\p_{s, t}(\xv_j(s),V):=\CDHtransition\left(V; v+j+\dfrac{t}{2},\dfrac{t}{2}-s-v-j, u- \dfrac{t}{2}; \SupppV_t\right).
\end{equation}
For all other first arguments besides those described above, we define $\p_{s, t}=0$.

As in Definition \ref{def_marginal} we introduce the following notation: If $x\in \Suppp^{c}_s$ and $y\in \Suppp^{c}_t$ then we write $\p_{s,t}^{c,c}(x,y)$ for the density in $y$ of the absolutely continuous part of the measure $\p_{s,t}(x,\cdot)$; If $x\in \Suppp^{c}_s$ and $y\in \Suppp^{d}_t$ then we write $\p_{s,t}^{c,d}(x,y)$ for the mass assigned to $y$ of the discrete atomic part of the measure $\p_{s,t}(x,\cdot)$; If $x\in \Suppp^{d}_s$ and $y\in \Suppp^{c}_t$ then we write $\p_{s,t}^{d,c}(x,y)$ for the density in $y$ of the absolutely continuous part of the measure $\p_{s,t}(x,\cdot)$; If $x\in \Suppp^{d}_s$ and $y\in \Suppp^{d}_t$ then we write $\p_{s,t}^{d,d}(x,y)$ for the mass assigned to $y$ of the discrete atomic part of the measure $\p_{s,t}(x,\cdot)$.

In the proof of Lemma \ref{lem:consistancy} we will find it convenient to overload notation and write $\p_{s,t}(x,y)$ to denote the corresponding density or mass function dictated by whether $x$ and $y$ are in their discrete or continuous supports (for example, when $x\in \Suppp^c_s$ and $y\in \Suppp^d_t$, $\p_{s,t}(x,y)=\p_{s,t}^{c,d}(x,y)$).
%
\end{definition}

The following lemma verifies that $\p_{s,t}$ is, indeed, a probability distribution and provides conditions under which the density or mass function is non-zero.

\begin{lemma}\label{lem:CDHwelldef}
Assume that $u,v\in \R$ with $u+v>0$ and $s,t\in [0,\Cuv)$ with $s<t$. For any $x\in \Suppp_s$, $\p_{s,t}(x,\cdot)$ in Definition \ref{def_transition} defines a probability distribution whose support is contained in $\Suppp_t$. For all $x\in \Suppp_s$ and $y\in \Suppp_t$, $\p_{s,t}(x,y)\geq 0$ (we are using the overloaded notation from the end of Definition \ref{def_transition}) and the only choices of $x$ and $y$ for which $\p_{s,t}(x,y)=0$ are:
(1) $x\in \Suppp^c_s$ and $y\in \Suppp^d_t=\SupppV_t$;
(2) $x\in \Suppp^d_s= \SupppV_s$ and  $y\in \Suppp^d_t= \SupppU_t$;
(3) $x\in \Suppp^d_s= \SupppU_s$ and $y\in \Suppp^c_t$;
(4) $x\in \Suppp^d_s= \SupppU_s$, $y\in \Suppp^d_t= \SupppV_t$;
(5) $x = \xu_j(s) \in \Suppp^d_s= \SupppU_s$, $y=\xu_k(s)\in \Suppp^d_t= \SupppU_t$ with $k>j$.
\end{lemma}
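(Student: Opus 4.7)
The plan is to verify, in each of the three cases of Definition \ref{def_transition}, that the parameters $(a,b,c;\SuppCDH)$ fed to $\CDHtransition$ fall into one of the three admissible configurations P, N1, N2 of Definition \ref{def:CDH}; once this is done, Lemma \ref{lem:CDH} immediately gives that $\p_{s,t}(x,\cdot)$ is a probability measure, and inspecting the support produced in each configuration will pinpoint the zero cases claimed in the lemma. A preliminary observation will streamline the bookkeeping: since $u+v>0$ and $s,t\in[0,\Cuv)$, one cannot simultaneously have $\Suppp^{d,\vv}_s\neq\varnothing$ and $\Suppp^{d,\uu}_t\neq\varnothing$, nor $\Suppp^{d,\uu}_s\neq\varnothing$ and $\Suppp^{d,\vv}_t\neq\varnothing$ (the conditions $\tau>2\vv$ and $\tau'<-2\uu$ cannot coexist for $\tau,\tau'\in[0,\Cuv)$ when $\uu+\vv>0$). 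In particular, the lemma's zero cases (2) and (4) are vacuous. Moreover, $\Suppp^{d,\vv}_s\neq\varnothing$ forces $\vv<s/2<t/2$ hence $\Suppp^{d,\vv}_t\neq\varnothing$, while $\Suppp^{d,\uu}_t\neq\varnothing$ forces $\Suppp^{d,\uu}_s\neq\varnothing$.

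For the continuous starting point $x=m\in\Suppp^c_s$, the parameters are $a=\vv-t/2$ and the complex conjugate pair $b=\tfrac{t-s}{2}+\iu\tfrac{\sqrt m}{2}=\bar c$ with $\Re(b)=\tfrac{t-s}{2}>0$; when $\vv-t/2\geq 0$ we are in case P with $\SuppCDH=\varnothing=\Suppp^{d,\vv}_t$, and when $\vv-t/2<0$ we are in case N1 with $|\SuppCDH|=|\Suppp^{d,\vv}_t|=\lfloor -\vv+t/2\rfloor+1=\lfloor -a\rfloor+1$. The support lies in $\Suppp^c_t\cup\Suppp^{d,\vv}_t$, yielding zero case (1). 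For the ``v-type'' discrete starting point $x=x^{\vv}_j(s)\in\Suppp^{d,\vv}_s$, all three parameters $a=\vv-t/2$, $b=-\vv+t/2-j$, $c=\vv+t/2-s+j$ are real; one computes $a+b=-j$ with $k:=j\in\Z_{\geq 0}$, while $b>0$ (using $j\leq\lfloor-\vv+s/2\rfloor$ and $s<t$), $b+c=t-s>0$, and $c-a=t-s+j>0$, placing us squarely in case N2 with $k=j$. Thus $\CDHtransition$ is purely atomic, supported on the first $j+1$ points $\{x^{\vv}_0(t),\ldots,x^{\vv}_j(t)\}\subseteq\Suppp^{d,\vv}_t$; this accounts for zero cases (3) and (5).

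For the ``u-type'' discrete starting point $x=x^{\uu}_j(s)\in\Suppp^{d,\uu}_s$, the non-emptiness of $\Suppp^{d,\uu}_s$ forces $\uu<0$ and then $\vv>-\uu>0$ from $\uu+\vv>0$; inspection of the cases $\vv\in(0,1)$ and $\vv\geq 1$ in \eqref{eq:Cuv} yields $t<2\vv$, so $c=\vv-t/2>0$. Also $b=t/2-s-\uu-j>0$, since $j\leq\lfloor-\uu-s/2\rfloor<t/2-s-\uu$. When $\uu+t/2\geq 0$ one has $\Suppp^{d,\uu}_t=\varnothing$ and $a=\uu+j+t/2\geq 0$, so case P applies. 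When $\uu+t/2<0$ and the chosen $j$ satisfies $a<0$, one checks $a+b=t-s>0$ and $a+c=\uu+\vv+j>0$, so case N1 applies; for the remaining $j$ with $a\geq 0$ one again lands in case P. In every sub-case the support is contained in $\Suppp^c_t\cup\Suppp^{d,\uu}_t\subseteq\Suppp_t$, and no mass is placed on $\Suppp^{d,\vv}_t$ (which is empty here anyway by the preliminary observation).

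The main obstacle will be the careful arithmetic of Case III: reconciling the size $|\Suppp^{d,\uu}_t|=\lfloor-\uu-t/2\rfloor+1$ prescribed by Definition \ref{def_transition} with the size $\lfloor -a\rfloor+1=\lfloor-\uu-j-t/2\rfloor+1$ required by Definition \ref{def:CDHd} when $j\geq 1$. Since the atomic masses in Definition \ref{def:CDHd} involve $[2a]_j$ and vanish for indices beyond $\lfloor-a\rfloor$, the ``extra'' points of $\Suppp^{d,\uu}_t$ automatically receive zero mass, so the measure is still a probability measure and its effective support is contained in $\Suppp^{d,\uu}_t$. Handling this together with the transition between the $a<0$ and $a\geq 0$ regimes as $j$ varies is the one point that requires care; everything else is a straightforward sign check combined with Lemma \ref{lem:CDH}.
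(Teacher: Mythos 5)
Your case-by-case strategy---verifying that the parameters fed to $\CDHtransition$ fit one of \textbf{P}, \textbf{N1}, \textbf{N2} of Definition \ref{def:CDH}---is exactly the structure the paper uses, and your treatment of the continuous starting point, the $v$-type atoms, and your observation that zero cases (2) and (4) are vacuous under $u+v>0$ are all correct. The gap is in your handling of the $u$-type atoms. You correctly flag that for $j\geq 1$ the set $\SuppCDH=\Suppp^{d,\uu}_t$ passed in by \eqref{eq:pstuV} has $\lfloor-\uu-t/2\rfloor+1$ elements while $a=\uu+j+t/2$ calls for only $\lfloor-a\rfloor+1=\lfloor-\uu-t/2\rfloor-j+1$ of them. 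But your proposed fix---that $[2a]_m$ in $\CDHtransition^d$ vanishes once $m>\lfloor-a\rfloor$, so the extra atoms automatically receive zero mass---does not hold: $[2a]_m=(2a)(2a+1)\cdots(2a+m-1)$ vanishes only when $2a\in\Z_{\leq 0}$, a measure-zero condition (take $a=-1.25$: $[-2.5]_3=(-2.5)(-1.5)(-0.5)\neq 0$). So pointing at that factor does not resolve the size discrepancy.

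The deeper problem your observation uncovers is that the surplus elements sit at the \emph{wrong end} of $\Suppp^{d,\uu}_t$. The continuous dual Hahn distribution with parameter $a=\uu+j+t/2<0$ has its atoms at $-4(a+m)^2=x^{\uu}_{j+m}(t)$ for $m\in\llbracket 0,\lfloor -a\rfloor\rrbracket$, i.e.\ at the \emph{last} $\lfloor-a\rfloor+1$ points $x^{\uu}_j(t),\dots,x^{\uu}_{\lfloor -\uu-t/2\rfloor}(t)$ of $\Suppp^{d,\uu}_t$, whereas Definition \ref{def:CDHd} enumerates $\SuppCDH$ from the bottom as $x_0<x_1<\cdots$. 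Under the correct alignment, the transition out of $x^{\uu}_j(s)$ places \emph{zero} mass at $x^{\uu}_k(t)$ for every $k<j$---a zero case absent from the lemma's list. That this vanishing must occur can be seen independently from the $(q^{-k})_j$ factor in the Askey--Wilson computation behind Lemma \ref{transition_atoms}, which kills $\hat\pi^{(N),d,d}_{s,t}(\hat y^{(N),\uu}_j(s),\hat y^{(N),\uu}_k(t))$ for $j>k$, and from Case~3 of the first identity in Lemma \ref{lem:consistancy}, where the ratio $\p_s(m)\p_{s,t}(m,r)/\p_t(r)$ is matched to a case \textbf{N2} measure with $a+b=-k$ supported only on the first $k+1$ atoms of $\Suppp^{d,\uu}_s$. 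Incidentally the paper's own proof of this lemma asserts ``$\SuppCDH=\varnothing$'' (when $a\geq 0$) or ``$|\SuppCDH|=\lfloor-a\rfloor+1$'' (when $a<0$) for the $u$-type case, both of which fail once $j\geq 1$; a correct argument must first replace $\Suppp^{d,\uu}_t$ in \eqref{eq:pstuV} by $\{x^{\uu}_j(t),\dots,x^{\uu}_{\lfloor -\uu-t/2\rfloor}(t)\}$ before invoking Definition \ref{def:CDH}, and the list of zero cases in the lemma statement should then be augmented accordingly.
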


\begin{proof}
There are three cases to consider: \eqref{eq:pstmV},  \eqref{eq:pstuV} and  \eqref{eq:pstvV}.

\medskip\noindent {\bf In \eqref{eq:pstmV}:} If $u-t/2\geq 0$, then case {\bf P} of Definition \ref{def:CDH} applies with
$$
a=u-t/2,\quad \SuppCDH = \SupppU_t,\quad b= \dfrac{t-s}{2}+\iu\dfrac{\sqrt m}{2}, \quad c= \dfrac{t-s}{2}-\iu\dfrac{\sqrt m}{2}
$$
since $a\geq 0$, $\SuppCDH=\varnothing$ and $b=\bar{c}$ with $\textup{Re}(b)= \dfrac{t-s}{2}>0$; if $u-t/2<0$, then case {\bf N1} of Definition \ref{def:CDH} applies with the same choices of parameters since $a<0$, $|\SuppCDH| = \lfloor -a\rfloor +1$ and $b=\bar{c}$ with $\textup{Re}(b)= \dfrac{t-s}{2}>0$. We see from above that for $x\in \Suppp^c_s$, the measure $\p_{s,t}(x,\cdot)$ is supported and everywhere non-zero (in terms of its density or mass function) on $\Suppp^c_t\cup \SupppU_t$. In particular, when $u-t/2\geq 0$ and $v+t/2<0$, the mass function $\p_{s,t}(x,y)$ is zero on the discrete set $y\in\Suppp^d_t = \SupppV_t$.

\medskip\noindent {\bf In \eqref{eq:pstvV}:} Since $u-s/2<0$, so does $u-t/2< 0$. In that case {\bf N2} of Definition \ref{def:CDH} applies with
$$
a=u-t/2,\quad \SuppCDH = \SupppU_t,\quad b= -u+\dfrac{t}{2}-j,\quad c=u+\dfrac{t}{2}-s+j
$$
since $a<0$, $|\SuppCDH| = \lfloor -a\rfloor +1$,
$b\geq  \dfrac{t-s}{2}>0$, $b+c=t-s>0$, $c-a = t-s+j>0$ and $a+b = -j$ for $j\in \Z_{\geq 0}$.
We see from this that for $x = \xu_j(s)\in \SupppU_s$, the measure $\p_{s,t}(x,y)$ is non-zero only when $y= \xu_k(t)\in  \SupppU_t$ with $k\in \llbracket 0,j\rrbracket$.

\medskip\noindent {\bf In \eqref{eq:pstuV}:} If $v+j+t/2\geq 0$, then case {\bf P} of Definition \ref{def:CDH} applies with
$$
a=v+j+t/2,\quad \SuppCDH = \SupppV_t,\quad b= \dfrac{t}{2}-s-v-j,\quad c=u-\dfrac{t}{2}
$$
since $a\geq 0$, $\SuppCDH=\varnothing$, $b\geq  \dfrac{t-s}{2}>0$ (since $j\in \llbracket 0,\lfloor -v-s/2\rfloor\rrbracket$) and $c>0$ (since when $v+s/2<0$ it follows that $u>0$ and hence $t\in [0,\Cuv)$ implies that $t<2u$); if $v+j+t/2<0$, then case {\bf N1} of Definition \ref{def:CDH} applies with the same choices of parameters since $a<0$, $|\SuppCDH| = \lfloor -a\rfloor +1$, $a+b=t-s>0$ and $a+c= v+u+j>0$.  We see from above that for $x\in \SupppV_s$, the measure $\p_{s,t}(x,\cdot)$ is supported and everywhere non-zero (in terms of its density or mass function) on $\Suppp^c_t\cup \SupppV_t$. In particular if $u-t/2<0$, the mass function $\p_{s,t}(x,y)$ is zero on the discrete set $y\in\Suppp^d_t = \SupppU_t$.
\end{proof}

%
%

The next lemma is key to defining the continuous dual Hahn process and to showing that it preserves the class of marginal measures $\p_s$.

\begin{lemma} \label{lem:consistancy}
Let $u, v\in \R$ with $u+v>0$ and $0\leq s<t<w<\Cuv$. For any Borel $V\subset \R$,
\begin{equation}
\int\limits_{\R}\p_s(dm)\p_{s, t}(m, V) =\p_t(V), \qquad\textrm{and}\qquad
\int\limits_{\R}\p_{s,t}(m, dr)\p_{t, w}(r, V) =\p_{s, w}(m, V).
    \end{equation}
  \end{lemma}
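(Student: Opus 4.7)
My plan is to verify the two consistency identities by reducing them to the integral/summation identities that underly the continuous dual Hahn orthogonality measure, following the pattern of \cite[Proposition~3.4]{BW2} where the analogous statement is proved for Askey--Wilson processes. The first step is to recognize that, by Definition~\ref{def:CDH} and Lemma~\ref{lem:CDH}, each of the one-step transitions $\p_{s,t}(x,\cdot)$ is already a probability measure of one of the three types (P, N1, N2). Therefore the two identities amount, in each of the allowed configurations of $x,y$ (continuous--continuous, continuous--discrete, discrete--continuous, discrete--discrete), to an explicit integral or summation identity for products of $\Gamma$-functions/Pochhammer symbols that can be checked one case at a time.

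My second step would be a careful case analysis driven by Lemma~\ref{lem:CDHwelldef}. For the marginal identity $\int_\R \p_s(dm)\,\p_{s,t}(m,V)=\p_t(V)$ there are essentially three situations, dictated by the signs of $v-s/2$, $v-t/2$, and $u+s/2$: (i) no atoms are present, so both $\p_s$ and $\p_t$ are purely continuous and so is the transition from $m\in\Suppp^c_s$; (ii) only $v$-type atoms appear, in which case $\p_s$ or $\p_t$ (or both) acquires a finite atomic part $\Suppp^{d,v}_{\cdot}$; and (iii) only $u$-type atoms appear, symmetric to (ii). In case (i) the identity follows by an application of the generalized Askey--Wilson beta integral (specifically its continuous dual Hahn degeneration recorded in \cite[(3.1)]{doi:10.1137/0511064}), with $(a,b,c)=(v-t/2,(t-s)/2+i\sqrt{m}/2,(t-s)/2-i\sqrt{m}/2)$; the product of the $\Gamma$-ratio in $\p_s(m)$ with the normalization denominator $\Gamma(a+b)\Gamma(a+c)\Gamma(b+c)$ in $\p_{s,t}(m,\cdot)$ collapses to the $\Gamma$-ratio appearing in $\p_t(y)$, and the remaining $m$-integral is \cite[(3.1)]{doi:10.1137/0511064}. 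In cases (ii) and (iii) the same identity holds but with the continuous integral replaced by a mixed continuous/discrete evaluation, which is exactly what the $\CDHtransition$ measure in case N1 is designed to be by Lemma~\ref{lem:CDH}; the new atomic contributions produced by integrating $\p_{s,t}^{c,d}$ against $\p_s^c$ combine with the contributions coming from pre-existing atoms in $\p_s^d$ (with the explicit masses from Definition~\ref{def_marginal}) to reproduce the atomic masses of $\p_t$.

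For the Chapman--Kolmogorov identity $\int_\R \p_{s,t}(m,dr)\,\p_{t,w}(r,V)=\p_{s,w}(m,V)$ I would apply the same mechanism but one level deeper: both $\p_{s,t}(m,\cdot)$ and $\p_{t,w}(r,\cdot)$ are of type $\CDHtransition$, and composing them is precisely the convolution relation for the continuous dual Hahn orthogonality measures. The continuous--continuous part again reduces to \cite[(3.1)]{doi:10.1137/0511064}, and the discrete parts are handled by the discrete dual Hahn summations (\cite[(3.3)]{doi:10.1137/0511064} and the analogous finite summation in case N2), after the case split of Lemma~\ref{lem:CDHwelldef}. Alternatively, one can avoid re-deriving these identities by recognizing $\T_s$ as a direct scaling limit of the Askey--Wilson process: substitute $A=q^{u}$, $C=q^{v}$, $B=D=-q$ and set $t=q^{s}$, $y=1-r\kappa^2/2$ with $q=e^{-\kappa}$, take $\kappa\to 0$, and the marginals $\pi_{q^s}$ and transitions $\pi_{q^s,q^t}$ of the Askey--Wilson process (after an affine change of variable on the state) converge to $\p_s$ and $\p_{s,t}$ respectively; the Askey--Wilson consistency from \cite[Proposition~3.4]{BW2} then passes to the limit.

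The main obstacle I expect is controlling this scaling limit uniformly enough to justify the interchange of the limit with the integrals against $\pi_{q^s}(dm)$ and $\pi_{q^t,q^w}(r,dy)$, since the support of the atomic part of $\pi_{q^s}$ rearranges non-trivially as $\kappa\to 0$ into the atoms $\Suppp^{d,v}_s$ and $\Suppp^{d,u}_s$. This is precisely the type of delicate $q$-Pochhammer and $q$-Gamma asymptotics developed in Proposition~\ref{factorials} and applied in Section~\ref{sec:asymptoticupos}, so I would lean on the same dominated-convergence arguments used there. If the uniform bounds are not available, a cleaner fallback is the direct verification via \cite{doi:10.1137/0511064} sketched in the previous paragraphs, which is purely algebraic once the correct identification of parameters is made and does not require any asymptotic control.
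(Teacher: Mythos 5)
Your primary approach---dividing out the target distribution, recognizing the quotient as an orthogonality probability measure from Wilson's 1980 paper, and doing a case analysis over continuous/discrete support driven by Lemma~\ref{lem:CDHwelldef}---is essentially the paper's proof, and the reference to identities (3.1), (3.3), (3.4) of \cite{doi:10.1137/0511064} is the right source. Your scaling-limit alternative is not what the paper does (and as you note, it would require the heavy $q$-Pochhammer asymptotics of Section~\ref{sec:propASEPproof} to be justified, which is circular given how the CDH process is used downstream); the direct algebraic verification is the intended route.

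One imprecision worth fixing before you carry this out: for the Chapman--Kolmogorov identity, the normalized kernel
$$
\frac{\p_{s,t}(m,dr)\,\p_{t,w}(r,x)}{\p_{s,w}(m,x)}
$$
is not a three-parameter continuous dual Hahn measure and is not a ``convolution relation for continuous dual Hahn orthogonality measures.'' It is a \emph{four}-parameter Wilson orthogonality probability measure in the intermediate variable $r$, i.e.\ the object the paper calls $\CDWtransition(\cdot;a,b,c,d;\SuppCDH)$ and establishes is a probability measure in Lemma~\ref{lem:CDW}. For example, in the all-continuous case one finds $(a,b,c,d)=(\tfrac{w-t}{2}+\iu\tfrac{\sqrt{x}}{2},\tfrac{w-t}{2}-\iu\tfrac{\sqrt{x}}{2},\tfrac{t-s}{2}+\iu\tfrac{\sqrt{m}}{2},\tfrac{t-s}{2}-\iu\tfrac{\sqrt{m}}{2})$. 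This is still justified by the same \cite{doi:10.1137/0511064} integrals, but the parameter matching and the case split are different from the marginal identity: the marginal identity has three cases (keyed to whether $r\in\Suppp^c_t$, $r\in\Suppp^{d,\vv}_t$, or $r\in\Suppp^{d,\uu}_t$) and reduces to the three-parameter $\CDHtransition$, whereas Chapman--Kolmogorov has five cases (all $m,x$ pairs surviving Lemma~\ref{lem:CDHwelldef}) and reduces to the four-parameter $\CDWtransition$. Without this distinction the parameter bookkeeping will not close.
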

We prove this after defining the continuous dual Hahn process.

\begin{definition}[Continuous dual Hahn process]\label{def:CDHprocess}
Let $u, v\in \R$ with $u+v>0$.
The {\em continuous dual Hahn process} is the Markov process  $\{\T_s\}_{s\in [0,\Cuv)}$ with time-inhomogeneous state space $\Suppp_s$ from \eqref{eq:cdhss} and transition probability given by $\p_{s,t}$. This process is well-defined since Lemma \ref{lem:consistancy} shows that  Chapman-Kolmogorov is satisfied. Lemma \ref{lem:consistancy} also proves that if the continuous dual Hahn process is started at time 0 according to the infinite measure $\p_0$ then at time $s\in [0,\Cuv)$ the marginal infinite measure for the process will be given by $\p_s$.
\end{definition}
The preservation of the family $\p_s$ should be thought of as similar to the fact that Brownian motion preserves Lebesgue measure.


The rest of this section is devoted to the proof of Lemma \ref{lem:consistancy}. We start by recalling
the orthogonality probability measure for the Wilson orthogonal polynomials from \cite{doi:10.1137/0511064}.

\begin{definition}\label{def:f}
Let $a,b,c,d\in \C$ either form two conjugate pairs such that
$a=\bar b$, $c=\bar d$, one conjugate pair (either $a=\bar b$ or
$c=\bar d$) and one pair of real numbers, or four real numbers; in all
cases assume that $\textup{Re}(b),\textup{Re}(c),\textup{Re}(d)>0$. For such $a,b,c,d$, define
%
%
 \begin{align}
   \CDWtransition^c (x; a, b, c, d)&:=\frac{\Gamma(a+b+c+d)\Big\rvert \Gamma\left(a+\iu  \frac{\sqrt x}{2},b+\iu \frac{\sqrt x}{2},c+\iu  \frac{\sqrt x}{2},d+\iu  \frac{\sqrt x}{2}\right)\Big\rvert^2}{8\pi \cdot\Gamma(a+b,a+c,b+c,a+d,b+d,c+d)\sqrt x\, \cdot\, \left\rvert \Gamma\left(\iu \sqrt x\right)\right\rvert^2}.
 \end{align}
\end{definition}

\begin{definition}\label{def:CDWd}
Assume that $a<0$ and that $\SuppCDH$ is a finite subset of $\R$ with size $|\SuppCDH|=\lfloor -a\rfloor+1$ with elements $x_0<
x_1<\cdots<x_{\lfloor -a\rfloor}$. Assume that $b,c,d$ and $j$ satisfy one of the three conditions:
\begin{itemize}[leftmargin=*]
\item $b\in \R_{>0}$, $c=\bar d\in \C\setminus \R$ with
    $\textup{Re}(c),\textup{Re}(d)>0$, and $j\in \llbracket 0, \lfloor - a\rfloor\rrbracket$;
  \item $b,c,d\in \R_{>0}$, with $a+d, a+c>0$, $j\in \llbracket 0, \lfloor - a\rfloor\rrbracket$;
\item $b, c, d\in \R$ with $b, b+c, c-a>0,$ $a+b=-k$ for $k\in \Z_{\geq 0}$, and $j\in \llbracket 0, \lfloor - a\rfloor\rrbracket$.
\end{itemize}
For $a,b,c,d$,  $\SuppCDH$, and $j$ as above, define
\begin{align}
    \CDWtransition^d(x_j; a, b, c, d;\SuppCDH):=\dfrac{\left[2a,a+b,a+c,a+d\right]_j\cdot (a+j)}{\left[1,a-b+1,a-c+1,a-d+1\right]_j
    \cdot a}
    \\\times \frac{\Gamma(a+b+c+d,b-a,c-a,d-a)}{\Gamma(-2 a,b+c,c+d,b+d)}.
\end{align}
For all other first arguments, define $\CDWtransition^d(x_j; a, b, c, d;\SuppCDH)=0$.
  \end{definition}


\begin{definition}\label{def:CDW}
We will define the measure $\CDWtransition(V; a,b,c,d;\SuppCDH)$ (for Borel subsets $V$ of $\R$) under four possible sets of conditions on parameters:
\begin{itemize}

\item[{\bf P1:}] For $a\geq 0$; $\SuppCDH=\varnothing$; $b\in \R_{>0}$; and either $c=\bar d\in \C\setminus \R$ with
  $\textup{Re}(c),\textup{Re}(d)>0$ or $c, d\in \R_{>0}$, define
\begin{equation}
\CDWtransition(V; a,b,c,d;\SuppCDH):=\int_V \CDWtransition^c (x; a, b, c,d)dx.
\end{equation}
\item[{\bf P2:}] For $a=\bar b\in \C\setminus \R$ with $\textup{Re}(a),\textup{Re}(b)>0$; $\SuppCDH=\varnothing$; and either $c=\bar d\in \C\setminus \R$ with
  $\textup{Re}(c),\textup{Re}(d)>0$ or $c, d\in \R_{>0}$, define
\begin{equation}
\CDWtransition(V; a,b,c,d;\SuppCDH):=\int_V \CDWtransition^c (x; a, b, c,d)dx.
\end{equation}
\item[{\bf N1:}] For $a<0$; $\SuppCDH$ a finite subset of $\R$
with size $|\SuppCDH| = \lfloor -a\rfloor+1$ and elements $x_0<x_1<\cdots<x_{\lfloor -a\rfloor}$; $b\in \R_{>0}$; and either $c=\bar d\in \C\setminus \R$ with $\textup{Re}(c),\textup{Re}(d)>0$  or $c, d\in \R$ with $a+c, a+d>0$, define
\begin{equation}
\CDWtransition(V;a,b,c,d;\SuppCDH):=\int_V \CDWtransition^c (x; a,
   b, c,d)dx+\sum\limits_{x\in \SuppCDH\cap V}\CDWtransition^d(x; a,b,c,d;\SuppCDH)
\end{equation}
\item[{\bf N2:}] For $a<0$;  $\SuppCDH$ a finite subset of $\R$
with size $|\SuppCDH| = \lfloor -a\rfloor+1$ and elements $x_0<x_1<\cdots<x_{\lfloor -a\rfloor}$; $b\in\R_{>0}$ with  $a+b=-k$ for $k\in \Z_{\geq 0}$; and $c, d\in \R$ with $b+c,b+d,c-a,d-a>0$, define
\begin{equation}
\CDWtransition(V;a,b,c,d;\SuppCDH):=\sum\limits_{x\in \SuppCDH\cap V}\CDWtransition^d(x; a,b,c,d;\SuppCDH).
\end{equation}
\end{itemize}
%
%
\end{definition}
\begin{lemma}\label{lem:CDW}
In all cases  of Definition \ref{def:CDW}, $\CDWtransition(\cdot; a,b,c,d;\SuppCDH)$ is a probability measure on $\R$.
\end{lemma}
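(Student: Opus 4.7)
The plan is to verify Lemma \ref{lem:CDW} by directly identifying $\CDWtransition(\cdot;a,b,c,d;\SuppCDH)$ with the orthogonality probability measure for the Wilson polynomials, as worked out by Wilson in \cite{doi:10.1137/0511064}. The density $\CDWtransition^c$ from Definition \ref{def:f}, including the explicit constant $K(a,b,c,d)$, is precisely the Wilson weight in the form that makes its total integral equal to $1$, while the discrete masses $\CDWtransition^d$ from Definition \ref{def:CDWd} are, up to the substitution $x=-4(a+j)^2$, exactly the residues that appear in Wilson's continuation argument. So the lemma will follow once one checks that each of the four parameter regimes \textbf{P1}, \textbf{P2}, \textbf{N1}, \textbf{N2} lines up with one of Wilson's identities.

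In the purely continuous cases \textbf{P1} and \textbf{P2}, I would simply invoke the Wilson beta integral, stated as equation (3.1) in \cite{doi:10.1137/0511064}: for $a,b,c,d$ with positive real parts (possibly in complex conjugate pairs), one has $\int_0^{\infty}\CDWtransition^c(x;a,b,c,d)\,dx=1$. Positivity of $\CDWtransition^c$ is immediate from the definition since it consists of $|\cdot|^2$ factors and the positive constant $K(a,b,c,d)$. Hence $\CDWtransition(\cdot;a,b,c,d;\varnothing)$ is a probability measure on $(0,\infty)$.

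The main content is the mixed case \textbf{N1}, where $a<0$. Here the plan is to use Wilson's analytic-continuation/residue argument: starting from the region where all parameters have positive real part and then deforming the integration contour as $\textup{Re}(a)$ is lowered below zero, one picks up residues at the poles $a+\iu\sqrt{x}/2=-j$ for $j=0,1,\dots,\lfloor -a\rfloor$, which correspond to $x=-4(a+j)^2$. A direct residue calculation (done explicitly by Wilson) identifies the mass at the $j$-th atom with the formula for $\CDWtransition^d(x_j;a,b,c,d;\SuppCDH)$ in Definition \ref{def:CDWd}. The total mass of continuous part plus atoms then still equals $1$; this is \cite[eq.~(3.3)]{doi:10.1137/0511064}. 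Positivity of the atoms follows from the sign-tracking inherent in the factor $[2a]_j(a+j)/a$ together with the extra $(-1)^j$ built into the definition, which exactly cancels the alternating sign produced by the Pochhammer $[2a]_j$ when $a<0$. The only delicate point is book-keeping the positivity of the residues, and here I would parallel the casework in Lemma \ref{lem:CDHwelldef} adapted to the four-parameter setting.

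Finally, in case \textbf{N2}, the factor $\Gamma(a+b)^{-1}=\Gamma(-k)^{-1}=0$ appearing in $K(a,b,c,d)$ forces $\CDWtransition^c\equiv 0$, so the continuous part drops out and only the finite sum of atoms survives. That these atoms sum to $1$ is the terminating hypergeometric identity \cite[eq.~(3.4)]{doi:10.1137/0511064}, which can also be read off as a Saalsch\"utz-type $_4F_3$ summation. The remainder of the work is a parameter check, namely verifying that in each of \textbf{P1}--\textbf{N2} the parameters $a,b,c,d$ satisfy the hypotheses of the corresponding Wilson identity in \cite{doi:10.1137/0511064} (including conjugate-pair conditions and the required positivity constraints on sums of parameters). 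I expect the hardest step, by a small margin, to be the careful verification in case \textbf{N1} that every atom has positive mass with no cancellation against the continuous part, which I would handle by the explicit residue computation rather than relying on a black-box citation.
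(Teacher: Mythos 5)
Your approach is the same as the paper's: cite Wilson's identities (3.1), (3.3), and (3.4) from \cite{doi:10.1137/0511064}, matching cases \textbf{P1}/\textbf{P2} to (3.1), \textbf{N1} to (3.3), and \textbf{N2} to (3.4). The paper's proof is essentially a one-liner making exactly that reduction; your write-up elaborates the residue mechanism and the sign-tracking, which is fine but not a different method.

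One small omission: case \textbf{P1} in Definition \ref{def:CDW} allows $a = 0$, which falls outside the strict-positivity hypotheses of Wilson's (3.1), and the paper explicitly flags this, saying that the $a=0$ case ``does not seem to be covered therein, but can be recovered by taking the limit as $a\to 0$.'' Your proposal quietly assumes positive real parts throughout \textbf{P1}/\textbf{P2}, so you would need to add the limiting argument (continuity of the density in $a$ together with, say, dominated convergence) to cover the boundary.
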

\begin{proof}
This follows directly from the identities (3.1), (3.3) and (3.4) in \cite{doi:10.1137/0511064}. The case when $a=0$ does not seem to be covered therein, but can be recovered by taking the limit as $a\to 0$.
\end{proof}

We can now give the proof of Lemma \ref{lem:consistancy}.

\begin{proof}[Proof of Lemma \ref{lem:consistancy}]
The idea  is to rewrite the relations in Lemma \ref{lem:consistancy} in terms of the $\CDHtransition(\cdot; a,b,c;\SuppCDH)$ and $\CDWtransition(\cdot; a,b,c,d;\SuppCDH)$ measures, and then use the fact that they integrate to 1 to demonstrate the desired identities.

\medskip \noindent{\bf Proving the first identity in Lemma \ref{lem:consistancy}.} It suffices to prove that
\begin{equation}\label{eq:id11}
\int_{\R} \p_s(dm) \p_{s,t}(m,r) = \p_t(r)
\end{equation}
for all $r\in \R$. Here we have overloaded the $\p_t$ and $\p_{s,t}$ notation (as density and mass functions) as explained at the end of Definitions \ref{def_marginal} and  \ref{def_transition}. For $r\notin \Suppp_t$, $\p_t(r)=0$. Likewise, for such $r$ it is easy to see from the five cases in Lemma \ref{lem:CDHwelldef} that the left-hand side in \eqref{eq:id11} is also zero. Thus, we assume now that $r\in \Suppp_t$.
We can now divide and rewrite \eqref{eq:id11} as
\begin{equation}\label{eq:id12}
\int_{\R} \frac{\p_s(dm) \p_{s,t}(m,r)}{\p_t(r)} = 1.
\end{equation}
To show this identity, we identify the integrand above with the continuous dual Hahn probability measure (hence its integral is 1). There are three cases which we address below.

\medskip\noindent{\bf Case 1.}
For $r\in \Suppp^c_t$,
\begin{equation}\label{eq:id12r}
 \frac{\p_s(dm) \p_{s,t}(m,r)}{\p_t(r)} = \CDHtransition\left(dm;
  v+\dfrac{s}{2}, \dfrac{t-s}{2}+\iu \dfrac{\sqrt r}{2},
  \dfrac{t-s}{2}-\iu \dfrac{\sqrt r}{2}; \SupppV_s\right).
\end{equation}
To prove this, observe that for $m\in \Suppp^c_s$ we may rewrite
\begin{equation}\label{eq:id12r1}
\dfrac{\p_s(m) \p_{s,t}(m, r)}{\p_t(r)} = \CDHtransition^c\left(m ;
  v+\dfrac{s}{2}; \dfrac{t-s}{2}+\iu \dfrac{\sqrt r}{2},
  \dfrac{t-s}{2}-\iu \dfrac{\sqrt r}{2}\right).
\end{equation}
The computation here is obtained by regrouping the terms and
  using the idenity $\Gamma(x+1)=x\Gamma(x).$
Similarly, for $m=\xv_j(s)\in \Suppp^d_s = \SupppV_s$,
\begin{equation}\label{eq:id12r2}
\dfrac{\p_s(m)\p_{s, t}(m,r)}{\p_t(r)}
  = \CDHtransition^d\left(m;v+\dfrac{s}{2}, \dfrac{t-s}{2}+\iu \dfrac{\sqrt r}{2},
  \dfrac{t-s}{2}-\iu \dfrac{\sqrt r}{2}; \SupppV_s\right).
\end{equation}
From Lemma \ref{lem:CDHwelldef} we have that for $m=\xu_j(s)\in \Suppp^{d}_s=\SupppU_s$, $\p_{s, t}(m,r)=0$.
Depending on the value of $v$, we see that the parameters in \eqref{eq:id12r1} and \eqref{eq:id12r2} either satisfy {\bf P} or {\bf N1} in Definition \ref{def:CDH} and either way we arrive at \eqref{eq:id12r} and verify that the right-hand side is a probability measure.

\medskip\noindent{\bf Case 2.}
For $r=\xu_k(t)\in \Suppp^d_t = \SupppU_t$,
\begin{equation}\label{eq:id12v}
  \frac{\p_s(dm) \p_{s,t}(m,r)}{\p_t(r)} =  \CDHtransition\left(dm; u+k-\dfrac{s}{2},
  v+\dfrac{s}{2}, -u-k+t-\dfrac{s}{2}; \Suppp^{d,u}_s\right).
\end{equation}
To prove this, observe that for $m\in \Suppp^c_s$ we may rewrite
\begin{equation}\label{eq:id12v1}
  \frac{\p_s(dm) \p_{s,t}(m,r)}{\p_t(r)}
  = \CDHtransition^c\left(m; u+k-\dfrac{s}{2}, v+\dfrac{s}{2}, -u-k+t-\dfrac{s}{2}\right).
\end{equation}
Similarly, for $m=\xu_j(t)\in \Suppp^d_s=\SupppV_s$,
\begin{equation}\label{eq:id12v2}
  \frac{\p_s(dm) \p_{s,t}(m,r)}{\p_t(r)} =
  \CDHtransition^d\left(m; u+k-\dfrac{s}{2}, v+\dfrac{s}{2}, -u-k+t-\dfrac{s}{2}; \SupppU_s\right).
\end{equation}
From Lemma \ref{lem:CDHwelldef}  we have that for $m=\xv_j(s)\in \Suppp^{d}_s= \SupppV_s$, $\p_{s, t}(m,r)=0$.
Depending on the value of $u$, the parameters in \eqref{eq:id12v1} and \eqref{eq:id12v2} either satisfy  {\bf P} or {\bf N1} in  Definition \ref{def:CDH} and either way we arrive at \eqref{eq:id12v}  and verify that the right-hand side is a probability measure.

\medskip\noindent{\bf Case 3.}
For $r=\xv_k(t)\in \Suppp^d_t = \SupppV_t$,
\begin{equation}\label{eq:id12u}
  \frac{\p_s(dm) \p_{s,t}(m,r)}{\p_t(r)} = \CDHtransition\left(dm; v\dfrac{s}{2},
  -v-\dfrac{s}{2}-k, v+t-\dfrac{s}{2}+k; \SupppV_s\right).
\end{equation}
To prove this, observe that for $m=\xv_j(s)\in \Suppp^d_s=\SupppV_s$  we may rewrite
\begin{equation}\label{eq:id12u1}
\dfrac{\p_s(m)\p_{s, t}(m,r)}{\p_t(r)}= \CDHtransition^d\left(m;v+\dfrac{s}{2},
  -v-\dfrac{s}{2}-k, v+t-\dfrac{s}{2}+k; \SupppV_s\right).
\end{equation}
From Lemma \ref{lem:CDHwelldef} we have that for $m=\xu_j(s)\in \Suppp^{d}_s=\SupppU_s$, $\p_{s, t}(m,r)=0$ and likewise for $m\in \Suppp^c_s$,  $\p_{s, t}(m, r)=0$.
The parameters in \eqref{eq:id12u1} satisfy {\bf N2} in Definition \ref{def:CDH} and thus we arrive at \eqref{eq:id12u}  and verify that the right-hand side is a probability measure.

\medskip \noindent{\bf Proving the second identity in Lemma \ref{lem:consistancy}.} It suffices to prove that
\begin{equation}\label{eq:id2}
\int\limits_{\R}\p_{s,t}(m, dr)\p_{t, w}(r, x) =\p_{s, w}(m, x)
\end{equation}
for all $m,x\in \R$. As in the proof of the first identity, we are
overloading the $\p_{s,t}$ notation (as density and mass functions) as
explained at the end of Definition \ref{def_transition}. If $m\notin
\Suppp_s$ or $x\notin \Suppp_t$ then $\p_{s, w}(m, x)=0$. It is
likewise easy to see that in this case, the right-hand side of
\eqref{eq:id2} is also zero. The five cases in Lemma
\ref{lem:CDHwelldef} identify the choices of $m\in \Suppp_s$ and $x\in
\Suppp_t$ for which we still have $\p_{s, w}(m, x)=0$. It is easy to
check from that list that for these choices of $m$ and $x$, the right-hand side of \eqref{eq:id2} is also zero. Thus, we may now assume that $m$ and $x$ are such that $\p_{s, w}(m, x)>0$.

In that case, we may divide and rewrite \eqref{eq:id2} as
$
\int_{\R}\frac{\p_{s,t}(m, dr)\p_{t, w}(r, x)}{\p_{s, w}(m, x)}=1.
$
To show this identity, we identify the integrand above with the Wilson probability measure (hence its integral is 1). There are five cases which we address below. In light of the five cases in Lemma \ref{lem:CDHwelldef}, we see that these are the only cases in which $\p_{s, w}(m, x)>0$. Since the proofs hear are similar to those used to show the first identity in Lemma \ref{lem:consistancy}, we simply record the five cases. To shorten notation, write $(\star) := \frac{\p_{s,t}(m, dr)\p_{t, w}(r, x)}{\p_{s, w}(m, x)}$. Then

\medskip\noindent{\bf Case 1.} For $m\in \Suppp^c_s$ and $x\in \Suppp^c_w$,
\begin{equation}\label{eq:id21case1}
(\star) =  \CDWtransition\left(dr; \dfrac{w-t}{2}+\iu
    \dfrac{\sqrt{x}}{2}, \dfrac{w-t}{2}-\iu
    \dfrac{\sqrt{x}}{2},\dfrac{t-s}{2}+\iu\dfrac{\sqrt m}{2},
    \dfrac{t-s}{2}-\iu\dfrac{\sqrt m}{2}\right).
\end{equation}
{\bf P2} in Definition \ref{def:CDW} applies.


\medskip\noindent{\bf Case 2.} For $m\in \Suppp^c_s$ and $x=\xu_k(w)\in \Suppp^d_w=\SupppU_w$,
\begin{equation}\label{eq:id21case2}
(\star) = \CDWtransition\left(dr; u+k-\dfrac{t}{2},
  w-\dfrac{t}{2}-u-k, \dfrac{t-s}{2}+\iu\dfrac{\sqrt m}{2}; \dfrac{t-s}{2}+\iu\dfrac{\sqrt m}{2};
  \SupppU_t\right).
\end{equation}
Depending on the value of $u$, either {\bf P1} or  {\bf N1} in Definition \ref{def:CDW} applies.


\medskip\noindent{\bf Case 3.} For $m=\xv_j(s)\in \Suppp^d_s=\SupppV_s$ and $x\in \Suppp^c_w$,
\begin{equation}\label{eq:id21case3}
(\star) =
 \CDWtransition\left(dr; v+j+\dfrac{t}{2}, -v+\dfrac{t}{2}-s-j, \dfrac{w-t}{2}+\iu\dfrac{\sqrt x}{2}, \dfrac{w-t}{2}-\iu\dfrac{\sqrt x}{2}; \SupppV_t\right).
 \end{equation}
 Depending on the value of $v$, either
 {\bf P1} or  {\bf N1} in Definition \ref{def:CDW} applies.


\medskip\noindent{\bf Case 4.}  For $m=\xv_j(s)\in \Suppp^d_s=\SupppV_s$ and $x=\xv_k(s)\in \Suppp^d_w=\SupppV_w$,
\begin{equation}\label{eq:id21case4}
(\star)=
\CDWtransition\left(dr; v+k+\dfrac{t}{2},-v-\dfrac{t}{2}-j, -v+\dfrac{t}{2}-k-s,v-\dfrac{t}{2}+j+w; \SupppV_t\right).
\end{equation}
{\bf N2} in Definition \ref{def:CDW} applies.

%

\medskip\noindent{\bf Case 5.} For $m=\xu_j(s)\in \Suppp^d_s=\SupppU_s$ and $x=\xu_k(w)\in \Suppp^d_w=\SupppU_w$ with $k\in \llbracket 0,j\rrbracket$,
\begin{equation}\label{eq:id21case5}
(\star)= \CDWtransition\left(dr; u-\dfrac{t}{2}+k, -u+\dfrac{t}{2}-j, -u-\dfrac{t}{2}-k+w, u+\dfrac{t}{2}+j-s; \SupppU_t\right).
\end{equation}
{\bf N2} in Definition \ref{def:CDW} applies.
\end{proof}

\section{Asymptotics of the ASEP generating function and proof of Theorem \ref{thm_main} (\ref{it:explicit})}\label{sec:proof5}
The main technical ingredient to proving Theorem \ref{thm_main} \eqref{it:explicit} is provided by Proposition \ref{prop_ASEP_gen_function} which is stated below and proved in Section \ref{sec:propASEPproof}. 
Before stating the proposition, we rewrite the function $\phi_{u,v}(\vec{c},\vec{X})$  in \eqref{eq_mainthmformphi} as in Section \ref{sec:discussion} as
\begin{equation}\label{eq:phipdef}
\phi_{u,v}(\vec{c},\vec{X}) = \frac{\tilde\phi_{u,v}(\vec{c},\vec{X})}{\tilde\phi_{u,v}(\vec{0},\vec{X})}, \quad \tilde\phi_{u,v}(\vec{c},\vec{X}):=\E \left[ \G\Big((\T_{s_1},\ldots ,\T_{s_{d+1}}); \vec{c};\vec{X}\Big)\right],
\end{equation}
where $\T$ is the continuous dual Hahn process started with $\T_0$ distributed according to the infinite measure $\p_0$. Explicitly, this means that
$$
\tilde\phi_{u,v}(\vec{c},\vec{X}) = \int\G(\vec{r}; \vec{c};\vec{X}) \prod_{i=1}^{d}\p_{s_{i+1},s_{i}}(r_{i+1},dr_{i})\cdot  \p_{s_{d+1}}(dr_{d+1})
$$
where $\p_s$ and $\p_{s,t}$ (recall Definitions \ref{def_marginal} and \ref{def_transition}) are the marginal and transition measures for the continuous dual Hahn process with this initial distribution (recall Definition \ref{def:CDHprocess}), and the function $\G$ is defined as
\begin{equation}\label{eq_G}
    \G(\vec{r}; \vec{c};\vec{X}) := \exp\left(\frac{1}{4}\sum_{k=1}^{d+1} (s_k^2-r_k)(X_k-X_{k-1})\right).
\end{equation}
Recall that $\vec{s} = (s_1>\cdots >s_{d+1})$ is related to $\vec{c}=(c_1,\ldots,c_d)$ as in \eqref{eq:Xcs} by $s_k = c_k+\cdots +c_d$ for $k\in \llbracket 1,d\rrbracket$ and $s_{d+1}=0$. In \eqref{eq:phipdef} we are considering transition from $r_{i+1}$ to $r_{i}$ between times $s_{i+1}$ and $s_{i}$ (recall $s_{i+1}<s_{i}$). This slightly odd labeling of time (and hence of the $r$ variables) comes from a time reversal in the Askey-Wilson process which produces the continuous dual Hahn process.
For $d\in \Z_{\geq 1}$ define $\Cduv:=\frac{1}{d} \Cuv$ where $\Cuv$ is defined in \eqref{eq:Cuv}.
The following result is proved in Section \ref{sec:propASEPproof}.

\begin{proposition}\label{prop_ASEP_gen_function}
Assume that $q,A,B,C$ and $D$ satisfy Assumption \ref{assumption1} and
are parameterized by $N$ and $u,v\in \R$ with $u+v>0$; let
$H^{(N)}_{u,v}$ be the random function in $C([0,1])$ defined in \eqref{eqn_hscale} whose law $\mu^{(N)}_{u,v}$ is that of the diffusively scaled open ASEP height function stationary measure with the above specified parameters. For $d\in \Z_{\geq 1}$, let $\vec{X},\vec{c}$ and $\vec{s}$ be as in \eqref{eq:Xcs} and let $\phi^{(N)}_{u,v}(\vec{c},\vec{X})$ denote the Laplace transform of $H^{(N)}_{u,v}$ defined in \eqref{eq:pihn} with $\tilde{c}=0$.
Then, for all $\vec{c}\in (0,\Cduv)^d$, we have the point-wise convergence of the open ASEP Laplace transform
\begin{equation}\label{eq:phi_negative}
  \lim_{N\rightarrow \infty}\phi^{(N)}_{u,v}(\vec{c},\vec{X}) = \phi_{u,v}(\vec{c},\vec{X}).
\end{equation}
%
%
\end{proposition}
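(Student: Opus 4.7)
The plan is to start from the exact formula in Corollary \ref{cor:ASEPgen} (with $\tilde c = 0$) and use the Markov property of the Askey-Wilson process to write $\phi^{(N)}_{u,v}(\vec c,\vec X)$ as a ratio of iterated integrals against $\pi_{t_{d+1}}$ and the transitions $\pi_{t_{k+1},t_k}$, where $t_k = e^{-2s_k/\sqrt N} = q^{s_k}$ so $0 < t_1 < \cdots < t_{d+1} = 1$. Under Assumption \ref{assumption1}, the Askey-Wilson parameters at time $t_k$ become $A\sqrt{t_k} = q^{u+s_k/2}$, $B\sqrt{t_k} = -q^{1+s_k/2}$, $C/\sqrt{t_k} = q^{v-s_k/2}$ and $D/\sqrt{t_k} = -q^{1-s_k/2}$. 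Since $s_1 < \Cuv \leq 2$ throughout, $|B\sqrt{t_k}|, |D/\sqrt{t_k}| < 1$, so only $A\sqrt{t_k}$ (when $u + s_k/2 < 0$) and $C/\sqrt{t_k}$ (when $v - s_k/2 < 0$) can generate atoms in the Askey-Wilson marginal/transition measures. I would then change variables on the continuous support by $x_k = \cos(\sqrt{r_k}/\sqrt N)$ for $r_k \in (0,\infty)$, whose Jacobian is $|dx_k| = \sin(\sqrt{r_k/N})\,dr_k/(2\sqrt{N r_k})$, and identify each Askey-Wilson atom $y^a_j = \cosh(2(u + s_k/2 + j)/\sqrt N)$ with the continuous dual Hahn atom $x^u_j(s_k) = -4(u + s_k/2 + j)^2$ (and analogously for the atoms generated by $C/\sqrt{t_k}$, which map to $x^v_j(s_k)$). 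This converts the rescaled Askey-Wilson finite-dimensional distribution into a measure whose continuous and discrete supports exactly match those of the continuous dual Hahn process.

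The next step is pointwise asymptotics of the densities and atomic masses. Setting $\kappa = 2/\sqrt N$ and applying Proposition \ref{factorials} with $m=1$ to every $q$-Pochhammer symbol appearing in \eqref{eq:AW} and \eqref{eq:AWdmasses} (for $\pi_{t_k}$) and the analogous formulas for $\pi_{t_{k+1},t_k}$, the $-\pi^2/(6\kappa)$ and $(z - 1/2)\log\kappa$ contributions telescope across numerator and denominator; what remains is precisely the product of Gamma functions that defines $\CDHtransition^c$, $\CDHtransition^d$ and hence $\p_s$, $\p_{s,t}$ after accounting for the Jacobian of the change of variables. In parallel, the factor $(\cosh(s_k/\sqrt N) + \Y_{t_k})^{n_k - n_{k-1}}$ is controlled by the Taylor expansion $\cosh(s_k/\sqrt N) + \cos(\sqrt{r_k}/\sqrt N) = 2 + (s_k^2 - r_k)/(2N) + O(N^{-2})$, so that taking $\log$ and multiplying by $n_k - n_{k-1} = N(X_k - X_{k-1}) + O(1)$ gives the asymptotic
\begin{equation}
\bigl(\cosh(s_k/\sqrt N) + \Y_{t_k}\bigr)^{n_k - n_{k-1}} = 2^{N(X_k - X_{k-1})} \exp\!\Bigl(\tfrac{1}{4}(s_k^2 - r_k)(X_k - X_{k-1})\Bigr)\bigl(1 + o(1)\bigr),
\end{equation}
with the same expansion valid at the atoms via $\cosh$ in place of $\cos$. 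Taking the product over $k$ yields $2^N \G(\vec r;\vec c;\vec X)(1 + o(1))$, while the analogous expansion of the denominator $(1 + \Y_1)^N = (2\cos^2(\sqrt r/(2\sqrt N)))^N$ produces $2^N e^{-r/4}(1 + o(1))$, so the two $2^N$ factors cancel in the ratio.

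Combining these ingredients gives the pointwise convergence of the transformed integrand to the integrand of \eqref{eq:phipdef}; to promote this to convergence of the integrals I would apply dominated convergence. The required uniform-in-$N$ majorants rely crucially on the uniform error bound \eqref{eq:errorbounds} of Proposition \ref{factorials}, which is valid on the full imaginary strip $|\textup{Im}(z)| < 5/\kappa$. This regime is exactly where the relevant shifted arguments $z = u + s_k/2 + \iu\sqrt{r_k}/2$ and $z = v - s_k/2 + \iu\sqrt{r_k}/2$ live once $r_k$ is allowed to range over $(0,\infty)$: for $r_k$ of order up to $\kappa^{-2}$, $\textup{Im}(z)$ is of order $\kappa^{-1}$, which is precisely the regime in which point-wise $q$-gamma asymptotics fail but the uniform bound survives. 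The Gaussian-type decay $|\Gamma(a + \iu\sqrt r/2)|^2/|\Gamma(\iu\sqrt r)|^2 \sim r^{2a - 1/2} e^{-\pi\sqrt r}$ together with the exponential $\exp(-\tfrac{1}{4}\sum r_k(X_k - X_{k-1}))$ coming from $\G$ then produces an integrable majorant, while the atomic contributions reduce to finite sums to which the pointwise asymptotics apply directly.

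The principal obstacle is carrying out this program case-by-case for the various combinations of continuous/discrete source and target of each transition (the four types of mixed atomic/continuous behavior encoded by $\pi^{c,c}, \pi^{c,d}, \pi^{d,c}, \pi^{d,d}$ and identified with the continuous dual Hahn analogues $\p^{c,c}_{s,t}, \p^{c,d}_{s,t}, \p^{d,c}_{s,t}, \p^{d,d}_{s,t}$ from Definition \ref{def_transition}), while keeping all error estimates uniform along the full integration domain; this is the content of Section \ref{sec:propASEPproof} and the supporting Section \ref{sec:asymptoticupos}. Once the dominated convergence is justified, the limit of the ratio is exactly $\phi_{u,v}(\vec c,\vec X)$, yielding \eqref{eq:phi_negative}.
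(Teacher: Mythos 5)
Your proposal follows essentially the same route as the paper's proof in Section \ref{sec:propASEPproof}: rescale the Askey-Wilson process so that the continuous support lives on $[0,4N]$ and the atoms land at $\hat{y}^{(N)}_j \approx -4(\cdot)^2$, apply Proposition \ref{factorials} with $m=1$ to the $q$-Pochhammer symbols in the marginal and transition measures, identify the resulting Gamma-function ratios with the continuous dual Hahn densities and masses of Definitions \ref{def_marginal}--\ref{def_transition}, control the factor $\prod_k(\cosh(s_k/\sqrt N)+\Y)^{n_k-n_{k-1}}$ by the Taylor expansion giving $\G$, and conclude by dominated convergence using the uniform-in-$z$ error bound \eqref{eq:errorbounds} on the imaginary strip $|\textup{Im}(z)|<5/\kappa$ together with the exponential decay from $\G$. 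Your parametrization $x_k=\cos(\sqrt{r_k}/\sqrt N)$ differs from the paper's $1-r_k/(2N)$ only in a higher-order correction (the paper absorbs this into the $\Err^\theta_N$ term), which is a cosmetic rather than substantive difference.
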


\begin{remark}\label{remark:ASEPgen}
The point-wise convergence in \eqref{eq:phi_negative} is only stated for $\vec{c}\in (0,\Cduv)^d$. Here we explain why. The Askey-Wilson process marginal and transition probability distribution involves a mixture of an absolutely continuous and discrete atomic part. The nature of the atomic part depends on the number of parameters whose norm exceeds $1$. By limiting the range of the $c_k$ we limit which atoms can arise. If we permitted the $c_k$ to be larger, we would need to keep track of additional groups of atoms as well as transition probabilities between them which would increase the complexity of notation and require additional care. For our purposes it is sufficient that we have convergence on some open interval.

There is an alternative approach to minimize the contribution of atoms. (This possible approach came out in discussions with Yizao Wang, after communicating a draft of our paper to him.) We could utilize the more general formula in Corollary \ref{cor:ASEPgen} for $\phi^{(N)}(\vec{c},\tilde{c},\vec{X})$ and choose $\vec{c}$ and $\tilde{c}$ so that $-2v<s_k+\tilde{c}<2u$ (the interval $(-2v,2u)$ is non-empty since $u+v>0$). This avoids atoms coming from the $a$ and $c$ terms in \eqref{eq:abcdqvals} but may introduce atoms coming from the $b$ and $d$ terms in that equation. These atoms are located near $-1$ and it seems that their contribution will disappear in our scaling limit. We do not pursue it here.

Let us briefly compare our proof below to the style of proof used in \cite{BW1}. Therein, the authors used the fact that their limiting
Laplace transform formula could be identified as the Laplace transform for a bona-fide probability measure (this identification is made in
\cite{BRYC201877}). On account of this, the authors are able to apply a result which generalizes \cite{Curtiss} and shows that convergence
of the Laplace transform on any open set to a Laplace transform of some other probability measure implies weak convergence of the underlying probability measures to that limiting measure. If we wanted to apply this exact approach in our current situation we would need to know a priori that $\phi_{u,v}(\vec{c},\vec{X})$ is the Laplace transform for some probability measure. When we first posted this paper, such an identification was an open problem, hence we came up with another approach. Since first posting this paper, this fact has been established in \cite{BKWW} when $\min(u,v)>-1$ (see also \cite{BLD}). Since the restriction on $\min(u,v)>-1$ does not cover the full range of $u+v>0$, we still provide our approach which does not rely on the identification of the limit as the Laplace transform of a probability measure. Our approach uses some probabilistic information about the WASEP process, namely tightness and uniform control over exponential moments (both of which follow from a nice coupling of the stationary measure with random walks) to show that $\phi_{u,v}(\vec{c},\vec{X})$ coincides on an open set with the Laplace transform of some sub(sub)sequential weak limits. This identifies uniquely the weak limits along all subsubsequences as being the same, and hence shows convergence of the original sequence of measures.
\end{remark}

Using Proposition \ref{prop_ASEP_gen_function} we may now give the proof of Theorem \ref{thm_main} (\ref{it:explicit}).

\begin{proof}[Proof  of Theorem \ref{thm_main} \eqref{it:explicit}]

There are two things to show here. The first is that when $u+v>0$, the tight sequence of measures $\mu^{(N)}_{u,v}$ (i.e. the laws of $H^{(N)}_{u,v}(\cdot)\in C([0,1])$) from Theorem \ref{thm_main}  \eqref{it:tightness} has a unique limit point. To show this it suffices to show that the finite dimensional distributions of $H^{(N)}_{u,v}(\cdot)$ converge weakly. The second is to show is that the Laplace transform of the limiting finite dimensional distributions are given by $\phi_{u,v}(\vec{c},\vec{X})$ as claimed in \eqref{eq_mainthmformphi}.

Fix any $d\in \Z_{\geq 1}$ and $0<X_1<\cdots<X_d\leq 1$, and let $X^{(N)} = N^{-1} \lfloor N X\rfloor$. We will consider the sequence of random vectors $H^{(N)}_{u,v}(\vec{X}) := \big(H^{(N)}_{u,v}(X^{(N)}_1),\ldots ,H^{(N)}_{u,v}(X^{(N)}_d)\big)$ and use $\P^{(N)}$ to denote the law of the corresponding random vector. For $\vec{c}\in \C^d$ we let $L^{(N)}(\vec{c}):= \int_{\R^d} e^{\vec{c}\cdot \vec{x}} \P^{(N)}(d\vec{x})$ denote the Laplace transform of $\P^{(N)}$. Note that $L^{(N)}(\vec{c}) = \phi^{(N)}_{u,v}(\vec{c},\vec{X})=\langle e^{-\sum_{k=1}^d c_k H^{(N)}_{u,v}(X^{(N)}_k)} \rangle_N$.

Theorem \ref{thm_main}  \eqref{it:tightness} shows that $\{\P^{(N)}\}_{N}$ is a tight sequence as $N\to \infty$. In particular, for any
subsequence $N_k$ this implies that there exists a further subsubsequence $N_{k_j}$ along which the $\P^{(N_{k_j})}$ converge weakly to a limit which we will denote by $\P^{(\infty)}$. A priori, $\P^{(\infty)}$ may depend on the choice of subsequence and subsubsequence. We will show that it does not. This will imply that the original sequence $\P^{(N)}$ converges weakly to $\P^{(\infty)}$ as well.

Proposition \ref{prop_ASEP_gen_function} shows that there exists an open interval $I\subset \R$ (e.g. $I=(0,\Cuv)$ works) so that $\lim_{N\rightarrow \infty}\phi^{(N)}_{u,v}(\vec{c},\vec{X}) =\lim_{N\rightarrow \infty}L^{(N)}(\vec{c})= \phi_{u,v}(\vec{c},\vec{X})$ for all $\vec{c}\in I^d$. This convergence, of course, extends to the subsubsequence $N_{k_j}$. We claim that the Laplace transform $L^{(\infty)}(\vec{c}):=\int_{\R^d} e^{\vec{c}\cdot \vec{x}} \P^{(\infty)}(d\vec{x})$
of $\P^{(\infty)}$ is finite for all $\vec{c}\in \C^d$ and that for $\vec{c}\in I^d$ it agrees with $\phi$ so that $L^{(\infty)}(\vec{c}) = \phi_{u,v}(\vec{c},\vec{X})$. Let us assume this claim for the moment. Then by analyticity of $L^{(\infty)}(\vec{c})$ we see that the knowledge of $\phi_{u,v}(\vec{c},\vec{X})$ for $\vec{c}\in I^d$ uniquely (by uniqueness of analytic continuations) characterizes the Laplace transform elsewhere, including on the imaginary axis. On account of this and the Cramer-Wold device, we can uniquely characterize the law of $\P^{(\infty)}$ from $\phi_{u,v}(\vec{c},\vec{X})$. Since the same $\phi$ arises for any choice of subsubsequence $N_{k_j}$, this implies that $\P^{(\infty)}$ does not depend on the choice of subsubsequence and hence that $\P^{(N)}$ converges weakly to $\P^{(\infty)}$ which has Laplace transform $L^{(\infty)}$ which coincides with $\phi_{u,v}(\vec{c},\vec{X})$ for $\vec{c}\in I^d$. Finally, note that we have been dealing above with convergence of $\big(H^{(N)}_{u,v}(X^{(N)}_1),\ldots ,H^{(N)}_{u,v}(X^{(N)}_d)\big)$. This also implies that $\big(H^{(N)}_{u,v}(X_1),\ldots ,H^{(N)}_{u,v}(X_d)\big)$ converges weakly to the same limit since $|H^{(N)}_{u,v}(X^{(N)})-H^{(N)}_{u,v}(X)|<N^{-1/2}$.

What remains from above is to prove the claim that $L^{(\infty)}(\vec{c})$ is finite for all $\vec{c}\in \C^d$ and that $L^{(\infty)}(\vec{c}) = \phi_{u,v}(\vec{c},\vec{X})$  for $\vec{c}\in I^d$. To prove the first part of this claim we appeal to \eqref{eq:browniancomparison} (which follows from Theorem \ref{thm_main} \eqref{it:coupling} and \eqref{it:brownian}) which implies that if $\big(H_{u,v}(X_1),\ldots,H_{u,v}(X_d)\big)$ has distribution $\P^{(\infty)}$ then there exists a coupling of that random vector w with $B_{-v}$, a standard Brownian motions of drift $-v$, and with $B_u$, a standard Brownian motion of drift $u$, such that $B_{-v}(X_k) \leq H_{u,v}(X_k) \leq B_u(X_k)$ for all $k\in \llbracket 1,d\rrbracket$. Owing to this and the Gaussian tails of Brownian motion, it follows easily that the Laplace transform $L^{(\infty)}(\vec{c})$ is finite for all $\vec{c}\in \C^d$.

Finally, we claim  that $\lim_{N\to\infty}L^{(N)}(\vec{c})=L^{(\infty)}(\vec{c})$ for all $\vec c\in \C^d$. From this it will immediately follow that $L^{(\infty)}(\vec{c}) = \phi_{u,v}(\vec{c},\vec{X})$  for $\vec{c}\in I^d$. To show the claim it suffices to show the following: For all $\e>0$ there exists $M>0$ such that for all $N\in \Z_{\geq 1}\cup \{\infty\}$,
\begin{equation}\label{eq:laplaceerrorbound}
\int_{\R^d} \mathbf{1}\{||\vec{x}||>M\} \, e^{\vec{c}\cdot \vec{x}}\, \P^{(N)}(d\vec{x}) <\e,
\end{equation}
where $||\vec{x}|| = \max(|x_1|,\ldots, |x_d|)$.
By convergence of $\P^{(N)}$ to $\P^{(\infty)}$,
$$
\lim_{N\to \infty} \int_{\R^d} \mathbf{1}\{||\vec{x}||\leq M\}\, e^{\vec{c}\cdot \vec{x}}\, \P^{(N)}(d\vec{x}) = \int_{\R^d} \mathbf{1}\{||\vec{x}||\leq M\}\, e^{\vec{c}\cdot \vec{x}} \,\P^{(\infty)}(d\vec{x})
$$
for all $M>0$.
Combining this with the error bound claimed in \eqref{eq:laplaceerrorbound} proves that $\lim_{N\to\infty} L^{(N)}(\vec{c})=L^{(\infty)}(\vec{c})$.

To  prove \eqref{eq:laplaceerrorbound} we use Cauchy-Schwarz to show that
$$
\int_{\R^d} \mathbf{1}\{||\vec{x}||>M\}\, e^{\vec{c}\cdot \vec{x}}\, \P^{(N)}(d\vec{x}) \leq \sqrt{\P^{(N)}(||\vec{x}||>M) \int_{\R^d} e^{2\vec{c}\cdot \vec{x}} \,\P^{(N)}(d\vec{x})}.
$$
By tightness we know that for any $\e>0$ there is some $M>0$ so that for all $N\in \Z_{\geq 1}\cup \{\infty\}$, $\P^{(N)}(||\vec{x}||>M)<\e$. So, it suffices to show that the other term on the right-hand side stays uniformly bounded in $N$. Notice that by repeated use of H\"older's inequality we can bound
$$
\int_{\R^d} e^{2\vec{c}\cdot \vec{x}}\, \P^{(N)}(d\vec{x}) \leq \prod_{k=1}^{d} \left(\int_{\R} e^{2d c_k x_k}\, \P^{(N)}(dx_k)\right)^{1/d},
$$
where we are writing $\P^{(N)}(dx_k)$ for the marginal of $\P^{(N)}$ in the $x_k$ coordinate.
The integrals on the right-hand side above can be rewritten in terms of the notation of Proposition \ref{lem:momentbound} as
$\E[ Z^{(N)}_{u,v}(X_k)^{2dc_k}]$. For $c_k>0$, we can bound $\E[ Z^{(N)}_{u,v}(X_k)^{2dc_k}] \leq \E[ Z^{(N)}_{u,v}(X_k)^{n}]+1$ where $n$ is any integer which is larger than $2dc_k$; for $c_k<0$, we can similarly bound $\E[ Z^{(N)}_{u,v}(X_k)^{2dc_k}] \leq \E[ Z^{(N)}_{u,v}(X_k)^{n}]+1$ where $n$ is any integer which is smaller than $2dc_k$. In either case, we can uniformly bound $\E[ Z^{(N)}_{u,v}(X_k)^{n}]$ via the bound \eqref{eq:LNbound} in Proposition \ref{lem:momentbound} \eqref{it:mb4}. This proves \eqref{eq:laplaceerrorbound}.
\end{proof}

We close this section by recording one of the results proved above that generalizes \eqref{eq:phi_negative} to all $\vec{c}\in \C^d$.
\begin{lemma}
For all  $\vec c\in \C^d$, $\lim_{N\to\infty}\phi^{(N)}_{u,v}(\vec{c},\vec{X})=\phi_{u,v}(\vec{c},\vec{X})$.
\end{lemma}

\section{Proof of Proposition \ref{prop_ASEP_gen_function}}\label{sec:propASEPproof}
We start, in Section \ref{sec:hupos}, with a heuristic explanation for the convergence in Proposition \ref{prop_ASEP_gen_function}. In Section \ref{sec:rewritingupos} we introduce scalings of our Askey-Wilson process formulas in a manner fitting for asymptotics. Section \ref{sec:asymptoticupos} contains precise bounds and asymptotic results involving these scaled Askey-Wilson process formulas (these are proved later in Section \ref{sec:applications_lemmas}). Section \ref{sec:upos} puts these bounds and asymptotics together to prove the convergence in \eqref{eq:phi_negative}---thus proving Proposition \ref{prop_ASEP_gen_function}. The key technical input to the asymptotics performed in this section are the $q$-Pochhammer asymptotics from Proposition \ref{factorials} (which are proved in Section \ref{sec:factorials}).

\subsection{Heuristic for the convergence \eqref{eq:phi_negative}}\label{sec:hupos}
Corollary \ref{cor:ASEPgen} provides a formula, \eqref{eq:pihn}, for $\phi^{(N)}_{u,v}(\vec{c},\vec{X})$ in terms of a ratio of expectations over the Askey-Wilson process. In the numerator of this ratio, there is a product over $d+1$ terms which take the form (assume $X_k\in \Z/N$ for the moment)
\begin{equation}\label{eq:coshexample}
\left(\cosh\left(s_k/\sqrt{N}\right)+ \Y_{e^{-2 s_k/\sqrt{N}}}\right)^{N(X_k-X_{k-1})}.
\end{equation}
As $N\to\infty$, we are taking the $s_k$ to be fixed and positive, and
likewise for the difference $X_k-X_{k-1}$. As $N\to\infty$, we have that $\cosh(s_k/\sqrt{N})\approx 1+
  \tfrac{s_k^2}{2N}$. The question is how does $\Y_{e^{-2 s_k/\sqrt{N}}}$ behave. Recall that from Section \ref{sec:AWprocessdefs} there are two parts to the support of the Askey-Wilson process $\Y_s$, an absolutely continuous part of support $\Supp^c_s=[-1,1]$ and a discrete atomic part $\Supp^d_s$ support above $1$. In our scaling, the atomic part lives in a $N^{-1}$ window above $1$. Thus, writing $\Y_{e^{-2 s/\sqrt{N}}} = 1 - \Yh_s^{(N)}/(2N)$ and assuming that $\Yh_s^{(N)}$ is of order one, \eqref{eq:coshexample} behaves (for $N$ large enough) like $2^{N(X_k-X_{k-1})}$ times
$e^{\frac{1}{4}(s^2-\Yh_s^{(N)})(X_k-X_{k-1})}$. This is the origin of the $\G$ function in \eqref{eq:phipdef}.

There are a few issues complicating the above heuristic. Recall that in \eqref{eq:pihn} we are considering the Askey-Wilson process $\Y_s$ with marginal distribution $\pi_s$. Under our scalings, while the discrete part of  $\Y_{e^{-2 s_k/\sqrt{N}}}$ does converge to a limit in a $N^{-1}$ window above $1$, the absolutely continuous part does not stay in that window. In fact, it remains of full support in $\Supp^c=[-1,1]$ even though the window is of order $N^{-1}$ around $1$. However, the $\G$ function has strong decay as the $\Y_{e^{-2 s_k/\sqrt{N}}}$ variable drops below a $N^{-1}$ window of $1$. Thus, we need to justify that the contribution  to the expectation coming from $\Y_{e^{-2 s_k/\sqrt{N}}}$ below this window is negligible in the large $N$ limit. Furthermore, we need to determine what happens to $\Y_{e^{-2 s_k/\sqrt{N}}}$ when we only consider it in this window. This leads to the continuous dual Hahn process that we have introduced in Section \ref{sec:tangentAWP}. (The continuous dual Hahn process can be thought of as a {\em tangent} process to the Askey-Wilson process.)
We will see that the marginal distribution in this $N^{-1}$ window has
a limit when compensated by a suitable power of $N$. The limit is no longer a probability measure, but rather of infinite mass. However, the transition probabilities of the Askey-Wilson process converge to bona-fide transition probabilities.

\subsection{Rewriting formulas to take asymptotics}\label{sec:rewritingupos}
Recall that $q, A,B,C$ and $D$ satisfy Assumption \ref{assumption1} and are parameterized by $N$ (through $q=e^{-2/\sqrt{N}}$) and $u,v\in \R$ with $u+v>0$. In what follows we will assume that our Askey-Wilson processes $\Y$ depend $N$, $u$ and $v$ through these parameters $q,A,B,C,D$. As $N$ changes, the law of the process changes. Though this dependence will be implicit at times, it should not be forgotten.

We will assume here and below that the Askey-Wilson process $\Y_s$ is always taken with marginal distribution $\pi_s$ for all $s$.
Define the centered and scaled Askey-Wilson process
\begin{equation}\label{eq:rescaling}
\Yh_s^{(N)}:=2 N \left(1-\Y_{q^{s}}\right).
\end{equation}
Due to the factor $q^s = e^{-2s/\sqrt N}$, the process $\Yh_s^{(N)}$ involves a time reversal of $\Y$. Thus, its transition probabilities
involve a conjugation by the marginal distribution. In writing down the marginal distribution and transition probabilities of $\Yh^{(N)}$
we distinguish the absolutely continuous and discrete atomic part of the support and measure. This is important since there is a Jacobian
factor which is present when the measure is absolutely continuous, though not when it is discrete.

\begin{remark} This time reversal, which was also used in \cite{BW}, is convenient conceptually since it allows us to write out limiting formulas in terms of a process that moves forward in time. It is not strictly necessary, though. We have opted to include it since it more closely matches \cite{BW}.
  \end{remark}

For any Borel $V\subset \R$, denote the marginal probability that $\Yh_s^{(N)}\in V$ by $\hat\pi^{(N)}_s(V)$. This probability measure can be written as the sum of an absolutely continuous part and a discrete atomic part. We denote the density of the absolutely continuous part by $\hat\pi^{(N), c}_s(y)$ and the probability mass of the discrete atomic part by $\hat\pi^{(N), d}_s(y)$. The support of $\hat\pi^{(N), c}_s$ is
$\Supph^{(N),c}_s=\Supph^{(N),c}:= [0,4N]$
and does not depend on $s$. The support of the discrete atomic part is
$
\Supph^{(N),d}_s := \left\{y\in \R: 1-\tfrac{y}{2N}\in \Supp^d_{q^{s}}\right\}
$
where $\Supp^d_{q^{s}}$ is defined via \eqref{eq:Fs} with $A,B,C,D$ and $q$ scaled dependent on $N$ and $u$ and $v$ as in the statement of Proposition \ref{prop_ASEP_gen_function}. We will use $\pi^{(N),c}_s$ and $\pi^{(N),d}_s$ to denote $\pi^{c}_s$ and $\pi^{d}_s$ from \eqref{eq:def_pi} where  $A,B,C,D$ and $q$ are scaled dependent on $N$ and $u$ and $v$ as noted above. Similarly, we introduce a superscript $(N)$ for the transition probabilities defined in \eqref{eq:def_p}.

For any $s$ the marginal distribution of $\Yh_s^{(N)}$ is specified by
($y\in \Supph^{(N),c}$ in the first formula and $y\in
\Supph^{(N),d}_t$ in the second)

\begin{equation}\label{eq:hatp}
\hat\pi^{(N), c}_t(y)
=\tfrac{1}{2N}  \pi^{(N), c}_{q^{t}}\left(1-\tfrac{y}{2N}\right), \qquad \hat\pi^{(N), d}_t(y) = \pi^{(N), d}_{q^{t}}\left(1-\tfrac{y}{2N}\right).
\end{equation}


Using the same convention as described below \eqref{eq:def_p}, for
$x\in \Supph^{(N),c}$ we write $\hat \pi^{(N), c, c}_{s,t}(x, y)$ for
the transition probability density supported on $y\in \Supph^{(N),c}$
while $\hat \pi^{(N), c, d}_{s,t}(x, y)$ is the mass function for
$y\in \Supph^{(N),d}_t$. Similarly, for  $x\in \Supph^{(N),d}_s$ we
write $\hat \pi^{(N), d, c}_{s,t}(x, y)$ for the transition
probability density supported on $y\in \Supph^{(N),c}$ while $\hat
\pi^{(N), d, d}_{s,t}(x, y)$ is the mass function for $y\in
\Supph^{(N),d}_t$. For all other values of $x$ or $y$, we declare
these functions to be zero. With this notation we have

\begin{align}
\hat \pi^{(N), c,c}_{s,t}(x, y)
&=\tfrac{1}{2N}  \pi^{(N), c, c}_{q^{t}, q^{s} }\left (1-\tfrac{y}{2N}, 1-\tfrac{x}{2N}\right)\cdot \dfrac{\pi^{(N), c}_{q^{t}}\left(1-\frac{y}{2N}\right)}{\pi^{(N), c}_{q^{s}}\left(1-\frac{x}{2N}\right)}
\label{eq:hatppcc}\\
\hat \pi^{(N), d, c}_{s,t}(x, y)
&=\tfrac{1}{2 N} \pi^{(N), c, d}_{q^{t}, q^{s} }\left (1-\tfrac{y}{2N}, 1-\tfrac{x}{2N}\right)\cdot \dfrac{\pi^{(N), c}_{q^{t}}\left(1-\frac{y}{2N}\right)}{\pi^{(N),d}_{q^{s}}\left(1-\frac{x}{2N}\right)}
\label{eq:hatppdc}\\
\hat \pi^{(N), c,d}_{s,t}(x, y)
&=  \pi^{(N), d, c}_{q^{t}, q^{s} }\left(1-\tfrac{y}{2N}, 1-\tfrac{x}{2N}\right)\cdot \dfrac{\pi^{(N), d}_{q^{t}}\left(1-\frac{y}{2N}\right)}{\pi^{(N), c}_{q^s}\left(1-\frac{x}{2N}\right)}
\label{eq:hatppcd}\\
\hat \pi^{(N), d,d}_{s,t}(x, y)&=  \pi^{(N), d, d}_{q^{t},q^{s}}\left(1-\tfrac{y}{2N} , 1-\tfrac{x}{2N}\right)\cdot \dfrac{\pi^{(N), d}_{q^{t}}\left(1-\frac{y}{2N}\right)}{\pi^{(N), d}_{q^{s}}\left(1-\frac{x}{2N}\right)}.
\label{eq:hatppdd}
\end{align}

We need one last piece of notation. For $\vec{X},\vec{c}$ and
$\vec{s}$ as in \eqref{eq:Xcs} and $\vec{r} = (r_1,\ldots ,r_{d+1})$
define

\begin{equation}\label{eq:gN}
\G^{(N)}(\vec{r}; \vec{c};\vec{X}):= {\bf 1}_{\vec{r}\in \R^{d+1}_{\leq 4N}} \cdot 2^{-N}\prod_{k=1}^{d+1}\left(\cosh\left (\tfrac{s_k}{\sqrt N}\right)+1-\tfrac{r_k}{2N}\right)^{N(X^{(N)}_k-X^{(N)}_{k-1})}.
\end{equation}
Recall that $X^{(N)} = N^{-1} \lfloor N X\rfloor$.
Now, we can rewrite \eqref{eq:pihn} as in \eqref{eq:phipdef}:
\begin{equation}\label{eq:phineq}
\phi^{(N)}_{u,v}(\vec{c},\vec{X}) = \frac{\tilde\phi^{(N)}_{u,v}(\vec{c},\vec{X})}{\tilde\phi^{(N)}_{u,v}(\vec{0},\vec{X})},\quad
\tilde\phi^{(N)}_{u,v}(\vec{c},\vec{X}):=\E \left[ N^{u+v}\G^{(N)}\Big((\Yh^{(N)}_{s_1},\ldots ,\Yh^{(N)}_{s_{d+1}}); \vec{c};\vec{X}\Big)\right],
\end{equation}
or more explicitly,
\begin{equation}\label{eq:phiNpreasymptot}
\tilde\phi^{(N)}_{u,v}(\vec{c},\vec{X})=
\int\G^{(N)}(\vec{r}; \vec{c};\vec{X}) \prod_{i=1}^{d}\hat\pi^{(N)}_{s_{i+1},s_{i}}(r_{i+1},dr_{i})\cdot N^{u+v}\hat\pi^{(N)}_{s_{d+1}}(dr_{d+1}).
\end{equation}
The inclusion of the factor $N^{u+v}$ will be seen below as necessary to have $\tilde\phi^{(N)}_{u,v}(\vec{c},\vec{X})$ converge to a limit as $N\to \infty$ (in particular, it is required in Lemma \ref{lem:p_zero} for the convergence of the marginal distribution to a non-trivial limit). Since $\phi^{(N)}_{u,v}(\vec{c},\vec{X})$ is a ratio of such terms, its inclusion in both the numerator and denominator does not change the ratio.

\subsection{Lemmas for asymptotics and bounds}\label{sec:asymptoticupos}
We provide the key technical results necessary to prove the point-wise convergence in \eqref{eq:phi_negative}. The proofs of these lemmas are postponed until Section \ref{sec:applications_lemmas}. We assume the scalings in Proposition \ref{prop_ASEP_gen_function} and the notation introduced in Section \ref{sec:rewritingupos}.

Equation \eqref{eq:phiNpreasymptot} is our starting point for asymptotics. In order to take $N\to\infty$ therein, we must control the convergence of $\G^{(N)}$ (in terms of a point-wise limit and dominating function) and the convergence of the $\Yh$ process. We start with the $\G^{(N)}$ function.

\begin{lemma} \label{lem:G_bound}
For every compact interval $I\subset \R$ there exists a constant $C>0$ such that for all $\vec{c}\in I^{d}$ and
 all $\vec{r}\in \R^{d+1}$,
\begin{equation}\label{eq:Gbound}
\G^{(N)}(\vec{r}; \vec{c};\vec{X})\leq C \prod\limits_{k=1}^{d+1} e^{-(X_k-X_{k-1})\frac{r_k}{4} + \frac{|r_k|}{2N}}
\end{equation}
where $\vec{X}$ and $\vec{c}$ are as in \eqref{eq:Xcs}.
For all  $\vec{r}\,^{(N)}\in \R^{d+1}$ if $\lim_{N\to \infty}\vec{r}\,^{(N)}=\vec{r}$,
\begin{equation}\label{eq:GNconv}
 \lim_{N\rightarrow \infty}  \G^{(N)}(\vec{r}\,^{(N)}; \vec{c};\vec{X})=\G(\vec{r}; \vec{c};\vec{X}).
\end{equation}
\end{lemma}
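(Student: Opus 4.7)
The strategy for both parts is to factor $2$ out of each term in the product defining $\G^{(N)}$ and then perform a Taylor expansion of the resulting logarithm. Writing $\epsilon_N(s) := \cosh(s/\sqrt{N}) - 1$, we observe that
\begin{equation}
\cosh\!\left(\tfrac{s_k}{\sqrt N}\right) + 1 - \tfrac{r_k}{2N} \;=\; 2\Bigl(1 + \tfrac{\epsilon_N(s_k)}{2} - \tfrac{r_k}{4N}\Bigr),
\end{equation}
so that the $2^{-N}$ prefactor exactly compensates the $2^{N(X^{(N)}_k - X^{(N)}_{k-1})}$ factors when summed over $k$ (since $\sum_k (X^{(N)}_k - X^{(N)}_{k-1}) = 1$). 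Hence
\begin{equation}
\log \G^{(N)}(\vec r;\vec c;\vec X) = \sum_{k=1}^{d+1} N(X^{(N)}_k - X^{(N)}_{k-1}) \log\!\Bigl(1 + \tfrac{\epsilon_N(s_k)}{2} - \tfrac{r_k}{4N}\Bigr)
\end{equation}
whenever the indicator is active (i.e.\ $r_k\leq 4N$ for all $k$, which ensures the argument of every logarithm is non-negative).

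For the uniform bound \eqref{eq:Gbound}, I would use the elementary inequality $\log(1+x)\leq x$ for every $x>-1$ to estimate each summand by $N(X^{(N)}_k-X^{(N)}_{k-1})(\epsilon_N(s_k)/2 - r_k/(4N))$. Since $N\epsilon_N(s_k)\to s_k^2/2$ is bounded uniformly for $\vec c\in I^d$ (and hence $\vec s$ bounded), the first piece contributes a constant $C$. The second piece equals $-(X^{(N)}_k-X^{(N)}_{k-1}) r_k /4$, and the discrepancy with $-(X_k-X_{k-1}) r_k /4$ is absorbed by $|r_k|/(2N)$ using $|X^{(N)}_k-X_k|\leq 1/N$. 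Exponentiating and taking the product over $k$ gives \eqref{eq:Gbound}. A small care is needed for $k$ with $X^{(N)}_k = X^{(N)}_{k-1}$, where the factor equals $1$ by the convention $a^0 = 1$, so those terms are harmless.

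For the convergence \eqref{eq:GNconv}, since $\vec r\,^{(N)}\to\vec r$ is bounded, for $N$ large the indicator is $1$ and $\epsilon_N(s_k)/2 - r^{(N)}_k/(4N)\to 0$. A second-order Taylor expansion gives
\begin{equation}
\log\!\Bigl(1 + \tfrac{\epsilon_N(s_k)}{2} - \tfrac{r^{(N)}_k}{4N}\Bigr) = \tfrac{s_k^2 - r^{(N)}_k}{4N} + O(N^{-2}),
\end{equation}
using $N\epsilon_N(s_k) = s_k^2/2 + O(N^{-1})$. Multiplying by $N(X^{(N)}_k - X^{(N)}_{k-1})$ yields $(X^{(N)}_k - X^{(N)}_{k-1})(s_k^2 - r^{(N)}_k)/4 + O(N^{-1})$, which converges to $(X_k - X_{k-1})(s_k^2 - r_k)/4$. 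Summing over $k$ and exponentiating yields $\G(\vec r;\vec c;\vec X)$, as claimed.

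The only mild obstacle is handling the regime where $r_k$ is close to the threshold $4N$ (so that the argument of the logarithm is close to $0$); there the bound $\log(1+x)\leq x$ still applies and gives an extremely negative contribution, which is a fortiori bounded above as needed. No subtlety arises on the opposite side ($r_k$ very negative) because the inequality $\log(1+x)\leq x$ is used in the correct direction for the upper bound.
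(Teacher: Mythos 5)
Your proof is correct. For the uniform bound \eqref{eq:Gbound} you take essentially the paper's route: the paper rewrites each factor via $\tfrac{\cosh x+1}{2}=1+\sinh^2(x/2)$ and applies $(1+x)^a\le e^{ax}$; you use $\epsilon_N(s)/2=\bigl(\cosh(s/\sqrt N)-1\bigr)/2=\sinh^2\!\bigl(s/(2\sqrt N)\bigr)$ and $\log(1+x)\le x$, which are the same two ingredients written multiplicatively versus logarithmically. The absorption of the $X^{(N)}_k\to X_k$ discrepancy into the $|r_k|/(2N)$ term is exactly what the paper does as well. For the convergence \eqref{eq:GNconv}, your argument is a genuinely shorter route: you plug $\vec r\,^{(N)}$ directly into the logarithm and Taylor-expand, whereas the paper first applies the triangle inequality to split $\bigl|\G^{(N)}(\vec r\,^{(N)};\cdot)-\G(\vec r;\cdot)\bigr|$ into $\bigl|\G^{(N)}(\vec r;\cdot)-\G(\vec r;\cdot)\bigr|$ plus $\bigl|\G^{(N)}(\vec r\,^{(N)};\cdot)-\G^{(N)}(\vec r;\cdot)\bigr|$, handling the second piece with a telescoping-product inequality combined with a Lipschitz bound for $\bigl|(1+\tilde a/L)^L-(1+\tilde b/L)^L\bigr|$. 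Your direct computation is cleaner here; the paper's split is more modular (separating the $N\to\infty$ limit from continuity in $\vec r$), but for this lemma nothing is gained by that modularity, since the $O(N^{-2})$ error from the Taylor expansion is uniform in the bounded sequence $r^{(N)}_k$ and is killed upon multiplication by $N(X^{(N)}_k-X^{(N)}_{k-1})\le N$.
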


The next four lemmas provide limits with quantified error bounds for the marginal and transition measures of $\Yh$. These limits are written in terms of the continuous dual Hahn process transition measures and family of marginal distributions from Definitions \ref{def_transition} and \ref{def_marginal}. The first lemma deals with the continuous part $\hat \pi^{(N), c}_{t}$ of the distribution of $\Yh_t$, and the second with the discrete part.

\begin{lemma} \label{lem:p_zero}
For all $t\in \R$ and all $\eta>1$, there exists $N_0\in \Z_{\geq 1}$ and  $C,\chi\in \R_{>0}$ such that for all $N>N_0$ and $r\in [0,4N]$,
\begin{equation}\label{Lem83eq}
N^{u+v}\sqrt{1-\frac{r}{4N}}\, \hat \pi^{(N), c}_{t}(r)
=\p^c_{t}(r)\,\cdot\, e^{\ErrLem^{(N), c}_{t}(r)},
\end{equation}
where the error term satisfy
\begin{equation}\label{eq:lemma82ineq}
\Big|\ErrLem^{(N), c}_{t}(r)\Big| \leq CN^{-\chi}(1+\sqrt{r})^{\eta}.
\end{equation}
For each fixed $r\in \R_{\geq 0}$ we have the point-wise convergence
\begin{equation}\label{eq:ptlem73}
\lim_{N\to \infty} N^{u+v} \, \hat \pi^{(N), c}_{t}(r)=\p^c_{t}(r).
\end{equation}
  \end{lemma}

The next lemma deals with the atomic part $\hat \pi^{(N), d}_{t}$ of $\Yh_t$. We show that the locations and masses of the finitely many atoms have limits as $N\to \infty$ and that those match with $\p^d_t$ from Definition \ref{def_marginal}.

Recall from Section \ref{sec:rewritingupos} that we write $\pi^{(N),d}_t$ to denote the atomic measure $\pi^{d}_t$ from \eqref{eq:def_pi}.
In light of \eqref{eq:def_pi} and \eqref{eq:AWddef}, for any Borel set $V\subset \R$
\begin{equation}\label{eq:pindtv}
\pi^{(N),d}_t(V)= \sum\limits_{y\in V\cap \Supp^d(a,b,c,d,q)}AW^d(y;a,b,c,d,q)
\end{equation}
where $q=e^{-2/\sqrt{N}}$ and
\begin{equation}\label{eq:abcdqvals}
 a= q^{v+t/2},\quad b= -q^{1+t/2},\quad c= q^{u-t/2},\quad d= -q^{1-t/2}.
\end{equation}
If $t\in (-2,2)$ then  $|b|,|d|<1$. Since $ac = q^{u+v}<1$, it follows that at most one of $|a|$ or $|c|$ can exceed $1$.

When $u-t/2<0$, $|c|>1$ and the set of atoms in $\pi^{(N),d}_t$ are given by
\begin{equation}\label{eq:ynvkt}
\xuNunscaled_j(t):=\dfrac{1}{2}\left(q^{u+j-t/2} +q^{-(u+j-t/2)}\right),\quad j\in \llbracket0,\ldots \lfloor -u+t/2\rfloor\rrbracket.
\end{equation}

Similarly, when
$v+t/2 <0$, $|a|>1$ and thus based on the discussion prior to \eqref{eq:atomchi}, we conclude that the set of atoms in $\pi^{(N),d}_t$ are given by
\begin{equation}\label{eq:ynujt}
\xvNunscaled_j(t):=\dfrac{1}{2}\left(q^{v+j+t/2} +q^{-(v+j+t/2)}\right),\quad \textrm{for } j\in\llbracket 0,\lfloor -v-t/2\rfloor\rrbracket.
\end{equation}

These account for all of the atoms in $\Supp^d(a,b,c,d,q)$.
Finally, let us denote
\begin{equation}\label{eq:yhatscale}
\xuN_j(t) := -2N\left(\xuNunscaled_j(t)-1\right),\quad \xvN_j(t) := -2N\left(\xvNunscaled_j(t)-1\right).
\end{equation}
The support, $\Supph^{(N),d}_t$, of $\hat\pi^{(N), d}_t$ is the union of these atoms. At most one type of atoms, either from $u$ or $v$, will appear for a given $t$. As $q$ varies with $N$, the number and type of atoms remains fixed.

\begin{lemma}\label{marginal_atoms}
Assume $t\in (-2,2)$. The location and masses of the (finitely many) atoms of $\hat\pi^{(N), d}_t$ converge to those of $\p^d_t$ (recall \eqref{eq:xuvs}). Explicitly, when  $v+t/2 <0$ the atoms of $\hat\pi^{(N), d}_t$ are at $\xvN_j(t)$ for $j\in \llbracket0,\ldots \lfloor -v-t/2\rfloor\rrbracket$ and
\begin{equation}\label{eq:lem83a}
\lim_{N\to \infty} \xvN_j(t) = \xv_j(t):= -4(v+j+t/2)^2.
\end{equation}
Similarly, when $u-t/2<0$ the atoms of $\hat\pi^{(N), d}_t$ are at $\xuN_j(t)$ for $j\in \llbracket0,\ldots \lfloor u-t/2\rfloor\rrbracket$ and
\begin{equation}\label{eq:lem83aprime}
\lim_{N\to \infty} \xuN_j(t) = \xu_j(t):= -4(u+j+t/2)^2.
\end{equation}

There exists $C,\chi\in \R_{>0}$ such that for all $N\in \Z_{\geq 1}$ and all atoms
\begin{equation}
\begin{split}\label{marginal_atoms_convergence}
N^{u+v} \hat \pi^{(N), d}_t(\xvN_j(t)) &=\p_t^{d}(\xv_j(t))\,\cdot\, e^{\ErrLem^{(N),d}_t(\xv_j(t))},\\
N^{u+v} \hat \pi^{(N), d}_t(\xuN_j(t)) &=\p_t^{d}(\xu_j(t))\,\cdot\, e^{\ErrLem^{(N),d}_t(\xu_j(t))}
\end{split}
\end{equation}
where the error terms satisfy
\begin{equation}\label{eq:lem83ab}
\Big|\ErrLem^{(N),d}_t(\xv_j(t))\Big|,\Big|\ErrLem^{(N),d}_t(\xu_j(t))\Big| \leq CN^{-\chi}.
\end{equation}
In particular,
\begin{equation}
\begin{split}\label{eq:lem83b}
\lim_{N\to \infty} N^{u+v} \hat \pi^{(N), d}_t(\xvN_j(t)) &=\p_t^{d}(\xv_j(t)),\\
\lim_{N\to \infty} N^{u+v} \hat \pi^{(N), d}_t(\xuN_j(t)) &=\p_t^{d}(\xu_j(t)).
\end{split}
\end{equation}
%
%
%
%
\end{lemma}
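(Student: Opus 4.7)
The plan is to work directly from the explicit Askey-Wilson atomic formulas \eqref{eq:atomchi}--\eqref{eq:AWdmasses} specialized to the parameters in \eqref{eq:abcdqvals}, and then push the $q \to 1$ asymptotics through Proposition \ref{factorials}. The key observation is that under Assumption \ref{assumption1}, $|b|,|d| < 1$ for $t \in (-2,2)$ and $ac = q^{u+v} < 1$, so the only possible atom-generators in $\{a,b,c,d\}$ are $a = q^{u+t/2}$ (atoms exist iff $u+t/2 < 0$) or $c = q^{v-t/2}$ (atoms exist iff $v - t/2 < 0$); these two cases are mutually exclusive since $u+v>0$. The two cases are symmetric, so I would focus on the $c$-generated atoms (the $a$-case is identical after swapping $u\leftrightarrow v$ and $t \leftrightarrow -t$).

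First I would verify the location convergence \eqref{eq:lem83aprime}. By \eqref{eq:atomchi}, $y^{(N),v}_j(t) = \cosh\bigl(\tfrac{2}{\sqrt N}(v + j - t/2)\bigr)$, so by the scaling \eqref{eq:yhatscale},
\begin{equation}
\hat y^{(N),v}_j(t) = -2N\bigl(\cosh(\tfrac{2}{\sqrt N}(v+j-t/2)) - 1\bigr) = -4(v+j-t/2)^2 + \mathcal{O}(N^{-1}).
\end{equation}
This matches $x^v_j(t)$ from \eqref{eq:xuvs} and immediately gives a polynomial error bound of the desired form.

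Next I would handle the mass asymptotics. From \eqref{eq:AWdmasses} (with the parameters $a,b,c,d$ permuted so that the $\chi = c$ role is played here), the mass $\hat\pi^{(N),d}_t(\hat y^{(N),v}_j(t))$ is a ratio of $q$-Pochhammer symbols $(q^\bullet;q)_\infty$ times an explicit rational prefactor $[{\cdot}]_j (1 - c^2 q^{2j})/[\cdot]_j (1-c^2) (q/(abcd))^j$. Plugging in the values \eqref{eq:abcdqvals}, the $q$-Pochhammer ratio becomes a product of factors of the form $(q^z;q)_\infty$ and $(-q^z;q)_\infty$ with exponents $z$ depending linearly on $u,v,t$. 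Applying Proposition \ref{factorials} with $m=1$ and taking the difference of the $\AqP^\pm$ leading terms, the $\kappa = 2/\sqrt{N}$ prefactors $\kappa^{1/2-z}$ combine to produce exactly an $N^{u+v}$ divergence (hence the normalization on the left of \eqref{marginal_atoms_convergence}), while the $\Gamma(z)/\sqrt{2\pi}$ factors organize themselves into the ordinary gamma-function ratios appearing in $\p_t^d(x^v_j(t))$ from Definition \ref{def_marginal}. The rational prefactor in $q$ converges directly to the Pochhammer $[2v - t, u+v]_j / [1, 1-u+v-t]_j$ ratio and the $(v+j-t/2)/(v-t/2)$ factor, since each individual $q^\alpha \to 1$ as $q \to 1$ at a rate $\mathcal{O}(N^{-1/2})$.

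The main obstacle will be bookkeeping: matching up the many $q$-Pochhammer factors with the gamma functions in $\p_t^d$ requires care, particularly tracking which arguments have $\mathrm{Re}(z) \geq 1/2$ versus $< 1/2$ (where Proposition \ref{factorials}'s functional-equation-based asymptotic introduces the $\kappa(1-2z)^2/8$ correction, which fortuitously cancels in the ratio). The quantitative error bound \eqref{eq:lem83ab} follows from the uniform error bound \eqref{eq:errorbounds} with $m=1$: since every argument $z$ appearing here is bounded (independent of $N$), each individual $\log$-error is $\mathcal{O}(\kappa) = \mathcal{O}(N^{-1/2})$, and there are only finitely many such terms, so the total error exponent is $\mathcal{O}(N^{-1/2})$, giving $\chi = 1/2$. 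Exponentiating and using $e^{\ErrLem} = 1 + \mathcal{O}(N^{-1/2})$ yields the point-wise limit \eqref{eq:lem83b}.
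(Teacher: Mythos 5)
Your proposal follows essentially the same route as the paper: write out the atoms explicitly from \eqref{eq:atomchi}--\eqref{eq:AWdmasses} with the parametrization \eqref{eq:abcdqvals}, Taylor expand the location, apply Proposition \ref{factorials} with $m=1$ to the infinite $q$-Pochhammer ratio for the $j=0$ mass, identify the $\AqP^{\pm}$ simplification with $\log\left[\p_t^d\right]$ plus a $-(u+v)\log N$ normalization, and control the finite $q$-Pochhammer prefactor for $j\geq 1$ factor-by-factor. The paper treats the $a$-generated ($u$) atoms and remarks the $v$-case is symmetric; you do the reverse, which is of course fine.

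Two small imprecisions worth flagging. First, you claim the error exponent is $\chi = 1/2$, but the $m=1$ error bound \eqref{eq:errorbounds} is $C\bigl(\kappa(1+|z|)^2 + \kappa^b(1+|z|)^{1+2b+\e}\bigr)$ with $b$ strictly less than $1$; for fixed $z$ the dominant term is $\kappa^b = (2/\sqrt N)^b$, so what you actually get is any $\chi \in (0,1/2)$, not $\chi = 1/2$. This is harmless since the lemma only asserts existence of some positive $\chi$, but the statement as you wrote it is not what Proposition \ref{factorials} gives. Second, your concern about the $\kappa(1-2z)^2/8$ correction from the $\textup{Re}(z)<1/2$ branch having to ``fortuitously cancel in the ratio'' is a red herring: the uniform bound \eqref{eq:errorbounds} already absorbs that correction into the $\kappa(1+|z|)^2$ term, so no cancellation is required or used; for fixed arguments (bounded $z$), each error term is simply bounded in absolute value by $CN^{-\chi}$ and the finite sum of them is too.
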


The next two lemmas deal with the transition probabilities for $\Yh$. The first of these lemmas deals with the continuous part of the transition probability while the second deals with the discrete part.

\begin{lemma}\label{lem:p} For all real $s<t$ and $\eta>1$, there exists $N_0\in \Z_{\geq 1}$ and  $C,\chi\in \R_{>0}$ such that for all $N>N_0$  the following bounds hold:
\begin{enumerate}[leftmargin=*]
\item For all $m, r\in \Supph^{(N),c}=[0,4N]$
\begin{equation}
\sqrt{1-\frac{r}{4N}}\,\hat \pi^{(N),c,c}_{s, t}(m,r) =\p^{c,c}_{s, t}(m, r)\,\cdot\, e^{\ErrLem^{(N),c,c}_{s, t}(m,r)},
\end{equation}
where the error term satisfies
\begin{equation}\label{eq:hatpNmrbound}
\Big|\ErrLem^{(N),c,c}_{s, t}(m,r)\Big| \leq CN^{-\chi}(1+\sqrt{m})^{\eta}+CN^{-\chi}(1+\sqrt{r})^{\eta}.
\end{equation}
For each fixed $m,r\in \R$ we have the point-wise convergence
\begin{equation}\label{eq:hatpNmrlim}
\lim_{N\to \infty} \hat \pi^{(N), c, c}_{s, t}(m, r)=\p^{c, c}_{t}(m,r).
\end{equation}
\item If $s,t\in (-2,2)$ and $v+s/2<0$, so that $\Supph^{(N),d}_s$ is entirely composed of $v$ atoms $\xvN_j(s)$ for $j\in\llbracket 0,\lfloor -v-s/2\rfloor\rrbracket$, see \eqref{eq:yhatscale}, then for each $j\in\llbracket 0,\lfloor -v-s/2\rfloor\rrbracket$, and all $r\in \Supph^{(N),c}=[0,4N]$
\begin{equation}
\hat \pi^{(N),d, c}_{s, t}\left(\xvN_j(s), r\right) =\p^{d,c}_{s,t}\left(\xv_k(s), r\right)\,\cdot\, e^{\ErrLem^{(N),d, c}_{s, t}(\xv_k(s),r)}
\end{equation}
where the error term satisfies
\begin{equation}\label{eq:ucasebound}
\Big|\ErrLem^{(N),d, c}_{s, t}(\xv_k(s),r)\Big|\leq CN^{-\chi}(1+\sqrt{r})^{\eta}.
\end{equation}
In particular, for each fixed $j\in\llbracket 0,\lfloor -v-s/2\rfloor\rrbracket$ and $r\in \R_{\geq 0}$,
\begin{equation}\label{eq:ucaselimit}
\lim_{N\to\infty}  \hat \pi^{(N),d, c}_{s, t}\left(\xvN_j(s), r\right) = \p^{d,c}_{s,t}\left(\xv_k(s), r\right).
\end{equation}
\item If $s,t\in (-2,2)$ and  $u-s/2<0$, so that $\Supph^{(N),d}_s$ is entirely composed of $u$ atoms $\xuN_j(s)$ for $j\in\llbracket 0,\lfloor -u+s/2\rfloor\rrbracket$, see \eqref{eq:yhatscale}, then for each $j\in\llbracket 0,\lfloor -u+s/2\rfloor\rrbracket$ and all $r\in \Supph^{(N),c}=[0,4N]$
\begin{equation}
\hat \pi^{(N),d, c}_{s, t}\left(\xuN_j(s), r\right)=\p^{d, c}_{s, t}\left(\xu_j(s), r\right)=0.
\end{equation}
\end{enumerate}
%
%
%
%
%
%
%
%
%
%
%
\end{lemma}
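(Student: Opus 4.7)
The plan is to treat parts (1) and (2) by the same asymptotic-analysis strategy that Lemma \ref{lem:p_zero} uses for the marginal---expand each $q$-Pochhammer factor via Proposition \ref{factorials} and keep track of cancellations---while reducing part (3) to a short support argument. Throughout, we use that $q=e^{-\kappa}$ with $\kappa=2/\sqrt{N}$, and we parametrize $x'=1-x/(2N)=\cos\theta_{x'}$ with $\theta_{x'}=\sqrt{x/N}+O((x/N)^{3/2})$ whenever $x\in[0,4N]$ is not too close to the right endpoint.

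\smallskip
\noindent\textbf{Part (3).} By \eqref{eq:hatppdc}, it suffices to show that for $x=\hat y^{(N),\vv}_j(s)$ and every $y\in [0,4N]$ one has $\pi^{(N),c,d}_{q^t,q^s}\!\left(1-\tfrac{y}{2N},\,1-\tfrac{x}{2N}\right)=0$. From \eqref{eq:def_p} (applied with the AW times $q^t<q^s$) this transition distribution is the Askey-Wilson measure with parameters $a=q^{\uu+s/2}$, $b=-q^{1+s/2}$, $c_{\rm AW}=q^{(t-s)/2}e^{\iu\theta}$, $d_{\rm AW}=q^{(t-s)/2}e^{-\iu\theta}$ where $\cos\theta=1-y/(2N)$. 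Since $t>s$ and $s\in(-2,2)$, only $|a|$ can exceed $1$, and only when $\uu+s/2<0$; in that case the resulting atoms, given by \eqref{eq:atomchi}, are the $\uu$-atoms $y^{(N),\uu}_k(s)$, which are disjoint from the $\vv$-atoms $y^{(N),\vv}_j(s)$. Hence the required mass is zero, giving the first equality. The second equality, $\p^{d,c}_{s,t}(x^{\vv}_j(s),r)=0$, is case (3) of Lemma \ref{lem:CDHwelldef}.

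\smallskip
\noindent\textbf{Part (1).} Substituting \eqref{eq:AW} into the four infinite $q$-products appearing in the denominator and numerator of \eqref{eq:hatppcc} (one density in the transition kernel $\pi^{(N),c,c}_{q^t,q^s}$, one in each of the two marginals, and the Jacobian factor $\tfrac{1}{2N}$), one obtains an explicit ratio of $(q^z;q)_\infty$ and $(-q^z;q)_\infty$ factors in which every $z$ has real part uniformly bounded in a compact set and imaginary part bounded by $C(\sqrt{m/N}+\sqrt{r/N})$. Applying Proposition \ref{factorials} with $m=1$ to the logarithm of each factor, the singular terms $-\pi^2/(6\kappa)$ cancel in pairs between numerator and denominator (because the combined powers of $q$-shifted Gamma functions match up), and the $(z-\tfrac12)\log\kappa$ contributions combine into the announced prefactor $\sqrt{1-r/(4N)}$ together with the factor $N^{u+v}$ already tracked in Lemma \ref{lem:p_zero} (which cancels here since transitions are normalized). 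What remains after exponentiating is exactly the ratio of $|\Gamma|^2$ factors defining $\p^{c,c}_{s,t}(m,r)$ through Definitions \ref{def_marginal}--\ref{def_transition} and \ref{def:CDH}, up to an overall exponential error. For the error bound \eqref{eq:hatpNmrbound}, one chooses a small $b\in(0,1)$ and $\e\in(0,1/2)$ with $1+2b+\e\le\eta$; the Proposition \ref{factorials} estimate then contributes at each factor
\begin{equation*}
O\!\left(\kappa(1+|z|)^2+\kappa^{b}(1+|z|)^{1+2b+\e}\right) = O\!\left(N^{-b/2}(1+\sqrt m)^{\eta}+N^{-b/2}(1+\sqrt r)^{\eta}\right),
\end{equation*}
and summing the finitely many factors gives \eqref{eq:hatpNmrbound} with $\chi=b/2$. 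The pointwise limit \eqref{eq:hatpNmrlim} is the immediate consequence.

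\smallskip
\noindent\textbf{Part (2) and main obstacle.} For $x=\hat y^{(N),\uu}_j(s)$, the denominator in \eqref{eq:hatppdc} is the atomic mass $\pi^{(N),d}_{q^s}\!\bigl(1-x/(2N)\bigr)$, for which the explicit formula \eqref{eq:AWdmasses} gives a finite product of Gamma-like factors and an infinite product of $q$-Pochhammers; the numerator uses the transition density \eqref{eq:AW} with the same parameters as in Part (1) (with the final point at an atom of the $a$-series). Proposition \ref{factorials} again reduces the logarithm of the ratio to a sum of $\log\Gamma$ contributions and a controlled error, and a direct matching of the resulting expression with $\p^{d,c}_{s,t}(x^{\uu}_j(s),r)$ from \eqref{eq:pstuV} and Definition \ref{def:CDH} yields \eqref{eq:ucasebound} and \eqref{eq:ucaselimit}. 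The main obstacle in both (1) and (2) is not any single estimate but the bookkeeping: verifying that the finite collection of $-\pi^2/(6\kappa)$ terms, the $\log\kappa$ terms, and the Gamma terms arising from the six or more $q$-Pochhammers in \eqref{eq:AW} (plus those in \eqref{eq:AWdmasses} in Part (2)) assemble precisely into the continuous-dual-Hahn densities and masses in Definitions \ref{def_marginal}--\ref{def_transition}, and that the $|\textup{Im}(z)|<5/\kappa$ hypothesis of Proposition \ref{factorials} holds throughout the range $m,r\in[0,4N]$. This is a direct but lengthy calculation that parallels---and in fact contains as a sub-step---the analogous verification carried out in the proof of Lemma \ref{lem:p_zero}.
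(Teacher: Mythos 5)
Your plan follows the same route as the paper: decompose $\hat\pi^{(N)}_{s,t}$ via \eqref{eq:hatppcc}--\eqref{eq:hatppdc}, expand the $q$-Pochhammer factors via Proposition \ref{factorials}, cancel the singular terms, and identify the surviving Gamma products with the continuous dual Hahn transition densities from Definition \ref{def_transition}; Part~(3) is argued the same way, by noting the transition kernel places no atom at the $\vv$-atoms.

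There is, however, a real error in how you justify invoking Proposition \ref{factorials}. You write that the imaginary parts of the exponents $z$ in the $q$-Pochhammer symbols are bounded by $C(\sqrt{m/N}+\sqrt{r/N})$. They are not: the relevant exponents carry the factor $\iu\sqrt{N}\theta_r/2$, and since $\sqrt{N}\theta^{(N)}_r=\sqrt{r}+\Err^{\theta}_N(r)$, one has $|\textup{Im}(z)|\asymp\sqrt{r}$ (and $\sqrt{m}$), which for $r\in[0,4N]$ can reach order $\sqrt{N}$, not $O(1)$. This is precisely the regime where the uniformity in $z$ of Proposition \ref{factorials} is essential---if the imaginary parts really were $o(1)$, a pointwise $q$-Gamma asymptotic would suffice and the heavy lifting of Section 9 would be unnecessary. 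The correct justification is $\sqrt{N}\theta^{(N)}_r\le\sqrt{5r}\le 2\sqrt{5N}<5/\kappa$, which places $z$ in the admissible strip; and the two error contributions $\kappa(1+|z|)^2$ and $\kappa^b(1+|z|)^{1+2b+\e}$ must then be absorbed separately into $CN^{-\chi}(1+\sqrt r)^{\eta}$ using $1+\sqrt r\lesssim\sqrt N$ (the first contributes $\chi=(\eta-1)/2$, the second $\chi=b/2$), rather than both being instantly $O(N^{-b/2})$ as your displayed chain assumes.

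A smaller omission in Part~(2): you only match the $j=0$ mass against the infinite $q$-Pochhammer products. For $j\ge1$, \eqref{eq:AWdmasses} introduces an extra finite product $M_j$ of factors $1-q^a$, $1+q^a$, $q^a$, some with exponents containing $\iu\sqrt{N}\theta_r/2$; these require a separate expansion of the form $\tfrac{1-q^{a+\iu\sqrt{N}\theta_r/2}}{1-q}=(a+\iu\sqrt r/2)\,e^{\Err}$ with $|\Err|\lesssim N^{-1/2}(1+\sqrt r)$, which still fits within the $CN^{-\chi}(1+\sqrt r)^{\eta}$ budget but must be added in. With these two repairs your outline aligns with the paper's proof.
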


\begin{lemma}\label{transition_atoms}
The locations and masses of the (finitely many) atoms of the transition probability distributions $\hat\pi^{(N),d, d}_{s, t}$ and $\hat\pi^{(N),c, d}_{s, t}$ converge to those of $\p^{d, d}_{s, t}$ and $\pi^{c, d}_{s, t}$. For all $s,t\in [0,\Cuv)$ with $s<t$
\begin{enumerate}[leftmargin=*]
\item If $v+s/2 <0$, then for all $j\in\llbracket 0,\lfloor
  -v-s/2\rfloor\rrbracket$, the discrete support of \\$\hat\pi^{(N),d, d}_{s, t}\left(\xvN_j(s), \cdot\right)$ coincides with the set of points $\xvN_k(t)$\\ for $k\in\llbracket0,\lfloor -v-t/2\rfloor\rrbracket$ (if $v+t/2>0$ then there are no atoms) and the masses satisfy the following bound: There exists $C,\chi\in \R_{>0}$ such that for all $N\in \Z_{\geq 1}$, all $j\in\llbracket 0,\lfloor -v-s/2\rfloor\rrbracket$, and all $k\in\llbracket0,\lfloor -v-t/2\rfloor\rrbracket$
\begin{equation}\label{eq:pinnexp}
\hat\pi^{(N),d, d}_{s, t}\left(\xvN_j(s), \xvN_k(t)\right) = \p^{d, d}_{s, t}\left(\xv_j(s), \xv_k(t)\right) \,\cdot\, e^{\ErrLem^{(N),d, d}_{s, t}}
\end{equation}
where the error term satisfies
\begin{equation}\label{eq:pinnexpbound}
\left|\ErrLem^{(N),d, d}_{s, t}\right| \leq CN^{-\chi}.
\end{equation}
In particular,
\begin{equation}
\lim_{N\to \infty} \hat\pi^{(N),d, d}_{s, t}\left(\xvN_j(s), \xvN_k(t)\right)= \p^{d, d}_{s, t}\left(\xv_j(s), \xv_k(t)\right).
\end{equation}
\item If $u-s/2 <0$, then for all $j\in\llbracket 0,\lfloor -u+s/2\rfloor\rrbracket$, the discrete support of \\$\hat\pi^{(N),d, d}_{s, t}\left(\xuN_j(s), \cdot\right)$ is the set of points $\xuN_k(t)$ for $k\in\llbracket0,j\rrbracket$ and the masses satisfy the following bound: There exists $C,\chi\in \R_{>0}$ such that for all $N\in \Z_{\geq 1}$, all $j\in\llbracket 0,\lfloor -u-s/2\rfloor\rrbracket$, and all $k\in\llbracket0, j\rrbracket$
\begin{equation}\label{eq:pinnexp2}
\hat\pi^{(N),d, d}_{s, t}\left(\xuN_j(s), \xuN_k(t)\right) = \p^{d, d}_{s, t}\left(\xu_j(s), \xu_k(t)\right) \,\cdot\, e^{\ErrLem^{(N),d, d}_{s, t}}
\end{equation}
where the error term satisfies
\begin{equation}\label{eq:pinnexpbound2}
\left|\ErrLem^{(N),d, d}_{s, t}\right| \leq CN^{-\chi}.
\end{equation}
In particular,
\begin{equation}
\lim_{N\to \infty} \hat\pi^{(N),d, d}_{s, t}\left(\xuN_j(s), \xuN_k(t)\right)= \p^{d, d}_{s, t}\left(\xu_j(s), \xu_k(t)\right).
\end{equation}
\item If $v+s/2<0$ and $m\in\Supph^{(N),d}=[0,4N]$, then the measure $\hat\pi^{(N),c, d}_{s, t}\left(m,\cdot\right)$ has no discrete atomic part. If $u-s/2<0$ and $m\in\Supph^{(N),d}=[0,4N]$, then the measure  $\hat\pi^{(N),c, d}_{s, t}\left(m,\cdot\right)$ has discrete support equal to the set of points $\xuN_k(t)$ for $k\in\llbracket0,\lfloor -u+s/2\rfloor\rrbracket$ and the masses satisfy the following bound:
    There exists $C,\chi\in \R_{>0}$ such that for all $N\in \Z_{\geq 1}$, all $m\in\Supph^{(N),d}=[0,4N]$ and all $k\in\llbracket 0,\lfloor -u+s/2\rfloor\rrbracket$,
\begin{equation}
\hat\pi^{(N),c, d}_{s, t}\left(m, \xuN_k(t)\right) =  \p^{c, d}_{s, t}\left(m, \xu_k(t)\right) \,\cdot\, e^{\ErrLem^{c, d}_{s, t}\left(m, \xu_k(t)\right)}
\end{equation}
where the error term satisfies
\begin{equation}
\left|\ErrLem^{c, d}_{s, t}\left(m, \xu_k(t)\right)\right| \leq CN^{-\chi}.
\end{equation}
\end{enumerate}

\end{lemma}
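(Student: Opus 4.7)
The plan is to reduce each claimed identity to explicit asymptotics of Askey-Wilson atom masses via the rescaling identities \eqref{eq:hatppdd} and \eqref{eq:hatppcd}, then to the $q$-Pochhammer expansion of Proposition \ref{factorials}. The first step is to observe that each scaled transition probability in the lemma is a product of an Askey-Wilson transition mass and a ratio of marginals. Because Lemma \ref{marginal_atoms} (and, for case (3), Lemma \ref{lem:p_zero}) already controls the marginals with uniform multiplicative error $e^{O(N^{-\chi})}$, the prefactors $N^{u+v}$ in numerator and denominator cancel in the ratio and what remains for the marginal part is exactly the continuous-dual-Hahn marginal ratio from Definition \ref{def_marginal}. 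It therefore suffices to analyze the Askey-Wilson transition mass $\pi^{(N),d,d}_{q^t,q^s}$ (or $\pi^{(N),c,d}_{q^t,q^s}$) at the relevant pair of atoms.

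For case (1), write the starting atom as $y^{(N),u}_k(t)=\tfrac12(\chi+\chi^{-1})$ with $\chi=q^{u+k+t/2}>1$. After the time-reversal swap induced by \eqref{eq:hatppdd}, the Askey-Wilson parameters of the relevant transition distribution become
\begin{equation}
a=q^{u+s/2},\quad b=-q^{1+s/2},\quad c=q^{u+k+t-s/2},\quad d=q^{-u-k-s/2}.
\end{equation}
Under $u+s/2<0$ and $s,t\in[0,\Cuv)$, only $a$ exceeds $1$ in modulus (a check that uses the explicit range \eqref{eq:Cuv}), so by \eqref{eq:atomchi} the target discrete support is exactly $\{y^{(N),u}_j(s):j\in\llbracket 0,\lfloor-u-s/2\rfloor\rrbracket\}$, matching the stated support. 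Writing out the corresponding $AW^d$ mass at the $j$-th atom via \eqref{eq:AWdmasses} gives an explicit product of infinite and finite $q$-Pochhammer symbols, each of the form $(q^z;q)_\infty$ or $(q^z;q)_j$ with $z$ affine in $u,v,s,t,j,k$. Applying Proposition \ref{factorials} in its $m=1$ form to the infinite products, and using the elementary $(q^z;q)_j\to[z]_j\kappa^j$ (with $\kappa=2/\sqrt N$) on the finite ones, the singular prefactors $e^{-\pi^2/(6\kappa)}\kappa^{-(z-1/2)}$ produced by Proposition \ref{factorials} telescope across the ratio, leaving a product of Gamma functions. After simplification (using $\Gamma(z)\Gamma(1-z)=\pi/\sin(\pi z)$ where needed) and incorporating the marginal ratio, this product coincides with the continuous-dual-Hahn mass from Definition \ref{def:CDHd} applied to the parameters $a'=\uu+j+t/2$, $b'=t/2-s-\uu-j$, $c'=\vv-t/2$ from \eqref{eq:pstuV}, evaluated at the $k$-th atom of $\Suppp^{d,\uu}_t$. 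The error bound \eqref{eq:pinnexpbound} follows at once from \eqref{eq:errorbounds} because all Pochhammer arguments have bounded real part. Case (2) is parallel with $a,c$ swapped: the generating parameter is $c=q^{v-t/2}$, and the support constraint $k\leq j$ in \eqref{eq:pinnexp2} reflects the Askey-Wilson rule that only atoms with $|cq^i|\geq 1$ occur, which here translates to $v+(j-k)-t/2\leq 0$.

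For case (3), the same scheme applies with \eqref{eq:hatppcd} in place of \eqref{eq:hatppdd}. The continuous density $\pi^{(N),c}_{q^s}(1-m/(2N))$ in the denominator is controlled by Lemma \ref{lem:p_zero}, the discrete mass $\pi^{(N),d}_{q^t}(1-y/(2N))$ in the numerator by Lemma \ref{marginal_atoms}, and the transition mass is again given by \eqref{eq:AWdmasses} with the Askey-Wilson parameters determined by the starting point $1-m/(2N)\in\Supp^c$ (for which $x\pm\sqrt{x^2-1}=e^{\pm\iu\theta_m}$ with $\theta_m$ of order $\sqrt{m/N}$). The identification of which of $u$- or $v$-atoms arises in the target is dictated by which of $a=q^{u+t/2}$ or $c=q^{v-t/2}$ has modulus exceeding $1$, giving the stated dichotomy. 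Expanding the (now complex-argument) $q$-Pochhammers via Proposition \ref{factorials}—taking $z$ with unbounded imaginary part only where the argument is of the form $b'=(t-s)/2\pm\iu\sqrt m/2$, which is precisely the regime \eqref{eq:errorbounds} is designed for—and matching to Definition \ref{def:CDH} recovers the continuous-to-discrete mass from \eqref{eq:pstmV}.

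The main obstacle is the sheer bookkeeping: each Askey-Wilson mass contains on the order of a dozen $q$-Pochhammer factors, and the assertion that, after Gamma-function simplification, they reassemble into the correct continuous-dual-Hahn mass of Definition \ref{def:CDHd} is a nontrivial algebraic identity rather than a transparent one. A subsidiary but essential preliminary step in each case is to verify the stated identification of the discrete support—in particular to check that under $s,t\in[0,\Cuv)$ the only parameter among $\{a,b,c,d\}$ with modulus exceeding $1$ is the claimed one, so that no spurious atoms coming from $b=-q^{1+s/2}$ or from a second large parameter arise. Once the Pochhammer bookkeeping is organized consistently with the parallel bookkeeping already performed in the proofs of Lemmas \ref{lem:p_zero}, \ref{marginal_atoms} and \ref{lem:p}, each of the quantitative bounds \eqref{eq:pinnexpbound} and \eqref{eq:pinnexpbound2} is simply inherited from the uniform error control in \eqref{eq:errorbounds}.
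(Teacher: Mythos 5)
Your outline matches the paper's strategy for parts (1) and (2): apply the rewriting \eqref{eq:hatppdd}, absorb the marginal ratio via Lemma \ref{marginal_atoms}, write the Askey--Wilson transition mass from \eqref{eq:piasAWdd} or \eqref{eq:piasAWdd2} together with \eqref{eq:AWdmasses}, expand via Proposition \ref{factorials} (the finite $q$-Pochhammer factors being handled by an elementary expansion), and match to Definition \ref{def_transition}. Two small corrections to your bookkeeping: the claim in part (1) that ``only $a$ exceeds $1$ in modulus'' is not quite right---with $a=q^{\uu+s/2}$ and $c=q^{\uu+k+t-s/2}$, the parameter $c$ can also exceed $1$. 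The paper handles this by observing that the resulting extra atoms are not in $\Supph^{(N),d}_s$, so they are irrelevant because \eqref{eq:hatpidchapyd} restricts the second Askey--Wilson slot to that support. Similarly, your condition ``$\vv+(j-k)-t/2\le 0$'' in part (2) should be $\vv+j-s/2<0$ (automatic since $j\le\lfloor-\vv+s/2\rfloor$); the constraint $k\le j$ arises simply because the $c$-generated atoms of the transition from $y^{(N),\vv}_k(t)$ are $y^{(N),\vv}_{k+i}(s)$ with $i\ge 0$.

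The genuine gap is in part (3). Your description places the continuous point $1-m/(2N)$ in the Askey--Wilson \emph{starting} slot and invokes the mass formula \eqref{eq:AWdmasses}. But the time reversal encoded in \eqref{eq:hatppcd} means the relevant factor is $\pi^{(N),d,c}_{q^t,q^s}\bigl(y^{(N),\chi}_k(t),\,1-m/(2N)\bigr)$: the transition \emph{starts} from the atom at time $q^t$, so the Askey--Wilson parameters come from \eqref{eq:piasAWdd} or \eqref{eq:piasAWdd2} rather than from $e^{\pm\iu\theta_m}$, and what gets evaluated at $1-m/(2N)$ is the \emph{density} \eqref{eq:AW}, not a discrete mass. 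This matters decisively for the first assertion of part (3). When $\uu+s/2<0$ and $\uu+t/2<0$, a naive count of which of $a$ and $c$ exceeds $1$ would suggest that $\uu$-atoms survive in the target; however with $a=q^{\uu+s/2}$ and $d=q^{-\uu-k-s/2}$ the product $ad=q^{-k}$ appears as $(q^{-k};q)_\infty$ in the numerator of \eqref{eq:AW}, and this factor is identically zero for $k\in\Z_{\ge 0}$ (the factor at index $k$ is $1-q^0=0$). This exact vanishing---not an asymptotic smallness, and not something detectable by Proposition \ref{factorials}, which is set up for $\textup{Re}(z)>0$---is precisely what shows the transition has no discrete atomic part when $\uu+s/2<0$. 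Your heuristic would not find it and would likely produce an incorrect nonzero limit.
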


\begin{remark}
Lemmas  \ref{lem:p} and  \ref{transition_atoms} prove convergence of the transition probabilities for $\Yh^{(N)}$ to those of $\T$, and Lemmas \ref{lem:p_zero} and \ref{marginal_atoms} prove the convergence of the their state spaces (in addition to the convergence of a family of marginal distributions). This implies that as Markov processes on the time interval $[0,\Cuv)$, $\Yh^{(N)}_s$ converges in finite-dimensional distributions to $\T_s$. Of course, our results prove precise error bounds on the convergence of the transition probabilities too.
\end{remark}

\subsection{Proof of Proposition \ref{prop_ASEP_gen_function}}\label{sec:upos}
%

Recall $\tilde\phi_{u,v}(\vec{c},\vec{X})$ from \eqref{eq:phipdef}. 
\begin{lemma}\label{lem_strict_positive}
For $\vec{c}\in (0,\Cduv)^d$, $\tilde\phi_{u,v}(\vec{c},\vec{X}),\tilde\phi_{u,v}(\vec{0},\vec{X})\in (0,\infty)$.
\end{lemma}

\begin{proof}
We show that  $\tilde\phi_{u,v}(\vec{c},\vec{X}\edit{)}\in (0,\infty)$ for $\vec{c}\in (0,\Cduv)^d$ since $\tilde\phi_{u,v}(\vec{0},\vec{X}\edit{)}\in (0,\infty)$ follows similarly.
For $\vec{c}\in  (0,\Cduv)^d$, $s_k:= c_k+\cdots +c_d\in[0,\Cuv)$ for all $k\in \llbracket 1, d+1\rrbracket$.
The support of the integrand defining $\tilde\phi_{u,v}(\vec{c},\vec{X})$ is $\Suppp_{s_1}\times \cdots\times \Suppp_{s_{d+1}}$ (see Definition \ref{def_marginal}). For fixed $u$ and $v$, there exists a constant $L<0$ such that for all $s\in [0,\Cuv)$ the support $\Suppp_s$ is lower bounded by $L$. This follows immediately from the definition of $\Suppp_s$ as the union of $\Suppp^c_s=(0,\infty)$ with $\Suppp^d_s$, a finite number of negative discrete atoms, whose locations vary continuously with $s$. Owing to this lower bound and the ordering of the $X_k$ variables, there exists a constant $C>0$ such that for all  $\vec{c}\in (0,\Cduv)^d$
\begin{equation}\label{eq:phirdd}
\tilde\phi_{u,v}(\vec{c},\vec{X})  \leq C  \int_{\R} e^{-\frac{(1-X_{d})}{4} r} \p_0(dr).
\end{equation}
We used $\G(\vec{r}; \vec{c};\vec{X})\leq C e^{-\frac{(1-X_{d})}{4} r_{d+1}}$ for $\vec{r}\in \Suppp_{s_1}\times \cdots\times \Suppp_{s_{d+1}}$, and then Lemma \ref{lem:consistancy} to integrate out the variables $r_1,\ldots, r_{d}$.

We claim that the integral on the right-hand side of \eqref{eq:phirdd} is finite.
%
%
%
The atoms in $\p_0$ have a finite contribution to the integral on the right-hand side of \eqref{eq:phirdd}, so it remains to control the integral on $(0,\infty)$. In that case, the measure $\p_0(dr)$ can be written as $\p^c_0(r)dr$ where the density function $\p^c_0(r)$ is given in Definition \ref{def_marginal}
Using the asymptotic behavior of the gamma function for small \eqref{eq:gamma_pole} and large \eqref{eq:asympt_gamma} imaginary parts, we see that for any fixed $u,v$ with $u+v>0$, there exists a constant $C>0$ such that
\begin{equation}\label{eqpcbound}
 C^{-1} f(r) \leq \p^c_0(r)\leq C f(r) \qquad \mathrm{where} \quad f(r):=\begin{cases}r^{1/2}&r\geq 1\\ r^{u+v-1}&r\in (0,1)\end{cases}.
\end{equation}
Substituting this into the right-hand side of \eqref{eq:phirdd} and using the integrability of $r^{-1/2}$ at 0, and the decay coming from $e^{-\frac{(1-X_{d})}{4} r} $ at infinity, we find that the right-hand side of \eqref{eq:phirdd} is finite.

Turning to the positivity of  $\tilde\phi_{u,v}(\vec{c},\vec{X})$ for $\vec{c}\in (0,\Cduv)^d$,
\begin{equation}
\label{eq:lowerboundstrict}
\tilde\phi_{u,v}(\vec{c},\vec{X})
\geq \G(\vec{2}; \vec{c};\vec{X})\int\limits_{[1,2]^{d+1}} \prod_{i=1}^{d}\p^{c,c}_{s_{i+1},s_{i}}(r_{i+1},r_{i})dr_i\cdot \p^c_{s_{d+1}}(r_{d+1})dr_{d+1}.
\end{equation}
The inequality follows from the definition of $\tilde\phi_{u,v}(\vec{c},\vec{X})$ and the positivity of the integrand therein  in conjunction with the lower bound $\G(\vec{r}; \vec{c};\vec{X})\geq \G(\vec{2}; \vec{c};\vec{X})$  for $\vec{r}\in[1,2]^{d+1}$ and the fact that on $[1,2]$ the transition and marginal distributions are absolutely continuous. There exists $C>0$ so that $\G(\vec{2}; \vec{c};\vec{X})\geq C$ for $\vec{c}\in (0,\Cduv)^d$.

It remains to show that the integral in the final line of \eqref{eq:lowerboundstrict} over $[1,2]^{d+1}$ is strictly positive for any $\vec{c}\in (0,\Cduv)^{d}$. Our assumption on $\vec{c}\in (0,\Cduv)^d$ implies that $s_{k}-s_{k+1}=c_k>0$. Using this and the explicit formulas for the marginal and transition density functions (see Definitions \ref{def_marginal} and \ref{def_transition}), we see that for all $\vec{c}\in (0,\Cduv)^d$ there exists a constant $C>0$ such that $\p^{c,c}_{s_i,s_{i+1}}(r_i,r_{i+1}),\p^c_{s_{d+1}}(r_{d+1})\geq C$ for all $\vec{r}\in [1,2]^{d+1}$.
\end{proof}

Turning to the proof of Proposition \ref{prop_ASEP_gen_function}, by Lemma \ref{lem_strict_positive} we see that in order to prove $\phi^{(N)}_{u,v}(\vec{c},\vec{X}) \to \phi_{u,v}(\vec{c},\vec{X})$, it suffices to show that
 $\tilde\phi^{(N)}_{u,v}(\vec{c},\vec{X}) \to \tilde\phi_{u,v}(\vec{c},\vec{X})$ for $\vec{c}\in (0,\Cduv)^{d}$ for all $k\in\llbracket 1,d\rrbracket$, and for $\vec{c}=\vec{0}$. We will just deal with the first case, since the second case where $\vec{c}=\vec{0}$ follows similarly. The idea in the proof is to use the convergence lemmas in Section \ref{sec:asymptoticupos} to show point-wise and dominated convergence of the integrand in $\phi^{(N)}_{u,v}(\vec{c},\vec{X})$ to that of $\phi_{u,v}(\vec{c},\vec{X})$. There is a bit of bookkeeping  since the measures we consider have mixed discrete and absolutely continuous support.

 In the definition of $\tilde\phi^{(N)}_{u,v}(\vec{c},\vec{X})$ we can insert a multiplicative factor $1$ as $\prod_{i=1}^{d+1} (\mathbf{1}_{r_i\geq 0}+\mathbf{1}_{r_i<0})$. Expanding leads to $2^{d+1}$ terms, each one corresponding to a choice of whether we integrate over the continuous part of $\hat\pi^{(N)}$ when $r_i\geq 0$ or the atomic part when $r_i<0$. Explicitly,
\begin{align}\label{eq:decomtildephi}
\tilde\phi^{(N)}_{u,v}(\vec{c},\vec{X}) = \!\!\!\!\sum_{I\subset \llbracket 1,d+1\rrbracket} \sum_{r_I} \int_{r_J} \G^{(N)}(\vec{r}; \vec{c};\vec{X}) &\prod_{i=1}^{d}\hat\pi^{(N),a_{i+1},a_{i}}_{s_{i+1},s_{i}}(r_{i+1},r_{i})\qquad\\
&\times N^{u+v} \hat\pi^{(N),a_{d+1}}_{s_{d+1}}(r_{d+1}).
\end{align}
Here $J$ is the complement of $I$ in $\llbracket 1,d+1\rrbracket$; the sum over $r_I$ is an $|I|$-fold summation over $r_{i}\in \Supph^{(N),d}_{s_{i}}$ for $i\in I$; the integral over $r_J$ is really a $|J|$-fold integral as the $r$ variables with indices in $J$ vary in $\Supph^{(N),c}=[0,4N]$ (note, we have suppressed the $\prod_{j\in J} dr_{j}$ symbols); and the variables $a_{i}$ take values in the set of symbols $\{c,d\}$ with $a_i = d$ when $i\in I$ and $a_i=c$ when $i\in J$.

As $N$ varies, the number of atoms in the atomic parts of $\hat\pi^{(N)}$ does not change (i.e. $\Supph^{(N),d}_{s}$ is independent of $N$). Therefore, the form of the decomposition \eqref{eq:decomtildephi} remains stable with $N$. Moreover, $\tilde\phi_{u,v}(\vec{c},\vec{X})$ admits the same form of decomposition. Thus, in order to show the convergence of $\tilde\phi^{(N)}$ to $\tilde\phi$, it suffices to show that for any choice of $I$, the corresponding sum over the $r_I$ and integral in the remaining $r_J$ variables in \eqref{eq:decomtildephi} converges to its proposed limit. The sum over $r_I$ can be indexed in terms of the labels of the elements chosen from each $\Supph^{(N),d}_{s_{i}}$. By labels we mean that for each $i\in I$, we may identify $r_i\in \Supph^{(N),d}_{s_{i}}$ by a label $\ell_i$ such that $r_i = x^u_{\ell_i}(s_{i})$ or $r_i = x^v_{\ell_i}(s_{i})$ (depending on whether we are dealing with $u$ or $v$ atoms). For each choice of $I$ there are a finite number of choices of labels $\{\ell_i\}_{i\in I}$. If for every choice of $I$ and every choice of labels, we can prove convergence of the remaining integral in the $r_J$ variables, then we will have achieved our goal of proving the point-wise limit in \eqref{eq:phi_negative}.

Lemmas \ref{marginal_atoms} and \ref{transition_atoms} show that the locations and masses of the atoms of the $\hat\pi^{(N)}$ measures converge as $N\to \infty$ to those of the $\p$ measures and Lemmas \ref{lem:p_zero} and \ref{lem:p} show the point-wise convergence of the densities of the absolute continuous parts of the $\hat\pi^{(N)}$ measures to those of the $\p$ measures.  Equation \eqref{eq:GNconv} in Lemma \ref{lem:G_bound} shows how $\G^{(N)}$ converges point-wise to $\G$. In light of these results it follows that for each choice of $I$ and labels $\{\ell_i\}_{i\in I}$, the integrand in \eqref{eq:decomtildephi} converges point-wise to its proposed limit. To show convergence of the integral itself, it suffices to demonstrate a dominating function and then use the Lebesgue dominated convergence theorem.

We will assume below that $X_d<1$. When $X_d=1$, the functions $\G^{(N)}$ and $\G$ do not depend on $r_{d+1}$ and thus we can integrate out the $r_{d+1}$ variable. Since $\int  \hat\pi^{(N)}_{s_{d+1},s_{d+1}}(r_{d},dr_{d})  \hat\pi^{(N)}_{s_{d+1}}(dr_{d+1}) = \hat\pi^{(N)}_{s_{d}}(dr_{d})$, the formula for $\tilde\phi$ reduces to a similar one but with one fewer variable, which can be bounded in the same manner as below.

Since $\Supph^{(N),d}_{s}$ has a uniform lower bound as $s$ varies, and $\Supph^{(N),c}_{s}=[0,4N]$, we can use \eqref{eq:Gbound} of Lemma \ref{lem:G_bound} to show that there exist $c,C>0$ such that
\begin{equation}\label{eq:Gnexpbound}
\G^{(N)}(\vec{r}; \vec{c};\vec{X})\leq C e^{-c(r_1+\cdots r_{d+1})}
\end{equation}
as the $r_k$ vary over the $\Supph^{(N)}_{s_{k}}$ for $k\in \llbracket 1,d+1\rrbracket$. Using this bound along with Lemmas \ref{lem:p_zero}, \ref{marginal_atoms}, \ref{lem:p} and \ref{transition_atoms} we arrive at the following bound: Fix $\eta=3/2$, then there exists $N_0\in \Z_{\geq 1}$ and $c,c',C,C',\chi\in \R_{>0}$ such that for all $N>N_0$, all $r_{i}\in \Supph^{(N),d}_{s_{i}}$ with $i\in I$ and all $r_j\in \Supph^{(N),c}=[0,4N]$ for $j\in J$, we have
\begin{align}\label{eq:Gnlongbound}
&\left|\G^{(N)}(\vec{r}; \vec{c};\vec{X}) \prod_{i=1}^{d}\hat\pi^{(N),a_{i+1},a_{i}}_{s_{i+1},s_{i}}(r_{i+1},r_{i})\cdot N^{u+v}\hat\pi^{(N),a_{d+1}}_{s_{d+1}}(r_{d+1}) \right|\\
&\quad \leq C e^{-c(r_1+\cdots r_{d+1})}
\prod_{j\in J} e^{CN^{-\chi}(1+\sqrt{r_j})^{\eta}}
\prod_{i=1}^{d}\p^{a_{i+1},a_{i}}_{s_{i+1},s_{i}}(r_{i+1},r_{i})\cdot \p^{a_{d+1}}_{s_{d+1}}(r_{d+1})\\
\nonumber &\quad \leq C' e^{-c'(r_1+\cdots r_{d+1})}
\prod_{i=1}^{d}\p^{a_{i+1},a_{i}}_{s_{i+1},s_{i}}(r_{i+1},r_{i})\cdot \p^{a_{d+1}}_{s_{d+1}}(r_{d+1})
\end{align}
 For the first inequality we used \eqref{eq:Gnexpbound} along with the bounds from Lemmas \ref{lem:p_zero} and  \ref{lem:p}; the second inequality uses $e^{-cr}  e^{CN^{-\chi}(1+\sqrt{r})^{\eta}} \leq C' e^{-c' r}$ for a large enough $C'>0$ and a small enough $c'>0$.

The point of \eqref{eq:Gnlongbound} is that it now provides us with an $N$-independent dominating function. If we can show that for each $I\subset \llbracket 1,d+1\rrbracket$,
\begin{equation}\label{eq:lasttoprove}
 \sum_{r_I} \int_{r_J} C' e^{-c'(r_1+\cdots r_{d+1})}
\prod_{i=1}^{d}\p^{a_{i+1},a_{i}}_{s_{i+1},s_{i}}(r_{i+1},r_{i})\cdot \p^{a_{d+1}}_{s_{d+1}}(r_{d+1}) <\infty
\end{equation}
we will be done owing to the point-wise convergence we have already shown.

Let us first consider \eqref{eq:lasttoprove} with $I$ such that $d+1\in I$. In that case, the $r_{d+1}$ variable is summed over the finite number of atoms in $\Suppp^{d}_0$, each of which has a finite mass. Thus, for such terms in  \eqref{eq:lasttoprove} we can bound $\p^{a_{I(d+1)}}_{s_{d+1}}(r_{d+1})\leq C$. All of the other terms $\p^{a_{i+1},a_{i}}_{s_{i+1},s_{i}}(r_{i+1},r_{i})$ are either densities or masses of probability measures. Owing to this and the fact that $e^{-c'r}$ is upper bounded by a constant for $r\in \Suppp_s$ it immediately follows that the sum over $r_I$ and integral over $r_J$ is likewise bounded by a constant.

For $I$ such that $d+1\notin I$, the $r_{d+1}$ variable is integrated over $\Suppp^{c} = (0,\infty)$. The term $\p^{a_{d+1}}_{s_{d+1}}(r_{d+1})$ now represents the density of that infinite measure. As in the previous paragraph we may integrate/sum out all of the other variables $r_1,\ldots, r_d$ at the cost of a constant factor. Thus, we are left to bound
$\int_0^{\infty} e^{-c' r}  \p^{c}_{0}(r) dr<\infty$, which is done precisely as in the proof of Lemma \ref{lem_strict_positive}.
This shows that the right-hand side of \eqref{eq:Gnlongbound} is a dominating function, completing the proof of Proposition \ref{prop_ASEP_gen_function}.

\subsection{Proof of lemmas in Section \ref{sec:asymptoticupos}}\label{sec:applications_lemmas}

\subsubsection{Proof of Lemma \ref{lem:G_bound}}
To prove \eqref{eq:Gbound} we show that
\begin{align}
\G^{(N)}(\vec{r}; \vec{c};\vec{X})
&= {\bf 1}_{\vec{r}\in \R^{d+1}_{\leq 4N}} \cdot \prod_{k=1}^{d+1}\left(1+ \sinh^2\left (\tfrac{s_k}{2\sqrt N}\right)-\tfrac{r_k}{4N}\right)^{N(X^{(N)}_k-X^{(N)}_{k-1})}\\
&\leq \prod_{k=1}^{d+1} \exp\left(\tfrac{N(X^{(N)}_k-X^{(N)}_{k-1})}{4N}\left(4N\sinh^2\left(\tfrac{s_k}{2\sqrt N}\right)-r_k\right)\right)\\
&\leq C \prod_{k=1}^{d+1} e^{-(X^{(N)}_k-X^{(N)}_{k-1})\frac{r_k}{4}} \leq C \prod_{k=1}^{d+1}e^{-(X_k-X_{k-1})\frac{r_k}{4} + \frac{|r_k|}{2N}}.
\end{align}
The first equality is by the definition of $\G^{(N)}(\vec{r};
\vec{c};\vec{X})$ and the hyperbolic trigonometric identity
$\frac{\cosh(x)+1}{2} = 1+ \sinh^2(\tfrac{x}{2})$. The next inequality
uses that $(1+x)^a\leq e^{ax}$ for $a\in \Z_{\geq 0}$ and $x\in
\R_{\geq -1}$. In particular, we take $a= N(X^{(N)}_k-X^{(N)}_{k-1})$
and $x= \sinh^2\left (\tfrac{s_k}{2\sqrt
    N}\right)-\tfrac{r_k}{4N}$. The $\sinh^2$ is always non-negative
and due to the indicator function ${\bf 1}_{\vec{r}\in \R^{d+1}_{\leq
    4N}}$ we may assume that $\tfrac{r_k}{4N}\leq 1$. After applying
the inequality we drop the indicator function. The next inequality
relies on the fact that $N\sinh^2\left(\tfrac{s_k}{2\sqrt N}\right)$
can be bounded above by a constant provided that the $s_k$ vary in a
compact set (which follows from the assumption on the $\vec{c}\in
I^d$). The constant $C$ will depend on the set $I$. The final
inequality uses the fact that $|X^{(N)}_k - X_k|\leq N^{-1},$ which means that we can replace the $X^{(N)}_k$ by their limiting values $X_k$ at the cost of introducing the factor $|r_k|/2N$ in the exponential.

Owing to the triangle inequality, to prove \eqref{eq:GNconv} it suffices to show that
\begin{equation}\label{eq:convp1}
 \lim_{N\rightarrow \infty}  \G^{(N)}(\vec{r}; \vec{c};\vec{X})=\G(\vec{r}; \vec{c};\vec{X}),\quad
 \lim_{N\rightarrow \infty} \left|\G^{(N)}(\vec{r}^{(N)}; \vec{c};\vec{X})-\G^{(N)}(\vec{r}; \vec{c};\vec{X})\right|=0.
\end{equation}
The first limit in \eqref{eq:convp1} follows immediately from Taylor expansion of the $\sinh$ function and the convergence of $(1+x/N)^N$ to $e^{x}$  and $X^{(N)}_k$ to $X_k$.

The second limit in \eqref{eq:convp1} will make use of two elementary inequalities. The first is that for all $a_1,\ldots, a_{d+1},b_{1},\ldots,b_{d+1}\in\R$ bounded in absolute value by $M\in \R_{>0}$,
$\left|\prod_{k=1}^{d+1} a_k -\prod_{k=1}^{d+1} b_k\right| \leq M^d \sum_{k=1}^{d+1} |a_k-b_k|.
$
We apply this inequality with
$
a_k^{(N)}= \Big(1+ \sinh^2\left (\tfrac{s_k}{2\sqrt N}\right)-\tfrac{r^{(N)}_k}{4N}\Big)^{N(X^{(N)}_k-X^{(N)}_{k-1})}$
and
$b_k^{(N)} = \Big(1+ \sinh^2\left (\tfrac{s_k}{2\sqrt N}\right)-\tfrac{r_k}{4N}\Big)^{N(X^{(N)}_k-X^{(N)}_{k-1})}.
$
For $\vec{X},\vec{s}$ fixed, it is easy to see that we can find some $M$ large enough so that $|a^{(N)}_k|,|b^{(N)}_k|\leq M$ for all $k\in \llbracket 1,d+1\rrbracket$ and all $N\in\Z_{\geq 1}$.
Thus, it suffices to show that $\lim_{N\to \infty}|a^{(N)}_k-b^{(N)}_k|=0$. Notice that $a^{(N)}_k$ can be written in the form $(1+\tilde{a}^{(N)}_k/L)^L$ where $L= N(X_k-X_{k-1})$. There exists some compact interval $I$ such that $\tilde{a}^{(N)}_k\in I$ for all  $N\in\Z_{\geq 1}$. Likewise $b^{(N)}_k$ can be written in the same form in terms of $\tilde{b}^{(N)}_k$ and we can find some compact interval $I$ so that $\tilde{b}^{(N)}_k\in I$ for all  $N\in\Z_{\geq 1}$ as well. The convergence $\lim_{N\to \infty} |r^{(N)}-r|=0$ implies that $\lim_{N\to \infty}|\tilde{a}^{(N)}_k-\tilde{b}^{(N)}_k|=0$. To finish the proof, we use the following elementary inequality: For any compact interval $I\subset \R$ there exists a constant $C>0$ such that for all $L$ large enough and $\tilde{a},\tilde{b}\in I$, $|(1+\tilde{a}/L)^L -(1+\tilde{b}/L)^L| \leq C |\tilde{a}-\tilde{b}|$. This implies the second limit in \eqref{eq:convp1} and completes the proof.

\subsubsection{Notation for asymptotics}\label{sec:notforasym}
Recall $\AqP^{\pm}[\kappa;z]$ from \eqref{eq:AqPp} and \eqref{eq:AqPm}. For $k,N\in \Z_{\geq 1}$ and $z_1,\ldots, z_k\in \C$, define
\begin{equation}\label{eq:AzN}
\AqP^{\pm}_N[z_1,\cdots, z_k]=\sum_{i=1}^k \AqP^{\pm}[\tfrac{2}{\sqrt N},z_i].
\end{equation}
Here we have fixed that $\kappa=\tfrac{2}{\sqrt N}$, in which case $q=e^{-\kappa}$.

Setting $m=1$ in Proposition \ref{factorials} shows that for $q=e^{-\kappa}$ and $z\in\C$,
\begin{equation}\label{eq:qqerrorref}
\log(\pm q^z;q)_{\infty} = \AqP^{\pm}[\kappa;z] +  \Error^{\pm}_1[\kappa;z]
\end{equation}
where $\Error^{\pm}_1[\kappa;z]$ satisfies \eqref{eq:errorbounds}.
For $k,N\in \Z_{\geq 1}$ and $z_1,\ldots, z_k\in \C$, define
\begin{equation}\label{eq:errorN}
\Err^\pm_N[z_1,\cdots, z_k]=\sum_{i=1}^k \Error^{\pm}_1[\tfrac{2}{\sqrt N},z_i].
\end{equation}

\subsubsection{Proof of Lemma \ref{lem:p_zero}}

For $r\in \R_{\geq 0}$, $\lim_{N\to \infty} \ErrLem^{(N), c}_{t}(r)=0$, so \eqref{eq:ptlem73} follows from \eqref{eq:lemma82ineq}.
It remains to prove \eqref{eq:lemma82ineq}.
We proceed in three steps. In step 1, we write down $\hat \pi^{(N), c}_{t}(r)$. In step 2, we further rewrite $\hat \pi^{(N), c}_{t}(r)$ in terms of a limiting term and error terms as in Section \ref{sec:notforasym}. In step 3, we control the error terms using the bounds in Proposition \ref{factorials}.

\smallskip
\noindent {\bf Step 1.} For $r\in \Supph^{(N),c}=[0,4N]$, in light of \eqref{eq:AW}, \eqref{eq:def_pi} and \eqref{eq:hatp} we have
\begin{equation}\label{eq:pirelations}
\hat \pi^{(N), c}_{t}(r) = \hat \pi^{(N), c}_{t;v,1,u,1}(r)
\end{equation}
where we define
\begin{align}\label{eq:firstexpand}
  &\qquad\hat \pi^{(N), c}_{t;v,\tilde{v},u,\tilde{u}}(r)  := \frac{1}{8\pi N  \sqrt{\frac{r}{4N}} \sqrt{1-\frac{r}{4 N}}}\times  \\
  & \dfrac{\big(q, -q^{v+\tilde{v}+t}, q^{v+u}, -q^{v+\tilde{u}}, -q^{u+\tilde{v}}, q^{\tilde{v}+\tilde{u}}, -q^{u+\tilde{u}-t}\big)_{\infty} \Big\lvert\big(q^{\iu  \sqrt N \theta_r }\big)_{\infty}\Big\rvert^2}
  {\big(q^{v+\tilde{v}+u+\tilde{u}}\big)_{\infty}\Big\lvert\big(q^{v+t/2+\iu  \sqrt N\frac{\theta_r}{2}},-q^{\tilde{v}+t/2+\iu \sqrt N \frac{\theta_r}{2}} , q^{u-t/2+\iu  \sqrt N \frac{\theta_r}{2}}, -q^{\tilde{u}-t/2+\iu  \sqrt N \frac{\theta_r}{2}},\big)_{\infty}\Big\rvert^2}
\end{align}
and where we have used the notation (suppressing the $N$ dependence above)
\begin{equation}\label{eq:thetardef}
\theta_r  = \theta^{(N)}_r:= \arccos\left(1-\frac{r}{2N}\right).
\end{equation}

Observe that if we send $r\mapsto 4N-r$, then $\theta_r\mapsto \pi -\theta_r$ and thus also $q^{\iu  \sqrt N \frac{\theta_r}{2}}\mapsto -q^{\iu  \sqrt N \frac{\theta_r}{2}}$ and $q^{\iu  \sqrt N \theta_r}\mapsto q^{\iu  \sqrt N \theta_r}$. These transformations imply that $\hat \pi^{(N), c}_{t;v,\tilde{v},u,\tilde{u}}(r)$ transforms under this change of variables by swapping the tilde and non-tilde variables:
\begin{equation}\label{eq:piinvar}
\hat \pi^{(N), c}_{t;v,\tilde{v},u,\tilde{u}}(4N-r) = \hat \pi^{(N), c}_{t;\tilde{v},v,\tilde{u},u}(r).
\end{equation}

In light of this transformation we will now consider the asymptotics behavior of $\hat \pi^{(N), c}_{t;v,\tilde{v},u,\tilde{u}}(r)$ for $r\in [0,2N]$. We will show (generalizing \eqref{Lem83eq}) that, provided $u+v>0$ and $\tilde{u}+\tilde{v}>0$, $r\in [0,2N]$
\begin{equation}
N^{u+v}\sqrt{1-\frac{r}{4N}}\, \hat \pi^{(N), c}_{t;v,\tilde{v},u,\tilde{u}} =\p^c_{t;v,u}(r)\,\cdot\, e^{\ErrLem^{(N), c}_{t;v,\tilde{v},u,\tilde{u}}(r)}
\end{equation}
where the error term satisfies the bound
\begin{equation}\label{eq:lemma82ineqprime}
\Big|\ErrLem^{(N), c}_{t;v,\tilde{v},u,\tilde{u}}(r)\Big| \leq CN^{-\chi}(1+\sqrt{r})^{\eta}.
\end{equation}
as in  \eqref{eq:lemma82ineq} and where $\p^c_{t;v,u}=\p^c_{t}$ is given in Definition \ref{def_marginal} (and does not depend on $\tilde{v}$ or $\tilde{u}$).
By combining this bound, the transformation \eqref{eq:piinvar} and the growth bound \eqref{eqpcbound} on $\p^c_{t}$ we can easily deduce that equation \eqref{Lem83eq} also holds for $r\in [2N,4N]$ with the claimed error bound \eqref{eq:lemma82ineq}. Thus, we focus the rest of this proof on demonstrating \eqref{eq:lemma82ineqprime} under the restriction $r\in [0,2N]$.


\smallskip
\noindent {\bf Step 2.}
By Taylor expanding around $r=0$ we can write
\begin{equation}\label{eq:thetataylor}
\sqrt{N} \theta^{(N)}_r = \sqrt{r} + \Err^{\theta}_N(r),
\end{equation}
where $\Err^{\theta}_N(r)$ is the remainder.
With this and \eqref{eq:qqerrorref}, we rewrite \eqref{eq:firstexpand}  as
\begin{align}\label{eq:firstexpand2}
  \qquad\log\left[\sqrt{1-\frac{r}{4N}}\, \hat \pi^{(N), c}_{t;v,\tilde{v},u,\tilde{u}}(r)\right] = - \log\left[8\pi N  \sqrt{\dfrac{r}{4N}}\,\right] + \AqP_N(\sqrt{r})+ \Err^{\AqP}_N(r)+\Err^{\mathcal{E}}_N(r).
\end{align}
In the above formula, for $a\in \R_{\geq 0}$, we have defined
\begin{align}
\AqP_N(a) :=& \AqP^+_N\Big[1, v+u, \tilde{v}+\tilde{u}, \iu  a, -\iu a\Big]+\AqP^-_N\Big[v+\tilde{v}+t, v+\tilde{u}, u+\tilde{v}, u+\tilde{u}-t \Big]\\
&-\AqP^+_N\Big[v+u+\tilde{v}+\tilde{u}, v+\dfrac{t}{2}+\iu \dfrac{a}{2}, v+\dfrac{t}{2}-\iu \dfrac{a}{2}, u-\dfrac{t}{2}+\iu \dfrac{a}{2}, u-\dfrac{t}{2}-\iu \dfrac{a}{2}\Big]\\
&-\AqP^-_N\Big[\tilde{v}+\dfrac{t}{2}+\iu \dfrac{a}{2}, \tilde{v}+\dfrac{t}{2}-\iu \dfrac{a}{2}, \tilde{u}-\dfrac{t}{2}+\iu \dfrac{a}{2}, \tilde{u}-\dfrac{t}{2}-\iu \dfrac{a}{2}\Big].\label{eq:AqPlong}
\end{align}

Here $\Err^{\AqP}_N(r)$ comes from the Taylor expansion \eqref{eq:thetataylor} and is given by
\begin{equation}\label{eq:Err1def}
\Err^{\AqP}_N(r) := \AqP_N\left(\sqrt{r} + \Err^{\theta}_N(r)\right)- \AqP_N\big(\sqrt{r}\big).
\end{equation}
Define the function $\mathcal{E}_N(a)$ exactly as in \eqref{eq:AqPlong}, except with the $\AqP$ symbol replaced by $\Err_N$, as in \eqref{eq:errorN}. $\Err^{\mathcal{E}}_N(r)$ comes from Proposition  \ref{factorials} and is
\begin{equation}\label{eq:error2def}
\Err^{\mathcal{E}}_N(r) = \mathcal{E}_{N}\left(\sqrt{r} + \Err^{\theta}_N(r)\right) =  \mathcal{E}_{N}\left(\sqrt{N} \theta^{(N)}_r \right) .
\end{equation}

The first two terms on the right-hand side of \eqref{eq:firstexpand2} can be simplified
considerably using the explicit expressions for  $\AqP^{\pm}[\kappa;z]$ from  \eqref{eq:AqPp} and \eqref{eq:AqPm}. In particular,  when $\AqP_N(\sqrt{r})$ is expanded, all of the terms that have a prefactor $\pi^2$ coming from  \eqref{eq:AqPp} and \eqref{eq:AqPm} end up canceling out. Combining the remaining terms we find that
\begin{align}
 &- \log\left[8\pi N  \sqrt{\dfrac{r}{4N}} \,\right] + \AqP_N(\sqrt{r}) =\\
 & \log\left[\frac{\Gamma(v+u+\tilde{v}+\tilde{u})}{\Gamma(v+u)\Gamma(\tilde{v}+\tilde{u})}\frac{\left|\Gamma\left(u-\frac{t}{2}+\iu \frac{\sqrt{r}}{2},v+\frac{t}{2}+\iu \frac{\sqrt{r}}{2}\right)\right|^2}{8 \pi \sqrt{r}\,\cdot\,  \left|\Gamma\left(\iu \sqrt{r}\right)\right|^2}   \right]
 -(v+u)\log(N).
\end{align}
Recalling \eqref{eq:pirelations}, it follows by taking $\tilde{v}=\tilde{u}=1$ in the above formula and using $\tfrac{\Gamma(v+u+2)}{\Gamma(v+u)} = (v+u)(v+u+1)$ that (recall also the formula in Definition \ref{def_marginal} for $\p^c_{t}$)
\begin{equation}\label{eq:logNabove}
\log\left[N^{u+v} \sqrt{1-\frac{r}{4N}}\,\hat \pi^{(N), c}_{t}(r)\right] = \log\left[\p^c_{t}(r) \right] + \Err^{\AqP}_N(r)+\Err^{\mathcal{E}}_N(r).
\end{equation}
Similarly, using \eqref{eq:piinvar} we see that
\begin{equation}\label{eq:logNabove2}
\log\left[N^{2} \sqrt{1-\frac{r}{4N}}\,\hat \pi^{(N), c}_{t}(4N-r)\right] = \log\left[\p^c_{t}(r) \right] + \Err^{\prime\AqP}_N(r)+\Err^{\prime\mathcal{E}}_N(r)
\end{equation}
where $\Err^{\prime\AqP}_N(r)$ and $\Err^{\prime\mathcal{E}}_N(r)$ are defined exactly as $\Err^{\AqP}_N(r)$ and $\Err^{\mathcal{E}}_N(r)$ are above but with $(v,u)$ and $(\tilde{v},\tilde{u})$ swapped.

\smallskip
\noindent {\bf Step 3.}
It remains to bound for $r\in [0,2N]$ the two error terms $\Err^{\AqP}_N(r)$ and $\Err^{\mathcal{E}}_N(r)$ in \eqref{eq:logNabove} and $\Err^{\prime\AqP}_N(r)$ and $\Err^{\prime\mathcal{E}}_N(r)$ in \eqref{eq:logNabove2}. The analysis is exactly the same in both cases so we will just focus on the first set of error terms. To be precise, in this step we will show that for all $\eta>1$, there exists $N_0\in \Z_{\geq 1}$ and  $C,\chi>0$ such that for all $N>N_0$ and $r\in [0,2N]$
\begin{equation}\label{eq:ErrAEbounds}
\left|\Err^{\AqP}_N(r)\right|,\left|\Err^{\mathcal{E}}_N(r)\right| \leq C N^{-\chi}(1+\sqrt{r})^{\eta}.
\end{equation}
From this and \eqref{eq:logNabove}, the desired bound in \eqref{eq:lemma82ineq} immediately follows.  Thus we will show \eqref{eq:ErrAEbounds}. In demonstrating those bounds we will also need to control $\Err^{\theta}_N(r)$, so we start with that.

\smallskip
\noindent {\bf Bounding $|\Err^{\theta}_N(r)|$.}
For $r\in [0,2N]$,
\begin{equation}\label{eq:Nsqrt5}
\sqrt{N} \theta^{(N)}_r\leq \sqrt{\frac{\pi^2}{8}r}\qquad\mathrm{and}\qquad \Err^{\theta}_N(r)\geq 0.
\end{equation}
The first inequality in \eqref{eq:Nsqrt5} is equivalent to the inequality
\begin{equation}\label{eq:arccosineq}
\arccos(1-x/2) \leq \sqrt{\frac{\pi^2}{8}x} \qquad\textrm{ for }x\in [0,2]
\end{equation} which can easily be shown by matching the values at $x=0$ and $x=2$ of both sides and then showing that the derivative of the difference strictly decreases (hence the difference is strictly concave). 
The second inequality in \eqref{eq:Nsqrt5} is equivalent to $\arccos(1-x/2)-\sqrt{x}\geq 0$ for $x\in [0,2]$ and is also shown by taking derivatives.

Now we claim that for all $\eta>1$ there exists $N_0\in \Z_{\geq 1}$ and $C,\chi>0$ such that for all $N>N_0$ and $r\in \Supph^{(N),c}=[0,2N]$,
\begin{equation}\label{eq:Err2bound}
\Err^{\theta}_N(r) \log(N) \leq C  N^{-\chi}(1+\sqrt{r})^{\eta}.
\end{equation}
Note that by \eqref{eq:Nsqrt5}, $\Err^{\theta}_N(r)\geq 0$. Changing variables $x = r/N$, \eqref{eq:Err2bound} reduces this to
\begin{equation}\label{eq:Err2bound2}
\arccos\left(1-\frac{x}{2}\right)-\sqrt{x} \leq \frac{C (1+\sqrt{Nx})^{\eta}}{N^{\chi +1/2} \log(N)}\qquad\mathrm{for} \quad x\in[0,2].
\end{equation}
We split the demonstration of  \eqref{eq:Err2bound2} into two cases.
Let $x_N$ be such that $\tfrac{(1+\sqrt{Nx_N})^{\eta-1}}{N^{\chi}\log(N)}=1$. Then, since $\tfrac{(1+\sqrt{Nx})^{\eta-1}}{N^{\chi}\log(N)}\geq 1$ for $x\in [x_N,2]$, we have on that interval that
\begin{equation}
\arccos\left(1-\frac{x}{2}\right)-\sqrt{x} \leq C \sqrt{x}
\leq C\frac{1+\sqrt{Nx}}{\sqrt{N}}  \frac{(1+\sqrt{Nx})^{\eta-1} }{N^{\chi}\log(N)}
\leq \frac{C (1+\sqrt{Nx})^{\eta}}{N^{\chi+1/2} \log(N)}
\end{equation}
where in the first inequality we can take $C=\sqrt{\pi^2/8}-1$ owing to \eqref{eq:arccosineq}. We do not require anything on the value of $\chi$. This shows \eqref{eq:Err2bound2} for $x\in [x_N,2]$.

Tuning to the case of $x\in [0,x_N]$, observe that $x_N$  goes to zero as $N$ grows and $x_N\leq N^{\frac{2\chi}{\eta-1} -1} \log(N)^{\frac{2}{\eta-1}}$. Provided that $\chi < \tfrac{\eta-1}{2}$ (so $x_N$ goes to zero as a power law in $N$), for $x\in [0,x_N]$ we can use Taylor expansion with remainder to show that there exists $N_0\in \Z_{\geq 1}$ and $C\in \R_{>0}$ such that for all $N>N_0$ and $x\in [0,x_N]$,
$
\arccos\left(1-\frac{x}{2}\right)-\sqrt{x}\leq C x^{3/2}.
$
From this we see that all that is left is to show that on the interval $x\in [0,x_N]$,
$x^{3/2} \leq C \frac{(1+\sqrt{Nx})^{\eta}}{N^{\chi+1/2} \log(N)}$.
Clearly this is true when $x=0$ (where both sides are zero). If we can show that the derivatives are likewise ordered on $x\in [0,x_N]$ then the above inequality will immediately follow. Calculating those derivatives we find that showing their ordering reduces to showing that
$
x\leq \frac{(1+\sqrt{Nx})^{\eta-1}}{N^{\chi}\log(N)}
$
which easily follows from Taylor expansion with remainder on $(1+\sqrt{Nx})^{\eta-1}$. Thus, we have shown \eqref{eq:Err2bound}.

\smallskip
\noindent {\bf Bounding $|\Err^{\mathcal{E}}_N(r)|$.} Recall that $\mathcal{E}_N(r)$ is defined in \eqref{eq:error2def} (the function $\mathcal{E}_N(a)$ is defined exactly as in \eqref{eq:AqPlong}, except with the $\AqP$ symbol replaced by $\Err_N$, as in \eqref{eq:errorN}) as a sum of many terms of the form $\Error^{\pm}_1[\kappa,z]$ for $\kappa= \tfrac{2}{\sqrt{N}}$ and for various choices of the variable $z$. Some of the assignments of the variable $z$ depend on $r$ while others do not (though may still depend on other variables like $u,v,t$). We refer to the former terms as Type (1) and the later as Type (2). For Type (2) terms, we see from \eqref{eq:errorbounds} in Proposition \ref{factorials} that for any $z$ fixed and $b\in (0,1)$, we can find a $\kappa_0>0$ and a $C>0$ so that for all $\kappa<\kappa_0$ (or equivalently we can find $N_0\in \Z_{\geq 1}$ so that for all $N>N_0$),
\begin{equation}
|\Error^{\pm}_{1}[\kappa,z]|\leq C(\kappa+\kappa^b) \leq C' N^{-b/2}.
\end{equation}
From this we see that the contribution of Type (2) terms to $|\Err^{\mathcal{E}}_N(r)|$ satisfies the bound in \eqref{eq:ErrAEbounds}.

Now, let us consider how Type (1) terms contributes to $|\Err^{\mathcal{E}}_N(r)|$. These terms take the form $\Error^{\pm}_{1}[\kappa,z]$ for either $z=z_c(r) = c \pm \iu \sqrt{N} \tfrac{\theta^{(N)}_r }{2}$ with some fixed real $c$ (e.g. $c=0$ or $c=u-\tfrac{t}{2}$) or $z=\pm \iu \sqrt{N} \theta^{(N)}_r$. Call the first type of term Type (1a) and the second Type (1b). Observe that for $r\in [0,2N]$, $\theta^{(N)}_r\in [0,\pi/2]$. This means that $\sqrt{N} \theta^{(N)}_r\in [0,\frac{\pi}{\kappa}]$. For Type (1a) terms this range is further divided by $2$ and hence $|\textup{Im}(z)|\leq \frac{\alpha}{\kappa}$ for $\alpha=\pi/2$. Since this $\alpha<\pi$, we can apply  \eqref{eq:errorbounds} from Proposition \ref{factorials} to show that for any $b\in (0,1)$ and $\e\in (0,1/2)$, there exists $C,\kappa_0>0$ such that for all $\kappa<\kappa_0$ and  $r\in [0,2N]$
\begin{equation}\label{eq:Errbgg}
\Big|\Error^{\pm}_{1}[\kappa,z]\Big| \leq C\left(\kappa(1+|z|)^2 + \kappa^b (1+|z|)^{1+2b+\e}\right)
\end{equation}
where $z=c \pm \iu \sqrt{N} \tfrac{\theta^{(N)}_r }{2}$. For the Type (1b) terms, observe that it is only $\Error^{+}_{1}$ that arises in $\Err^{\mathcal{E}}_N(r)$. Recalling from Proposition \ref{factorials} that the bound \eqref{eq:errorbounds} on $\Error^{+}_m[\kappa, z]$ holds  with $|\textup{Im}(z)|<\tfrac{2\alpha}{\kappa}$ (for $\alpha\in (0,\pi)$) we see that \eqref{eq:Errbgg} holds when $\pm$ is restricted to $+$ and $z=c \pm \iu \sqrt{N}\theta^{(N)}_r$.

It just remains to massage the bound in \eqref{eq:Errbgg} into the claimed form. To do this, note that for Type (1) choices of $z=z(r)$, with $r\in [0,2N]$, we have that $|z(r)|\leq c+ C\sqrt{r}$ for some choices of $c,C>0$. This implies that for any $b\in (0,1)$ and $\e\in (0,1/2)$,  there exists $C>0$ such that for all $r\in [0,2N]$,
\begin{equation}\label{eq:Errabs}
\Big|\Error^{\pm}_{1}[\kappa,z_c(r)]\Big| \leq C\left(N^{-1/2}(1+\sqrt{r})^2 + N^{-b/2} (1+\sqrt{r})^{1+2b+\e}\right).
\end{equation}
The second term on the right-hand side above is already of the form $CN^{-\chi}(1+\sqrt{r})^{\eta}$ where $\eta$ can be taken arbitrarily close to $1$ by turning $b$ and $\e$ close to zero, and where $\chi = b/2$. The first term $N^{-1/2}(1+\sqrt{r})^2$ can also be put in this form since for a large enough constant $C>0$,
\begin{equation}
N^{-1/2}(1+\sqrt{r})^2 = N^{-1/2}(1+\sqrt{r})^{2-\eta} (1+\sqrt{r})^{\eta} \leq C N^{-\frac{\eta-1}{2}} (1+\sqrt{r})^{\eta}
\end{equation}
where the inequality uses that $1+\sqrt{r} \leq C'N^{1/2}$ for some suitably large constant $C'>0$. Taking $\chi = \tfrac{\eta-1}{2}$ puts this bound in the form $CN^{-\chi}(1+\sqrt{r})^{\eta}$. Therefore, we conclude that the contribution of Type (1) terms to $|\Err^{\mathcal{E}}_N(r)|$ satisfies the bound in \eqref{eq:ErrAEbounds}. Combining this with the previous conclusion for Type (2) terms, we arrive at the bound in \eqref{eq:ErrAEbounds} on $|\Err^{\mathcal{E}}_N(r)|$.

\smallskip
\noindent {\bf Bounding $|\Err^{\AqP}_N(r)|$.} Recall that $\Err^{\AqP}_N(r)$ is defined in \eqref{eq:Err1def} in terms of $\AqP_N(a)$ defined in \eqref{eq:AqPlong}. From \eqref{eq:AqPlong}, we see that $\AqP_N(\sqrt{r}+\Err^{\theta}_N(r))-\AqP_N(\sqrt{r})$ involves many cancellations. All of the terms in \eqref{eq:AqPlong} which do not depend on the argument $a$ immediately cancel when taking this difference. Recalling the definition of $\AqP^{\pm}[\kappa,z]$ from \eqref{eq:AqPp} and \eqref{eq:AqPm}, we also see that the terms in those functions involving $\pi^2$ do not depend on the $z$ argument and hence also cancel upon taking a difference. Let us take an accounting of which terms remain. From $\AqP^+$, we need to account for (1) the $(z-\tfrac{1}{2})\log(\kappa)$ terms and (2) the  $\log\left[\tfrac{\Gamma(z)}{\sqrt{2\pi}}\right]$ terms, while from $\AqP^-$, we need only account for (3) the $(z-\tfrac{1}{2})\log(2)$ terms. Let us consider each of these types of term separately and show how their contributions can be bounded by expressions of the form of the right-hand side of  \eqref{eq:ErrAEbounds}.

Type (1) terms contribute to $\AqP_N(\sqrt{r}+\Err^{\theta}_N(r))-\AqP_N(\sqrt{r})$ expressions of the form $\iu \frac{\Err^{\theta}_N(r)}{2} \log(\kappa)$. Since $\kappa= \tfrac{2}{\sqrt{N}}$, the magnitude of such terms is proportional to $\Err^{\theta}_N(r) \log(N)$.
The bound we established in \eqref{eq:Err2bound} implies that the contribution to $|\Err^{\AqP}_N(r)|$ of Type (1) terms can be bounded by $ C N^{-\chi} (1+\sqrt{r})^{\eta}$ provided $\chi$ is small enough. This is precisely of the form of the right-hand side of  \eqref{eq:ErrAEbounds}. Note that Type (3) terms which arise from $\AqP^-$ involve $\Err^{\theta}_N(r) \log(2)$. Since for $N\geq 2$, $\log(2)\leq \log(N)$, the argument above immediately controls those terms by $ C N^{-\chi} (1+\sqrt{r})^{\eta}$ as well.

All that remains is to control the contribution to $|\Err^{\AqP}_N(r)|$ from the Type (2) terms coming from $\log\left[\tfrac{\Gamma(z)}{\sqrt{2\pi}}\right]$ in $\AqP^+$. These contributions are of the form
$\log\left[\Gamma(z'_c(r))\right] - \log\left[\Gamma(z_c(r))\right]$
where $z'_c(r) = c \pm \iu \frac{\sqrt{r} + \Err^{\theta}_N(r)}{2}$
and $z_c(r) = c \pm \iu \frac{\sqrt{r}}{2}$ (case (1)), or $z'_c(r) = c \pm \iu \left(\sqrt{r} + \Err^{\theta}_N(r)\right)$
and $z_c(r) = c \pm \iu \sqrt{r}$ (case (2)). The proof in both cases is
identical (just the constants change), so we focus on the first case.  Similarly, the argument we
present works just as well for $\pm=+$ and $\pm=-$, so we will just
address the $+$ case. We claim that for any $c\in \R$ fixed, there exists $N_0\in \Z_{\geq 1}$ and $C,\chi >0$ such that for all $N>N_0$ and $r\in \Supph^{(N),c}= [0,4N]$
\begin{equation}\label{eq:Err1bounds2}
\Big|\log[\Gamma(z'_c(r))] - \log[\Gamma(z_c(r))]\Big| \leq C N^{-\chi}(1+\sqrt{r})^\eta.
\end{equation}
We will show this separately for $r\in [r_0,4N]$ and for $r\in [0,r_0]$, where $r_0$ is specified momentarily. The purpose of this split is that for large $r$ we can use the asymptotic behavior of the Gamma function in the imaginary direction. For small $r$ we can use the uniform continuity of the gamma function provided $c\notin \Z_{\leq 0}$. If $c\in \Z_{\leq 0}$, we need to account for the divergence from the pole, but after doing that we can still show the desired bound. We proceed with this argument now.

Observe that by the fundamental theorem of calculus, for $r>0$,
\begin{equation}\label{eq:digammaint}
\log[\Gamma(z'_c(r))] - \log[\Gamma(z_c(r))] = \int^{\frac{\sqrt{r} + \Err^{\theta}_N(r)}{2}}_{\frac{\sqrt{r}}{2}} \!\!\!\!\partial_y \log[\Gamma(c+\iu y)] dy =\int^{\frac{\sqrt{r} + \Err^{\theta}_N(r)}{2}}_{\frac{\sqrt{r}}{2}} \!\!\!\!\iu \psi(c+\iu y) dy
\end{equation}
where in the second equality  uses the Cauchy-Riemann equation to write $\partial_y \log[\Gamma(c+\iu y)] = \iu \psi(c+\iu y)$ where $\psi$ is the digamma function. Thus,
\begin{equation}\label{eq:Logpsibdd}
\Big|\log[\Gamma(z'_c(r))] - \log[\Gamma(z_c(r))]\Big| \leq  \tfrac{\Err^{\theta}_N(r)}{2}\!\!\!\! \sup_{y\in \big[\frac{\sqrt{r}}{2},\frac{\sqrt{r} + \Err^{\theta}_N(r)}{2}\big]} |\psi(c+\iu y)|.
\end{equation}
It follows from the asymptotic expansion for the digamma function in \cite[equation (5.11.2)]{NIST:DLMF} or \cite[page 18]{MOS} that any $c$ there exists $C,y_0>0$ such that for $|y|>y_0$, $|\psi(c+\iu y)|\leq C \log(|y|)$. Let $r_0=4y_0^2$ (so that $\tfrac{\sqrt{r_0}}{2} =y_0$). Thus  there exists $C,C'>0$ such that for $r>r_0$,
\begin{equation}
\Big|\log[\Gamma(z'_c(r))] - \log[\Gamma(z_c(r))]\Big| \leq C \Err^{\theta}_N(r) \log[|z'_c(r)|] \leq  C' \Err^{\theta}_N(r) \log(N)
\end{equation}
where the second inequality comes from the fact that for $r\in \Supph^{(N),c}=[0,4N]$ we can bound $|z'_c(r)|\leq C'' \log(N)$ for some $C''>0$. Now we may appeal to \eqref{eq:Err2bound} to bound the right-hand side above by $CN^{-\chi}(1+\sqrt{r})^{\eta}$ as desired in \eqref{eq:Err1bounds2}.

It remains to demonstrate \eqref{eq:Err1bounds2} when $r\in [0,r_0]$. First assume that $c\notin\Z_{\leq 0}$. For $z$ in any compact set away from $\Z_{\leq 0}$, analyticity implies that  $|\psi(z)|$ is uniformly continuous. Combining this observation with \eqref{eq:Logpsibdd}, we find that $\big|\log[\Gamma(z'_c(r))] - \log[\Gamma(z_c(r))]\big| \leq  C \Err^{\theta}_N(r)$. Again, owing to \eqref{eq:Err2bound} we may bound this above by $CN^{-\chi}(1+\sqrt{r})^{\eta}$ as desired in \eqref{eq:Err1bounds2}.

For the case when $c\in \Z_{\leq 0}$ we must appeal to the behavior of $\psi(z)$ near its poles $\Z_{\leq 0}$. As in \cite[page 14]{MOS}, we have that for $z$ in a vertical strip with real part in $[c-1/2,c+1/2]$, the function $\psi(z) = -(z-c)^{-1} + \psi_c(z)$ where $\psi_c(z)$ is analytic in the strip. This and \eqref{eq:digammaint} imply
\begin{equation}
\log(\Gamma(z'_c(r))) - \log(\Gamma(z_c(r)))  =  \int^{\frac{\sqrt{r} + \Err^{\theta}_N(r)}{2}}_{\frac{\sqrt{r}}{2}} \!\!\!\! \left(-y^{-1} + \psi_c(c+\iu y)\right) dy.
\end{equation}
This shows that there exists a constant $C>0$ such that
\begin{equation}
\Big|\log(\Gamma(z'_c(r))) - \log(\Gamma(z_c(r))) \Big| \leq C \log\left(\tfrac{\sqrt{r}+\Err^{\theta}_N(r)}{\sqrt{r}}\right) + C\Err^{\theta}_N(r).
\end{equation}
The second term $\Err^{\theta}_N(r)|$ is bounded by appealing to  \eqref{eq:Err2bound}. For the first,
\begin{equation}
 \log\left(\tfrac{\sqrt{r}+\Err^{\theta}_N(r)}{\sqrt{r}}\right)  = \log\left(1+ \tfrac{\Err^{\theta}_N(r)}{\sqrt{r}}\right)  \leq \tfrac{\Err^{\theta}_N(r)}{\sqrt{r}} \leq C' \cdot \tfrac{r}{N},
\end{equation}
for some constants $C,C'>0$.
Since $r\in [0,r_0]$, for any choice of $\eta>1$ and $\chi<1$ there is a constant $C>0$ such that $\tfrac{r}{N} \leq CN^{-\chi} (1+\sqrt{r})^{\eta}$ . This bound is of the form of \eqref{eq:Err1bounds2}. Putting together the cases considered above we have shown that \eqref{eq:Err1bounds2} holds. This completes the proof of \eqref{eq:ErrAEbounds} for $|\Err^{\AqP}_N(r)|$ and hence completes Step 3 and the proof of Lemma \ref{lem:p_zero}.

\subsubsection{Proof of Lemma \ref{marginal_atoms}}

The convergence of the location of the atoms in \eqref{eq:lem83a} and  \eqref{eq:lem83aprime} follows immediately from \eqref{eq:yhatscale} by Taylor expanding in $N$.
The convergence of the masses of these atoms in \eqref{eq:lem83b} follows immediately from the bounds in \eqref{eq:lem83ab}. Thus, our problem reduces to showing \eqref{eq:lem83ab}. We will deal with the case when $v+t/2<0$ since the other case of $u-t/2<0$ proceeds exactly in the same manner. From \eqref{eq:AWdmasses} we can write down the weights of the atoms as (recall the parametrization for $a,b,c,d$ from \eqref{eq:abcdqvals})
\begin{align}\label{eq:computingthemasses}
\hat\pi^{(N), d}_t(\xvN_0(t))&=\dfrac{\left(q^{-2 v-t}, -q^{u+1-t} , -q^{1+u}, q^{2}\right)_\infty}{\left(-q^{1-v}, q^{u-v-t }, -q^{1-v-t }, q^{2+v+u}\right)_\infty},\\
\frac{\hat\pi^{(N), d}_t(\xvN_j(t))}{\hat\pi^{(N), d}_t(\xvN_0(t))}&=\dfrac{\left(q^{2 v+t},
    -q^{v+1+t}, q^{v+u}, -q^{v+1}\right)_j\cdot\left(1- q^{2 v+2 j+t}\right)}
{\left(q,- q^{v}, q^{1-u+v+t}, -q^{v+t}\right)_j \cdot \left(1-q^{2v+t}\right)} \cdot\left(q^{-v-u-1}\right)^j,
\end{align}
where $j\in \llbracket 1,-v-t/2\rrbracket$. Since $v+t/2<0$ and $u+v>0$, there is no chance of division by zero in the above formulas.

Recalling the notation from \eqref{eq:AzN} and  \eqref{eq:errorN}, Proposition
\ref{factorials} with $m=1$ yields (here $(\AqP^\pm_N+ \Err^\pm_N)[\cdots] :=\AqP^\pm_N[\cdots]+\Err^\pm_N[\cdots]$)
\begin{align}
  &\log\left( N^{u+v}\hat\pi^{(N), d}_t(\xvN_0(t))\right)= (u+v)\log N+(\AqP^+_N+\Err^+_N)[-2v-t, 2]\\
  &-(\AqP^+_N+\Err^+_N)[u-v-t ,2+ v+u ] +(\AqP^-_N+\Err^-_N)[u+1-t , u+1 ]\\
  &- (\AqP^-_N+\Err^-_N)[1-v, 1-v-t].\label{eq:Nuvpihf}
\end{align}
Simplifying the $\AqP$ terms, the $N$ dependence drops and
\begin{align}
&\AqP^+_N[-2v-t, 2]-\AqP^+_N[u-v-t ,2+ v+u ]+\AqP^-_N[u+1-t , u+1 ]\\
&- \AqP^-_N[1-v, 1-v-t]
+(u+v)\log N
=\log\left[\p_t^{d}(\xv_0(t))\right],\label{eq:NsimplifyA}
\end{align}
where $\p_t^{d}(\xv_0(t))$ is given in Definition \ref{def_marginal}.

Since $u, v$ and $t$ are fixed, it follows from \eqref{eq:errorbounds} that the four error terms in \eqref{eq:Nuvpihf} can be bounded in absolute value by $CN^{-\chi}$ for some $C>0$ and any $\chi\in (0,1/2)$ (recall that $\kappa = 2N^{-1/2}$).
Combining this observation with \eqref{eq:NsimplifyA} proves \eqref{eq:lem83ab} for $j=0$.


In the same manner, we can bound the asymptotic behavior  of the other $j\in \llbracket 1,-v-t/2\rrbracket$ masses. In order to do this we must take into account the additional multiplicative terms, all of which are of the form $1-q^a$, $1+q^a$, or $q^a$ for some choices of real $a$. We use the following error bound:
For $a\in \R$, if we write
\begin{equation}\label{eq:a123defa}
\frac{1-q^a}{1-q}= a \,\cdot\, e^{\Err^1_N(a)},\qquad\frac{1+q^a}{1+q}= e^{\Err^2_N(a)},\qquad q^a= e^{\Err^2_N(a)}
\end{equation}
then there exist constants $C_a>0$ such that
\begin{equation}\label{eq:a123defb}
\left| \Err^1_N(a)\right|,\left| \Err^2_N(a)\right|,\left| \Err^3_N(a)\right| \leq C_a N^{-1/2}.
\end{equation}
Applying these bounds to the formula for $\hat\pi^{(N), d}_t(\xvN_j(t))$ in \eqref{eq:computingthemasses} and rewriting the additional factors multiplying $\hat\pi^{(N), d}_t(\xvN_j(t))$ in the form of $1-q^a$, $1+q^a$ and $q^a$ for various choices of $a$, we see that
\begin{align}
\dfrac{\left(q^{2 v+t},
    -q^{v+1+t}, q^{v+u}, -q^{v+1}\right)_j\left(1- q^{2 v+2 j+t}\right)}
{\left(q,- q^{v}, q^{1-u+v+t}, -q^{v+t}\right)_j \left(1-q^{2v+t}\right)} \left(q^{-v-u-1}\right)^j \\
= \dfrac{(v+j+t/2)\cdot\left[2 v+t,v+u\right ]_j}{(v+t/2)\cdot j! \left[1-u+v+t\right ]_j} e^{\Err_N^{M_j}}
\end{align}
where
$
\left|\Err_N^{M_j}\right| \leq CN^{-1/2}
$
for some constant $C>0$.
Recognizing that
\begin{equation}
\dfrac{\Gamma(u-v-t,2+v+u)}{\Gamma(-2 v-t)}\cdot \dfrac{(v+j+t/2)\cdot\left[2 v+t,v+u\right ]_j}{(v+t/2)\cdot j! \left[1-u+v+t\right ]_j}=\p_t^{d}(\xv_j(t)),
\end{equation}
we arrive at \eqref{eq:lem83ab}, thus completing the proof of Lemma \ref{marginal_atoms}.

\subsubsection{Proof of Lemma \ref{lem:p}}
There are three parts to this lemma.

\medskip
\noindent{\bf Part 1.}
Observe that \eqref{eq:hatpNmrlim} immediately follows from \eqref{eq:hatpNmrbound}. It remains to prove the bound  \eqref{eq:hatpNmrbound}. The proof of this result very closely follows that of Lemma \ref{lem:p_zero}.
From \eqref{eq:hatppcc} it follows that for $m,r\in \Supph^{(N),c}=[0,4N]$,
\begin{equation}\label{eq:recallpihatdef}
\hat \pi^{(N), c,c}_{s,t}(m, r) =\dfrac{1}{2N}  \pi^{(N), c, c}_{q^{t}, q^{s} }\left (1-\tfrac{r}{2N}, 1-\tfrac{m}{2N}\right)\cdot \dfrac{\hat\pi^{(N), c}_{t}(r)}{\hat\pi^{(N), c}_{s}(m)}.
\end{equation}
Lemma \ref{lem:p_zero} controls $\hat\pi^{(N), c}_{t}(r)$ and
$\hat\pi^{(N), c}_{s}(m)$. Thus, we need only to control $$ \pi^{(N), c,
  c}_{q^{t}, q^{s}}\left (1-\tfrac{r}{2N},
  1-\tfrac{m}{2N}\right).$$
It is useful to factorize this in order to utilize certain symmetries. Define
\begin{align}\label{eq:piNccpzero1}
&f_{t;u,\tilde u}^{(N), c}( r):
=\bigg\lvert\left(-q^{\tilde u+t/2+\iu \sqrt N \theta_{r}/2},q^{u+t/2+\iu \sqrt N \theta_{r}/2}\right)_{\infty}\bigg \rvert^2,\\
&g_{s;u,\tilde u}^{(N), c}( m):
= \frac{
\bigg\lvert\left(q^{\iu \sqrt N \theta_{m}};q\right)_{\infty}\bigg \rvert^2}
{\bigg \lvert \left(-q^{\tilde u+s/2+\iu \sqrt N  \theta_m/2},q^{u+s/2+\iu \sqrt N \theta_{m}/2}\right)_{\infty}\bigg \rvert^2},\\
&h_{s,t;u,\tilde u}^{(N), c}(m, r): =\frac{1}{4\pi N \sqrt{\frac{m}{N}}\sqrt{1-\frac{m}{4 N}}}\\& \times\frac{\left(q, -q^{u+\tilde
                                      u+s},q^{t-s}
                                      \right)_\infty}{\left(-q^{u+\tilde
                                      u+t} \right)_\infty}\frac{1}{\bigg \lvert \left(q^{(t-s)/2+\iu \sqrt N (\theta_{r}+\theta_{m})/2} , q^{(t-s)/2+\iu \sqrt N (-\theta_{r}+\theta_{m})/2}\right)_{\infty}\bigg \rvert^2},\\
\end{align}
where we used the notation from \eqref{eq:thetardef} (suppressing the $N$ dependence above) that
$$\theta_a=\theta^{(N)}_a:=\arccos\left(1-\frac{a}{2N}\right).$$
Defining
\begin{align}\label{eq:piNccpzero_mixed}
&\pi^{(N), c, c}_{q^{t}, q^{s}, u, \tilde u }\left(x, y\right)
:= \\ & 2N f_{t;u,\tilde u}^{(N), c}\left(2N(1-x)\right) \cdot  g_{s;u,\tilde u}^{(N), c}\left(2N(1-y)\right)\cdot
                                           h_{s,t;u,\tilde u}^{(N), c}\left(2N(1-x),2N(1-y)\right)
                                           \end{align}
it follows from \eqref{eq:AW} that
\begin{align}\label{eq:piNccpzero_mixed}
&\pi^{(N), c, c}_{q^{t}, q^{s} }\left(1-\tfrac{r}{2N}, 1-\tfrac{m}{2N}\right)
=\pi^{(N), c, c}_{q^{t}, q^{s}; u, 1 }\left(1-\tfrac{r}{2N}, 1-\tfrac{m}{2N}\right).
\end{align}

Observe that if we send $m\mapsto 4N-r$, then $\theta_r\mapsto \pi -\theta_r$ and thus also $q^{\iu  \sqrt N \frac{\theta_r}{2}}\mapsto -q^{\iu  \sqrt N \frac{\theta_r}{2}}$ and $q^{\iu  \sqrt N \theta_r}\mapsto q^{\iu  \sqrt N \theta_r}$. These transformations imply that
\begin{equation}\label{eq:piinvar_mixed}
f_{t;u,\tilde u}^{(N), c}(4N-r)=f_{t;\tilde u, u}^{(N), c}(r),\quad
g_{s;u,\tilde u}^{(N), c}(4N-m)= g_{s;\tilde u, u}^{(N), c}(m).
\end{equation}

As in the proof of Lemma \ref{lem:p_zero}, by using the transformation \eqref{eq:piinvar_mixed} it will suffice to consider the asymptotics behavior of these $f$ and $g$ functions just for $m\in [0,2N]$ and $r\in [0,2N]$. 

%
%

Using the notation from
\eqref{eq:thetardef} and the notation from Section
\ref{sec:notforasym}, we may write

\begin{align}
&\log \left[\sqrt{1-\frac{m}{4N}}\,\cdot\,
                \frac{1}{2N}\,\cdot\,\pi^{(N), c, c}_{q^{t}, q^{s};
                u,\tilde u}\left(1-\tfrac{r}{2N}, 1-\tfrac{m}{2N}\right)\right]
=\\
&-\log\left[4\pi N\sqrt{\frac{r}{N}}\right]+\AqP_{N;u,\tilde{u}}(\sqrt m,\sqrt r)+\Err_{N; u,\tilde u}^{\AqP}(m ,r)+\Err_{N; u,\tilde u}^{\mathcal E}(m, r).\label{eq:rewritepiNcc}
\end{align}
The function $\AqP_{N, u,\tilde u}(a, b)$ is defined for $a,b\in \R_{\geq 0}$ by
\begin{equation}\label{eq:AqPlong_p}
\AqP_{N, u,\tilde u} (a, b):= \AqP_{N; u,\tilde u} ^f(a,
                                    b)+ \AqP_{N; u,\tilde u}^g (a,
                                    b)+ \AqP_{N; u,\tilde u}^h (a,
                                    b)
\end{equation}
where
\begin{align}
&  \AqP_{N; u,\tilde u} ^f(a, b):=\AqP_N^+\big[ u+\tfrac{t}{2}+\iu \tfrac{b}{2}, u+\tfrac{t}{2}-\iu \tfrac{b}{2}\big]+\AqP_N^-\big[ \tilde u+\tfrac{t }{2}+\iu  \tfrac{b}{2},\tilde u+\tfrac{t}{2}-\iu \tfrac{b}{2}\big],\\
&  \AqP_{N; u,\tilde u} ^g(a, b):=\AqP_N^+\big[ \iu a, -\iu a\big]-\AqP_N^+\big[ u+\tfrac{s}{2}+\iu \tfrac{b}{2}, u+\tfrac{t}{2}-\iu \tfrac{b}{2}\big]-\AqP_N^-\big[ \tilde u+\tfrac{s }{2}+\iu  \tfrac{b}{2},\tilde u+\tfrac{s}{2}-\iu \tfrac{b}{2}\big],\\
&  \AqP_{N; u,\tilde u} ^h(a, b):=\AqP_N^+\big[ 1, t-s\big]+\AqP_N^-[u+\tilde u+s]-\AqP_N^-[u+\tilde u+t]-\\
&  \qquad\qquad  \qquad  \AqP_N^+\big[\tfrac{t-s}{2}+\iu \tfrac{a+b}{2}, \tfrac{t-s}{2}-\iu \tfrac{a+b}{2}, \tfrac{t-s}{2}+\iu \tfrac{a-b}{2}, \tfrac{t-s}{2}-\iu \tfrac{a-b}{2}\big].\\
\end{align}
The error term $\Err_{N; u,\tilde u}^{\mathcal A}(m ,r)$ is defined as
 $$
\Err_{N; u,\tilde u}^{\mathcal A}(m ,r):=\Err_{N; u,\tilde u}^{\mathcal A, f}(m ,r) +\Err_{N; u,\tilde u}^{\mathcal A, g}(m ,r)
+\Err_{N; u,\tilde u}^{\mathcal A, h}(m ,r),
$$
where, for $\bullet \in \{f,g,h\}$ we have
$$
\Err_{N; u,\tilde u}^{\mathcal A, \bullet}(m ,r):=\mathcal A^{\bullet}_{N; u,\tilde u}\left(\sqrt m+\Err^\theta(m), \sqrt r+\Err^\theta(r)\right)-\mathcal A^{\bullet}_{N; u,\tilde u} (\sqrt m,\sqrt r)
.$$
Similarly, we define the error term 
$$
\Err^{\mathcal{E}}_{N, u, \tilde u}(m, r): = \mathcal{E}_{N;u,\tilde u}\left(\sqrt{m} +
  \Err^{\theta}_N(m), \sqrt{r} + \Err^{\theta}_N(r)\right),$$
where $\mathcal{E}_{N;u,\tilde u}(a, b)$ is defined exactly as in \eqref{eq:AqPlong_p}, except with the $\AqP$ symbol replaced by $\Err_N$, as in \eqref{eq:errorN}.

Next we use \eqref{eq:piinvar_mixed} and the arguments from Lemma \ref{lem:p_zero} to get an
analogue of \eqref{eq:ErrAEbounds}, i.e. for all $\eta>1$ there exists $N_0\in \Z_{\geq 1}$ and $C,\chi>0$ such that for all $N>N_0$ and $m,r\in \Supph^{(N),c}=[0,4N]$, and $\bullet\in \{f,g\}$
\begin{equation}
\left|\Err^{\AqP, \bullet}_{N; u, \tilde
    u}(m,r)\right|,\left|\Err^{\mathcal{E}}_{ N; u, \tilde u}(m,r)\right| \leq C N^{-\chi}(1+\sqrt{m})^{\eta}+C N^{-\chi}(1+\sqrt{r})^{\eta}.
\end{equation}
We can also prove such a bound for $\Err^{\AqP, h}_{N; u, \tilde  u}(m,r)$ directly by using Proposition \ref{factorials} (since $t-s>0$ we use the part of the proposition which assumes the condition that $\textup{dist}(\textup{Re}(z),\mathbb{Z}_{\leq 0})>r$ for some $r>0$).

Using the explicit formulas  for $\AqP^{\pm}$ from \eqref{eq:AqPp} and
\eqref{eq:AqPm}, when $\tilde u=1$ we may simplify the above calculation to see that
\begin{align}\label{eq:piNAterm}
&-\log\left[4\pi N\sqrt{\frac{r}{N}}\right]+\AqP_{N, u,1}(\sqrt m,\sqrt r) \\
&\qquad=\log\left[\frac{\left\vert \Gamma\left(u+\frac{s}{2}+\iu
  \frac{\sqrt m}{2},\frac{t-s}{2}+\iu\frac{\sqrt m+\sqrt r}{2}, \frac{t-s}{2}+\iu\frac{\sqrt m-\sqrt r}{2}\right)\right\vert^2}{8\pi\cdot \sqrt r\cdot \Gamma(t-s)\left\vert \Gamma\left( u+\frac{t}{2}+\iu
  \frac{\sqrt r}{2}, \iu  \sqrt m\right) \right\vert^2}
\right].
\end{align}
There is a $4\pi$ factor on the left-hand side above versus $8\pi$ on the right-hand side. This extra factor of two comes from an imbalance between the second order expansion in the $\AqP^+_N$ and $\AqP^{-}_N$.
From Lemma \ref{lem:p_zero}, we may write
\begin{equation}\label{eq:bound_err}
\log\left[\dfrac{\sqrt{1-\frac{r}{4N}}\hat\pi^{(N), c}_{t}(r)}{\sqrt{1-\frac{m}{4N}}\hat\pi^{(N), c}_{s}(m)}\right]= \log\left[\dfrac{\p^c_{t}(r)}{\p^c_{s}(m)} \right] + \ErrLem^{(N),c}_{t}(r)-\ErrLem^{(N),c}_{s}(m)
\end{equation}
where the error terms $\ErrLem^{(N),c}_{t}(r)$ and $\ErrLem^{(N),c}_{s}(m)$ are controlled by the bounds in \eqref{eq:lemma82ineq}.
In light of \eqref{eq:recallpihatdef}, we may combine this with \eqref{eq:piNAterm} and \eqref{eq:rewritepiNcc} to conclude that (recall Definition \ref{def_transition})
\begin{align}
\log\Big(\sqrt{1-\frac{r}{4N}}&\hat \pi^{(N), c,c}_{s,t}(m, r)\Big)
  = \\
  &\log\left[\frac{\left\vert \Gamma\left(u+\frac{s}{2}+\iu
 \frac{\sqrt m}{2}\right) \Gamma\left( \frac{t-s}{2}+\iu\frac{\sqrt m+\sqrt r}{2}\right)\Gamma\left( \frac{t-s}{2}+\iu\frac{\sqrt m-\sqrt r}{2}\right)\right\vert^2}{8\pi\cdot \sqrt r\cdot \Gamma(t-s)\left\vert \Gamma\left(u+\frac{t}{2}+\iu
 \frac{\sqrt r}{2}\right) \Gamma\left( \iu  \sqrt m\right) \right\vert^2} \dfrac{\p^c_{t}(r)}{\p^c_{s}(m)}
\right]\\
&\quad+\Err_{N; u, 1}^{\AqP}(m ,r)+\Err_{N; u, 1}^{\mathcal E}(m, r) +  \ErrLem^{(N),c}_{t}(r)-\ErrLem^{(N),c}_{s}(m)\\
&=\quad\log\left[\p^{c,c}_{s, t}(m, r)\right] + \ErrLem^{(N),c,c}_{s,t}(m,r),
\end{align}
where $\ErrLem^{(N),c,c}_{s,t}(m,r):=\Err_{N; u, 1}^{\AqP}(m
,r)+\Err_{N; u, 1}^{\mathcal E}(m, r) +  \ErrLem^{(N),c}_{t}(r)-\ErrLem^{(N),c}_{s}(m)$.
The simplification which produces $\p^{c,c}_{s, t}(m, r)$ above can be verified by appealing to the explicit formula for $\p^{c,c}_{s, t}(m, r)$ from Definition \ref{def_transition}.

Combining \ref{eq:bound_err}  with  \eqref{eq:lemma82ineq} in Lemma \ref{lem:p_zero} (that $| \ErrLem^{(N),c}_{t}(r)|\leq  C N^{-\chi}(1+\sqrt{r})^{\eta}$ and likewise with $r$ replaced by $m$) we see that $\ErrLem^{(N),c,c}_{s,t}(m,r)$ is likewise bound in absolute value by $C N^{-\chi}(1+\sqrt{m})^{\eta}+C N^{-\chi}(1+\sqrt{r})^{\eta}$ which completes the proof of \eqref{eq:hatpNmrbound} and hence part 1 of this lemma.

\medskip
\noindent{\bf Part 2.}
Since \eqref{eq:ucaselimit}  follows from \eqref{eq:ucasebound}, it remains to prove  \eqref{eq:ucasebound}.

We may rewrite \eqref{eq:hatppdc} as
\begin{equation}\label{eq:hatpidchapy}
\hat \pi^{(N), d, c}_{s,t}(\xvN_j(s), r) = \pi^{(N), c, d}_{q^{t}, q^{s} }(1-\tfrac{r}{2N},\xvNunscaled_j(s))\cdot \dfrac{\hat\pi^{(N), c}_{t}(r)}{\hat\pi^{(N),d}_{s}(\xvN_j(s))},
\end{equation}
where $r\in \Supph^{(N),c}=[0,4N]$ and $\xvNunscaled_j(s)$, for $j\in\llbracket 0,\lfloor -v-s/2\rfloor\rrbracket$, constitutes all of the atoms in $\Supp^{(N),d}_s$. (Recall that  $\xvN_j(s)$ and $\xvNunscaled_j(s)$ are related by \eqref{eq:yhatscale}.)
For $r\in \Supph^{(N),c}$, we may use \eqref{eq:def_p} to rewrite, for Borel $V\subset\R$,
\begin{equation}\label{eq:piasAW}
\pi^{(N)}_{q^{t}, q^{s} }\left (1-\tfrac{r}{2N},V\right) = AW\left(V;q^{v+\frac{s}{2}},-q^{1+\frac{s}{2}},q^{\frac{t-s}{2} + \iu \sqrt{N} \frac{\theta_r}{2}},q^{\frac{t-s}{2} - \iu \sqrt{N} \frac{\theta_r}{2}}\right).
\end{equation}
Based on the discussion about atoms in Section \ref{sec:awd} we observe that as long as $1+\tfrac{s}{2} >0$ and $\tfrac{t-s}{2}>0$ (both of which necessarily hold since we have assumed $s<t$ and $s,t\in (-2,2)$), the only atoms are those coming from the $q^{v+\tfrac{s}{2}}$ term. This term has absolute value exceeding $1$ and hence we see that the atoms of this measure are precisely $\xvNunscaled_j(s)$ for $j\in\llbracket 0,\lfloor -v-s/2\rfloor\rrbracket$.

By \eqref{eq:AWdmasses}, the weight $\pi^{(N),c,d}_{q^{t}, q^{s}}\left(1-\tfrac{r}{2N}, \xvNunscaled_0(s)\right)$ at $\xvNunscaled_0(s)$ is
\begin{equation}\label{eq:piyzero}
\dfrac{\left(q^{-2 v-s}, -q^{1+t/2+\iu\sqrt N \frac{\theta_{r}}{2}} , -q^{1+t/2-\iu\sqrt N \frac{\theta_{r}}{2}},q^{t-s}\right)_\infty}{\left(-q^{1-v},q^{-v+t/2-s+\iu\sqrt N \frac{\theta_{r}}{2}}, q^{-v+t/2-s-\iu\sqrt N \frac{\theta_{r}}{2}},-q^{v+t+1}\right)_\infty},
\end{equation}
while, for  $j\in\llbracket 1,\lfloor -v-s/2\rfloor\rrbracket$ the weights at $\xvNunscaled_j(s)$ are
\begin{equation}\label{eq:piygenj}
\pi^{(N),c,d}_{q^{t}, q^{s}}\left(1-\tfrac{r}{2N}, \xvNunscaled_j(s)\right)=\pi^{(N),c,d}_{q^{t}, q^{s}}\left(1-\tfrac{r}{2N}, \xvNunscaled_0(s)\right) \times M_j
\end{equation}
where the additional multiplicative factor $M_j$ is defined as
\begin{equation}\label{eq:Mj}
\dfrac{\left (q^{2 v+s}, -q^{1+v+s} , q^{v+t/2+\iu\sqrt N \frac{\theta_{r}}{2}}, q^{v+t/2-\iu\sqrt N \frac{\theta_{r}}{2}}\right)_{j} \left(1-q^{2v+2j+s}\right)}{\left( q, -q^{v}, q^{1+v+s-t/2+\iu\sqrt N\frac{\theta_{r}}{2}}, q^{1+s+v-t/2-\iu\sqrt N \frac{\theta_{r}}{2}}\right)_{j} \left(1-q^{2v+s}\right)}\cdot\left(-q^{-v-t}\right)^{j}.
\end{equation}
In \eqref{eq:piyzero} and \eqref{eq:Mj} there are no instances of division by zero. This would arise in \eqref{eq:piyzero} if $-v+t/2-s\in \Z_{\leq 0}$ and in \eqref{eq:Mj} if $1+v+s-t/2+j-1=0$ for $j\in\llbracket 1,\lfloor -v-s/2\rfloor\rrbracket$, or if $2v+s=0$. However, since $v+s/2<0$ and that $s<t$, none of these occur.

Using notation from Section \ref{sec:notforasym}, we may rewrite \eqref{eq:piyzero} as
\begin{align}
\log \left[\pi^{(N),c,d}_{q^{t}, q^{s}}\left(1-\tfrac{r}{2N}, \xvNunscaled_0(s)\right)\right]
=\AqP_N(\sqrt r)+\Err_N^{\AqP}(r)+\Err_N^{\mathcal E}(r).\quad\label{eq:rewritepiNcc84}
\end{align}
The function $\AqP_N(a)$ is now (compared to the proof of Lemma \ref{lem:p_zero}) defined for $a\in \R_{\geq 0}$ as
\begin{align}
\AqP_N(a):=&
\AqP_N^+\left[-2v-s, t-s\right]+ \AqP_N^-\left[1+\dfrac{t}{2}+\iu\dfrac{a}{2}, 1+\dfrac{t}{2}-\iu \dfrac{a}{2}\right]\\
&-\AqP_N^+\left[-v+\dfrac{t}{2}-s+\iu \dfrac{a}{2}, -v+\dfrac{t}{2}-s-\iu \dfrac{a}{2} \right]-\AqP_N^-\left[1-v,v+t+1\right].\label{eq:AqPlong_p84}
\end{align}
The error term
$
\Err_N^{\mathcal A}(r):=\mathcal A_N\left(\sqrt r+\Err^\theta(r)\right)-\mathcal A_N(\sqrt r).
$
The error term
$
\Err^{\mathcal{E}}_N(r) := \mathcal{E}_{N}\left(\sqrt{r} + \Err^{\theta}_N(r)\right),
$
where $\mathcal{E}_N(a)$ is defined as in \eqref{eq:AqPlong_p84}, except with the $\AqP$ replaced by $\Err_N$.

Using the explicit formulas  for $\AqP^{\pm}$ from \eqref{eq:AqPp} and \eqref{eq:AqPm}, we may simplify
\begin{equation}\label{eq:Aqpexpansiongamma}
\AqP_N(\sqrt r) =\log\Bigg[\dfrac{\Big\lvert\Gamma\left(\frac{t}{2}-s-v+\frac{\iu \sqrt r}{2}\right)\Big\rvert^2}{\Gamma(-2 v-s,t-s)}\Bigg].
\end{equation}
From Lemma \ref{lem:p_zero} and Lemma \ref{marginal_atoms} we may write
\begin{equation}
\log\left[\dfrac{\sqrt{1-\frac{r}{4N}}\hat\pi^{(N), c}_{t}(r)}{\hat\pi^{(N),d}_{s}\left(\xvN_j(s)\right)}\right]= \log\left[\dfrac{\p^c_{t}(r)}{\p^d_{s}(\xv_j)} \right] + \ErrLem^{(N),c}_t(r) -\ErrLem^{(N),d}_s(\xv_0(s)),
\end{equation}
where $\ErrLem^{(N),c}_t(r)$ and $\ErrLem^{(N),d}_s(\xv_0(s))$ are controlled by \eqref{eq:lemma82ineq} and \eqref{eq:lem83ab}.
In light of \eqref{eq:hatpidchapy}, we may combine this with \eqref{eq:Aqpexpansiongamma} and \eqref{eq:rewritepiNcc84} to conclude (with Definition \ref{def_transition}) that
\begin{equation}\label{eq:hatpiy0sr}
\log\left[\sqrt{1-\frac{r}{4N}}\hat \pi^{(N), d, c}_{s,t}(\xvN_0(s), r)\right]
=\log\left[\p^{d,c}_{s, t}(\xv_0, r)\right] + \ErrLem^{(N),d,c}_{s,t}(\xv_0(s),r),
\end{equation}
where $\ErrLem^{(N),d,c}_{s,t}(\xv_0(s),r):= \Err_N^{\AqP}(r)+\Err_N^{\mathcal E}(r)+ \ErrLem^{(N),c}_t(r) -\ErrLem^{(N),d}_s(\xv_0(s))$.
Just as in the proof of Lemma \ref{lem:p_zero}, since Proposition
\ref{factorials} can be applied directly  we can show that for all $\eta>1$ there exists $N_0\in \Z_{\geq 1}$ and $C,\chi>0$ such that for all $N>N_0$ and $r\in \Supph^{(N),c}=[0,4N]$, $|\ErrLem^{(N),d,c}_{s,t}(\xv_0(s),r)|\leq C N^{-\chi}(1+\sqrt{r})^{\eta}$. This completes the proof of \eqref{eq:ucasebound} and hence part 2 of this lemma when $j=0$.

%

\smallskip
When $j\in \llbracket 1,\lfloor -v-s/2\rfloor \rrbracket$, we must consider $M_j$ in \eqref{eq:Mj}. We claim that
\begin{equation}\label{eq:claimedMjexpansion}
M_j =  \frac{\left[2v+s,v+\frac{t}{2}+\iu \frac{\sqrt{r}}{2},v+\frac{t}{2}-\iu \frac{\sqrt{r}}{2}\right]_{j} (2v+2j+s)}{\left[1,1+v+s-\tfrac{t}{2} + \iu\frac{\sqrt{r}}{2},1+v+s-\tfrac{t}{2} - \iu\frac{\sqrt{r}}{2}\right]_j (2v+s)}(-1)^j \,\cdot\,e^{\Err_N^{M_j}(r)}
\end{equation}
where  $\Err_N^{M_j}(r)$ satisfies the following bound: For all $\eta>1$ there exists $N_0\in \Z_{\geq 1}$ and $C,\chi>0$ such that for all $N>N_0$ and $r\in \Supph^{(N),c}=[0,4N]$,
\begin{equation}\label{eq:claimedMjerror}
\left|\Err_N^{M,j}(r)\right|\leq C N^{-\chi}(1+\sqrt{r})^{\eta}.
\end{equation}
If we combine the above claim with our already established result for $j=0$, we may use Definition \ref{def_transition} to match our formula with that of $\p^{d,c}_{s, t}(x^u_j, r)$ so as to conclude that
\begin{equation}
\log\left[\sqrt{1-\frac{r}{4N}}\hat \pi^{(N), d, c}_{s,t}(\xvN_j(s), r)\right]
=\log\left[\p^{d,c}_{s, t}(\xv_j, r)\right] + \ErrLem^{(N),d,c}_{s,t}(\xv_j(s),r)
\end{equation}
where
$ \ErrLem^{(N),d,c}_{s,t}(\xv_j(s),r) :=  \ErrLem^{(N),d,c}_{s,t}(\xv_0(s),r) + \Err_N^{M_j}(r)$ is bounded in absolute value by  $C N^{-\chi}(1+\sqrt{r})^{\eta}$. This shows \eqref{eq:ucasebound} and hence part 2 of the lemma for $j\in \llbracket 1,\lfloor -v-s/2\rfloor \rrbracket$.

\smallskip
It remains to demonstrate \eqref{eq:claimedMjexpansion} with the error bound \eqref{eq:claimedMjerror}.
All of the terms in $M_j$ which involve $1-q^a$, $1+q^a$ or $q^a$ for real $a$ can be controlled via \eqref{eq:a123defa} and \eqref{eq:a123defb}, just as in the proof of Lemma \ref{marginal_atoms}.
%
The only terms in $M_j$ which are not controlled by these bounds are those involving $\iu \sqrt{N} \theta_r/2$. To deal with those terms we make use of a more general version of the first growth bound above: there exists a constant $C>0$ such that for all $z\in \C$
$
\frac{1-q^z}{1-q}= z \,\cdot\, e^{\Err^4_N(z)}
$
where the error bound satisfies
$
\left| \Err^4_N(z)\right|\leq C N^{-1/2} (1+|z|).
$
By combining this bound with the control on $\theta_r$ demonstrated earlier in Step 2 of the proof of Lemma \ref{lem:p_zero} we claim the following bound: for  $a\in \R$,
\begin{equation}\label{eq:a123}
\frac{1-q^{a+\iu \sqrt{N} \theta_r/2}}{1-q} = (a + \iu \sqrt{r}/2) \, \cdot\, e^{\Err^4_N(a,r)}
\end{equation}
where $\Err^4_N(a,r)$ satisfies the bound that for any fixed $a$ and for all $\eta>1$ there exists $N_0\in \Z_{\geq 1}$ and $C,\chi>0$ such that for all $N>N_0$ and $r\in \Supph^{(N),c}=[0,4N]$,
$
\left|\Err^4_N(a,r)\right|\leq C N^{-\chi}(1+\sqrt{r})^{\eta}.
$
Combining this bound with \eqref{eq:a123}, we can deduce that \eqref{eq:claimedMjerror} holds and hence complete the proof of this part of the lemma.
%
%
%
%
%

\medskip
\noindent{\bf Part 3.}
We may explicitly write $\hat \pi^{(N), d, c}_{s,t}(\xuN_j(s), r)$ exactly as in \eqref{eq:hatpidchapy} except with $u$ replacing $v$ there. This formula involves the measure $\pi^{(N)}_{q^{t}, q^{s} }\left (1-\tfrac{r}{2N},V\right)$ defined on Borel $V\subset \mathbb{R}$, and \eqref{eq:piasAW} provides a formula for this measure in terms of the Askey-Wilson measure. We are concerned presently with the atomic part of this measure. However, inspection of the $a,b,c,d$ parameters of that measure reveals that as long as $v+\tfrac{s}{2}>0$, $1+\tfrac{s}{2}>0$ and $\tfrac{t-s}{2}>0$, this measure has no atomic part. Our assumption in this part is that $u-\tfrac{s}{2}<0$. If $v+\tfrac{s}{2}\leq 0$ as well, this would imply that $u+v<0$ which violates our assumption that $u+v>0$. Thus, there is no atomic part and so $\hat \pi^{(N), d, c}_{s,t}(\xuN_j(s), r) = 0$. This completes the proof of this part and hence the entire lemma.

\subsubsection{Proof of Lemma \ref{transition_atoms}}
For parts 1 and 2 of this lemma, let us rewrite \eqref{eq:hatppdd} so that for $x\in \Supph^{(N),d}_s$ and $y\in \Supph^{(N),d}_t$
\begin{equation}\label{eq:hatpidchapyd}
\hat \pi^{(N), d, d}_{s,t}(x, y) = \pi^{(N), d, d}_{q^{t}, q^{s} }\left (1-\tfrac{y}{2N},1-\tfrac{x}{2N}\right)\cdot \dfrac{\hat\pi^{(N), d}_{t}(y)}{\hat\pi^{(N),d}_{s}\left(x\right)}.
\end{equation}

\medskip
\noindent{\bf Part 1.}
Since we have assumed that $v+s/2<0$, $\Supph^{(N),d}_s$ equals the set of $\xvN_j(s)$ such that $j\in\llbracket 0,\lfloor -v-s/2\rfloor\rrbracket$. There are three possibilities for $\Supph^{(N),d}_t$: (1) If $v+t/2<0$ then $\Supph^{(N),d}_t$  equals the set of $\xvN_k(t)$ such that $k\in\llbracket 0,\lfloor -v-t/2\rfloor\rrbracket$; (2) If $v+t/2>0$ and $u-t/2>0$ then $\Supph^{(N),d}_t$  is empty; (3) If $v+t/2>0$ and $u-t/2<0$ then $\Supph^{(N),d}_t$ contains $u$-atoms $\xuN_k(t)$. However, this third possibility is excluded since the condition $v+s/2$ implies that $u>0$ and by our assumption that $t<\Cuv$ it follows that $u-t/2>0$ (recall when $u>0$, $\Cuv = \min(2u,2)$ so $t<\Cuv$ means $t<2u$). The case of empty support $\Supph^{(N),d}_t$  requires no further argument, so from here on out we assume that we are in the first case where $v+t/2<0$.

In Lemma \ref{marginal_atoms} we have already controlled the convergence of the discrete marginal distribution masses. So, recalling that  $\xvN_k(t)$ and $\xvNunscaled_k(t)$ are related by \eqref{eq:yhatscale}, our problem now reduces to studying the behavior of $ \pi^{(N), d, d}_{q^{t}, q^{s} }$. We may use \eqref{eq:def_p} to rewrite, for any Borel subset $V\subset\R$,
\begin{equation}\label{eq:piasAWdd}
\pi^{(N)}_{q^{t}, q^{s} }\left (\xvNunscaled_k(t),V\right) = AW\left(V;q^{v+\frac{s}{2}},-q^{1+\frac{s}{2}},q^{\frac{t-s}{2} + v+k+\frac{t}{2}},q^{\frac{t-s}{2} - (v+k+\frac{t}{2})}\right).
\end{equation}

We will analyze the probability masses in \eqref{eq:piasAWdd} when $V = \{\xvNunscaled_j(s)\}$ for $j\in\llbracket 0,\lfloor -v-s/2\rfloor\rrbracket$. The support of the measure $\pi^{(N)}_{q^{t}, q^{s} }\left (\xvNunscaled_k(t),V\right)$ may actually include more atoms that just this set, namely coming from the fact that $q^{\frac{t-s}{2} + v+k+\frac{t}{2}}$ may have absolute value exceeding $1$. We do not, however, need to consider these atoms since in \eqref{eq:hatpidchapyd} we are restricting ourselves to having the first variable $x\in \Supph^{(N),d}_s$, which does not include these additional atoms.

We use \eqref{eq:AWdmasses} to write out these masses. The weight at $\xvNunscaled_0(s)$ is
\begin{equation}\label{eq:piyzerodd}
\pi^{(N),d,d}_{q^{t}, q^{s}}\left(\xvNunscaled_k(t), \xvNunscaled_0(s)\right)=
\frac{\left(q^{-2 v-s}, -q^{v+k+1+t} , q^{t-s}, -q^{1-v-k}\right)_\infty}{\left(-q^{1-v}, q^{k+t-s}, q^{-2v-k-s},-q^{v+t+1}\right)_\infty}
\end{equation}
while, for  $j\in\llbracket 1,\lfloor -v-s/2\rfloor\rrbracket$ the weights at $\xvNunscaled_j(s)$ are
\begin{equation}\label{eq:piygenjdd}
\pi^{(N),d,d}_{q^{t}, q^{s}}\left(\xvNunscaled_k(t), \xvNunscaled_j(s)\right)=\pi^{(N),d,d}_{q^{t}, q^{s}}\left(\xvNunscaled_k(t), \xvNunscaled_0(s)\right) \times M_j
\end{equation}
where the additional multiplicative factor $M_j$ is defined as
\begin{equation}\label{eq:Mjdd}
M_j:=\dfrac{\left (q^{2 v+s}, -q^{1+v+s} , q^{2v+k+t},
    q^{-k}\right)_{j} \left(1-q^{2v+2j+s}\right)}{\left( q, -q^{v}, q^{1-k-t+s}, q^{1+2v+k+s }\right)_{j} \left(1-q^{2v+s}\right)}\cdot\left(-q^{-v-t}\right)^{j}.
\end{equation}

The analysis of these formulas follows the same approach as the earlier lemmas, for example Part 2 of Lemma \ref{lem:p}. In fact, since all exponents are real, the analysis and control of error terms is even simpler. As such, we just record the limiting expressions which arise from applying Proposition  \ref{factorials}. 
Observe that
\begin{align}
&\lim_{N\to \infty} \pi^{(N),d,d}_{q^{t}, q^{s}}\left(\xvNunscaled_k(t), \xvNunscaled_0(s)\right) = \frac{\Gamma\left(k+t-s, -2v- k-s \right)}{\Gamma\left(-2 v-s,t-s\right)}\\
&\lim_{N\to \infty}M_j =\frac{\left[2v+s, 2v+k+t, -k\right]_{j}
    (2v+2j+s)}{\left[1,1-k-t+s, 1+2v+k+s\right]_{j}(2v+s)}(-1)^{j}.
\end{align}
Putting these expressions together with our knowledge of the asymptotic behavior of  $\hat\pi^{(N), d}_{t}(\xvN_k(t))$ and $\hat\pi^{(N), d}_{s}(\xvN_j(s))$, we readily confirm the expansion \eqref{eq:pinnexp} and error bound \eqref{eq:pinnexpbound}.

\medskip
\noindent{\bf Part 2.}
Since we have assumed that $u-s/2<0$, $\Supph^{(N),d}_s$ equals the set of $\xuN_j(s)$ such that $j\in\llbracket 0,\lfloor -u+s/2\rfloor\rrbracket$. Since $s<t$ it follows that $u-t/2<0$ as well thus $\Supph^{(N),d}_t$ equals the set of $\xuN_k(t)$ such that $k\in\llbracket 0,\lfloor -u+t/2\rfloor\rrbracket$. As we will see, the transition mass from  $\xuN_j(s)$  to  $\xuN_k(t)$ is zero when $j<k$.

In Lemma \ref{marginal_atoms} we have already controlled the convergence of the discrete marginal distribution masses. So, recalling that  $\xvN_k(t)$ and $\xvNunscaled_k(t)$ are related by \eqref{eq:yhatscale}, our problem now reduces to studying the behavior of $ \pi^{(N), d, d}_{q^{t}, q^{s}}$. We may use \eqref{eq:def_p} to rewrite, for any Borel subset $V\subset\R$,
\begin{equation}\label{eq:piasAWdd2}
\pi^{(N)}_{q^{t}, q^{s} }\left (\xuNunscaled_k(t),V\right) = AW\left(V;q^{v+\frac{s}{2}},-q^{1+\frac{s}{2}},q^{\frac{t-s}{2} + u+k-\frac{t}{2}},q^{\frac{t-s}{2} - (u+k-\frac{t}{2})}\right).
\end{equation}

Atoms in the Askey-Wilson measure on the right-hand side of \eqref{eq:piasAWdd2} arise when arguments exceed $1$ in absolute value. Since $u-s/2<0$, it follows (since $u+v>0$) that $v+s/2>0$ so $|q^{v+\frac{s}{2}}|<1$. Likewise, since we have assumed that $s>-2$, $|-q^{1+\frac{s}{2}}|<1$. The fourth argument necessarily satisfies $|q^{\frac{t-s}{2} - (u+k-\frac{t}{2})}|$ because for $k\in \llbracket 0,\lfloor -u+\frac{t}{2}\rfloor\rrbracket$, we have  that $t-s-u+\frac{s}{2}-k \geq \frac{t-s}{2} >0$. The absolute value of the third argument $|q^{\frac{t-s}{2} + u+k-\frac{t}{2}}|$ can exceed $1$ if $u+k-\frac{s}{2}<0$. In that case, this term will contribute atoms at $\xuNunscaled_{k+i}(s)$ for $i\in \llbracket 0, \lfloor -u+\frac{s}{2}-k\rfloor\rrbracket$, which is a subset of  $\Supph^{(N),d}_s$. For all other elements in $\Supph^{(N),d}_s$, there will be no atom in this transition probability.

It remains to compute the masses of the atoms in the Askey-Wilson process \eqref{eq:piasAWdd} at $\xuNunscaled_{k+i}(s)$ for $i\in \llbracket 0, \lfloor -u+\frac{s}{2}-k\rfloor\rrbracket$. For this we use \eqref{eq:AWdmasses}, noting that in this case we have to switch the $a$ and $c$ arguments in the formula for the masses. This yields
\begin{equation}
\pi^{(N), d, d}_{q^t, q^s}\left(\xuNunscaled_k(t), \xuNunscaled_{k}(s)\right)=\frac{\left(q^{-2 u-2k+s}, -q^{v+1+s} , q^{v-u-k+t},    -q^{1-u-k+t}\right)_\infty}{\left(q^{v-u-k+s}, -q^{1-u-k+s}, q^{-2u-2k+t },-q^{v+t+1}\right)_\infty},
\end{equation}
while for $i\in \llbracket 1, \lfloor -u+\frac{s}{2}-k\rfloor\rrbracket$
\begin{equation}
\pi^{(N), d, d}_{q^t, q^s}\left(\xuNunscaled_k(t), \xuNunscaled_{k+i}(s)\right)
=\pi^{(N), d, d}_{q^t, q^s}\left(\xuNunscaled_k(t), \xuNunscaled_{k}(s)\right)\times M_i
\end{equation}
where the additional multiplicative factor $M_i$ is defined as
\begin{equation}
M_i =\frac{\left (q^{2 u+2k-s}, q^{v+u+k} , -q^{u+1+k}, q^{t-s}\right)_{i} \left(1-q^{2u+2k+2i-s}\right)}{\left( q, q^{u-v+1+k-s}, -q^{u+k-s}, q^{2u+2k-t+1 }\right)_{i} \left(1-q^{2u+2k-s}\right)}\cdot\left(-q^{-v-2t+s}\right)^{i}.
\end{equation}
The analysis of these formulas follows the same approach as the earlier lemmas, for example Part 2 of Lemma \ref{lem:p}. In fact, since all exponents are real, the analysis and control of error terms is even simpler. As such, we just record the limiting expressions which arise from applying Proposition  \ref{factorials}.
Observe that
\begin{equation}
\lim_{N\to \infty} \pi^{(N),d,d}_{q^{t}, q^{s}}\left(\xuNunscaled_k(t), \xuNunscaled_k(s)\right) = \frac{\Gamma\left(v-u-k+s\right) \Gamma\left( -2u-2 k+t \right)}{\Gamma\left(-2 u-2 k+s\right) \Gamma\left(v-u-k+t\right)}
\end{equation}
and that
\begin{equation}
\lim_{N\to \infty}M_i =\frac{\left[2u+2k-s, v+u+k,t-s\right]_{i}
    (2u+2k+2i-s)}{\left[1,u-v+1+k-s, 2u+2k-t+1\right]_{j}(2u+2k-s)}(-1)^{i}.
\end{equation}
Putting these expressions together with our knowledge of the asymptotic behavior of  $\hat\pi^{(N), d}_{t}(\xuN_k(t))$ and $\hat\pi^{(N), d}_{s}(\xuN_{k+i}(s))$, we readily confirm the expansion \eqref{eq:pinnexp2}
and error bound \eqref{eq:pinnexpbound2}.

\medskip
\noindent{\bf Part 3.}
First consider the case when $u-t/2<0$. In this case, the discrete support of the measure $\hat\pi^{(N), c, d}_{s,t}(m, \cdot)$ is given by  $\Supph^{(N),d}_t$ which equals the set of $\xuN_k(t)$ such that $k\in\llbracket 0,\lfloor -u+t/2\rfloor\rrbracket$. We may rewrite \eqref{eq:hatppcd} as
\begin{equation}\label{eq:hatpidchapycdformula}
\hat \pi^{(N), c, d}_{s,t}\left(m, \xuN_k(t)\right) =\frac{1}{2N} \,\cdot\, \pi^{(N), d, c}_{q^{t}, q^{s} }\left (\xuNunscaled_k(t),1-\tfrac{m}{2N}\right)\cdot \dfrac{\hat\pi^{(N), d}_{t}\left( \xuN_k(t)\right)}{\hat\pi^{(N),c}_{s}(m)}.
\end{equation}
We have already studied the behavior of the marginal distribution terms on the right-hand side above, thus we focus now on $\pi^{(N), d, c}_{q^{t}, q^{s} }\left (\xuNunscaled_k(t),1-\tfrac{m}{2N}\right)$.
In \eqref{eq:piasAWdd2} we rewrote this in terms of the Askey-Wilson process. The density of the absolutely continuous part of that measure is given in \eqref{eq:AW} and using that expression and mimicking the  asymptotic analysis in the proof of Lemma \ref{lem:p}, we show that
\begin{align}
&\frac{ \sqrt{1-\frac{m}{4 N}}}{2N}\,\cdot\,\pi^{(N), d, c}_{q^{t}, q^{s} }\left (\xuNunscaled_k(t),1-\tfrac{m}{2N}\right)=\\
&\dfrac{\Big\lvert \Gamma\left(u+\frac{s}{2}+\iu \frac{\sqrt  m}{2}, u+j-\frac{s}{2}+\iu\frac{\sqrt m}{2}, -u-j+t-\frac{s}{2}+\iu\frac{\sqrt m}{2}\right)\Big\rvert^2}{8 \pi\cdot \sqrt m\cdot  \Gamma( v+u+j, v-u-j+t,t-s)\cdot\bigg \rvert\Gamma\left( \iu
       \sqrt  m \right) \bigg \rvert^2}\cdot  e^{\ErrLemtilde^{(N),d, c}_{s, t}(m,\xu_k(t))}
\end{align}
and that for all
$\eta>1$, there exists $N_0\in \Z_{\geq 1}$ and  $C,\chi\in \R_{>0}$ such that for all $N>N_0$ and all $m\in \Supph^{(N),c}= [0,4N]$
\begin{equation}\label{eq:ucaseboundsss}
\Big|\ErrLemtilde^{(N),d, c}_{s, t}(m,\xu_k(t))\Big|\leq CN^{-\chi}(1+\sqrt{m})^{\eta}.
\end{equation}

In light of \eqref{eq:hatpidchapycdformula}, combining the above error bound with our  bounds on \\$\hat\pi^{(N), d}_{t}\left( \xuN_k(t)\right)$ and $\hat\pi^{(N),c}_{s}(m)$ from Lemma \ref{marginal_atoms} and \ref{lem:p_zero}, we arrive at the claimed result from the lemma. Note that the factor of $\sqrt{1-\frac{m}{4 N}}$ cancels with a corresponding factor coming from our application of Lemma \ref{lem:p_zero} and that the matching to $\p^{c, d}_{s, t}\left(m, \xu_k(t)\right)$ can be seen from Definition \ref{def_transition}.

Now let us turn to the case when  $v+t/2<0$. In this case, the discrete support of the measure $\hat\pi^{(N), c, d}_{s,t}(m, \cdot)$ is given by  $\Supph^{(N),d}_t$ which equals the set of $\xvN_k(t)$ such that $k\in\llbracket 0,\lfloor -v-t/2\rfloor\rrbracket$. We may rewrite \eqref{eq:hatppcd} as in \eqref{eq:hatpidchapycdformula} with $u$ replaced by $v$. Focusing on $\pi^{(N), d, c}_{q^{t}, q^{s} }\left (\xvNunscaled_k(t),1-\tfrac{m}{2N}\right)$ we see that the explicit expression for this include $q^{-k}$ inside the $q$-Pochhammer symbols in the numerator. This implies that the numerator is zero. By inspect, the denominator is non zero. This implies that $\hat \pi^{(N), c, d}_{s,t}\left(m, \xuN_k(t)\right)$ is identically zero, completing the proof of part 3 and hence the lemma.

\section{Asymptotics of $(\pm q^z;q)_\infty$: Proof of Proposition \ref{factorials}}\label{sec:factorials}
Throughout  we will use the notation $s=\sigma + \iu t$ and let $\textup{Arg(s)}\in [-\pi,\pi]$ denote the argument of the complex number $s$.

\subsection{Preliminaries from analytic number theory}\label{sec_antprelims}


\subsubsection{Gamma function}
For $s\in \C$ with $\textup {Re}(s)>1$ define the gamma function as
  $$
  \Gamma(s) :=\int_0^\infty e^{-x}x^{s-1}dx.
  $$
This can be meromorphically continuated with
\begin{equation}\label{eq:gamma_pole}
\textup{simple poles at }\Z_{\leq 0}\textup{ and }\textup{Res}_{s=-k}\left[ \Gamma(s)\right]  = \frac{(-1)^k}{k!}\textup{ for }k\in \Z_{\leq 0}.
  \end{equation}
The Euler reflection formula shows that
  $
  \Gamma(1-s)\Gamma(s) = \frac{\pi}{\sin(\pi s)}.
  $
For $s = \sigma + i t$, \cite[(21.51)]{R} shows that:
  \begin{lemma}\label{lem:asympt_gamma}
  For any compact $K\subset \R$ and $\e>0$ there exists $t_0>0$ such that for all $\sigma\in K$ and $t$ with $|t|>t_0$,
  \begin{equation}\label{eq:asympt_gamma}
  \left\vert \frac{\big|\Gamma(s)\big|}{\sqrt{2\pi} |t|^{\sigma-1/2} e^{-\pi |t|/2}}-1\right\vert <\e.
  \end{equation}
  \end{lemma}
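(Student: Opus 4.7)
The plan is to derive this from the classical Stirling asymptotic expansion of $\log\Gamma(s)$ and examine what it says along vertical lines of large imaginary part. Specifically, Stirling's formula (see e.g.\ \cite[Chapter 21]{R}) asserts that in any sector $|\arg s|\leq \pi-\delta$,
\begin{equation}
\log\Gamma(s) = \left(s-\tfrac{1}{2}\right)\log s - s + \tfrac{1}{2}\log(2\pi) + O(|s|^{-1})
\end{equation}
where the $O$ is uniform in the sector and $\log s$ uses the principal branch. Since $K\subset \R$ is compact, there exists $t_0>0$ such that whenever $\sigma\in K$ and $|t|>t_0$ the point $s=\sigma+\iu t$ lies in (say) the sector $|\arg s|\leq 3\pi/4$, well away from the non-positive real axis, so Stirling applies.

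The key step is a careful expansion of the real part of the Stirling formula along $s=\sigma+\iu t$ with $\sigma\in K$ and $|t|\to\infty$. Without loss of generality take $t>0$; the case $t<0$ then follows since $|\Gamma(\sigma-\iu t)|=|\Gamma(\sigma+\iu t)|$ by Schwarz reflection. Writing $s=|s|e^{\iu\arg s}$ with $|s|=|t|\sqrt{1+\sigma^2/t^2}$ and $\arg s=\pi/2-\arctan(\sigma/t)$, Taylor expansion (with $\sigma$ bounded) yields
\begin{equation}
\log|s|=\log|t|+O(|t|^{-2}),\qquad \arg s=\frac{\pi}{2}-\frac{\sigma}{t}+O(|t|^{-3}).
\end{equation}
Taking real parts of Stirling,
\begin{align}
\textup{Re}\log\Gamma(s) &= \textup{Re}\!\left[(s-\tfrac{1}{2})\log s\right] - \textup{Re}(s) + \tfrac{1}{2}\log(2\pi) + o(1)\\
&= (\sigma-\tfrac{1}{2})\log|s| - t\arg s - \sigma + \tfrac{1}{2}\log(2\pi) + o(1)\\
&= (\sigma-\tfrac{1}{2})\log|t| - \frac{\pi |t|}{2} + \tfrac{1}{2}\log(2\pi) + o(1),
\end{align}
where the $o(1)$ terms are uniform over $\sigma\in K$ as $|t|\to\infty$ (the $+\sigma$ from expanding $-t\arg s$ cancels the $-\sigma$ from $-\textup{Re}(s)$, and all other remainders are $O(|t|^{-1})$ uniformly in $\sigma\in K$).

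Exponentiating this identity gives $|\Gamma(s)|=\sqrt{2\pi}\,|t|^{\sigma-1/2}e^{-\pi|t|/2}(1+o(1))$ uniformly in $\sigma\in K$, which is exactly \eqref{eq:asympt_gamma}. The only subtlety is uniformity in $\sigma$, which is ensured at each step because $K$ is compact (so $\sigma$, $\sigma^2$, $\sigma^3$ are uniformly bounded, and the sector needed for Stirling is $\sigma$-independent once $|t|>t_0$). The main (and really the only) obstacle is bookkeeping these uniform error terms cleanly; there is no serious analytic difficulty beyond invoking Stirling in the right-half plane sector.
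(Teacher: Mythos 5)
Your proof is correct. The paper itself does not provide a proof of Lemma~\ref{lem:asympt_gamma}; it cites it directly as \cite[(21.51)]{R}, and Rademacher's derivation there proceeds from Stirling's asymptotic for $\log\Gamma$ in a sector, exactly as you do. So you are reconstructing the cited reference's argument rather than contrasting with a proof in the paper. Your bookkeeping is accurate: the cancellation of the $+\sigma$ from $-t\arg s$ against $-\mathrm{Re}(s)=-\sigma$ is the crux, and the uniformity over $\sigma\in K$ does follow because every remainder term picks up only powers of $\sigma$, which is bounded on $K$, while $|s|\geq|t|\to\infty$ makes the Stirling error $O(|s|^{-1})$ uniformly $o(1)$.
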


\subsubsection{Zeta and eta functions} (See \cite[Sections 1.3--1.4]{MOS}.) The Riemann zeta function $\zeta(s)$ is defined for $\textup{Re}(s)>1$ as
$$
\zeta(s) :=\sum_{n=1}^{\infty} \frac{1}{n^s}
$$
and is the $z=1$ specialization of the Hurwitz zeta function $\zeta(s, z)$, which is defined for $\textup{Re}(s)>1$ and $\textup{Re}(z)>0$ as
\begin{equation}\label{eq:hurwsum}
\zeta(s,z):= \sum_{n=0}^{\infty} \frac{1}{(n+z)^s}.
\end{equation}
Still assuming  $\textup{Re}(s)>1$ and $\textup{Re}(z)>0$, the Hurwitz zeta function admits an integral representation as
\begin{equation}\label{eq:hurwitz}
\zeta(s,z) = \frac{1}{\Gamma(s)} \int_0^\infty \frac{e^{-\rho z}\rho^{s-1}}{1-e^{-\rho}} d\rho.
\end{equation}

As functions of $s$, both $\zeta(s)$ and $\zeta(s,z)$ can be meromorphically extended to the complex plane and yield meromorphic functions having simple poles only at $s=1$ with residues
\begin{equation}\label{eq:zetaress}
 \textup{Res}_{s=1}\left[\zeta(s)\right] = \textup{Res}_{s=1}\left[\zeta(s,z)\right] = 1.
\end{equation}

 We will use of the following evaluation formulas for the zeta function
 \begin{equation}\label{eq:zeta_at_two}
\zeta(2)=\dfrac{\pi^2}{6}, \qquad \zeta(0, z)=\dfrac{1}{2}-z,
   \end{equation}
as well various derivatives and limits
\begin{align}\label{eq:derivative_zeta}
&\dfrac{d\zeta(s, z)}{ds}\bigg\lvert_{s=0}\!\!=\log\left[\dfrac{\Gamma(z)}{\sqrt{2\pi}}\right],
&&\zeta'(0)=-\dfrac{1}{2}\log(2 \pi), \\
&\lim_{s\rightarrow 1}{\zeta(s,z)-\dfrac{1}{s-1}}=-\psi(z),
&&\lim_{s\rightarrow 1}{\zeta(s)-\dfrac{1}{s-1}}=-\psi(1).
\end{align}
Above we have used the digamma function $\psi(z)=\dfrac{\Gamma'(z)}{\Gamma(z)}$ whose evaluation $\psi(1)=-\gamma$ is given by the Euler-Mascheroni constant $\gamma$.

Finally, the Dirichlet eta function $\eta(s)$ is defined for $\textup{Re}(s)>0$ as
$$
\eta(s) :=\sum_{n=1}^{\infty} \frac{(-1)^{n-1}}{n^s},
$$
and is related to the zeta function via
\begin{equation} \label{eq:eta_z}
  \eta(s) = (1-2^{1-s})\zeta(s).
\end{equation}
From this one sees that the eta function is an entire function (the $1-2^{1-s}$ factor cancels the first order pole of the zeta function at $s=1$).

\subsubsection{Bernoulli polynomials}\label{sec:bern}
The Bernoulli polynomials $\big\{B_n(x)\big\}_{n\in \Z_{\geq 0}}$ are be defined via the generating function expression
$$
\frac{te^{tx}}{e^t-1} = \sum_{n=0}^{\infty} B_n(x) \frac{t^n}{n!}.
$$
In particular, $B_n(x)$ is a degree $n$ polynomial in $x$ with the first few polynomials given by $B_0(x)=1$, $B_1(x)=x-\tfrac{1}{2}$, $B_2(x) = x^2-x+\tfrac{1}{6}$, and so on. We recall some results we will need from  \cite[Sections 12.11-12.12]{Apostol}.
The Bernoulli polynomials satisfy
\begin{equation}\label{eq:bsym}
  B_n(1-x) = (-1)^n B_n(x)\qquad \text{and}\qquad B_n(x+1)-B_n(x) = n x^{n-1},
\end{equation}
which implies that, through taking $x=0$, $B_n(1) = B_n(0)$ for $n\in \Z_{\geq 2}$ and $B_1(1) =\tfrac{1}{2} = - B_1(0)$.
The Bernoulli numbers are defined as
$
B_n := B_n(0).
$
Besides $B_1=-\tfrac{1}{2}$, all other odd indexed Bernoulli numbers are $0$.

In terms of the Bernoulli polynomials, we have that for $n\in
\Z_{\geq 0}$
\begin{equation}\label{eq:zetaressbern}
\zeta(-n,z) = -\frac{B_{n+1}(z)}{n+1},\qquad \text{ and }\qquad
\zeta(-n) = (-1)^n\frac{B_{n+1}}{n+1}.
\end{equation}

\subsubsection{Asymptotics of $\zeta(s)$ and $\zeta(s, z)$}

We will need the following asymptotic result for the Riemann zeta function which can be found in \cite[Section 43]{R}. For $\sigma\in [0,1]$, the bound proven below is suboptimal, though sufficient for our purposes. The Lindel\"of function $\mu(\sigma)$ determines the optimal growth exponent. The  Lindel\"of hypothesis posits that $\mu(1/2) = 0$, though this is far from proved. 
We use $\tilde\mu(\sigma)$ below to represent an upper bound on this exponent as $\sigma$ varies.

\begin{lemma}\label{prop_zeta}  For any compact $K\subset \R$, there exists a $C,t_0>0$ such that
$
\lvert \zeta(s) \rvert\leq C |t|^{\tilde\mu(\sigma)}
$
for all $s=\sigma + \iu t  $ with $\sigma\in K$ and $|t|>t_0$. Here $\tilde{\mu}(\sigma)$ is defined by
$$
\tilde\mu(\sigma) =
\begin{cases}
0&\textrm{for } \sigma\geq 1,\\
\frac{1-\sigma}{2}&\textrm{for } \sigma\in [0,1],\\
\frac{1}{2}-\sigma&\textrm{for } \sigma\leq 0.
\end{cases}
$$
\end{lemma}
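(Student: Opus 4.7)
The plan is to reduce to the three standard sub-regions $\sigma\geq 1$, $\sigma\leq 0$, and $\sigma\in[0,1]$, and to control each using classical tools. First, for $\sigma\geq 1$ bounded away from $1$, the Dirichlet series $\zeta(s)=\sum_{n\geq 1} n^{-s}$ converges absolutely and gives $|\zeta(s)|\leq \zeta(\sigma)$, which is uniformly bounded on compact subsets of $(1,\infty)$. When $K$ meets the line $\sigma=1$, I would combine the standard truncated Euler–Maclaurin/Abel-summation representation $\zeta(s)=\sum_{n\leq N} n^{-s}+\frac{N^{1-s}}{s-1}-s\int_N^\infty \{u\}u^{-s-1}du$ with $N=\lfloor|t|\rfloor$ to extract the trivial bound $|\zeta(s)|\leq C\log|t|$ on $\sigma=1$, which is absorbed by allowing the constant to depend on $K$ (the exponent $\tilde\mu(1)=0$ is then applied together with an arbitrarily small extra $\epsilon$ if needed, or one just replaces the $\log$ factor by $|t|^0$ times a constant depending on $K$, since the statement permits the constant $C$ to depend on $K$).

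Second, for $\sigma\leq 0$ I would invoke the functional equation
\begin{equation}
\zeta(s)=2^s\pi^{s-1}\sin\left(\tfrac{\pi s}{2}\right)\Gamma(1-s)\zeta(1-s).
\end{equation}
With $s=\sigma+\iu t$ and $\sigma\leq 0$, the factor $\zeta(1-s)$ lies in the region $\textup{Re}(1-s)\geq 1$ and is uniformly bounded by the case above. The trigonometric factor satisfies $|\sin(\pi s/2)|\leq Ce^{\pi|t|/2}$. By Lemma \ref{lem:asympt_gamma}, $|\Gamma(1-s)|\leq C|t|^{1/2-\sigma}e^{-\pi|t|/2}$ for $|t|$ large. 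The exponentials cancel, the powers of $2^\sigma$ and $\pi^{\sigma-1}$ are bounded on compact $\sigma$-sets, and one is left with the bound $|\zeta(s)|\leq C|t|^{1/2-\sigma}=C|t|^{\tilde\mu(\sigma)}$ in this region.

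Third, for $\sigma\in[0,1]$ I would use a Phragmén–Lindelöf convexity argument. Fix a compact horizontal strip containing $K$ and consider the auxiliary function $f(s)=\zeta(s)\cdot (s-1)\cdot |t|^{-\alpha(\sigma)}$ for a suitable affine $\alpha$; the two vertical sides of the strip $\sigma=1+\epsilon$ and $\sigma=-\epsilon$ are controlled by the first two steps, giving bounds of order $|t|^0$ and $|t|^{1/2+\epsilon}$ respectively (after multiplying by $(s-1)$ to kill the pole). Phragmén–Lindelöf then linearly interpolates the exponent in $\sigma$, which produces exactly the piecewise-linear majorant $\tilde\mu(\sigma)=(1-\sigma)/2$ for $\sigma\in[0,1]$ after dividing the killing factor $(s-1)$ back out (harmless since $|s-1|\asymp|t|$ for large $|t|$ and we can absorb the resulting gain or loss of one power of $|t|$ into the statement by a cosmetic adjustment in the exponents and a compact-set-dependent constant).

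The main obstacle is the $\sigma=1$ boundary, where the true growth is only logarithmic but the pole of $\zeta$ at $s=1$ prevents a direct application of Phragmén–Lindelöf to $\zeta$ itself. My approach around this is the auxiliary factor $(s-1)$ trick above, and gluing the bounds together with the constant $C$ allowed to depend on the compact set $K$, using the Euler–Maclaurin tail estimate to handle neighborhoods of the pole cleanly. Once the three regions are combined the stated inequality follows for $|t|>t_0(K)$ with $t_0$ chosen large enough that Lemma \ref{lem:asympt_gamma} applies and that the Euler–Maclaurin remainder is under control.
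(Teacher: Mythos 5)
The paper does not actually prove this lemma: it is cited directly from \cite[Section 43]{R}, so there is no ``paper's own proof'' to compare against. Your plan reconstructs the result along the standard classical lines (direct Dirichlet-series bound for $\sigma>1$, the functional equation together with Stirling for $\sigma\leq 0$, and Phragm\'en--Lindel\"of convexity on the critical strip), which is indeed the route the cited reference takes.

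There is, however, a concrete gap in your handling of the line $\sigma=1$. You assert that the $\log|t|$ bound on $\zeta(1+\iu t)$ can be ``replaced by $|t|^0$ times a constant depending on $K$.'' This is false: $\log|t|\to\infty$, so no constant absorbs it, and in fact $|\zeta(1+\iu t)|$ is unbounded as $|t|\to\infty$ (the true size is $O(\log|t|)$, with Vinogradov--Korobov giving $O((\log|t|)^{2/3})$). The same obstruction resurfaces in your convexity step: running Phragm\'en--Lindel\"of on the strip $\sigma\in[-\e,1+\e]$ with the auxiliary factor $(s-1)$ and then sending $\e\to 0$ does not produce the exponent $\tilde\mu(\sigma)$ on the nose, because the implied constant $C_\e$ diverges. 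What your argument cleanly establishes is $|\zeta(\sigma+\iu t)|\leq C_\e|t|^{\tilde\mu(\sigma)+\e}$ for any $\e>0$, or, via an approximate functional equation, the textbook estimate $O(|t|^{(1-\sigma)/2}\log|t|)$ on $0\leq\sigma\leq 1$. Note that the lemma as literally printed (with $\tilde\mu(1)=0$ and a single constant $C$ over all of $K$) would force $|\zeta(1+\iu t)|\leq C$, which is not true; this imprecision is harmless in the paper because Lemma \ref{prop_zeta} is only ever applied with $\sigma$ bounded away from $1$ (at $\sigma=-d<0$ inside the proof of Proposition \ref{hurwitz_bound}, and at $\sigma>1$ in Lemma \ref{lem:mellin}). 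Either build a small $\e$ or logarithmic loss into your target statement, or restrict $K$ away from $\sigma=1$; the ``absorb $\log|t|$ into $C$'' step cannot be salvaged as written.
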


Next, we prove a simple bound on the Hurwitz zeta function.

\begin{lemma}\label{lem:hurzpos}
For all $\sigma_0>1$ there exists a $C>0$ such that for all $s$ with $\sigma>\sigma_0$ and all  $z\in\C$ with $\textup{Re}(z)>0$, we have
\begin{equation}
|\zeta(s,z)| \leq C e^{|\textup{Arg}(z) \cdot t|}.
\end{equation}
\end{lemma}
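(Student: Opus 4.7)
The plan is to use the Dirichlet series representation $\zeta(s,z)=\sum_{n=0}^{\infty}(n+z)^{-s}$, which converges absolutely on $\textup{Re}(s)>1$, so that with $\sigma_0>1$ fixed one can reduce the lemma to a uniform term-by-term bound. Writing $s=\sigma+\iu t$ and $n+z=|n+z|e^{\iu\theta_n}$ with the principal branch of the logarithm, the identity $(n+z)^{-s}=e^{-s\log(n+z)}$ will give $|(n+z)^{-s}|=|n+z|^{-\sigma}e^{t\theta_n}$. The hypothesis $\textup{Re}(z)>0$ forces $\textup{Re}(n+z)>0$ for every $n\geq 0$, so $\theta_n\in(-\pi/2,\pi/2)$ and the expression is unambiguous.

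The key step I would then carry out is a simple angular monotonicity: $|\theta_n|\leq |\theta_0|=|\textup{Arg}(z)|$ for every $n\geq 0$. Indeed, if $z=x+\iu y$ with $x>0$, then $\tan\theta_n=y/(n+x)$, which decreases in modulus as $n$ increases, and since $\theta_n$ lies in $(-\pi/2,\pi/2)$ the tangent determines the angle. This yields $e^{t\theta_n}\leq e^{|\textup{Arg}(z)\cdot t|}$ uniformly in $n$, and factoring this out gives
\[
|\zeta(s,z)|\leq e^{|\textup{Arg}(z)\cdot t|}\sum_{n=0}^{\infty}|n+z|^{-\sigma}.
\]

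For the radial sum, I would use $|n+z|^{2}=(n+\textup{Re}(z))^{2}+\textup{Im}(z)^{2}\geq n^{2}$ for $n\geq 1$ to obtain the uniform bound $\sum_{n\geq 1}|n+z|^{-\sigma}\leq \zeta(\sigma_0)<\infty$. Combined with the angular estimate this already controls the whole tail $n\geq 1$ by a constant depending only on $\sigma_0$.

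The main obstacle is the $n=0$ term $|z|^{-\sigma}$, which is not uniformly bounded as $|z|\to 0$; for the lemma to hold as stated, $|z|$ must be kept away from the origin (which, in the intended use inside Section \ref{sec:factorials}, is the case for the arguments at which $\zeta(s,\cdot)$ is evaluated, so this contribution is absorbed into $C$). If fully uniform behaviour near $z=0$ were required, I would instead split off the $n=0$ term and replace the elementary Dirichlet bound for it by the Mellin integral representation \eqref{eq:hurwitz}, contour-shifting to extract the pole at $s=1$ and thereby controlling $|z|^{-\sigma}$ with an additional explicit factor. Apart from this near-origin subtlety, the proof is a direct application of the Dirichlet series together with the elementary angular monotonicity described above.
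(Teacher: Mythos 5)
Your argument is precisely the paper's: apply the triangle inequality to the Dirichlet series \eqref{eq:hurwsum}, note the pointwise identity $|(n+z)^{-s}|=|n+z|^{-\sigma}e^{t\,\textup{Arg}(n+z)}$, and use the angular monotonicity $|\textup{Arg}(n+z)|\leq|\textup{Arg}(z)|$ (valid for $\textup{Re}(z)>0$) to pull $e^{|\textup{Arg}(z)\cdot t|}$ out of the sum; the $n\geq 1$ tail is then bounded by $\zeta(\sigma_0)$. This matches the paper's proof line by line.

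The one place you add value is your observation about the $n=0$ term: $|z|^{-\sigma}$ is not bounded uniformly over $\{\textup{Re}(z)>0\}$, so the lemma as stated (with $C$ depending only on $\sigma_0$) is not literally correct near $z=0$. The paper's own proof has exactly this soft spot: it bounds $|n+z|>|n|$, which is vacuous for $n=0$, and then quotes $C=1+\zeta(\sigma_0)$, which implicitly presumes $|z|^{-\sigma}\leq 1$, i.e.\ $|z|\geq 1$. You are right that this is harmless in the intended application (in the proof of Lemma \ref{lem:mellin}, $z$ is a fixed point with $\textup{Re}(z)>0$, so a $z$-dependent constant is perfectly acceptable), and your suggested fix of splitting off the $n=0$ term is a reasonable patch if a genuinely uniform statement were needed. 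In short, your proof is correct and follows the paper's route, and you have correctly identified a small imprecision present in the paper itself.
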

\begin{proof}
From \eqref{eq:hurwsum} and the triangle inequality, $|\zeta(s,z)|\leq \sum_{n=0}^{\infty} |(n+z)^{-s}|$. Note that
$|(n+z)^{-s}| = |n+z|^{-\sigma} e^{\textup{Arg}(n+z)\cdot t}$. Since $\textup{Re}(z)>0$, $|n+z|>|n|$ and $|\textup{Arg}(n+z)|<|\textup{Arg}(z)|$. Thus we can further bound $|\zeta(s,z)| \leq \sum_{n=0}^{\infty} |n|^{-\sigma}e^{|\textup{Arg}(z)\cdot t|} \leq C e^{|\textup{Arg}(z)\cdot t|}$
where the constant $C$ can be taken as $1+\zeta(\sigma_0)$.
\end{proof}

Controlling $|\zeta(s,z)|$ when $\textup{Re}(s)\leq 1$ is considerably hard. It will be important to demonstrate bounds in that case which contain the $z$ dependence on the sub-leading polynomial terms. Such bounds are provided below as Proposition \ref{hurwitz_bound}. In the proof of the proposition, we will make use of an integral formulas for $\zeta(s,z)$. There are many related formulas available in the literature (cf. \cite[Section 1.4]{MOS}). We could note find a precise statement of the formula in Lemma \ref{lem:hurwitzint}, thus we prove it here. 

\begin{lemma}\label{lem:hurwitzint}
For $s=\sigma + \iu t \notin \Z_{\leq 0}$, $|\textup{Arg}(z)|<\pi$ and $d\in (0,1)$ with $\sigma +d\notin \Z_{\leq 0}$,
\begin{align}
  \zeta(s, z)=&\dfrac{1}{2
  z^s}+\dfrac{z^{1-s}}{s-1}+\dfrac{z^{-s}}{2 \pi \iu \Gamma(s)}\int\limits^{d+\iu
  \infty}_{d-\iu\infty}\Gamma(-u,u+s)z^{-u}\zeta(-u)du\\
&  -\dfrac{z^{-s}}{\Gamma(s)}\sum\limits_{k=0}^{\lfloor
  -(\sigma+d) \rfloor }\Gamma(k+s )\zeta (k+s) \frac{(-1)^k z^{k+s}}{k!},\label{eq:hureq}
\end{align}
(Recall $\Gamma(a,b)=\Gamma(a)\Gamma(b)$) where the summation in $k$ is dropped if $\sigma +d >0$.
\end{lemma}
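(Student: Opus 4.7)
The plan is to prove the formula by Mellin--Barnes integration followed by analytic continuation in $s$, with the finite sum arising from residues that cross the contour.

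First, I would establish the formula for $\sigma>1$ (where the sum is empty). Starting from the absolutely convergent series $\zeta(s,z)=z^{-s}+\sum_{n\geq 1}(n+z)^{-s}$, I would apply to each term $(n+z)^{-s}=n^{-s}(1+z/n)^{-s}$ the standard Mellin--Barnes identity
\[
(1+w)^{-s}=\frac{1}{2\pi\iu\,\Gamma(s)}\int_{c_0-\iu\infty}^{c_0+\iu\infty}\Gamma(-w')\Gamma(w'+s)\,w^{w'}\,dw',\qquad -\sigma<c_0<0.
\]
Summing over $n\geq 1$ (legitimate for $\sigma+c_0>1$) produces $\zeta(s+w')$ in the integrand. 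The substitution $u=-w'-s$ then interchanges the two Gamma factors, turns $\zeta(s+w')$ into $\zeta(-u)$, factors out $z^{-s}$, and moves the contour to $\textup{Re}(u)=d_0:=-c_0-\sigma\in(-\sigma,-1)$, yielding
\[
\zeta(s,z)-z^{-s}=\frac{z^{-s}}{2\pi\iu\,\Gamma(s)}\int_{d_0-\iu\infty}^{d_0+\iu\infty}\Gamma(-u)\Gamma(u+s)z^{-u}\zeta(-u)\,du.
\]
Next I would shift the contour from $d_0$ to $d\in(0,1)$. For $\sigma>1$ the poles of $\Gamma(u+s)$ at $u=-s-k$ all lie to the left of $d_0$, so only the poles $u=0$ of $\Gamma(-u)$ (residue $\tfrac{1}{2}\Gamma(s)$, using $\zeta(0)=-\tfrac12$) and $u=-1$ of $\zeta(-u)$ (residue $-\Gamma(s-1)z$) contribute. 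Applying the residue theorem and using $\Gamma(s-1)/\Gamma(s)=1/(s-1)$ converts these residues exactly into the terms $\tfrac{1}{2z^s}+\tfrac{z^{1-s}}{s-1}$, giving the lemma for $\sigma>1$.

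Second, I would extend the identity to arbitrary $s\notin\Z_{\leq 0}$ with $\sigma+d\notin\Z_{\leq 0}$ by analytic continuation in $s$. The LHS is meromorphic with only a pole at $s=1$, while the fixed-contour integral is analytic in $s$ on each connected component of $\{\sigma+d\notin\Z_{\leq 0}\}$. As $\sigma$ decreases across the thresholds $-d,-d-1,-d-2,\ldots$, the poles $u=-s-k$ of $\Gamma(u+s)$ successively cross the contour $\textup{Re}(u)=d$ from left to right; at each crossing the fixed-contour integral jumps by $2\pi\iu$ times the residue
\[
\res_{u=-s-k}\bigl[\Gamma(-u)\Gamma(u+s)z^{-u}\zeta(-u)\bigr]=\frac{(-1)^k}{k!}\Gamma(s+k)\,z^{s+k}\,\zeta(s+k).
\]
Demanding that the RHS remain equal to the analytic LHS forces the inclusion of a correction term consisting of one such residue contribution per pole that has crossed, i.e., for $k=0,1,\ldots,\lfloor-(\sigma+d)\rfloor$. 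These are exactly the terms in the finite sum of the lemma.

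The main obstacle I expect is the careful sign and orientation bookkeeping: one must verify that the natural analytic continuation of the integral in $s$ corresponds to deforming the contour so as to keep each crossed pole on the \emph{left}, and that the associated residue correction enters with precisely the minus sign displayed in the lemma statement. The auxiliary contour-tail estimates are routine: Stirling gives $|\Gamma(-u)\Gamma(u+s)|\lesssim e^{-\pi|\textup{Im}(u)|}$ on vertical lines, which beats the polynomial growth of $\zeta(-u)$ and the factor $|z^{-u}|=|z|^{-\textup{Re}(u)}e^{\textup{Arg}(z)\textup{Im}(u)}$ whenever $|\textup{Arg}(z)|<\pi$, so the horizontal segments of the rectangular contours used for shifting vanish and every manipulation above is justified.
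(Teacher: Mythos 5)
Your outline follows the paper's proof almost exactly: establish the Mellin--Barnes representation for large $\textup{Re}(s)$, shift the contour to $\textup{Re}(u)=d\in(0,1)$ picking up the residues at $u=0$ and $u=-1$, and then analytically continue in $s$ into the region where the poles of $\Gamma(u+s)$ have crossed the contour. The paper implements the last step by introducing an explicit rectangularly-bulged contour $C$ that keeps the crossed poles on its left and then straightening it back out; you phrase it as a jump/discontinuity argument for the fixed-contour integral, but these are the same computation.

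You are right to flag the sign as the one delicate point, and ``one must verify\dots that the associated residue correction enters with precisely the minus sign displayed in the lemma statement'' cannot be waved away, because carrying out the bookkeeping produces a \emph{plus} sign. Concretely: as $\sigma$ decreases, the pole $u=-s-k$ crosses $\textup{Re}(u)=d$ from left to right; the analytic continuation corresponds to bulging the contour to the right (keeping the pole on its left), and the rectangular loop formed by the bulge traversed forward and the straight segment traversed backward is counterclockwise around the pole, so $\int_C=\int_{\textup{Re}(u)=d}+2\pi\iu\sum\res$. Substituting back gives $\zeta(s,z)=\tfrac{1}{2z^s}+\tfrac{z^{1-s}}{s-1}+\tfrac{z^{-s}}{2\pi\iu\Gamma(s)}\int_{\textup{Re}(u)=d}\Gamma(-u)\Gamma(u+s)z^{-u}\zeta(-u)\,du\,+\,\tfrac{z^{-s}}{\Gamma(s)}\sum_k\tfrac{(-1)^k}{k!}\Gamma(s+k)z^{s+k}\zeta(s+k)$. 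A clean cross-check: fix $s$ with $\sigma\in(-1,0)$ and compare the two allowed choices $d<-\sigma$ (one term in the sum) and $d>-\sigma$ (empty sum); the two straight-contour integrals differ by exactly $2\pi\iu\res_{u=-s}=2\pi\iu\,\Gamma(s)z^s\zeta(s)$, and only the $+$ sign makes the $k=0$ term cancel that discrepancy so that both choices of $d$ yield the same value of $\zeta(s,z)$. So the minus sign in \eqref{eq:hureq} appears to be a typo in the lemma (it is harmless downstream since Proposition \ref{hurwitz_bound} takes absolute values of each term), but your proof should not simply assert it; pinning down the orientation is precisely the content of this lemma's final step.
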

\begin{proof}
We will make use of the following formula \cite[(3.3.9)]{ParisKam}
\begin{equation}\label{eq:pariskim}
\dfrac{1}{2\pi \iu}\int\limits_{c-\iu \infty}^{c+\iu \infty}\Gamma(s,a-s)x^{-s}ds=\frac{\Gamma(a)}{(1+x)^a},
\end{equation}
which is valid so long as $\textup{Re}(a)>c>0$ and $|\textup{Arg}(x)|<\pi$.

Start by assuming $\sigma>1$, $|\textup{Arg}(z)|<\pi$ and $c\in(1,
\sigma)$. Then from the definition of $\zeta(s, z)$ (this formula
appears as in \cite[(2.1)]{Paris})
\begin{align}
&z^s \zeta(s, z) = 1+\sum\limits_{n=1}^\infty
\left(1+\frac{n}{z}\right)^{-s}=1+\dfrac{1}{\Gamma(s)}\sum\limits_{n=1}^\infty
\dfrac{1}{2\pi \iu}\int\limits_{c-\iu \infty}^{c+\iu
  \infty}\Gamma(y,s-y) \left(\frac{n}{z}\right)^{-y}dy\\
& = 1+ \int\limits_{c-\iu \infty}^{c+\iu
  \infty}\frac{\Gamma(y,s-y) z^{y}\zeta(y) dy}{2\pi \iu\Gamma(s)}=1+
\int\limits_{-c-\iu \infty}^{-c+\iu
  \infty}\frac{\Gamma(-u,s+u) z^{-u}\zeta(-u) du}{2\pi \iu\Gamma(s)}\\
&=\dfrac{1}{2}+\dfrac{z}{s-1}+\int\limits_{d-\iu \infty}^{d+\iu
  \infty}\frac{\Gamma(-u,s+u) z^{u}\zeta(-u) du}{2\pi \iu\Gamma(s)},
\end{align}
 for $d\in(0, 1)$.
The first equality is by the definition of the Hurwitz zeta function; the second is by \eqref{eq:pariskim}; the third is from interchanging the summation and integration (justified by Fubini) and appealing to the definition of the zeta function (we assume $\sigma>c>1$ here);  the fourth equality is the simple change of variables $u=-y$; and the final equality follows from shifting the contour of integration to the right from $-c+\iu \R$ to $d+\iu \R$ for $d\in (0,1)$. In this shifting, we encounter two poles, one at $u=-1$ (from $\zeta(-u)$) and one at $u=0$ (from $\Gamma(-u)$). The first two terms in the final line come from evaluating these residues, see \eqref{eq:gamma_pole} and \eqref{eq:zetaress}.  To justify shifting the contours we must show that the integrand decays sufficiently fast for $|\textup{Im}(u)|$ large. Using the bounds from Lemmas \ref{lem:asympt_gamma} and \ref{prop_zeta} we can prove uniformly in the strip between $-c+\iu\R$ and $d+\iu \R$, exponential decay like $e^{(|\textup{Arg}(z)|-\pi)|\textup{Im}(u)|}$ (recall that $|\textup{Arg}(z)|<\pi$) as $|\textup{Im}(u)|\to \infty$.

We have shown that for $|\textup{Arg}(z)|< \pi$, $\sigma>1$ and any $d\in (0,1)$:
\begin{equation}\label{eq:firsthz}
\zeta(s, z)=\dfrac{1}{2
  z^s}+\dfrac{z^{1-s}}{s-1}+\dfrac{z^{-s}}{2 \pi \iu \Gamma(s)}\int\limits^{d+\iu
  \infty}_{d-\iu\infty}\Gamma(-u,u+s)z^{-u}\zeta(-u)du.
\end{equation}

By \eqref{eq:derivative_zeta} it follows that $\zeta(s,z)-\dfrac{z^{1-s}}{s-1}$ is an entire function. For a fixed value of $d\in (0,1)$, the integral in \eqref{eq:firsthz} is analytic in $s$ provided $\sigma+ d>0$. By analytic continuation, the formula \eqref{eq:firsthz} actually holds for all $\sigma>- d$.

In order to extend to a formula for $\sigma\leq - d$ we will need to make some contour deformations and account for some residues.

Our aim is now to establish a formula for $s=\sigma+\iu t$ when $\sigma <-d$. For the moment, let us assume that $t\neq 0$ and let us fix some $d\in (0,1)$ and assume that $\sigma<-d$ and that $\sigma +d \notin \Z_{\leq 0}$. Fix some $\tilde\sigma<-d$ and let $\varepsilon_0=|t|/2$ (which is non-zero by our temporary assumption) and $\varepsilon_1=(\lceil -\tilde \sigma\rceil +\tilde \sigma)/2$. By Cauchy's theorem without changing the value of the integral we can deform to the contour $C$:
\begin{equation}
  \begin{split}
    C=&[d-\iu \infty, d-\iu
(t+\varepsilon_0)]\cup [d-\iu (t+\varepsilon_0), \varepsilon_1-\tilde\sigma-\iu
(t+\varepsilon_0)]\cup  \\
&[\varepsilon_1-\tilde\sigma-\iu (t+\varepsilon_0), \varepsilon_1-\tilde\sigma-\iu
(t-\varepsilon_0)]\cup \\
&[ \varepsilon_1-\tilde\sigma-\iu (t-\varepsilon_0),  d-\iu (t-\varepsilon_0)]\cup [d-\iu (t-\varepsilon_0), d+\iu \infty].
\end{split}
\end{equation}
The purpose of this deformation is that the integral is now analytic in $s$ provided that $\sigma \geq \tilde\sigma$. Let us now assume that $\sigma=\tilde\sigma$. To reach a final formula we will deform $C$ back to the original contour $d+\iu \R$. In doing so, we cross poles from the $\Gamma(u+s)$ term. These occur when $u+s =-k$ for $k \in \{0,1,\ldots, \lfloor -(\sigma+d)\rfloor\}$. Taking into account the residues and the direction of the contours yields \eqref{eq:hureq}
when $t\neq 0$. Provided that $s \notin\Z_{\leq 0}$, we can use continuity of both side of \eqref{eq:hureq} in $t$ to extend to $t=0$.
\end{proof}


We come to our main bound on the Hurwitz zeta function.
\begin{proposition}\label{hurwitz_bound}
For any non-integer $\sigma<0$ and $d\in (0,1/2)$ chosen such that
$d+\sigma\notin\Z_{\leq 0}$, there exists $C>0$ such that for all
$z\in\C$ with $|\textup{Arg}(z)|<\pi$  and all $s\in \C$ with $s=\sigma+\iu t$,
\begin{align}\label{eq:asymp_hurwitz_z}
  \qquad  |\zeta(s,z)|\leq C \Big( e^{\lvert \textup{Arg}(z) \cdot t \rvert }\left( |z|^{-\sigma}+(1+|t|)^{-1}|z|^{1-\sigma}+ |t|^{1/2-\sigma}|z|^{-d-\sigma}\right)\qquad \\
   + \max(1,|t|^{1/2-\sigma}) \sum\limits_{k=0}^{\lfloor
  -(\sigma+d) \rfloor } |z|^k\Big),
  \end{align}
where the summation in $k$ is dropped if $\sigma +d >0$.

For any $t_0>0$, $a<b$ and $z\in\C$ with $|\textup{Arg}(z)|<\pi$, there exists a constant $C>0$ and $c<\pi/2$ such that for all $s\in\C$ with $\sigma\in (a,b)$ and $|t|\geq t_0$,
\begin{equation}\label{eq:asymp_hurwitz_z2}
|\zeta(s,z)| \leq C e^{c|t|}.
\end{equation}
\end{proposition}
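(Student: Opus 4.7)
The plan is to apply the integral representation of Lemma \ref{lem:hurwitzint} to decompose $\zeta(s,z)$ into four pieces and estimate each of them separately. The terms $\tfrac{1}{2}z^{-s}$ and $z^{1-s}/(s-1)$ are handled directly: the first has modulus $\tfrac{1}{2}|z|^{-\sigma}e^{t\,\textup{Arg}(z)} \leq \tfrac{1}{2}|z|^{-\sigma}e^{|\textup{Arg}(z)\cdot t|}$, and for the second I would use the fact that $\sigma<0$ forces $|s-1|\geq \max(1-\sigma,|t|)\geq c(1+|t|)$. For the terminating residue sum I would verify first that $k+\sigma<0$ throughout (since $k\leq\lfloor-(\sigma+d)\rfloor<-\sigma$), then apply Lemma \ref{prop_zeta} to get $|\zeta(k+s)|\leq C(1+|t|)^{1/2-k-\sigma}$ together with $|\Gamma(k+s)/\Gamma(s)|=|s(s+1)\cdots(s+k-1)|\leq C(1+|t|)^k$; the $k$-th summand is then $\leq C(1+|t|)^{1/2-\sigma}|z|^k$, exactly matching the corresponding piece of the claimed bound.

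The main work is controlling the integral
\[
J := \frac{z^{-s}}{2\pi\iu\,\Gamma(s)}\int_{d-\iu\infty}^{d+\iu\infty}\Gamma(-u)\Gamma(u+s)z^{-u}\zeta(-u)\,du.
\]
Writing $u=d+\iu v$ with $v\in\R$, I would combine $|z^{-u}|=|z|^{-d}e^{v\,\textup{Arg}(z)}$ with the Stirling bounds $|\Gamma(-d-\iu v)|\leq C(1+|v|)^{-d-1/2}e^{-\pi|v|/2}$ and $|\Gamma(d+\sigma+\iu(v+t))|\leq C(1+|v+t|)^{d+\sigma-1/2}e^{-\pi|v+t|/2}$ (Lemma \ref{lem:asympt_gamma}, valid uniformly on the contour since the hypothesis $d+\sigma\notin\Z_{\leq 0}$ keeps both gamma functions away from their poles) and with $|\zeta(-d-\iu v)|\leq C(1+|v|)^{1/2+d}$ from Lemma \ref{prop_zeta}. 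The decisive observation is that the polynomial growth factor from $\zeta(-u)$ exactly cancels the $(1+|v|)^{-d-1/2}$ from $\Gamma(-u)$, so the integrand is bounded by $C(1+|v+t|)^{d+\sigma-1/2}e^{-\pi(|v|+|v+t|)/2}|z|^{-d}e^{v\,\textup{Arg}(z)}$.

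Since $|v|+|v+t|\geq|t|$ with equality on the saddle interval between $0$ and $-t$, the exponential factor is pinned at $e^{-\pi|t|/2}$ on the saddle and strictly smaller outside. The subtle point is to combine $\int_{\text{saddle}}e^{v\,\textup{Arg}(z)}\,dv$ with the prefactor $e^{t\,\textup{Arg}(z)}$ coming from $|z^{-s}|$ so as to produce exactly $e^{|\textup{Arg}(z)\cdot t|}$ rather than $e^{2|\textup{Arg}(z)\cdot t|}$: a direct calculation shows that $\int_{-|t|}^0 e^{v\,\textup{Arg}(z)}\,dv$ is bounded independently of $t$ when $\textup{Arg}(z)\geq 0$ and is bounded by $e^{|t\,\textup{Arg}(z)|}$ when $\textup{Arg}(z)<0$, so in both cases the product with $e^{t\,\textup{Arg}(z)}$ is $\lesssim e^{|\textup{Arg}(z)\cdot t|}$. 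Combining with the prefactor bound $|z^{-s}/\Gamma(s)|\leq C|z|^{-\sigma}|t|^{1/2-\sigma}e^{\pi|t|/2}$ (Stirling plus continuity of $1/\Gamma$ at the non-integer $\sigma$), and absorbing the mild polynomial factor $(1+|t|)^{\max(0,d+\sigma+1/2)}$ from the $(1+|v+t|)^{d+\sigma-1/2}$ integration into $|t|^{1/2-\sigma}$ at the cost of a larger multiplicative constant, yields $|J|\leq C|z|^{-\sigma-d}|t|^{1/2-\sigma}e^{|\textup{Arg}(z)\cdot t|}$ and completes the proof of \eqref{eq:asymp_hurwitz_z}.

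The second bound \eqref{eq:asymp_hurwitz_z2} will then follow immediately from \eqref{eq:asymp_hurwitz_z}: for $\sigma$ in a bounded interval and $|t|\geq t_0$, every polynomial factor in $|t|$ can be absorbed into $e^{\epsilon|t|}$ for arbitrary $\epsilon>0$, so combining with the exponential $e^{|\textup{Arg}(z)\cdot t|}$ (where $|\textup{Arg}(z)|<\pi$ is fixed once $z$ is) gives the claim with any $c>|\textup{Arg}(z)|$. The main obstacle throughout is the polynomial bookkeeping on the contour: tracking the cancellation between the $\zeta$ growth and the $\Gamma(-u)$ decay in the $|v|$ variable, and above all extracting the sharp exponential factor $e^{|\textup{Arg}(z)\cdot t|}$ rather than its square---a naive application of the triangle inequality loses this cancellation and would both spoil the $|t|^{1/2-\sigma}$ polynomial dependence and double the exponent in a way that breaks the bound's usefulness for the $q$-Pochhammer asymptotics of Proposition \ref{factorials}.
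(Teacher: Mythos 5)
Your proposal follows the same outline as the paper's proof: start from the integral representation in Lemma \ref{lem:hurwitzint}, bound the two elementary terms and the residue sum directly, then estimate the Mellin--Barnes integral $J$ by pulling the absolute value inside and using Stirling and Lemma \ref{prop_zeta} along the contour. Your treatment of the residue sum via $\Gamma(k+s)/\Gamma(s)=s(s+1)\cdots(s+k-1)$ is a small simplification over the paper's argument (which applies Lemma \ref{lem:asympt_gamma} to each factor separately and handles small $|t|$ via continuity), but produces the same bound $\lesssim \max(1,|t|^{1/2-\sigma})|z|^k$ once one uses $|\zeta(k+s)|\lesssim\max(1,|t|^{1/2-k-\sigma})$. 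The cancellation $|\Gamma(-d-\iu v)\,\zeta(-d-\iu v)|\sim e^{-\pi|v|/2}$ and the identification of the saddle interval where $|v|+|v+t|=|t|$ are correct and are, in fact, how the paper organizes its case analysis into regions of the $(t,r)$-plane.

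However, the saddle-interval estimate has a gap, and it is the same gap present in the paper's own region-$D$ analysis. The claim that $\int_{-|t|}^0 e^{v\,\textup{Arg}(z)}\,dv$ is bounded independently of $t$ when $\textup{Arg}(z)\geq 0$ fails as $\textup{Arg}(z)\to 0^+$: the integral equals $|t|$ when $\textup{Arg}(z)=0$, and is $\min(|t|,1/\textup{Arg}(z))$ in general. When you then retain the polynomial factor $(1+|v+t|)^{d+\sigma-1/2}$ over the saddle, the integral contributes a factor $\asymp (1+|t|)^{\max(0,\,d+\sigma+1/2)}$ when $\textup{Arg}(z)$ is small, and you assert this can be ``absorbed'' into $|t|^{1/2-\sigma}$ at the cost of a larger constant. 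That absorption is not legitimate: $|t|^{1/2-\sigma}\cdot|t|^{d+\sigma+1/2}=|t|^{1+d}$, which is not $O(|t|^{1/2-\sigma})$ when $d+\sigma>-1/2$, a regime permitted by the hypotheses (e.g.\ $\sigma=-0.1$, $d=0.4$). The paper's parallel step claims ``the integral is bounded by a constant'' when $\textup{Arg}(z)>0$, but the implied constant is $\sim 1/\textup{Arg}(z)$, which is not uniform over the allowed $z$. Closing this gap would require either restricting $|\textup{Arg}(z)|$ away from zero, assuming $d+\sigma\leq -1/2$, or allowing an extra factor $(1+|t|)^{\max(0,\,d+\sigma+1/2)}$ in the right side of \eqref{eq:asymp_hurwitz_z}. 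In short, you have faithfully reproduced the paper's strategy including its weakest point; the lesson is that a growing power of $|t|$ cannot be hidden in a multiplicative constant, and that the triangle inequality applied to the Barnes integral is genuinely lossy near $\textup{Arg}(z)=0$.
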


\begin{proof}
We focus on proving \eqref{eq:asymp_hurwitz_z}. The proof of the bound \eqref{eq:asymp_hurwitz_z2} is simpler and proceeds in much the same manner (and thus is not provided here).

In this proof when we write $x\lesssim y$ we mean that $x\leq C y$ for
some constant which may depend on $d$ and $\sigma$, but nothing else. In turn, when we say that ``$x$ is bounded by $y$'', we mean that $x\lesssim y$ and when we say that ``$x$ is bounded'', we mean that $x$ is bounded by a constant. We also will make use of Lemmas \ref{lem:asympt_gamma} and \ref{prop_zeta} to deduce bounds when the imaginary part of the argument of the gamma or zeta function is small or large. For the rest of this proof, let $t_0$ be such that the bound in Lemma \ref{lem:asympt_gamma} holds for $\e = 1/2$, and such that the bound in Lemma \ref{prop_zeta} holds for some constant $C$ as specified in the lemma. We will use $t_0$ as the cutoff between small and large.

This proof relies on the integral representation for $\zeta(s,z)$ given in Lemma \ref{lem:hurwitzint}. The hardest term to bound in that representation is the contour integral. Let us address the other terms first.
For the first two terms in the $\zeta(s,z)$ representation in Lemma \ref{lem:hurwitzint}, we find that
$$\left|\frac{1}{2z^s}\right|\lesssim e^{\textup{Arg}(z) \cdot t} |z|^{-\sigma}\quad\text{and}\quad
\left|\frac{z^{1-s}}{s-1}\right| \lesssim e^{-\textup{Arg}(z) \cdot t} (1+|t|)^{-1} |z|^{1-\sigma},
$$
where, in both terms we have used the fact that
$|z^s| = |z|^\sigma e^{-\textup{Arg}(z)\cdot t}$,
and in the second inequality we use that for $s$ with negative real part, $|1/(1-s)|<C(1+|t|)^{-1}$ for some constant $C$. Since $e^{\textup{Arg}(z)\cdot  t}$ and $e^{-\textup{Arg}(z)\cdot  t}$ are both bounded by $e^{|\textup{Arg}(z)\cdot  t|}$, we find that the contribution of these two terms is upper bounded by the first two terms in the right-hand side of \eqref{eq:asymp_hurwitz_z}.

The $\zeta(s,z)$ representation in Lemma \ref{lem:hurwitzint} also involves terms indexed by $k\in \{0, \ldots, \lfloor -(\sigma+d) \rfloor\}$. Taking absolute values these terms contribute a constant times
$
|\Gamma(s)|^{-1}\,\cdot\, |\Gamma(k+s )|\,\cdot\, |\zeta (k+s)| |z|^{k}.
$
For large $t$, appealing to the asymptotics of Lemmas
\ref{lem:asympt_gamma} and \ref{prop_zeta}, we can show that the
expression above is bounded by $|t|^{1/2-\sigma} |z|^k$ where as for
$t$ small, since we have assumed that $s$ (and hence also $s+k$) is
not in $\Z_{\geq 0}$, the expression is bounded by $|z|^k$. These bounds produce the final terms in \eqref{eq:asymp_hurwitz_z}.

All that remains is to control the contour integral term in \eqref{eq:hureq}.
Taking the absolute value inside of the integral, we are left to control
\begin{equation}\label{eq:toboundd2}
\left|\dfrac{z^{-s}}{2 \pi \iu \Gamma(s)}\right|\int\limits^{\infty}_{-\infty}|\Gamma(-d-\iu r,d+\sigma+\iu(r+t))|\cdot|z^{-d-\iu r}|\cdot|\zeta(-d-\iu r)dr|.
\end{equation}
The rest of this proof is devoted to showing that \eqref{eq:toboundd2} is bounded by the right-hand side of \eqref{eq:asymp_hurwitz_z}. This is elementary, though requires the analysis of a number of cases and the use of the bounds from Lemmas \ref{lem:asympt_gamma} and \ref{prop_zeta} for large imaginary parts of the gamma and zeta functions, as well as constant bounds on the gamma and zeta functions for small imaginary parts (for the gamma function, this is where  $d+\sigma\notin \Z_{\leq 0}$ is important).

\smallskip
We split our analysis of \eqref{eq:toboundd2} into two cases --- $|t|\leq t_0$ and $|t|>t_0$.

\smallskip
\noindent {\bf Case 1: $|t|\leq t_0$.} Using Lemma \ref{lem:asympt_gamma} and the analyticity of $1/\Gamma(s)$ we bound the pre-factor
\begin{equation}\label{eq:firsttermbound}
\left|\dfrac{z^{-s}}{2 \pi \iu \Gamma(s)}\right| \lesssim |z|^{-\sigma} e^{\textup{Arg}(z)\cdot  t} \lesssim |z|^{-\sigma} e^{|\textup{Arg}(z)\cdot  t|}
\end{equation}
Using this and $|z^{-d-\iu r}| = |z|^{-d} e^{\textup{Arg}(z)\cdot  r}$ inside the integrand of \eqref{eq:toboundd2} yields
\begin{align}
&\eqref{eq:toboundd2} \lesssim (I)\times (II),\quad\textrm{where}\quad (I):=|z|^{-\sigma-d}\,\cdot\, e^{\textup{Arg}(z)\cdot  t}\quad\textrm{and}\\
&(II):=\int\limits^{\infty}_{-\infty}|\Gamma(-d-\iu r,d+\sigma+\iu(r+t))| \,\cdot\, e^{\textup{Arg}(z)\cdot  r}\,\cdot\,|\zeta(-d-\iu r)\,dr|.
\end{align}
We claim that $(II)\lesssim 1$. Assume this claim for the moment. Since $d\in (0,1/2)$, $|z|^{-\sigma-d} \leq (|z|^{-\sigma}+|z|^{1-\sigma})$. Thus $(I)\times (II)\lesssim  e^{|\textup{Arg}(z)|}(|z|^{-\sigma}+|z|^{1-\sigma})$ which is, itself, bounded by the right-hand side of \eqref{eq:asymp_hurwitz_z} as desired.

To bound $(II)\lesssim 1$, we split the integral into $|r|\leq t_0$
and  $|r|>t_0$. In the first case, since the integrands can be bounded
by constants, the total contribution is likewise bounded by a
constant. In the second case, to estimate the integral over $|r|>t_0$,
we may use of Lemmas \ref{lem:asympt_gamma} and \ref{prop_zeta} for
the gamma and zeta functions. The integrand in $(II)$ is thus bounded up to a constant factor
by
$|r|^{\sigma+d-\frac{1}{2}}e^{-\frac{\pi}{2}(|r|+|r+t|)+\textup{Arg}(z)\cdot
  r}$. Since $\sigma<0$ and $d\in(0,1/2)$, $\sigma+d-\frac{1}{2}<0$
and hence $|r|^{\sigma+d-\frac{1}{2}}\lesssim 1$ for $|r|>t_0$. Since
$|\textup{Arg}(z)|<\pi$ and $|t|\leq t_0$, for $r$ large enough, $-\frac{\pi}{2}(|r|+|r+t|)+\textup{Arg}(z)\cdot  r<-\delta r$ for some $\delta>0$. Thus the integral in $(II)$ can be bounded by a constant as claimed.

\smallskip
\noindent {\bf Case 2: $|t|>t_0$.} We proceed in a similar, albeit more involved, manner as in Case 1. In place of the bound \eqref{eq:firsttermbound} we get (using Lemma \ref{lem:asympt_gamma} to control the behavior of $|1/\Gamma(s)|$) that
\begin{equation}
\left|\dfrac{z^{-s}}{2 \pi \iu \Gamma(s)}\right| \lesssim |z|^{-\sigma} e^{\textup{Arg}(z)\cdot  t} |t|^{-\sigma+1/2} e^{\pi |t|/2}.
\end{equation}
As opposed to in Case 1, we do not want to throw away the possible decay that $e^{\textup{Arg}(z)\cdot  t}$ can provide. Instead, we write this as $e^{\textup{Arg}(z)\cdot  t} = e^{|\textup{Arg}(z)\cdot  t|} \,\cdot \,e^{\textup{Arg}(z)\cdot  t - |\textup{Arg}(z)\cdot  t|}$. The first term goes with $(I)$ below, while the second term goes with $(II)$.
Using $|z^{-d-\iu r}| = |z|^{-d} e^{\textup{Arg}(z)\cdot  r}$ we see that
	$$
\eqref{eq:toboundd2}
\lesssim (I)\times (II)\quad\textrm{where}\quad (I):=|z|^{-\sigma-d}\,\cdot\, e^{|\textup{Arg}(z) \cdot t|} \,\cdot\,|t|^{-\sigma+\frac{1}{2}}\quad\textrm{and}$$
$$
(II):=e^{\textup{Arg}(z)\cdot t -|\textup{Arg}(z)\cdot t|+\frac{\pi}{2} |t|}\int\limits^{\infty}_{-\infty} F_{II}(r;d,\sigma,t,z)dr.
$$
with
$$
F_{II}(r;d,\sigma,t,z) = |\Gamma(-d-\iu r,d+\sigma+\iu(r+t))| e^{\textup{Arg}(z)\cdot  r}|\zeta(-d-\iu r)|
$$
We claim that $(II)\lesssim 1$. Assuming this we see that $(I)\cdot
(II)$ is bounded by the right hand side in \eqref{eq:asymp_hurwitz_z}. Thus, it remains to show that $(II)\lesssim 1$.

\begin{figure}[h]
\centering
\scalebox{0.3}{\includegraphics{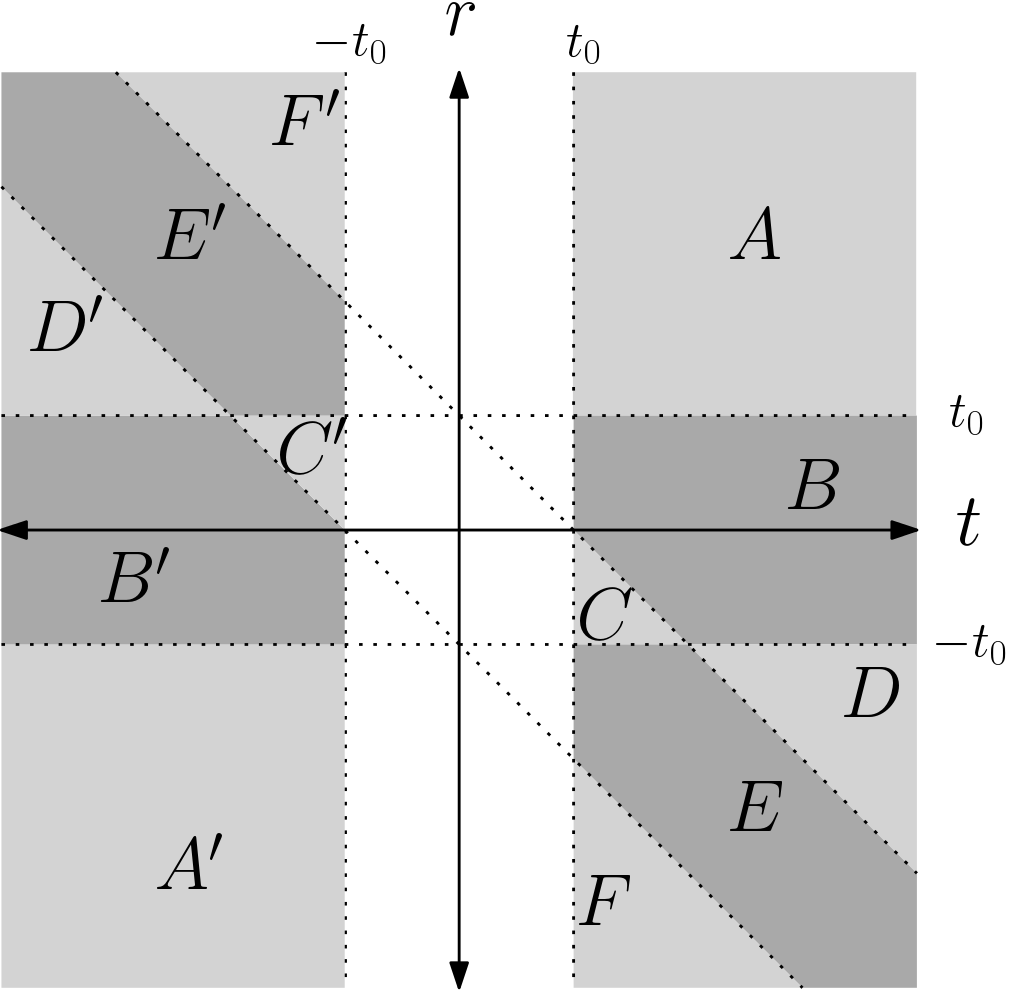}}
\end{figure}

In order to bound $(II)$, we split the integral depending on the size of $r$ and $r+t$. Assume that $t>t_0$ (the case $t<-t_0$ is completely analogous and involve the primed regions in the figure; we will not repeat the argument in that case though). We define six regions in the $(t,r)$ plane:
$A=\{(t,r):t>t_0,r>t_0\}$,
$B=\{(t,r):t>t_0,|r|\leq t_0,r+t>t_0\}$,
$C=\{(t,r):t>t_0,|r|\leq t_0,|r+t|\leq t_0\}$,
$D=\{(t,r):t>t_0,r<-t_0,r+t>t_0\}$,
$E=\{(t,r):t>t_0,r<-t_0,|r+t|\leq t_0\}$,
$F=\{(t,r):t>t_0,r<-t_0,t+r<-t_0\}$.
For $t$ given, we write $(II)_A$ to denote the expression given above for $(II)$ subject to the additional restriction that $(r,r+t)\in A$ (and likewise for $B,C,D,E,F$).

%
%
%
%
For a fixed  $t$ in each region $A,B,C,D,E,F$ we may upper bound the integrand $F_{II}(r;d,\sigma,t,z)$ defining $(II)$ either by constants if the imaginary part of the argument of
the gamma or zeta function is small, or by the asymptotics given in
Lemmas \ref{lem:asympt_gamma} and \ref{prop_zeta} if the imaginary
part of the argument of the gamma or zeta function are large. We can
then estimate the contribution of each region to the
integral. Depending on whether $t\in (t_0,2t_0]$ or $t>2 t_0$, the integral in $r$ will encounter a different set of regions. We consider these two cases.

\smallskip
\noindent{\bf Case 2.a: $t\in (t_0,2t_0]$.}
Here $(II)= (II)_A+(II)_B+(II)_C+(II)_E+(II)_F$. On
regions $B,C$ and $E$ the integrand $F_{II}(r;d,\sigma,t,z)\lesssim 1$ and
since the domain of integration for $r$ in these regions is bounded, it follows that $(II)_B,(II)_C,(II)_E\lesssim 1$. Regions $A$ and $F$ involve unbounded integrals and require
close inspection. On region $A$, $F_{II}(r;d,\sigma,t,z)\lesssim |r+t|^{d+\sigma-\frac{1}{2}}
e^{-\frac{\pi}{2}(|r|+|r+t|)+\textup{Arg}(z)\cdot r}$. We already
encountered such an integral in Case 1 when bounding $(II)$ there.
 Since $\sigma<0$ and $d\in(0,1/2)$, $\sigma+d-\frac{1}{2}<0$
and hence $|r+t|^{\sigma+d-\frac{1}{2}}\lesssim 1$. Since
$|\textup{Arg}(z)|<\pi$ and $t\in (t_0,2t_0]$, for $r$ large enough,
$-\frac{\pi}{2}(|r|+|r+t|)+\textup{Arg}(z)\cdot  r<-\delta r$ for some
$\delta>0$. Since $t\in (t_0,2t_0]$ the prefactor $e^{\textup{Arg}(z)\cdot t -|\textup{Arg}(z)\cdot t|+\frac{\pi}{2} |t|}$ in front of the
integral in $(II)$ is also bounded by a constant. Thus,  $(II)_A\lesssim 1$ as desired. The $F$ region follows similarly, as the integrand $F_{II}(r;d,\sigma,t,z)\lesssim|r+t|^{d+\sigma-\frac{1}{2}}
e^{-\frac{\pi}{2}(|r|+|r+t|)+\textup{Arg}(z)\cdot r}$. In both of these cases of region $A$ and $F$ we have used the fact that $t\in (t_0,2t_0]$ to bound the term $e^{\mp \frac{\pi}{2} t}$ in the prefactor by a constant. In the next case, this will not be true.

\noindent{\bf Case 2.b: $t>2t_0$.}
Now $(II)=(II)_A+(II)_B+(II)_D+(II)_E+(II)_F$. As in Case 2.a, for $(r,r+t)\in A$, $F_{II}(r;d,\sigma,t,z)\lesssim |r+t|^{d+\sigma-\frac{1}{2}} e^{-\frac{\pi}{2}(|r|+|r+t|)+\textup{Arg}(z)\cdot r}$. The term $|r+t|^{d+\sigma-\frac{1}{2}}\lesssim 1$, and the $e^{-\frac{\pi}{2}(|r|+|r+t|)+\textup{Arg}(z)\cdot r}\lesssim e^{-\frac{\pi}{2}t-\delta \cdot r}$ for some $\delta>0$. This implies that the integral of $F_{II}(r;d,\sigma,t,z)$ over $r$ such that $(r,r+t)\in A$ is bounded by a constant times $e^{-\pi t/2}$. This cancels the $e^{\pi |t|/2}$ pre-factor outside the integral in $(II)$. What is left is bounded by a constant times $e^{\textup{Arg}(z)\cdot t -|\textup{Arg}(z)\cdot t|}$ and since $\textup{Arg}(z)\cdot t -|\textup{Arg}(z)\cdot t|\leq 0$, we conclude that $(II)_A\lesssim 1$.

In region $B$, the integrand $F_{II}(r;d,\sigma,t,z)\lesssim |r+t|^{d+\sigma-\frac{1}{2}} e^{-\frac{\pi}{2}|r+t| + \textup{Arg}(z)\cdot  r}$. Since $r+t>t_0$ in this region, and since the $r$-variable is integrated from $-t_0$ to $t_0$, the contribution of the integral  of $F_{II}(r;d,\sigma,t,z)$ for $r$ in this region is bounded by a constant time $e^{-\pi t/2}$. Again, this cancels the  $e^{\pi |t|/2}$ pre-factor outside the integral in $(II)$ and thus $(II)_B\lesssim 1$.
Bounding the integral in regions $D, E$ and $F$ is more subtle.

Let us start by addressing $(II)_D$. Here,  $$F_{II}(r;d,\sigma,t,z)\lesssim |r+t|^{d+\sigma-\frac{1}{2}} e^{-\frac{\pi}{2}(|r|+|r+t|)+\textup{Arg}(z)\cdot r}.$$ Since $r\in (-t+t_0,-t_0)$, $|r|=-r$ and $|r+t|=r+t$. Also, since $d+\sigma-\frac{1}{2}<0$, $|r+t|^{d+\sigma-\frac{1}{2}}$ is bounded by a constant. Thus, the upper bound on the integrand $F_{II}(r;d,\sigma,t,z)$ reduces to $F_{II}(r;d,\sigma,t,z)\lesssim e^{-\frac{\pi}{2} t} e^{\textup{Arg}(z)\cdot  r}$. The magnitude of the integral $\int_{-t+1}^{-1} e^{\textup{Arg}(z)\cdot  r} dr$ depends on the sign of $\textup{Arg}(z)$.
If $\textup{Arg}(z)>0$, then the exponential $e^{\textup{Arg}(z)\cdot  r}$ decays and the integral is bounded by a constant. In this case, the pre-factor to the integral in $(II)$ is $e^{\frac{\pi}{2} t}$, which cancels the just demonstrated $e^{-\frac{\pi}{2} t}$ behavior of the integral. Thus, when $\textup{Arg}(z)>0$, $(II)_D\lesssim 1$.
If $\textup{Arg}(z)<0$, then the exponential $e^{\textup{Arg}(z)\cdot  r}$ grows and the integral is hence bounded by $e^{-\textup{Arg}(z)\cdot t}$. Combining this with the pre-factor in $(II)$ shows that when $\textup{Arg}(z)<0$, $(II)_D\lesssim e^{-|\textup{Arg}(z)\cdot t|}\lesssim 1$ since $t>t_0$.

Controlling the integral in region $E$ works similarly. Here $F_{II}(r;d,\sigma,t,z)\lesssim e^{-\frac{\pi}{2} |r|+\textup{Arg}(z)\cdot r}$. Since here $r\in (-t-t_0,-t+t_0)$, we can bound the integral of $F_{II}(r;d,\sigma,t,z)$ on this region  by a constant times $e^{-\frac{\pi}{2}t} e^{-\textup{Arg}(z)\cdot t}$. Putting this together with the pre-factors in $(II)$ shows that $(II)_E\lesssim e^{-|\textup{Arg}(z)\cdot t|}\lesssim 1$ since $t>2t_0$.

In region $F$, $F_{II}(r;d,\sigma,t,z)\lesssim|r+t|^{d+\sigma-\frac{1}{2}} e^{-\frac{\pi}{2}(|r|+|r+t|) + \textup{Arg}(z)\cdot r}$. Since $r<-t-t_0$, $|r|+|r+t| = -2r-t$ and since $d+\sigma-\frac{1}{2}<0$, $|r+t|^{d+\sigma-\frac{1}{2}}\lesssim 1$. Thus, $$F_{II}(r;d,\sigma,t,z)\lesssim e^{(\pi + \textup{Arg}(z))\cdot r + \frac{\pi}{2}t}.$$ We can bound the integral of this over the range $r\in (-\infty,-t-t_0)$ by a constant times $e^{-(\pi + \textup{Arg}(z))\cdot t + \frac{\pi}{2}t} = e^{- \textup{Arg}(z)\cdot t - \frac{\pi}{2}t}$. Combining this with the pre-factors in $(II)$ we find  that $(II)_F\lesssim e^{-|\textup{Arg}(z)\cdot t|}\lesssim 1$ since $t>2t_0$.
Thus, we have shown that $(II)\lesssim 1$ which implies the desired bound \eqref{eq:asymp_hurwitz_z} and hence completes the proof of the proposition.
\end{proof}

\subsubsection{Jacobi theta function}\label{sec:thetas}
Jacobi theta functions (see \cite[Chapter 10]{R}) are defined in the
following way for complex $\nu, \rho$ with
$\textup{Im}(\rho)>0$

\begin{align}
&\theta_1(\nu|\rho)=\dfrac{1}{\iu}\!\!\!\sum\limits_{k=-\infty}^\infty\!\!\!
(-1)^k e^{\pi \iu \rho (k+1/2)^2}e^{\pi \iu \nu (2k+1)},
&&\theta_2(\nu|\rho)=\!\!\!\sum\limits_{k=-\infty}^\infty\!\!\! e^{\pi \iu \rho (k+1/2)^2}e^{\pi \iu \nu (2k+1)},\\
&\theta_3(\nu|\rho)=\!\!\!\sum\limits_{k=-\infty}^\infty\!\!\! e^{\pi \iu \rho k^2}e^{2k\pi \iu \nu },
&&\theta_4(\nu|\rho)=\!\!\!\sum\limits_{k=-\infty}^\infty\!\!\!
    (-1)^k e^{\pi \iu \rho k^2}e^{2k \pi \iu \nu}.
\end{align}
For every value of $\rho$ in this half-plane the functions are entire
functions of $\nu$. Note that  $\theta_1(\nu|\rho)$ is an odd function in $\nu$ and
all others are even.
We also need the following identities\cite[(78.32), (78.33), (79.7),  (79.9) ]{R}:
\begin{equation*}
  \begin{split}
    \theta_1\left(\nu \lvert \rho\right)&= -\iu e^{\pi \iu
      \rho/4}\cdot e^{\pi \iu \nu}\cdot\left(e^{2\pi \iu \rho}, e^{2\pi \iu
        (\rho+\nu)}, e^{-2\pi\iu\nu}; e^{2\pi \iu \rho}\right)_\infty,\\
  \theta_2\left(\nu \lvert \rho\right)&= e^{\pi \iu
      \rho/4}\cdot e^{\pi \iu \nu}\cdot\left(e^{2\pi \iu \rho},- e^{2\pi \iu
        (\rho+\nu)},- e^{-2\pi\iu\nu}; e^{2\pi \iu \rho}\right)_\infty,\\
\theta_1\Big(\tfrac{\nu}{\rho}\Big \lvert
  -\tfrac{1}{\rho}\Big)&=-\iu\sqrt{\tfrac{\rho}{\iu}}\cdot e^{\iu \pi
  \nu^2/\rho}\cdot\theta_1(\nu|\rho),\\
\theta_4\Big(\tfrac{\nu}{\rho}\Big \vert
  -\tfrac{1}{\rho}\Big)&=\sqrt{\tfrac{\rho}{\iu}}\cdot e^{\iu \pi
  \nu^2/\rho}\cdot\theta_2(\nu|\rho),
\end{split}
\end{equation*}
 with the principal value of  the square root.
Using the $\theta_1$ identity,
 \begin{equation}\label{eq:theta}
\left(e^{-\kappa z}; e^{-\kappa}\right)_\infty= \frac{\sqrt {\frac{2 \pi}{\kappa}}
\exp\left(\frac{\kappa}{8}-\frac{\kappa z}{2}+\frac{\kappa
    z^2}{2}\right)}{\left(e^{-\kappa};
    e^{-\kappa}\right)_\infty \left(e^{-\kappa(1-z)}; e^{-\kappa}\right)_\infty}\cdot
\theta_1\Big(z \Big \vert \tfrac{2\pi \iu}{\kappa}\Big).
\end{equation}
Similarly, using the identities involve $\theta_4$ and $\theta_2$ we find that
%
%
 \begin{equation}\label{eq:theta_negative}
\left(-e^{-\kappa z}; e^{-\kappa}\right)_\infty=\frac{\sqrt {\frac{2 \pi}{\kappa}}
\exp\left(\frac{\kappa}{8}-\frac{\kappa z}{2}+\frac{\kappa
    z^2}{2}\right) }{\left(e^{-\kappa};
    e^{-\kappa}\right)_\infty \left(-e^{-\kappa (1-z)}; e^{-\kappa}\right)_\infty}\cdot
\theta_4\Big(z \Big \vert \tfrac{2\pi \iu}{\kappa}\Big).
\end{equation}


\subsubsection{Theta function bounds}
\begin{lemma} \label{theta_estimate_z}
For all $\alpha\in (0,\pi)$ there exist $C,c,\kappa_0>0$ such that for all $\kappa\in (0,\kappa_0)$ and all $\nu\in \C$ with $|\textup{Im}(\nu)| \leq \frac{\alpha}{\kappa}$
    \begin{equation}\label{eq:firstthetakappa}
\bigg\lvert \theta_1\left(\nu\bigg\lvert \dfrac{2\pi i}{\kappa}\right)\dfrac{e^{\pi^2/(2\kappa)}}{2 \sin(\pi \nu)} -1\bigg\rvert\,\,,\,\,
\bigg\lvert\theta_4\left(\nu\bigg\lvert
   \dfrac{2\pi i}{\kappa}\right)-1\bigg\lvert\leq C\cdot e^{-\frac{c}{\kappa}}.
\end{equation}
\end{lemma}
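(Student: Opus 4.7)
The plan is to specialize Lemma \ref{theta_estimate} with $\nu=z$ and $\rho=i\tilde\rho$ where $\tilde\rho:=\frac{2\pi}{\kappa}$. I would first verify the hypothesis $\tilde\rho>C_2(z):=\frac{1}{\pi}\bigl(\log(1+\sqrt{5})+|\textup{Im}(z)|\bigr)$ of that lemma. Inserting $|\textup{Im}(z)|\leq\frac{5}{\kappa}$ turns the required condition into $\frac{2\pi}{\kappa}>\frac{\log(1+\sqrt{5})}{\pi}+\frac{5}{\pi\kappa}$, i.e. $\frac{2\pi^2-5}{\pi\kappa}>\frac{\log(1+\sqrt{5})}{\pi}$. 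Because $2\pi^2-5>0$ this holds for every $\kappa<\kappa_0$ with $\kappa_0:=(2\pi^2-5)/\log(1+\sqrt{5})$, and the same $\kappa_0$ serves for both inequalities.

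Next, for the first inequality I would compute $\pi i\rho=-\frac{2\pi^2}{\kappa}$, hence $e^{\pi i\rho/4}=e^{-\pi^2/(2\kappa)}$ and $e^{\pi i\rho}=e^{-2\pi^2/\kappa}$. Substituting into the first bound $\bigl|\tfrac{\theta_1(z|\rho)}{2e^{\pi i\rho/4}\sin(\pi z)}-1\bigr|\leq C_1 e^{\pi i\rho}$ of Lemma \ref{theta_estimate} and rewriting $\tfrac{1}{2e^{-\pi^2/(2\kappa)}}=\tfrac{e^{\pi^2/(2\kappa)}}{2}$ inside the absolute value yields exactly the first claimed estimate, with $C=C_1$.

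For the second inequality I would substitute into $|\theta_4(z|\rho)-1|\leq C_1 e^{\pi i\rho+|\textup{Im}(z)|}$, obtaining $|\theta_4(z\,|\,\tfrac{2\pi i}{\kappa})-1|\leq C_1 e^{(|\textup{Im}(z)|-2\pi^2)/\kappa}$. Using $|\textup{Im}(z)|\leq\frac{5}{\kappa}$, the exponent is at most $-(2\pi^2-5)/\kappa$, and the key elementary inequality $2\pi^2-5\geq\pi^2$ (equivalent to $\pi^2\geq5$, which is clear since $\pi^2\approx 9.87$) gives $e^{(5-2\pi^2)/\kappa}\leq e^{-\pi^2/\kappa}$, yielding the claimed bound with $C=C_1$.

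The argument is essentially bookkeeping; there is no substantive obstacle. The only subtlety is the numerical inequality $\pi^2>5$, which provides just enough slack so that the window $|\textup{Im}(z)|\leq\frac{5}{\kappa}$ lies inside the regime where the $\theta_4$--factor is $1+O(e^{-\pi^2/\kappa})$. I should note that I read the ``$-1$'' in the second inequality as sitting outside the arguments of $\theta_4$, i.e., the statement bounds the deviation $|\theta_4(z\,|\,\tfrac{2\pi i}{\kappa})-1|$, parallel to the first inequality and consistent with what is needed to turn the ratio identity \eqref{eq:theta_negative} into the asymptotic expansion in Proposition \ref{factorials}; a literal modular-parameter shift $\rho\mapsto\rho-1$ would not produce a small quantity here (one can check via \eqref{eq:eta_z}-style identities that $\theta_4(z\,|\,\tfrac{2\pi i}{\kappa}-1)=1+O(e^{-2\pi^2/\kappa})$ remains close to $1$, not to $0$).
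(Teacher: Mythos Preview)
Your proof is correct and follows essentially the same approach as the paper: specialize Lemma \ref{theta_estimate} with $\nu=z$ and $\rho=\tfrac{2\pi i}{\kappa}$, verify the hypothesis $\tilde\rho>C_2(z)$ via the assumption $|\textup{Im}(z)|\leq 5/\kappa$, and then read off the two bounds using $\pi^2>5$. One small typo: where you write $C_1\,e^{(|\textup{Im}(z)|-2\pi^2)/\kappa}$ you mean $C_1\,e^{|\textup{Im}(z)|-2\pi^2/\kappa}$ (your next sentence makes clear you compute with the latter), and your reading of the ``$-1$'' as sitting outside $\theta_4$ is indeed the intended one.
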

\begin{proof}
We can rewrite theta functions in the following way \cite[(76.2)]{R}:

\begin{align}
\theta_1(\nu|\rho)&=2\sum_{k=0}^{\infty}(-1)^k e^{(k+1/2)^2\pi \iu \rho}\sin\left((2k+1)\pi \nu\right),\\
\theta_4(\nu|\rho)&=1+2\sum_{k=1}^{\infty}(-1)^k e^{k^2\pi \iu \rho}\cos\left(2k \pi \nu\right).
\end{align}

Chebyshev polynomial of the first kind $T_k(x)$ and  the second kind  $U_k(x)$ with $x\in[-1,1]$ are defined in the
following way (see \cite[Section 1.8.2]{KS2}): For $x=\cos(\nu)$,
$
T_k(x):=\cos(k \nu)$ and $U_k(x):=\frac{\sin((k+1)\nu)}{\sin(\nu)}$.
Their coefficients are explicitly given by
\begin{equation*}
T_k(x) =\dfrac{k}{2}\sum\limits_{n=0}^{\lfloor k/2\rfloor}\dfrac{
  (-1)^k}{k-n}{k-n \choose n}(2 x)^{k-2n},\qquad U_k(x)=\sum\limits_{n=0}^{\lfloor k/2\rfloor} (-1)^n{k-n \choose n}(2 x)^{k-2n}.
  \end{equation*}

Inserting the Chebyshev polynomials into these expressions we arrive at
\begin{align}\label{eq:thetacheb}
  \dfrac{\theta_1(\nu|\rho)}{2 e^{\pi\iu \rho/4}\sin(\pi \nu)}-1&= 2\sum\limits_{k=1}^\infty(-1)^k  e^{\pi \iu \rho
  (k^2+k)}U_{2k}(\cos(\pi \nu)),\text{ and }\\
\theta_4(\nu|\rho)-1 &= 2\sum_{k=1}^{\infty}(-1)^k e^{\pi \iu
  \rho k^2}T_{2k}(\cos(\pi\nu)).
\end{align}

We now claim that with $x=\cos(\pi\nu)$, for all $\nu\in \C$
\begin{equation}\label{eq:ch_ineq}
|U_k(x)|,\, |T_k(x)|\,\leq \dfrac{(1+\sqrt 5)^{k+1}}{\sqrt 5} \cdot \max\Big(|\cos(\pi\nu)|,2^{-1}\Big)^k.
\end{equation}
To see this, consider the case $|x|\geq 1/2$ and $|x|<1/2$ separately. When $|x|\geq 1/2$
$$
|U_k(x)|,\,|T_k(x)|\,\leq |2 x|^{k}\sum_{n=0}^{\lfloor k/2\rfloor} {k-n
    \choose n}=|2 x|^{k}F(k+1),
$$
where $F(k+1)$ is the Fibonacci number. When $|x|< 1/2$
$$|U_k(x)|,\, |T_k(x)|\,\leq |2 x|^{k-2\lfloor k/2\rfloor}\sum_{n=0}^{\lfloor k/2\rfloor} {k-n
  \choose n}\leq F(k+1).$$
Putting these together we see that $|U_k(x)|,\, |T_k(x)|\,\leq F(k+1)\max(|2x|,1)^k$.
Since the Fibonacci number $F(k+1)$ equals the nearest integer to
$\tfrac{(1+\sqrt 5)^{k+1}}{2^{k+1} \sqrt 5}$, multiplying this by $2$ clearly yields an upper bound on $F(k+1)$ and combined with our earlier bounds on $|U_k(x)|$ and $|T_k(x)|$ we arrive at \eqref{eq:ch_ineq}.

Inserting \eqref{eq:ch_ineq} into \eqref{eq:thetacheb} we find that
\begin{align}
\bigg\lvert\dfrac{\theta_1(\nu|\rho)}{2 e^{\pi\iu \rho/4}\sin(\pi
  \nu)}-1\bigg\rvert &\leq \dfrac{2 (1+\sqrt 5)}{\sqrt 5} \sum\limits_{k=1}^\infty e^{\pi \iu \rho (k^2+k)}(1+\sqrt 5)^{2k}\cdot \max(|\cos(\pi \nu)|, 2^{-1})^{2k},  \label{eq:inequality_theta2}\\
\bigg\lvert \theta_4(\nu|\rho) -1\bigg\rvert&\leq \dfrac{2 (1+\sqrt 5)}{\sqrt 5} \sum\limits_{k=1}^\infty e^{\pi \iu \rho
  k^2}(1+\sqrt 5)^{2k}\cdot \max(|\cos(\pi \nu)|, 2^{-1})^{2k}.\label{eq:inequality_theta3}
\end{align}
Since $\rho = 2\pi \iu /\kappa$ it follows that $e^{\pi \iu \rho} = e^{-2 \pi^2/\kappa}<1$. Thus for $k\in \Z_{\geq 1}$ we can bound $e^{\pi \iu \rho (k^2+k)},  e^{\pi \iu \rho k^2} \leq  e^{\pi \iu \rho k}$. This shows that for some $C>0$,
\begin{align}\label{eq:combinedineq}
&\qquad\bigg\lvert\dfrac{\theta_1(\nu|\rho)}{2 e^{\pi\iu \rho/4}\sin(\pi \nu)}-1\bigg\rvert \,\, , \,\, \bigg\lvert \theta_4(\nu|\rho) -1\bigg\rvert\leq \\
& C  e^{-\frac{2\pi^2}{\kappa}} \cdot \max(|\cos(\pi \nu)|, 2^{-1})^{2}
\sum\limits_{k=0}^\infty e^{-\frac{2\pi^2}{\kappa}a k}(1+\sqrt 5)^{2k}\cdot \max(|\cos(\pi \nu)|, 2^{-1})^{2k}.
\end{align}
Observe that $|\cos(\pi \nu)| \leq e^{|\textup{Im}(\pi\nu)|}$. We have assumed that $|\textup{Im}(\nu)| \leq \frac{\alpha}{\kappa}$ with $\alpha\in (0,\pi)$ and thus it follows that there exists some $c>0$ such that
$$e^{-\frac{2\pi^2}{\kappa}} \cdot \max(|\cos(\pi \nu)|, 2^{-1})^{2}\leq e^{-\frac{c}{\kappa}}.$$ Plugging this bound into \eqref{eq:combinedineq} yields \eqref{eq:firstthetakappa} and hence the lemma.
\end{proof}

\subsubsection{Mellin transform} \label{sec:mellin}
For a function $f(x)$ on $(0, +\infty)$, and  $s\in\C$, define
\begin{equation} \label{eq:Mellin} F(s)=M[f;s]:=\int\limits_0^\infty f(x) x^{s-1} dx,
\end{equation}

The largest open strip  $-\infty\leq a<\textup{Re}(s)\leq
  b\leq\infty)$ in which the integral converges is called the {\em fundamental
strip} or the {\em strip of analyticity} of $M\left[f; s\right]$. Note
that if $g$ is defined by the relation $f(x)=g(-\log x),$
$$\int\limits_{0}^\infty x^{s-1} g(-\log x)
dx=\int\limits_{-\infty}^\infty e^{-t s}g(t)dt=M\left[f; s\right].$$
Therefore, all basic properties of the Mellin transform follow from those of the Laplace transform.
The following inversion formula can be found as \cite[Theorem 11.2.1.1]{BBO} (other similar statements abound).
\begin{proposition}\label{prop:melinver}
Assume that the function $F(s)$ is analytic in the strip $(a, c)$ and satisfies
  $\lvert F(s)\rvert\leq K\cdot |s|^{-2}$
for some $K>0$. Then, for $b\in (a,c)$,
    $$f(x)=\dfrac{1}{2\pi\iu}\int\limits_{b-\iu\infty}^{b+\iu\infty}
    F(s)x^{-s}ds$$
  is a continuous function of the variable $x\in(0;\infty)$ and does not depend on the choice of $b.$ Furthermore, $F(x) = M\left[f; s\right],$ and we then say that $f(x)$ is the inverse Mellin transform of $F(s)$.
\end{proposition}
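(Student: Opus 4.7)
The plan is to reduce the Mellin inversion to the familiar Fourier/Laplace inversion via the substitution $x = e^{-t}$, using the strong decay $|F(s)| \leq K|s|^{-2}$ to justify all interchanges of integration.

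First, I would verify that the integral defining $f(x)$ converges absolutely for each $x \in (0,\infty)$: on the vertical line $\textup{Re}(s) = b$ we have $|x^{-s}| = x^{-b}$, so the integrand is bounded by $K x^{-b}|s|^{-2}$, which is integrable in $\textup{Im}(s)$. The same bound, together with the dominated convergence theorem applied to a compact neighborhood of any $x_0 \in (0,\infty)$ (on which $x^{-b}$ is bounded), gives continuity of $f$ in $x$. Independence of $b$ follows from Cauchy's theorem: for $a < b_1 < b_2 < c$, close the contour with horizontal segments $[b_1 \pm iT, b_2 \pm iT]$; the contribution of these segments is bounded by $(b_2 - b_1)\cdot K T^{-2}\cdot \max(x^{-b_1}, x^{-b_2})$, which vanishes as $T \to \infty$. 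Since $F$ is analytic in the strip there are no residues in between, so the two line integrals agree.

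The main step is the identity $F(s) = M[f;s]$. I would substitute $x = e^{-t}$, so $dx = -e^{-t}dt$, which gives
\begin{equation}
f(e^{-t}) = \frac{1}{2\pi i}\int_{b-i\infty}^{b+i\infty} F(s)\,e^{st}\,ds = \frac{e^{bt}}{2\pi}\int_{-\infty}^{\infty} F(b+i\tau)\,e^{i\tau t}\,d\tau.
\end{equation}
Thus $g(t) := e^{-bt} f(e^{-t})$ is the inverse Fourier transform of $\tau \mapsto F(b+i\tau)$. Since $F(b+i\tau)$ belongs to $L^1(\R) \cap L^2(\R)$ by the $|s|^{-2}$ bound, and since $g$ is continuous and likewise integrable (indeed $|g(t)| \leq \frac{K}{2\pi}\int_{-\infty}^{\infty}|b+i\tau|^{-2}d\tau < \infty$ uniformly in $t$, and further analysis of the tails via contour shifts to $b\pm \varepsilon$ would give the needed decay), the Fourier inversion theorem yields
\begin{equation}
F(b+i\tau) = \int_{-\infty}^{\infty} g(t)\,e^{-i\tau t}\,dt = \int_{-\infty}^{\infty} e^{-bt}f(e^{-t})\,e^{-i\tau t}\,dt.
\end{equation}
Changing variables back via $x = e^{-t}$ turns this into $\int_0^\infty f(x)\,x^{s-1}\,dx = M[f;s]$ for $s = b + i\tau$, as desired.

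The main obstacle I anticipate is justifying the Fourier inversion rigorously: one needs $g \in L^1$ (so that its Fourier transform is well-defined pointwise and equals $F(b+i\cdot)$), and one needs to know $F(b+i\cdot) \in L^1$ (so that the inverse Fourier transform applied to it recovers $g$ pointwise at every continuity point). The latter is immediate from the hypothesis; the former requires showing that $f(e^{-t})$ decays faster than $e^{bt}$ as $t \to \pm\infty$, equivalently that $f(x) = o(x^{-b})$ at $0$ and $\infty$. This is obtained by shifting the defining contour for $f(x)$ slightly to $b'\in(a,b)$ (when $x$ is large) or to $b''\in(b,c)$ (when $x$ is small), which is allowed by analyticity and the $|s|^{-2}$ decay; the resulting bounds $|f(x)| \leq C x^{-b'}$ for large $x$ and $|f(x)|\leq C x^{-b''}$ for small $x$ give the required integrability of $g$, and independence of $b$ already established ensures consistency. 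Everything else is bookkeeping with Fubini, whose hypotheses are verified by combining the uniform bound $|F(s)x^{-s}| \leq Kx^{-b}|s|^{-2}$ with the pointwise bounds on $f$.
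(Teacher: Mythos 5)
The paper does not prove Proposition \ref{prop:melinver}; it cites the result directly as \cite[Theorem 11.2.1.1]{BBO}, so there is no in-paper proof to compare against. Your proposal is the standard textbook argument — substitute $x=e^{-t}$, reduce Mellin inversion to Fourier inversion, and verify the $L^1$ hypotheses by contour shifts — and the overall structure (absolute convergence, continuity via dominated convergence, $b$-independence via Cauchy and vanishing horizontal segments, then Fourier inversion) is sound.

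There is, however, a concrete error in the last paragraph: the directions of the contour shifts are swapped, and as written that step fails. You correctly identify that you need $g(t)=e^{-bt}f(e^{-t})\in L^1(\R)$, equivalently good decay of $f(x)$ relative to $x^{-b}$ as $x\to 0$ and $x\to\infty$. Shifting the defining contour for $f$ to $\textup{Re}(s)=\beta$ gives $|f(x)|\le C_\beta x^{-\beta}$, hence $|g(t)|\le C_\beta e^{(\beta-b)t}$. As $t\to-\infty$ (i.e.\ $x=e^{-t}\to\infty$) this decays precisely when $\beta>b$, so for \emph{large} $x$ you must shift \emph{right}, to some $b''\in(b,c)$. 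As $t\to+\infty$ (i.e.\ $x\to 0$) it decays precisely when $\beta<b$, so for \emph{small} $x$ you must shift \emph{left}, to some $b'\in(a,b)$. Your proposal assigns $b'\in(a,b)$ to large $x$ and $b''\in(b,c)$ to small $x$; those shifts produce bounds $x^{-b'}$ (large $x$) and $x^{-b''}$ (small $x$) that are \emph{weaker} than $x^{-b}$ at the relevant end, and hence do not yield $g\in L^1$. Once the two directions are exchanged, the exponential decay of $g$ at both ends follows and the rest of the argument (Fourier inversion plus change of variables back to $x$) goes through.
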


\subsection{Proof of Proposition \ref{factorials}}

The following is the key to being able to use the above derived asymptotics on special function to access $q$-Pochhammer asymptotics.

\begin{lemma}\label{lem:mellin}
For all $c>1,$ $\kappa>0,$ $\textup{Re}(z)>0$ we have that
\begin{align}
  \log\left(e^{-\kappa z}; e^{-\kappa}\right)&=-\dfrac{1}{2\pi
    \iu}\int\limits_{c-\iu\infty}^{c+\iu\infty}\Gamma(s)\zeta(s+1)\zeta(s,
  z)\dfrac{ds}{\kappa^s},\label{eq:mellinqpoch}\\
  \log\left(-e^{-\kappa z}; e^{-\kappa}\right)&=-\dfrac{1}{2\pi
    \iu}\int\limits_{c-\iu\infty}^{c+\iu\infty}(2^{-s}-1)\Gamma(s) \zeta(s+1)\zeta(s,
  z)\dfrac{ds}{\kappa^s}\label{eq:mellinqpochneg}
  \end{align}
\end{lemma}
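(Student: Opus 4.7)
The plan is to first expand each $q$-Pochhammer logarithm as a double series and then recognize the inner sum as the Mellin inverse of a product of gamma/zeta factors.

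First, for the identity \eqref{eq:mellinqpoch}, I would write
\begin{equation}
\log(e^{-\kappa z};e^{-\kappa})_\infty = \sum_{k=0}^\infty \log\bigl(1-e^{-\kappa(z+k)}\bigr) = -\sum_{k=0}^\infty\sum_{n=1}^\infty \frac{e^{-\kappa n(z+k)}}{n} = -\sum_{n=1}^\infty \frac{1}{n}\,\frac{e^{-\kappa n z}}{1-e^{-\kappa n}},
\end{equation}
where the double sum converges absolutely for $\textup{Re}(z)>0$. The integral representation \eqref{eq:hurwitz} of the Hurwitz zeta function says exactly that the Mellin transform (in the variable $\rho$) of $\rho\mapsto e^{-\rho z}/(1-e^{-\rho})$ equals $\Gamma(s)\zeta(s,z)$, with strip of analyticity $\textup{Re}(s)>1$. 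By Lemma \ref{lem:asympt_gamma} this transform decays like $e^{-\pi|t|/2}$ along any vertical line $\textup{Re}(s)=c>1$ (and $|\zeta(s,z)|$ is bounded there by the defining series \eqref{eq:hurwsum}), so Proposition \ref{prop:melinver} applies and gives, for any $c>1$ and any $\rho>0$,
\begin{equation}
\frac{e^{-\rho z}}{1-e^{-\rho}} = \frac{1}{2\pi\iu}\int_{c-\iu\infty}^{c+\iu\infty} \Gamma(s)\zeta(s,z)\,\rho^{-s}\,ds.
\end{equation}
Specializing to $\rho=\kappa n$ and substituting into the series above gives
\begin{equation}
\log(e^{-\kappa z};e^{-\kappa})_\infty = -\sum_{n=1}^\infty \frac{1}{n}\cdot\frac{1}{2\pi\iu}\int_{c-\iu\infty}^{c+\iu\infty}\Gamma(s)\zeta(s,z)\,(\kappa n)^{-s}\,ds.
\end{equation}
Provided the sum and integral can be interchanged, pulling the $n$-sum inside produces $\sum_{n\ge 1} n^{-(s+1)}=\zeta(s+1)$ and gives \eqref{eq:mellinqpoch}.

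The interchange is the only nontrivial step; it is justified by absolute convergence. On the contour $\textup{Re}(s)=c$ the integrand is bounded in $|\Gamma(s)|\cdot|\zeta(s,z)|\cdot(\kappa n)^{-c}$, the first two factors are integrable in $t=\textup{Im}(s)$ by Lemma \ref{lem:asympt_gamma} together with $|\zeta(s,z)|\le \zeta(c,\textup{Re}(z))$ (from the absolutely convergent series \eqref{eq:hurwsum}, for $c>1$), and summing $n^{-(c+1)}$ over $n$ is a convergent $p$-series since $c+1>2$. Fubini's theorem then legitimizes the swap.

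For the second identity \eqref{eq:mellinqpochneg}, exactly the same scheme applies with $\log(1-x)$ replaced by $\log(1+x)=-\sum_{n\ge 1}(-x)^n/n$, producing the alternating version
\begin{equation}
\log(-e^{-\kappa z};e^{-\kappa})_\infty = \sum_{n=1}^\infty \frac{(-1)^{n+1}}{n}\,\frac{e^{-\kappa n z}}{1-e^{-\kappa n}}.
\end{equation}
Inserting the same Mellin representation and interchanging produces the Dirichlet series $\sum_{n\ge 1}(-1)^{n+1}n^{-(s+1)}=\eta(s+1)$ in place of $\zeta(s+1)$. Invoking the identity \eqref{eq:eta_z} in the form $\eta(s+1)=(1-2^{-s})\zeta(s+1)$ and accounting for the overall sign flip yields the factor $(2^{-s}-1)$ appearing in \eqref{eq:mellinqpochneg}. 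The Fubini justification is the same as above, since introducing $(-1)^{n+1}$ does not harm absolute convergence.

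The only step that needs genuine care, and hence the main obstacle if any, is verifying the decay hypothesis of Proposition \ref{prop:melinver} for $\Gamma(s)\zeta(s,z)$ along $\textup{Re}(s)=c$ uniformly enough to justify both the Mellin inversion at a single point and the subsequent Fubini interchange; this follows from the Stirling-type bound in Lemma \ref{lem:asympt_gamma} and the crude bound on $\zeta(s,z)$ for $\textup{Re}(s)>1$ provided by Lemma \ref{lem:hurzpos} (or directly from \eqref{eq:hurwsum}).
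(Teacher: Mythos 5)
Your proof is correct, but it takes a genuinely different route from the paper's. The paper computes the \emph{forward} Mellin transform of the whole function $f_1(\kappa) = -\log\left(e^{-\kappa z};e^{-\kappa}\right)_\infty$ in the variable $\kappa$: after inserting the series expansion and changing variables $\kappa = \rho/n$, a Fubini step pulls out $\sum_n n^{-(s+1)} = \zeta(s+1)$ and the Hurwitz integral \eqref{eq:hurwitz} gives $M[f_1;s] = \Gamma(s)\zeta(s+1)\zeta(s,z)$; only then does the paper invoke Proposition~\ref{prop:melinver} to invert this full three-factor product. You instead \emph{invert first}, applying Proposition~\ref{prop:melinver} to the single Mellin pair $\Gamma(s)\zeta(s,z) \leftrightarrow e^{-\rho z}/(1-e^{-\rho})$ supplied by \eqref{eq:hurwitz}, then substitute $\rho=\kappa n$ term by term and perform the Fubini interchange inside the contour integral to generate the $\zeta(s+1)$ factor. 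Both routes pass through essentially the same analytic estimates (gamma decay from Lemma~\ref{lem:asympt_gamma} plus a bound on $\zeta(s,z)$ on vertical lines with $\textup{Re}(s)>1$) and both require a Fubini interchange, just on opposite sides of the Mellin correspondence. Your organization is marginally more economical, since the inversion hypothesis only needs to be checked for the two-factor kernel rather than the three-factor product.

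One small imprecision you should fix: you assert $|\zeta(s,z)|\le \zeta(c,\textup{Re}(z))$ on $\textup{Re}(s)=c$. This holds only for $z$ real and positive. For complex $z$ with $\textup{Re}(z)>0$ the series \eqref{eq:hurwsum} instead gives the bound $|\zeta(s,z)|\le C\,e^{|\textup{Arg}(z)\cdot t|}$ of Lemma~\ref{lem:hurzpos}, so $\zeta(s,z)$ may grow exponentially in $|t|$. Fortunately $|\textup{Arg}(z)|<\pi/2$ when $\textup{Re}(z)>0$, and the factor $e^{-\pi|t|/2}$ from $\Gamma(s)$ dominates, so the product still decays faster than any polynomial and both the Mellin inversion hypothesis and the Fubini justification go through. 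You do mention Lemma~\ref{lem:hurzpos} as an alternative in your final paragraph; that is the bound you should cite throughout in place of the incorrect $\zeta(c,\textup{Re}(z))$ claim.
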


\begin{proof}
Our goal is to show that $\log\left(e^{-\kappa z};  e^{-\kappa}\right)$ and $\log\left(-e^{-\kappa z}; e^{-\kappa}\right)$ are the inverse Mellin transforms of the corresponding functions on the right-hand side of \eqref{eq:mellinqpoch}. To do this we compute the Mellin transforms of the left-hand sides in \eqref{eq:mellinqpoch} and then show they can be inverted.

For $a\in \C$ with $|a|<1$ and $q\in (0,1)$ we may write
  \begin{equation}\label{eq:log}
-\log(a; q)_\infty=-\sum\limits_{k=0}^\infty \log(1-a
q^k)=\sum\limits_{k=0}^\infty\sum\limits_{n=1}^\infty\dfrac{(a q^k)^n}{n}=\sum\limits_{n=1}^\infty\dfrac{a^n}{n}\sum\limits_{k=0}^\infty q^{n k}=\sum\limits_{n=1}^\infty\dfrac{a^n}{n(1-q^n)}.
    \end{equation}
The interchange of the order of summations is possible due to Fubini's theorem since for each $q\in (0,1)$ there exists $C=C(q)>0$ such that
$$\sum\limits_{n=1}^\infty\bigg\lvert \dfrac{a^n}{n(1-x^n)} \bigg \rvert \leq  C\sum\limits_{n=1}^\infty|a^n|<\infty.$$

Let $f_1(x)=-\log\left((e^{-\kappa z}; e^{-\kappa})_\infty\right)$. For $\textup {Re} (z)>0$ and $\textup{Re}(s)>1$,
\begin{align}
&M\left[f_1; s\right]=-\int\limits_0^\infty \log\left(( e^{-\kappa z};e^{-\kappa})_\infty\right)
\kappa^{s-1}d\kappa=\int\limits_0^\infty \sum\limits_{n=1}^\infty \dfrac{(
  e^{-\kappa z} )^n}{n (1-e^{-n \kappa})} \kappa^{s-1}d\kappa\\
&=\int\limits_0^\infty \sum\limits_{n=1}^\infty
\dfrac{1}{n^{s+1}}\dfrac{e^{-\rho z} \rho^{s-1} }{ (1-e^{- \rho})}
d\rho =\sum\limits_{n=1}^\infty\dfrac{1}{n^{s+1}}\int\limits_0^\infty \dfrac{e^{-\rho z} \rho^{s-1} }{ (1-e^{- \rho})}
d\rho= \Gamma(s)\zeta(s+1)\zeta(s ,z).\label{eq:Mf1}
\end{align}
The first equality in \eqref{eq:Mf1} is by the definition of the Mellin transform. The second equality in \eqref{eq:Mf1} uses \eqref{eq:log}  with $a=e^{-\kappa z}$ and $q=e^{-\kappa}$. The third equality in \eqref{eq:Mf1} comes from the change of variables $\kappa=\rho/n$. The fourth equality in \eqref{eq:Mf1} is valid because
$$\sum\limits_{n=1}^\infty
\dfrac{1}{n^{\textup{Re}(s)+1}} \int\limits_0^\infty \bigg\lvert\dfrac{
  e^{-\rho z} \rho^{s-1} }{ (1-e^{- \rho})}d\rho\bigg \rvert\leq
C\sum\limits_{n=1}^\infty\dfrac{1}{n^{\textup{Re}(s)+1}}<\infty.
$$
The constant term $C>0$ above comes from bounding the integral: Since  $\textup{Re}(s)>1$, the behavior near $\rho=0$ is like $\rho^{\textup{Re}(s)-2}$ which is integrable since $\textup{Re}(s)>1$; and near $\rho=\infty$ the integrand decays exponentially because $\textup{Re}(z)>0$. The final equality in \eqref{eq:Mf1} uses \eqref{eq:hurwitz} for $\zeta(s,z)$.

A similar computation for $f_2(x)=-\log\left((-e^{-\kappa z}; e^{-\kappa})_\infty\right)$, yields
\begin{align}
M[f_2;s]&=-\int\limits_0^\infty \log\left((- e^{-\kappa z};e^{-\kappa})_\infty\right) \kappa^{s-1}d\kappa=
\sum\limits_{n=1}^\infty \dfrac{(-1)^n}{n^{s+1}}\int\limits_0^\infty \dfrac{e^{-\rho z} \rho^{s-1} }{ (1-e^{- \rho})}d\rho\\
&=-\Gamma(s)\eta(s+1)\zeta(s ,z)=(2^{-s}-1)\Gamma(s)\zeta(s+1)\zeta(s ,z).
\end{align}
The last equality uses \eqref{eq:eta_z}.

Having computed the Mellin transform for $f_1$ and $f_2$ we now verify that they can be inverted using Proposition \ref{prop:melinver}. We must check analyticity and the quadratic decay estimate.
Due to the analyticity of $\Gamma(s), \zeta(s)$ and $\zeta(s ,z)$, the analyticity of these Mellin transforms holds for $\textup{Re}(s)>1$.

Now we argue that there is quadratic decay. To apply Proposition \ref{prop:melinver}, it suffices to have this decay on any vertical strip. Fix $a=1$ and any $c>1$.  We claim that there exists a constant $K>0$ such that for all $s\in \C$ with $\textup{Re}(s)\in (a,c)$ and all $z\in \C$ with $\textup{Re}(z)>0$,
$
\lvert\Gamma(s)\zeta(s+1)\zeta(s ,z)\rvert\leq K |s|^{-2}.
$
This follows by appealing to the gamma function decay bound in Lemma \ref{lem:asympt_gamma}, the boundedness of $\zeta(s+1)$ for $\textup{Re}(s)\in(a,c)$,  and the bound on $|\zeta(s,z)|$ from Lemma \ref{lem:hurzpos}.
With this, we can invert the Mellin transform of $f_1$ and thus prove the desired formula. The case for $f_2$ is similar since the factor $(2^{-s}-1)$ is bounded by a constant  provided $\textup{Re}(s)\in (a,c)$.
%
%
%
%
%
\end{proof}

\subsubsection{Proof of Proposition \ref{factorials}: asymptotics of
  $(e^{-\kappa z}; e^{-\kappa})_\infty$}\label{sec:qqasy}

 We will first consider the case when $\textup{Re}(z)\geq 1/2$. This is addressed in three steps. Then, in a fourth step we will use the functional identity \eqref{eq:theta} to address the case when $\textup{Re}(z)< 1/2$. In the fifth and final step, we will combine these two bounds into a common bound.

\smallskip
\noindent {\bf Step 1.} We start with the representation for $(e^{-\kappa z}; e^{-\kappa})_\infty$ from Lemma \ref{lem:mellin} and shift the contour of integration to the left of zero, picking up some residues. We claim the following formula: For any non-integer $a<0$,
\begin{align}\label{eq:error}
    -\log(e^{-\kappa z}; e^{-\kappa})_\infty=&\dfrac{1}{2\pi\iu}\int\limits_{a-\iu\infty}^{a+\iu\infty}\!\!\!\Gamma(s)\zeta(s+1)\zeta(s
    ,z)\kappa^{-s} ds \\
    &+\sum\limits^{\ell=1}_{\ell=\ceil{a}}\textup{Res}_{s=\ell}\left[ \Gamma(s)\zeta(s+1)\zeta(s
    ,z)\kappa^{-s}\right].
  \end{align}
The starting point for this is \eqref{eq:mellinqpoch} where the contour has real part $c>1$. The idea is to shift the contour to the left until it lies on a vertical line with real part $a<0$. In doing this, we encounter poles at $s=1, 0, -1, \ldots, \lceil a\rceil$ whose residues must be accounted for. The summation in \eqref{eq:error} is precisely the contribution of those residues. To justify the deformation we use Lemmas \ref{lem:asympt_gamma} and \ref{prop_zeta} with \eqref{eq:asymp_hurwitz_z2}  from Proposition \ref{hurwitz_bound} to show that  for $z$ fixed with $\textup{Re}(z)>0$ there exists some $\varepsilon>0$ and $C>0$ such that for all $s$ with $\textup{Re}(s)=\sigma\in [a,c]$ and $|t|>1$,
 $
 \left|\Gamma(s)\zeta(s+1)\zeta(s ,z)\kappa^{-s}  \right| \leq C e^{-\varepsilon |t|}.
 $

\smallskip
\noindent{\bf Step 2.} Next, we compute the residues in \eqref{eq:error}. In order to do that this, we make use of the following Taylor series expansions. We first address the residue at $s=0$. By using the results in \eqref{eq:derivative_zeta} we see that around $s=0$
    \begin{align}
\Gamma(s)\zeta  (s+1)\zeta(s,z) \kappa^{-s}&=\tfrac{1}{s^2}\Gamma(s+1)\cdot \left(s
        \zeta(s+1)\right)\cdot \zeta(s,z)\cdot \kappa^{-s}=\\
      &=(\tfrac{1}{2}-z)s^{-2}+\left(\left(z-\tfrac{1}{2}\right)\log \kappa + \log \left[\tfrac{\Gamma (z)}{\sqrt{2
            \pi}}\right]
    \right)s^{-1}+\cdots,
      \end{align}
    where $\cdots$ here represents lower order terms in $s$. From this expansion it immediately follows that the residue at $s=0$ is $\left(z-\frac{1}{2}\right)\log \kappa + \log \left[\frac{\Gamma (z)}{\sqrt{2\pi}}\right]$. Turning to the residue at $s=1$, from \eqref{eq:zeta_at_two} we have that
      \begin{equation*}
    \underset{s=1}{\textup{Res}}\left[\Gamma(s)\zeta(s+1)\zeta(s,z) \kappa^{-s}\right]=\dfrac{\Gamma(1)\zeta(2)}{\kappa}\cdot \underset{s=1}{\textup{Res}}\left[\zeta(s,z)\right]=\dfrac{\pi^2}{6 \kappa}.
    \end{equation*}
The residue at $s=-n$ for $n\in \Z_{\geq 1}$ is evaluated by \eqref{eq:gamma_pole} and \eqref{eq:zetaressbern} as
    $$
    \underset{s=-n}{\textup{Res}}\left[\Gamma(s)\zeta(s+1)\zeta(s,z) \kappa^{-s}\right]=\underset{s=-n}{\textup{Res}}\left[\Gamma(s)\right]\cdot
    \zeta(-n+1)\zeta(-n, z)\cdot \kappa^{n}=\dfrac{B_{n}B_{n+1}(z)}{n(n+1)!}\cdot \kappa^{n}.
      $$
Recall that Bernoulli numbers are zero for odd integers. Combining these deductions, we conclude for $\textup{Re}(z)>0$ and non-integer $a<0$ we have
    \begin{align}\label{eq:step216}
    -\log(e^{-\kappa z}; e^{-\kappa})_\infty&=\dfrac{1}{2\pi
      \iu}\int\limits_{a-\iu\infty}^{a+\iu\infty}\Gamma(s)\zeta(s+1)\zeta(s
    ,z)\kappa^{-s}ds\\
   +\dfrac{\pi^2}{6 \kappa}+ &\left(z-\dfrac{1}{2}\right)\log \kappa+\log \left[\dfrac{\Gamma (z)}{\sqrt{2
          \pi}}\right ]+\sum\limits_{n=1}^{\lfloor -a\rfloor}\dfrac{B_{n+1}(z)B_{n}}{n(n+1)!}\kappa^{n} .
    \end{align}

\smallskip
Recall $m\in \Z_{\geq 1}$ in the statement of Proposition \ref{factorials}. For any $a\in (-m,-m+1)$
we may compare the right-hand sides of \eqref{eq:step216} and \eqref{eq:factorialspos} to see that the error term $\Error_m^+[\kappa, z]$ in \eqref{eq:factorialspos} is precisely give by
\begin{equation}\label{eq:eplusint}
\Error_m^+[\kappa, z] = \dfrac{-1}{2\pi
      \iu}\int\limits_{a-\iu\infty}^{a+\iu\infty}\Gamma(s)\zeta(s+1)\zeta(s
    ,z)\kappa^{-s}ds.
\end{equation}
Therefore, our problem reduces to bounding the absolute value of the above integral. Fix some $a\in (-m,-m+1)$. To estimate $|\Error_m^+[\kappa, z] |$, we bring the absolute value inside the integral and utilize the bounds given in Propositions \ref{prop_zeta} and  \ref{hurwitz_bound}. Using the notation $s = a + \iu t$, we will divide the integral into small $|t|$ and large $|t|$. On account of the just mentioned lemma and propositions, for all $d\in (0,1/2)$ such that $a+d\notin\Z_{\leq 0}$, there exists a $t_0>0$ and $C>0$ such that for all
$t<t_0$ and $z\in \C$ with $\textup{Re}(z) \geq 1/2$,
\begin{align}\label{eq:ttnotsmall}
  \left|\Gamma(a+\iu t)\zeta(a+\iu t+1)\zeta(a+\iu t,z)\right| &\leq\\
  C& \Big( e^{\lvert \textup{Arg}(z) \cdot t \rvert }\big( |z|^{1-a}+ |t|^{1/2-a}|z|^{-d-a}\big) +|z|^{-d-a}\Big),
\end{align}
while for all $t>t_0$ and $z\in \C$ with $\textup{Re}(z) \geq 1/2$,
\begin{align}\label{eq:ttnotbig}
&\qquad\left|\Gamma(a+\iu t)\zeta(a+\iu t+1)\zeta(a+\iu t,z)\right| \leq C  e^{-\frac{\pi}{2} |t|}\times\\
& \times\Big( e^{\lvert \textup{Arg}(z) \cdot t \rvert }\big( |z|^{-a} + (1+|t|)^{-1} |z|^{1-a}+ |t|^{1/2-a}|z|^{-d-a}\big) +|t|^{1/2-a}|z|^{-d-a}\Big),
\end{align}
In deriving the above we made some simplifications from the bound in Proposition \ref{hurwitz_bound}.
For $|t|\leq t_0$, we bounded $\max(1,|t|^{1/2-a}) \leq  C$
while
for $|t|\geq t_0$, we bounded $\max(1,|t|^{1/2-a}) \leq  C |t|^{1/2-a}$
where $C>0$ depends on $a$ and $t_0$.
Since we are presently assuming that $\textup{Re}(z) \geq 1/2$ (and hence $|z|\geq 1/2$) we find a constant only dependent on $a$ and $t_0$ such that for $|t|\leq t_0$, $|z|^{-a}+(1+|t|)^{-1}|z|^{1-a} \leq C |z|^{1-a}$ and likewise find $C>0$ only dependent on $d+a$ such that $\sum_{k=0}^{\lfloor -d-a \rfloor } |z|^k \leq  C |z|^{-d-a}$.

%

With the $t_0$ above we may bound
$
|\Error_m^+[\kappa, z]| \leq (\pi)^{-1} \kappa^{-a} (\mathbf{I}+\mathbf{II})
$
where
\begin{align}
\mathbf{I} &= \int_0^{t_0}\left|\Gamma(a+\iu t)\zeta(a+\iu t+1)\zeta(a+\iu t,z)\right|dt,\\
\mathbf{II} &= \int_{t_0}^{\infty} \left|\Gamma(a+\iu t)\zeta(a+\iu t+1)\zeta(a+\iu t,z)\right|dt.
\end{align}
Estimating $\mathbf{I}$ from \eqref{eq:ttnotsmall} is done easily since $t<t_0$  and $|z|\geq 1/2$. The main contribution is from the term $|z|^{1-a}$ and all other terms can be bounded by it. Thus, we find that there exists $C>0$ depending on $a$ and $t_0$ such that
$
\mathbf{I} \leq C |z|^{1-a}.
$
To control $\mathbf{II}$ requires a bit more. Let us recall two facts. The first is an immediate consequence of the gamma function integral formula: For any $\alpha>-1$ and any $\e>0$,
\begin{equation}\label{eq:gammabound}
\int_0^{\infty} e^{-\e t}t^{\alpha} dt = \e^{-\alpha -1} \Gamma(\alpha+1).
\end{equation}
The second is that for all $\textup{Re}(z) \geq 1/2$, there exists a $C>0$ such that
$\frac{\pi}{2}-|\textup{Arg}(z)|\geq C |z|^{-1}$.
With these facts we may show that there exists $C>0$ depending only on $a$ such that
\begin{equation}
\int_{t_0}^{\infty} e^{-(\frac{\pi}{2}-\textup{Arg}(z)) |t|}\big(|z|^{-a} +(1+|t|)^{-1} |z|^{1-a}+ |t|^{1/2-a}|z|^{-d-a}\big)dt \leq
C \left(|z|^{1-a} + |z|^{\frac{3}{2} -2a -d}\right).
\end{equation}
To derive this inequality we first extended the integration to $(0,\infty)$, and then used \eqref{eq:gammabound} with $\e = \frac{\pi}{2}-\textup{Arg}(z)\geq C |z|^{-1}$. Similarly, we find that
\begin{equation}
\int_{t_0}^{\infty} e^{-\frac{\pi}{2}|t|}|t|^{1/2-a}|z|^{-d-a} dt \leq
C |z|^{-d-a}.
\end{equation}
Thus, in light of \eqref{eq:ttnotbig} and the above bounds, we have shown that
$$
\mathbf{II} \leq C  \left(|z|^{1-a} + |z|^{\frac{3}{2} -2a -d}+ |z|^{-d-a}\right),
$$
and combining this with the earlier bound on $\mathbf{I}$ we see that there exists  $C>0$ dependent only on $a$ and $d$ such that
$$
|\Error_m^+[\kappa, z]| \leq C \kappa^{-a} \left(|z|^{1-a} + |z|^{\frac{3}{2} -2a -d}+ |z|^{-d-a}\right)\leq C \kappa^{-a} \left(|z|^{1-a} + |z|^{\frac{3}{2} -2a -d}\right).
$$
Since we were allowed to take $a\in (-m,-m+1)$ arbitrary, and $d\in (0,1/2)$ provided $a+d\notin\Z_{\leq 0}$, we may try to optimize the right-hand side above. Let $b=-a\in (m-1,m)$. Then, we have shown that for any $\e\in (0,1/2)$ and any $b\in (m-1,m)$, there exists $C>0$ such that
\begin{equation}\label{eq:errorzpos}
|\Error_m^+[\kappa, z]| \leq C \kappa^{b} \left(|z|^{1+b} + |z|^{1+2b+\e}\right)\leq  C \kappa^{b}|z|^{1+2b+\e}.
\end{equation}
This completes the proof of the proposition when $\textup{Re}(z)\geq 1/2$. Notice that there is no restriction on $\textup{Im}(z)$ assumed here (in accordance with the statement of the proposition).

\smallskip
\noindent{\bf Step 4.} We will now make use of the functional identity \eqref{eq:theta} to relate the $\textup{Re}(z)<1/2$ behavior to the $\textup{Re}(z)\geq 1/2$ result we have proven already above.
The two Pochhammer symbols, $(e^{-\kappa}; e^{-\kappa})_\infty$ and $(e^{-\kappa (1-z)};    e^{-\kappa})_\infty$, which arise on the right-hand side of \eqref{eq:theta} are both of the form $(e^{-\kappa\tilde{z}}; e^{-\kappa})_\infty$ for $\tilde{z} = 1$ and $1-z$ respectively. In both cases (in the second case, due to the assumption that $\textup{Re}(z)<1/2$), we have that $\textup{Re}(\tilde{z}) \geq 1/2$, hence the result we have proved above can be applied. In particular, for any $m\in \Z_{\geq 1}$, recalling  $\AqP^+[\kappa, z]$ from \eqref{eq:AqPp},
\begin{align}\label{eq:minlogqq}
  -\log(e^{-\kappa(1-z)}; e^{-\kappa})_\infty&=\\
-\AqP^+[\kappa, 1-z]  &+\sum\limits_{n=1}^{m-1}\dfrac{B_{n+1}(1-z)B_{n}}{n(n+1)!}\kappa^{n}+\Error^+_{m}[\kappa, 1-z].\\
  -\log \left(e^{-\kappa}; e^{-\kappa}\right)_\infty=&\\
-\AqP^+[\kappa, 1]  &+\sum\limits_{n=1}^{m-1}\dfrac{B_{n+1}(1)B_{n}}{n(n+1)!}\kappa^{n}+\Error^+_{m}[\kappa, 1].
\end{align}
Recall that from \eqref{eq:bsym} we have $B_{n+1}(1-z)=(-1)^{n+1}B_{n+1}(z)$ and that, in particular, $B_{n+1}=B_{n+1}(1)=0$ for $n$ even. Thus, we may replace $B_{n+1}(1-z)B_n$ by $-B_{n+1}(z)B_n$ (when $n$ is even, both sides are zero). Also observe that the product $B_{n+1}B_{n} =0$ for all $n$ except $n=1$.
By the Euler reflection equation
$\log\left [ \frac{\Gamma(1-z)} {\sqrt 2 \pi}\right ]=-\log\left[2\sin(\pi z)\right]-\log\left[\frac{\Gamma(z)}{\sqrt {2 \pi}}\right]$.
Plugging this into \eqref{eq:theta} we find that we can write
$$\log\left(e^{-\kappa z}; e^{-\kappa}\right)_\infty=
\AqP^+[\kappa, z]-\sum\limits_{n=1}^{m-1}\dfrac{B_{n+1}(z)B_{n}}{n(n+1)!}\kappa^{n}+\Error_m^+[\kappa, z]
$$
where
\begin{align}
\Error_m^+[\kappa, z] &:= \sum\limits_{n=1}^{m-1}\dfrac{B_{n+1}(1)B_{n}}{n(n+1)!}\kappa^{n}+\dfrac{\kappa}{8}-\dfrac{\kappa z}{2}+\dfrac{\kappa
    z^2}{2}
    +\Error ^+_{m} (\kappa, 1)+\Error^+ _{m} (\kappa, 1-z)
    \\&
\qquad+\log\left[\theta_1\left(z\bigg\lvert \dfrac{2\pi\iu}{\kappa}\right)\dfrac{e^{\pi^2/(2\kappa)}}{2 \sin(\pi z)}\right].
\end{align}
This provides an explicit formula for the error term $\Error_m^+[\kappa, z]$ in \eqref{eq:factorialspos}.
The first bound in Lemma \ref{theta_estimate_z} shows that for any $\alpha\in (0,\pi)$ there exist $C,c,\kappa_0>0$ such that for all $\kappa\in (0,\kappa_0)$ and all $z\in\C$ with $|\textup{Im}(z)|<\tfrac{\alpha}{\kappa}$
\begin{equation*}
\left|\log\left(\theta_1\left(z\bigg\lvert \dfrac{2\pi\iu}{\kappa}\right) \dfrac{e^{\pi^2/(2\kappa)}}{2 \sin(\pi z)}\right)\right|
\leq \log\big(1+C\cdot e^{-\frac{c}{\kappa}}\big)\leq C\cdot e^{-\frac{c}{\kappa}}.
\end{equation*}
The two terms $\Error ^+_{m} (\kappa, 1)$ and $\Error^+ _{m} (\kappa, 1-z)$ were bounded in the first three steps of this proof, see \eqref{eq:errorzpos}.  The summation involving Bernoulli numbers has only one non-zero term when $n=1$ and is proportional to $\kappa$. Thus, to summarize so far we have shown that for any $\alpha\in (0,\pi)$, $\e\in (0,1/2)$ and $b\in (m-1,m)$ there exist $C,c,\kappa_0>0$ such that for all $\kappa\in (0,\kappa_0)$ and all $z\in\C$ with $|\textup{Im}(z)|<\tfrac{\alpha}{\kappa}$
\begin{equation}
|\Error_m^+[\kappa, z]| \leq C\left(\kappa +\kappa |1-2z|^2 + \kappa^{b}|1-z|^{1+2b+\e} + \kappa^b + e^{-\frac{c}{\kappa}}\right).
\end{equation}
For each $b$, provided $\kappa$ is small enough, we can bound $e^{-\frac{c}{\kappa}}\leq \kappa^{b}$. Since $\textup{Re}(z)<1/2$ it follows that there exist a constant $C$ which depends on $b$ such that $1\leq C|1-z|^{1+2b+\e}$. Thus, we can simplify our bound to the following: For any $\alpha\in (0,\pi)$, $\e\in (0,1/2)$ and $b\in (m-1,m)$ there exist $C,c,\kappa_0>0$ such that for all $\kappa\in (0,\kappa_0)$ and all $z\in\C$ with $|\textup{Im}(z)|<\tfrac{\alpha}{\kappa}$
\begin{equation}\label{eq:errorzneg}
|\Error_m^+[\kappa, z]| \leq C\left(\kappa +\kappa |1-2z|^2 + \kappa^{b}|1-z|^{1+2b+\e}\right).
\end{equation}

\smallskip
\noindent{\bf Step 5.} In this final step, we combine the bound \eqref{eq:errorzpos} shown in Step 3 for all $\textup{Re}(z)\geq 1/2$ with the bound just shown at the end of Step 4, in \eqref{eq:errorzneg} which holds for  $\textup{Re}(z)< 1/2$ (with some additional condition on the imaginary part). Notice that if  $\textup{Re}(z)\geq 1/2$, then there exists a constant $C$ such that  $|z| \leq C(1+|z|)$, and similarly, if $\textup{Re}(z)< 1/2$, then there exists a constant $C$ such that $|1-2z|\leq C (1+|z|)$. Making these replacements in the respective bounds  \eqref{eq:errorzpos}  and \eqref{eq:errorzneg} immediately leads to the error bound claimed in \eqref{eq:errorbounds} for $\Error_m^+[\kappa, z]$. This completes the proof of the asymptotics of $\log(q^z;q)_{\infty}$ provided $|\textup{Im}(z)|<\tfrac{\alpha}{\kappa}$. We will extend this asymptotic to $|\textup{Im}(z)|<\tfrac{2\alpha}{\kappa}$ using the result for  $\log(-q^z;q)_{\infty}$. To avoid confusion we defer this until the end of the proof of the  $\log(-q^z;q)_{\infty}$ asymptotics.

\subsubsection{Proof of Proposition \ref{factorials}: asymptotics of $(-q^z; q)_\infty$}
Comparing the formula \eqref{eq:mellinqpoch} for $(q^z;q)_{\infty}$ to the formula \eqref{eq:mellinqpochneg} for $(-q^z;q)_{\infty}$, the only difference is the inclusion of the factor $2^{-s}-1$ in the later. This will have very minor effect on the argument, as compared to our earlier study of the asymptotics of $(q^z;q)_{\infty}$ in Section \ref{sec:qqasy}. As such, we just summarize the outcome of each of the five steps from that proof, subject to inclusion now of the multiplicative factor $2^{-s}-1$. As a sixth and final step we include the extension of the  $\log(q^z;q)_{\infty}$ asymptotic to only assume $|\textup{Im}(z)|<\tfrac{2\alpha}{\kappa}$.

\smallskip
\noindent{\bf Step 1.} Assume that $\textup{Re}(z)\geq 1/2$. Under this condition we may use \eqref{eq:mellinqpochneg} and decay estimates to show, as in \eqref{eq:error}, that for non-integer $a<0$,
\begin{align}\label{eq:error2}
    -\log(-e^{-\kappa z};e^{-\kappa})_\infty=
    &\dfrac{1}{2\pi\iu}\!\!\!\int\limits_{a-\iu\infty}^{a+\iu\infty}\!\!\!(2^{-s}-1)\Gamma(s)\zeta(s+1)\zeta(s
    ,z)\kappa^{-s} ds \\
   + &\sum\limits^{\ell=1}_{\ell=\ceil{a}}\!\!\textup{Res}_{s=\ell}\left[(2^{-s}-1)\Gamma(s)\zeta(s+1)\zeta(s
    ,z)\kappa^{-s}\right].
  \end{align}

\smallskip
\noindent{\bf Step 2.} We compute the residues in \eqref{eq:error2}. Notice that around $s=0$, we have $2^{-s}-1 = -s \log(2) +\cdots$. From this we see that around $s=0$,
$$
(2^{-s}-1) \Gamma(s) \zeta(s+1) \zeta(s,z) \kappa^{-s} = \frac{(z-\frac{1}{2})\log 2}{s}+\cdots
$$
where the $\cdots$ represent lower order terms. Hence the residue at $s=0$ is $(z-\tfrac{1}{2})\log 2$. At $s=1$, the factor $2^{-s}-1$ is simply evaluated to $-\tfrac{1}{2}$ and hence the residue there becomes $-\tfrac{\pi^2}{12 \kappa}$. Finally, for the residues at  $-n$ for $n\in \Z_{\geq 1}$, the $2^{-s}-1$ factor contributes a new multiplicative factor $2^n-1$ to the residue. Thus, \eqref{eq:step216} is replaced by (recall  $\AqP^-[\kappa, z]$ from \eqref{eq:AqPm})
    \begin{align}\label{eq:step2162}
    -\log(e^{-\kappa z}; e^{-\kappa})_\infty=&\dfrac{1}{2\pi
      \iu}\int\limits_{a-\iu\infty}^{a+\iu\infty}(2^{-s}-1)\Gamma(s)\zeta(s+1)\zeta(s
    ,z)\kappa^{-s}ds\\
    &-\AqP^-[\kappa, z]+\sum\limits_{n=1}^{\lfloor -a\rfloor}(2^n-1)\dfrac{B_{n+1}(z)B_{n}}{n(n+1)!}\kappa^{n}.
    \end{align}

\smallskip
\noindent{\bf Step 3.} We fix $m\in \Z_{\geq 1}$ and let $a\in (-m,-m+1)$. Comparing the right-hand sides of \eqref{eq:step2162} and \eqref{eq:factorialsneg} we see that the error term $\Error_m^-[\kappa, z]$ in \eqref{eq:factorialsneg} is precisely give by
\begin{equation}
\Error_m^-[\kappa, z] = \dfrac{-1}{2\pi
      \iu}\int\limits_{a-\iu\infty}^{a+\iu\infty}(2^{-s}-1)\Gamma(s)\zeta(s+1)\zeta(s
    ,z)\kappa^{-s}ds.
\end{equation}
Bounding the absolute value of the error term $|\Error_m^-[\kappa, z]|$ proceeds quite similar to that of $|\Error_m^-[\kappa, z]|$ in \eqref{eq:eplusint}.  Since along our integration contour, $\textup{Re}(s)=a$, it follows that we can bound $|2^{-s}-1| \leq 2^{-a}+1$ which can be absorbed into the constant $C$. Thus, we arrive as the same bound as in \eqref{eq:errorzpos}. In particular, we see that for any $\e\in (0,1/2)$ and $b\in(m-1,m)$, there exists a constant $C>0$ such that
\begin{equation}\label{eq:errorzneg2}
|\Error_m^-[\kappa, z]| \leq C \kappa^{b}|z|^{1+2b+\e}.
\end{equation}
This result still assumes that $\textup{Re}(z)\geq 1/2$.

\smallskip
\noindent{\bf Step 4.} We now return to $\textup{Re}(z)< 1/2$ and use \eqref{eq:theta_negative} to deduce the asymptotics in that regime from those derived above for $\textup{Re}(z)\geq 1/2$.
Since $\textup{Re}(1-z)\geq 1/2$, we may conclude from the results of Steps 1--3 above that for any $m\in \Z_{\geq 1}$, 
\begin{equation}
-\log(-e^{-\kappa (1-z)}; e^{-\kappa})_\infty\!\!\!=-\AqP^-[\kappa, 1-z]+\sum\limits_{n=1}^{m-1}\left (2^{n}-1 \right)\dfrac{B_{n+1}(1-z)B_{n}}{n(n+1)!}\kappa^{n}+\Error^-_{m}[\kappa, 1-z].
\end{equation}
Combining this with the asymptotics of $-\log\left(e^{-\kappa}; e^{-\kappa}\right)_\infty$
in \eqref{eq:minlogqq}  and the form of
the right-hand sides of \eqref{eq:factorialsneg} in
\eqref{eq:factorialsneg} yields
\begin{align}\label{eq:theta_negative_1}
\log\left(-e^{-\kappa z}; e^{-\kappa}\right)_\infty =\AqP^-[\kappa, z]-\sum\limits_{n=1}^{m-1} (2^n-1)\dfrac{B_{n+1}(z)B_{n}}{n(n+1)!}\kappa^{n}+\Error_m^-[\kappa, z]
\end{align}
where $\Error_m^-[\kappa, z]$ is given by
\begin{align} \Error_m^-[\kappa, z]=&\sum\limits_{n=1}^{m-1}\dfrac{B_{n+1}(1)B_{n}}{n(n+1)!}\kappa^{n}+\dfrac{\kappa}{8}-\dfrac{\kappa z}{2}+\dfrac{\kappa z^2}{2}+\Error^+_{\lfloor -d\rfloor}[\kappa,1]+\Error^-_{\lfloor -d\rfloor}[\kappa,1- z]\\
&\quad
+\log\Big(\theta_4 \Big(z \Big \vert \tfrac{2\pi \iu}{\kappa}\Big)\Big).
\end{align}
We may now invoke the second bound in Lemma \ref{theta_estimate_z} which shows that for any $\alpha\in (0,\pi)$ there exists $C,c,\kappa_0>0$  such that for all $\kappa\in (0,\kappa_0)$ and all $z\in\C$ with $|\textup{Im}(z)|<\tfrac{\alpha}{\kappa}$ 
$$
\left|\log\left(\theta_4\left(z\bigg\lvert \frac{2\pi\iu}{\kappa}\right)\right)\right| \leq \log\big(1+C\cdot e^{-\frac{c}{\kappa}}\big)\leq C\cdot e^{-\frac{c}{\kappa}}.
$$
The term $\Error ^+_{m} (\kappa, 1)$ was bounded in Section \ref{sec:qqasy} whereas $\Error^-_{m} (\kappa, 1-z)$ has been already bounded in the first three steps of this proof, see \eqref{eq:errorzneg2}. Thus, in the same manner as in Section \ref{sec:qqasy} we conclude that: For any $\alpha\in (0,\pi)$, $\e\in (0,1/2)$ and $b\in (m-1,m)$ there exist $C,c,\kappa_0>0$ such that for all $\kappa\in(0,\kappa_0)$ and all $z\in\C$ with $|\textup{Im}(z)|<\tfrac{\alpha}{\kappa}$ (again, this is assuming $\textup{Re}(z)< 1/2$)
$$
|\Error_m^-[\kappa, z]| \leq C\left(\kappa +\kappa |1-2z|^2 + \kappa^{b}|1-z|^{1+2b+\e}\right).$$

\smallskip
\noindent{\bf Step 5.} The synthesis of the $\textup{Re}(z)\geq 1/2$  and $\textup{Re}(z)< 1/2$ bounds on the error are precisely the same as above, hence the proof is complete.

\smallskip
\noindent{\bf Step 6.} 
The fact that the bound on $\Error^{+}_m[\kappa, z]$ holds with $|\textup{Im}(z)|<\tfrac{2\alpha}{\kappa}$ follows from \eqref{eq:errorbounds} for $\Error^{\pm}_m[\kappa, z]$ with $|\textup{Im}(z)|<\tfrac{\alpha}{\kappa}$  by using the transformation \eqref{eq:transfo}. To see this, consider $z$ with $\textup{Im}(z)\in[\tfrac{\alpha}{\kappa},\tfrac{2\alpha}{\kappa})$ (the negative imaginary case is the same). Then
\begin{align}
\log (q^z;q)_{\infty} &= \log (-q^{z-\frac{\pi}{\kappa} \iu};q)_{\infty} \\
&= \AqP^-[\kappa, z-\frac{\pi}{\kappa} \iu] -\sum\limits_{n=1}^{m-1}\left(2^{n}-1\right)\dfrac{B_{n+1}( z-\frac{\pi}{\kappa} \iu)B_{n}}{n(n+1)!}\kappa^{n}+\Error_m^-[\kappa,  z-\frac{\pi}{\kappa} \iu].
\end{align}
Comparing this to \eqref{eq:factorialspos} shows that
\begin{align}
\Error_m^+[\kappa, z] &= \AqP^-[\kappa, z-\frac{\pi}{\kappa} \iu] - \AqP^+[\kappa, z] \\ &\quad +\sum\limits_{n=1}^{m-1}\left(\dfrac{B_{n+1}(z)B_{n}}{n(n+1)!}\kappa^{n}-\left(2^{n}-1\right)\dfrac{B_{n+1}( z-\frac{\pi}{\kappa} \iu)B_{n}}{n(n+1)!}\kappa^{n}\right)+\Error_m^-[\kappa,  z-\frac{\pi}{\kappa} \iu].
\end{align}
For
$\textup{Im}(z)\in[\tfrac{\alpha}{\kappa},\tfrac{2\alpha}{\kappa})$ we
have that $\textup{Im}[z-\frac{\pi}{\kappa} \iu]\in
[\frac{\alpha-\pi}{\kappa},\frac{2\alpha-\pi}{\kappa})$ which is
contained in $(-\frac{\beta}{\kappa},\frac{\beta}{\kappa})$ for some
$\beta\in (0,\pi)$. Thus we can apply the bound \eqref{eq:errorbounds}
to control $\Error_m^-[\kappa,  z-\frac{\pi}{\kappa} \iu]$. All of the
other terms are also easily controlled (for the Gamma function, use
the asymptotics from Lemma \ref{lem:asympt_gamma}) and doing so we
verify that  \eqref{eq:errorbounds} holds for $\Error_m^+[\kappa,
z]$. The reason why we cannot do the same extension of the range of
imaginary part for $\Error_m^-[\kappa, z]$ is that in replicating that
above argument we encounter $\AqP^+[\kappa, z-\frac{\pi}{\kappa} \iu]
- \AqP^-[\kappa, z]$ and the first term may have singularities from
the Gamma function.

Note that the above argument works for obtaining a bound on $\Error_m^{\pm}[\kappa, z]$ for $z$ with $\textup{dist}(\textup{Re}(z),\mathbb{Z}_{\leq 0})>r$ for any $r>0$ since we will avoid the singularities from the Gamma function.
This implies the final claim of the proposition.

\ack
I. Corwin was supported by a Packard Fellowship in Science and Engineering and a W.~M.~Keck Foundation Science and Engineering Grant and NSF DMS:1811143, DMS:1664650 and DMS:1937254.  A. Knizel was supported by NSF DMS:1704186.
The authors wish to thank Wlodek Bryc and Yizao Wang for discussions surrounding Askey-Wilson processes and their tangent processes, and in particular to Wlodek Bryc for visiting Columbia and inspiring us to work on this subject and Yizao Wang for illuminating discussions around the atoms which occur in our work (in particular, see Remark \ref{remark:ASEPgen}). We would also like to thank Wlodek Bryc for asking clarifying questions about Proposition \ref{factorials} which prompted us to uncover and remedy a mistake in its original formulation and application.
The authors also wish to thank Alex Dunlap, Alexandra Florea, Hao Shen and Kevin Yang for helpful discussions and comments related to this work, as well as two referees who provided very detailed and helpful remarks on an earlier version of this paper.



\frenchspacing
\bibliographystyle{cpam.bst}
\bibliography{refs}

\begin{thebibliography}{10}
\providecommand{\url}[1]{\texttt{#1}}
\providecommand{\urlprefix}{Available at: }
\providecommand{\eprint}[2][]{\url{#2}}

\bibitem{AV97}
{Andjel}, E.~D.; {Vares}, M.~E. {Hydrodynamic equations for attractive particle
  systems on Z}. \emph{J. Stat. Phys.} \textbf{47} (1987), 265--288.

\bibitem{Apostol}
Apostol, T.~M. \emph{Introduction to Analytic Number Theory}, Springer, 1976.

\bibitem{Askey}
Askey, R. \emph{Beta integrals and the associated orthogonal polynomials},
  Springer, 1987.

\bibitem{AskeyWilson}
Askey, R.; Wilson, J. Some basic hypergeometric orthogonal polynomials that
  generalize jacobi polynomials. \emph{Memoirs of AMS} \textbf{54} (1985).

\bibitem{BCKFullLine}
Bakhtin, Y.; Cator, E.; Khanin, K. {Space-time stationary solution for the
  Burgers equation}. \emph{J. Amer. Math. Soc.} \textbf{27} (2014), no.~1,
  193--238.
\urlprefix\url{http://www.jstor.org/stable/43302841}

\bibitem{BC22}
Barraquand, G.; Corwin, I. Stationary measures for the log-gamma polymer and
  kpz equation in half-space. \emph{arXiv:2203.11037}  (2022).

\bibitem{BKLD20}
{Barraquand}, G.; {Krajenbrink}, A.; {Le Doussal}, P. {Half-Space Stationary
  Kardar-Parisi-Zhang Equation}. \emph{J. Stat. Phys.} \textbf{181} (2020),
  1149--1203.

\bibitem{BLD}
Barraquand, G.; Le~Doussal, P.: {Steady state of the KPZ equation on an
  interval and Liouville quantum mechanics}. 2022.

\bibitem{BC95}
Bertini, L.; Cancrini, N. The stochastic heat equation: Feynman-kac formula and
  intermittence. \emph{J. Stat. Phys.} \textbf{78} (1995), 1377--1401.

\bibitem{BBO}
Bertrand, J.; Bertrand, P.; Ovarlez, J. \emph{``The Mellin Transform'', The
  Transforms and Applications Handbook: Second Edition}, CRC Press, 2000.

\bibitem{Bryc}
Bryc, W. Quadratic harnesses from generalized beta integrals.
  \emph{Noncommutative harmonic analysis with applications to probability III}
  (2012), 67--79. 96 of Banach Center Publications.

\bibitem{BrycCDH}
Bryc, W. {On the continuous dual Hahn process}. \emph{Stoch. Proc. Appl.}
  \textbf{143} (2021), 185--206.

\bibitem{BKuz}
Bryc, W.; Kuznetsov, A.: {Markov limits of steady states of the KPZ equation on
  an interval.} 2022.

\bibitem{BKWW}
Bryc, W.; Kuznetsov, A.; Wang, Y.; Weso{\l}owski, J.: {Markov processes related
  to the stationary measure for the open KPZ equation}. 2022. ArXiv:2105.03946.

\bibitem{BRYC201877}
Bryc, W.; Wang, Y. Dual representations of laplace transforms of brownian
  excursion and generalized meanders. \emph{Stat. Prob. Lett.} \textbf{140}
  (2018), 77--83.

\bibitem{BW}
Bryc, W.; Wang, Y. Limit fluctuations for density of {A}symmetric {S}imple
  {E}xclusion {P}rocesses with open boundaries. \emph{Ann. Inst. H. Poincar\'e
  B} \textbf{55} (2019), 2169--2194.

\bibitem{BW2}
Bryc, W.; Weso{\l}owski, J. {A}skey--{W}ilson polynomials, quadratic harnesses
  and martingales. \emph{Ann. Prob.} \textbf{38} (2010), 1221--1262.

\bibitem{BW1}
Bryc, W.; Weso{\l}owski, J. {A}symmetric {S}imple {E}xclusion {P}rocess with
  open boundaries and quadratic harnesses. \emph{J. Stat. Phys.} \textbf{167}
  (2017), 383--415.

\bibitem{bufetov2020cutoff}
Bufetov, A.; Nejjar, P. {Cutoff profile of ASEP on a segment}. \emph{Probab.
  Theo. Rel. Fields} \textbf{183} (2020), 229--253.

\bibitem{corteel2011}
Corteel, S.; Williams, L.~K. Tableaux combinatorics for the asymmetric
  exclusion process and askey-wilson polynomials. \emph{Duke Math. J.}
  \textbf{159} (2011), 385--415.

\bibitem{CS}
Corwin, I.; Shen, H. Open $\text{ASEP}$ in the weakly asymmetric regime.
  \emph{Comm. Pure App. Math.} \textbf{71} (2018), 2065--2128.

\bibitem{Curtiss}
Curtiss, J.~H. A note on the theory of moment generating functions. \emph{Ann.
  Math. Stat.} \textbf{13} (1942), 430--433.

\bibitem{Daalhuis}
Daalhuis, A. B.~O. Asymptotic expansions for q-gamma, q-exponential, and
  q-bessel functions. \emph{J. Math. Anal. Appl.} \textbf{186} (1994),
  896--913.

\bibitem{PhysRevLett.95.240601}
de~Gier, J.; Essler, F. Bethe ansatz solution of the asymmetric exclusion
  process with open boundaries. \emph{Phys. Rev. Lett.} \textbf{95} (2005),
  240\,601.
\urlprefix\url{https://link.aps.org/doi/10.1103/PhysRevLett.95.240601}

\bibitem{Derrida_1993}
Derrida, B.; Evans, M.~R.; Hakim, V.; Pasquier, V. Exact solution of a 1d
  asymmetric exclusion model using a matrix formulation. \emph{J. Phys. A}
  \textbf{26} (1993), no.~7, 1493--1517.
\urlprefix\url{https://doi.org/10.1088%2F0305-4470%2F26%2F7%2F011}

\bibitem{NIST:DLMF}
{\it NIST Digital Library of Mathematical Functions}.

\bibitem{alex2019stationary}
Dunlap, A.; Graham, C.; Ryzhik, L. Stationary solutions to the stochastic
  burgers equation on the line. \emph{Commun. Math. Phys.} \textbf{382} (2021),
  875--949.

\bibitem{EKMS}
E, W.; Khanin, K.; Mazel, A.; Sinai, Y. Invariant measures for {B}urgers
  {E}quation with {S}tochastic {F}orcing. \emph{Ann. Math.} \textbf{151}
  (2000), 877--960.

\bibitem{ED}
Enaud, C.; Derrida, B. Large deviation functional of the weakly asymmetric
  exclusion process. \emph{J. Stat. Phys.} \textbf{114} (2004), 537--562.

\bibitem{FQ14}
{Funaki}, T.; {Quastel}, J. {KPZ equation, its renormalization and invariant
  measures}. \emph{Stoch. Part. Diff. Equa.: Analys \& Comp.} \textbf{3}
  (2015), 159--220.

\bibitem{gantert2020mixing}
Gantert, N.; Nestoridi, E.; Schmid, D.: Mixing times for the simple exclusion
  process with open boundaries. 2020. ArXiv:2003.03781.

\bibitem{GH19}
Gerencs\'er, M.; Hairer, M. {Singular SPDEs in domains with boundaries}.
  \emph{Probab. Theory Relat. Fields} \textbf{173} (2019), 697--758.

\bibitem{AHL_2020__3__87_0}
Goncalves, P.; Perkowski, N.; Simon, M. {Derivation of the stochastic Burgers
  equation with Dirichlet boundary conditions from the WASEP}. \emph{Ann. H.
  Leb.} \textbf{3} (2020), 87--167.

\bibitem{gubinelli2018infinitesimal}
Gubinelli, M.; Perkowski, N. The infinitesimal generator of the stochastic
  burgers equation. \emph{Probab. Theo. Rel. Fields} \textbf{178} (2020),
  1067--1124.

\bibitem{hairer2018}
Hairer, M.; Mattingly, J. The strong feller property for singular stochastic
  pdes. \emph{Ann. Inst. H. Poincar\'e B} \textbf{54} (2018), 1314--1340.
\urlprefix\url{https://doi.org/10.1214/17-AIHP840}

\bibitem{KS2}
Koekoek, R.; Swarttouw, R. \emph{The {A}skey-scheme of hypergeometric
  orthogonal polynomials and its $q$-analogue}, Delft University of Technology,
  Faculty of Technical Mathematics and Informatics, 1994. 94-05.

\bibitem{LabbeLacoin}
Labb\'e, C.; Lacoin, H. Cuttoff phenomenon for the asymmetric simple exclusion
  process and the biased card shuffling. \emph{Ann. Prob.} \textbf{47} (2019),
  no.~3, 1541--1586.

\bibitem{Liggett75}
Liggett, T. Ergodic theorems for the asymmetric simple exclusion process.
  \emph{Transactions of the American Mathematical Society} \textbf{213} (1975),
  237--261.

\bibitem{Liggett77}
Liggett, T. Ergodic theorems for the asymmetric simple exclusion process ii.
  \emph{Annals of Probability} \textbf{5} (1977), no.~5, 795--801.

\bibitem{MOS}
Magnus, W.; Oberhettinger, F.; Soni, R.~P. \emph{Formulas and theorems for the
  special functions of mathematical physics}, Springer, 1966.

\bibitem{Mcintosh}
Mcintosh, R.~J. Some asymptotic formulae for q-shifted factorials.
  \emph{Ramanujan J.} \textbf{3} (1999), 205--214.

\bibitem{Moak}
Moak, D.~S. {The q-analogue of Stirling's formula}. \emph{Rocky Mountain J.
  Math.} \textbf{14} (1984), 403--413.

\bibitem{MV}
Montgomery, H.~L.; Vaughan, R.~C. \emph{Multiplicative Number Theory I.
  Classical theory}, Cambridge University Press, 2006.

\bibitem{Parekh}
Parekh, S. {The KPZ Limit of ASEP with Boundary}. \emph{Comm. Math. Phys.}
  \textbf{365} (2019), 569--649.

\bibitem{Paris}
Paris, R.~B. {The Stokes phenomenon associated with the Hurwitz zeta function
  $\zeta(s, a)$}. \emph{Proc. Royal Soc. A} \textbf{461} (2005), 297--304.

\bibitem{ParisKam}
Paris, R.~B.; Kaminski, D. \emph{Asymptotics and Mellin-Barnes Integrals},
  Cambridge University Press, 2001.

\bibitem{R}
Rademacher, H. \emph{Topics in Analytic Number Theory}, Springer, 1973.

\bibitem{rosati2019synchronization}
Rosati, T.~C. {Synchronization for KPZ}. \emph{Stochastics and Dynamics}
  \textbf{22} (2022), 2250\,010.

\bibitem{Schmid}
Schmid, D.: Mixing times for the tasep in the maximal current phase. 2021.
  ArXiv:2104.12745.

\bibitem{Uchiyama_2004}
Uchiyama, M.; Sasamoto, T.; Wadati, M. Asymmetric simple exclusion process with
  open boundaries and askey{\textendash}wilson polynomials. \emph{J. Phys. A}
  \textbf{37} (2004), no.~18, 4985--5002.
\urlprefix\url{https://doi.org/10.1088%2F0305-4470%2F37%2F18%2F006}

\bibitem{doi:10.1137/0511064}
Wilson, J.~A. Some hypergeometric orthogonal polynomials. \emph{SIAM Journal on
  Mathematical Analysis} \textbf{11} (1980), no.~4, 690--701.
  \eprint{https://doi.org/10.1137/0511064}.
\urlprefix\url{https://doi.org/10.1137/0511064}

\bibitem{Z}
Zhang, R. On asymptotics of the $q$-exponential and $q$-gamma functions.
  \emph{J. Math. Analy. Appl.} \textbf{411} (2014), 522--529.

\end{thebibliography}

\end{document}